\DeclareMathOperator{\rng}{\mathrm{rng}}
\DeclareMathOperator{\supp}{\mathrm{supp}}
\newcommand{\MAlg}{\mathrm{MAlg}}
\newcommand{\Aut}{\mathrm{Aut}}
  \newcommand{\R}{\mathbb R}
  \newcommand{\N}{\mathbb N}
  \newcommand{\Z}{\mathbb Z}
  \newcommand{\LL}{\mathrm L}
  \newcommand{\U}{\mathcal U}
 \newcommand{\dom}{\mathrm{dom}\;}
  \newcommand{\inv}{^{-1}}
  \renewcommand{\ker}{\mathrm{Ker}\,}
    \newcommand{\spec}{\mathrm{spec}}
  \renewcommand{\leq}{\leqslant}
  \renewcommand{\geq}{\geqslant}
  \newcommand{\abs}[1]{\left\lvert #1\right\rvert}
  \newcommand{\norm}[1]{\left\lVert #1\right\rVert}
  \newcommand{\impl}{\Rightarrow}
  \newcommand{\la}{\left\langle}
  \newcommand{\ra}{\right\rangle}
\newtheorem{thm}{Theorem}[section]
\newtheorem{cor}[thm]{Corollary}
\newtheorem{lem}[thm]{Lemma}
\newtheorem{prop}[thm]{Proposition}
\newtheorem*{claim}{Claim}
\theoremstyle{definition}
\newtheorem{qu}[thm]{Question}
\newtheorem{df}[thm]{Definition}
\newtheorem*{rmq}{Remark}
\newtheorem{exemple}[thm]{Example}
\title{On a measurable analogue of small topological full groups}
\author{François Le Maître\footnote{Research supported by Projet ANR-14-CE25-0004 GAMME.}}
\begin{document}

\maketitle

\begin{abstract}
We initiate the study of a measurable analogue of small topological full groups that we call $\LL^1$ full groups. These groups are endowed with a Polish group topology which  admits a natural complete right invariant metric. We mostly focus on $\LL^1$ full groups of measure-preserving $\Z$-actions which are actually a complete invariant of flip conjugacy. 

We prove that for ergodic actions the closure of the derived group is topologically simple although it can fail to be simple.  We also show that the closure of the derived group is connected, and that for measure-preserving free actions of non-amenable groups the closure of the derived group and the $\LL^1$ full group itself are never amenable. 

In the case of a measure-preserving ergodic $\Z$-action, the closure of the derived group is shown to be the kernel of the index map. If such an action is moreover by homeomorphism on the Cantor space, we show that the topological full group is dense in the $\LL^1$ full group. Using Juschenko-Monod and Matui's results on topological full groups, we conclude  that $\LL^1$ full groups of ergodic $\Z$-actions are amenable as topological groups, and that they are topologically finitely generated if and only if the $\Z$-action has finite entropy. 
\end{abstract}

\tableofcontents

\section{Introduction}

\subsection{\texorpdfstring{$\LL^1$}{L1}  full groups of measure-preserving transformations}

The study of (invertible) measure preserving transformations on standard probability spaces has a long and rich history stemming from its connections with various fields such as statistical mechanics, information theory, Riemannian geometry or number theory. Over the years, various invariants have been introduced in order to classify such transformations, although a result of Foreman, Rudolph and Weiss shows that such a classification cannot ‘‘reasonably'' be obtained for \textit{all} measure-preserving transformations \cite{MR2800720}. 

The most well-behaved measure-preserving transformations are probably the \textbf{compact} ones, i.e. those which arise as translation by a topological generator in a compact abelian group (for instance, an irrational rotation is a compact transformation). 
Indeed, ergodic compact transformations are completely classified up to conjugacy by the countable subgroup of the circle $\mathbb S^1$ consisting of all the eigenvalues of the associated Koopman unitary (for an irrational rotation this is just the group generated by its angle). To be more precise, a theorem of Halmos and von Neumann states that two such transformations $T$ and $T'$ of a standard probability space $(X,\mu)$ are conjugate iff $\spec(T)=\spec(T')$ where $\spec(T)$ is the group consisting of all the $\lambda\in\mathbb S^1$ such that there is a non-null $f\in\LL^2(X,\mu)$ satisfying $f\circ T=\lambda f$. So an ergodic compact transformation $T$ is entirely described by the countable group $\spec(T)$. 

The purpose of this paper is to initiate the study of a \textit{Polish} group $[T]_1$ called the $\LL^1$ full group which one can associate to a measure-preserving transformation $T$. Its main feature resembles that of the spectrum of compact transformations: two ergodic measure-preserving transformations  $T$ and $T'$ are \textit{flip}-conjugate\footnote{Two measure-preserving transformation $T$ and $T'$ are flip-conjugate if there is a measure-preserving transformation $S$ such that $T'=STS\inv$ or $T'=ST\inv S\inv$.} if and only if $[T]_1$ is abstractly (or topologically) \textit{isomorphic} to $[T']_1$.

The idea for this group comes from full groups, which were defined by Dye as subgroups of the group $\Aut(X,\mu)$ of measure-preserving transformations of a standard probability space $(X,\mu)$ satisfying a strong closure property. 
The most basic example of a full group is the  full group of a measure-preserving invertible aperiodic transformation $T$, which consists in all the measure-preserving transformations $U$ such that for almost all $x\in X$, there exists $n\in\Z$ such that $U(x)=T^n(x)$. This full group is denoted by $[T]$ and every $U\in[T]$ is then completely described by the \textbf{cocycle map} $c_U: X\to \Z$ defined by the equation 
$$U(x)=T^{c_U(x)}(x).$$
Note that the full group $[T]$ only depends on the partition of the space into orbits induced by $T$: it is an invariant of \textbf{orbit equivalence}.
A theorem of Dye states that all measure-preserving ergodic transformations are orbit equivalent \cite{MR0131516}, thus making the associated full group a trivial conjugacy invariant for such transformations. To obtain an interesting invariant, we will place an integrability restriction on the cocycle maps.

\begin{df}The \textbf{$\LL^1$ full group} of an aperiodic measure-preserving invertible transformation $T$ on a standard probability space $(X,\mu)$ is the group of all $U\in[T]$ such that 
$$\int_X\abs{c_U(x)}d\mu(x)<+\infty.$$
\end{df}

The $\LL^1$ full group is endowed with a natural metric given by the $\LL^1$ metric when viewing elements of $[T]_1$ as cocycle maps $X\to \Z$. Such a metric is actually complete, separable and right-invariant (see Proposition  \ref{prop: complete separable metric}) so that $\LL^1$ full groups are cli\footnote{A Polish group is cli if it admits a compatible left- (or equivalently right-) invariant metric} Polish groups.

\begin{thm}[see Thm. \ref{thm:l1fullgroup is complete invariant of flip conjugacy}]\label{thmi:l1fullgroup is complete invariant of flip conjugacy}Let $T$ and $T'$ be two measure-preserving ergodic transformation of a standard probability space $(X,\mu)$. Then the following are equivalent: 
\begin{enumerate}[(1)]
\item $T$ and $T'$ are flip-conjugate;
\item the groups $[T]_1$ and $[T']_1$ are abstractly isomorphic;
\item the groups $[T]_1$ and $[T']_1$ are topologically isomorphic. 
\end{enumerate}
\end{thm}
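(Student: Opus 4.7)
The implications $(1)\Rightarrow(3)\Rightarrow(2)$ are routine. Indeed, if $T'=ST^{\pm 1}S\inv$ for some $S\in\Aut(X,\mu)$, conjugation by $S$ maps $[T]$ bijectively onto $[T']$ and transforms cocycles by $c_{SUS\inv}=\pm c_U\circ S\inv$, so it preserves the $\LL^1$ norm and restricts to an isomorphism $[T]_1\to[T']_1$ of Polish groups; any topological isomorphism is in particular an abstract one. The whole content of the theorem lies in $(2)\Rightarrow(1)$, which I would attack in two steps.

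\textbf{Step 1: spatial realization.} Given an abstract isomorphism $\Phi\colon[T]_1\to[T']_1$, I would show it is induced by some $S\in\Aut(X,\mu)$, i.e.\ $\Phi(U)=SUS\inv$ for all $U\in[T]_1$. This is a Dye--Fremlin style reconstruction. The key observation is that involutions whose cocycle takes only the values $0$ and $\pm 1$ already belong to $[T]_1$: concretely, given a measurable $A$ disjoint from $T(A)$, the involution that swaps $A$ with $T(A)$ and fixes everything else lies in $[T]_1$. Such involutions are abundant enough to recover the measure algebra $\MAlg(X,\mu)$ from the group structure alone (using, e.g., supports and commutation relations to encode disjointness/inclusion of sets), and the hypothesis that $T$ is ergodic pins down the measure up to scaling. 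The induced Boolean isomorphism then lifts to a point map $S\in\Aut(X,\mu)$, which by construction conjugates $[T]_1$ to $[T']_1$. One then upgrades this to conjugation of the full orbit equivalence relations, either by a density/closure argument inside $[T]$ or by exhibiting $[T]_1$ as a subgroup which still detects the orbit partition.

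\textbf{Step 2: Belinskaya's theorem.} Since $T\in[T]_1$ with cocycle identically $1$, its image $STS\inv=\Phi(T)$ lies in $[T']_1$. This means $STS\inv$ is an element of $[T']$ whose cocycle with respect to $T'$ is integrable, and its orbits coincide with those of $T'$. Belinskaya's theorem then forces $STS\inv$ to be flip-conjugate to $T'$, so $T$ itself is flip-conjugate to $T'$.

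\textbf{Main obstacle.} The delicate part is Step 1. Classical Dye reconstruction for full groups exploits the uniform density of involutions, which is no longer available in $[T]_1$ under the $\LL^1$ topology. One must instead give a purely group-theoretic characterization of the ``finitely supported'' involutions inside $[T]_1$ (those coming from sets disjoint from some $T$-iterate) and verify that this restricted class suffices to pin down the measure algebra and that the resulting point realization $S$ honestly conjugates the ambient orbit equivalence relations and not merely the $\LL^1$-integrable part. Once this is done, the invocation of Belinskaya's theorem in Step 2 is immediate.
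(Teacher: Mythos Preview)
Your proposal is correct and follows exactly the paper's route: the trivial implications $(1)\Rightarrow(3)\Rightarrow(2)$, then a Dye--Fremlin reconstruction to realise the abstract isomorphism as conjugation by some $S\in\Aut(X,\mu)$, and finally Belinskaya's theorem applied to $STS\inv\in[T']_1$.

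Your ``main obstacle'' is, however, overstated. The paper does not carry out any bespoke analysis of involutions in $[T]_1$; it simply quotes Fremlin's general theorem (384D in \emph{Measure Theory}, vol.~3), which says that any abstract isomorphism between two subgroups of $\Aut(X,\mu)$ having \emph{many involutions} is conjugation by a non-singular transformation. The ``many involutions'' hypothesis asks only that every non-null Borel set support a non-trivial involution of the group, and this is immediate for $[T]_1$ from the very involutions you describe (swapping $A$ and $T(A)$). No group-theoretic characterisation of a distinguished class of involutions is needed, and no density argument either. Ergodicity then forces the Radon--Nikodym derivative of $S_*\mu$ to be constant, so $S\in\Aut(X,\mu)$.

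Likewise, you do not need to ``upgrade'' $S$ to a conjugacy of the ambient orbit equivalence relations. Since $\psi$ is surjective, $S[T]_1S\inv=[T']_1$; as $T$ generates the orbits of every element of $[T]_1$, its image $STS\inv$ generates the orbits of every element of $[T']_1$, in particular of $T'$ itself. So $STS\inv$ and $T'$ have the same orbits, and Belinskaya applies directly.
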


There are two results behind this theorem: the first is a reconstruction result à la Dye which says that any isomorphisms between $\LL^1$ full groups must be a conjugacy by a measure-preserving transformation. The second is Belinskaya's theorem \cite[Cor. 3.7]{belinskaya1968partitions}, which states that if two ergodic transformations share the same $\LL^1$ full group, then they must be flip-conjugate. 

%
%
%

Let us now briefly describe the natural and well-studied analogues of full groups and $\LL^1$ full groups in the context of Cantor dynamics. Given a homeomorphism $T$ of the Cantor space $\mathcal C$, its (topological) full group $[T]_t$ is the group of all homeomorphisms $U$ of the Cantor space such that for all $x\in \mathcal C$ there is $n\in\N$ such that $U(x)=T^n(x)$. This full group turns out to completely determine the partition of the Cantor space into orbits up to a homeomorphisms. Such partitions are entirely described up to homeomorphism in terms of the set of invariant measures: by a theorem of Giordano, Putnam and Skau \cite{MR1363826}, two minimal homeomorphisms $T$ and $T'$ of the Cantor space are orbit equivalent if and only if there is a homeomorphism $S$ such that $S(M_T)=M_{T'}$ where $M_T$ denotes the set of $T$-invariant Borel probability measures. As in the measurable case, an element of the topological full group of an aperiodic homeomorphism $T$ is completely described by the associated cocycle map $c_U: \mathcal C\to \Z$ defined by $U(x)=T^{c_U(x)}(x)$.

The analogue of $\LL^1$ full groups is sometimes called the \textbf{small topological full group} and is also defined by putting a restriction on cocycles. Namely, given a homeomorphism $T$ of the Cantor space $\mathcal C$, its small (topological) full group $[T]_c$ is the group of homeomorphisms $U$ of the Cantor space whose cocycle map $c_U$ is actually continuous. A result of Giordano-Putnam-Skau in complete analogy with Theorem \ref{thmi:l1fullgroup is complete invariant of flip conjugacy} states that when $T$ is minimal, the flip conjugacy class of $T$ is completely determined by its small topological full group $[T]_c$ (see \cite{MR1710743}). In the end, $\LL^1$ full groups can be seen as the missing piece in the following picture. \vspace{0.5cm}

\begin{figure}[htpb]
\begin{center}

\begin{tabular}{| l | c | c |} \hline
 & Topological setup & Measurable setup\\ \hline
 & & \\
Orbit equivalence & $\begin{array}{cl}[T]_t= & \{U\in \mathrm{Homeo}(\mathcal C):  \\ & \forall x, U(x)=T^{c_U(x)}\}\end{array}$  & $\begin{array}{cl}[T]= & \{U\in\Aut(X,\mu): \\ & \forall x, U(x)=T^{c_U(x)}\}\end{array}$ \\
& & \\
\hline
 & & \\
Flip conjugacy & $\begin{array}{cl}[T]_c= & \{U\in [T]_t: \\ & c_U\text{ is continuous}\}\end{array}$ & $\begin{array}{cl}[T]_1= & \{U\in [T]: \\ & c_U\text{ is integrable}\}\end{array}$ \\
  & & \\
\hline \end{tabular}
\end{center}
\end{figure}

\subsection{Some topological properties of $\LL^1$ full groups}

Theorem \ref{thmi:l1fullgroup is complete invariant of flip conjugacy} implies that every ergodic theoretic property of a measure-preserving transformation $T$ should be reflected algebraically in the group $[T]_1$. However since the latter is an uncountable group, it is quite hard to find a purely group-theoretic property allowing us to distinguish two $\LL^1$ full groups. But as we already pointed out, we also have at our disposal a Polish group topology, which provides richer tools to study $[T]_1$. Before we present our results, let us mention a fundamental tool for the study of $\LL^1$ full groups and topological full groups.

Given an aperiodic measure-preserving transformation, one defines the \textbf{index map} $I:[T]_1\to \R$ by putting 
$$I(U)=\int_Xc_U(x)d\mu(x).$$
If $T$ is a minimal homeomorphism, we define the index map the same way picking an invariant measure $\mu$ so as to view $T$ as a measure-preserving transformation. The resulting index map can be shown not to depend on the chosen invariant measure \cite{MR1710743}.

We can now give various descriptions of the closure of the derived group of the $\LL^1$ full group of an ergodic measure-preserving transformation $T$, which we call the \textbf{derived $\LL^1$ full group} of $T$ and denote by $\overline{D([T]_1)}$.

\begin{thm}[see Cor. \ref{cor:index take values in Z}]\label{thm:index take values in Z}Let $T\in\Aut(X,\mu)$ be ergodic. Then the derived $\LL^1$ full group of $T$ is equal to the following four groups:
\begin{itemize}
\item the kernel of the index map,
\item the connected component of the identity,
\item the group generated by periodic elements and
\item the group topologically generated by involutions.
\end{itemize}
\end{thm}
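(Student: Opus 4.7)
The plan is to establish the chain of inclusions
\begin{equation*}
\overline{D([T]_1)} \subseteq \ker I \subseteq \langle\text{periodic}\rangle \subseteq \overline{\langle\text{involutions}\rangle} \subseteq \overline{D([T]_1)}
\end{equation*}
so that all four groups coincide, and separately to identify this common subgroup with the connected component of $\id$ in $[T]_1$. I first verify that $I : [T]_1 \to \R$ is a continuous group homomorphism with image in $\Z$: continuity from $|I(U)-I(V)| = |I(UV^{-1})| \leq \|c_{UV^{-1}}\|_1$, the homomorphism property from the cocycle identity $c_{UV}(x) = c_U(Vx) + c_V(x)$ together with $V_*\mu = \mu$, and integrality from the cited Corollary \ref{cor:index take values in Z} (a consequence of Kac's lemma). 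Since $\Z$ is discrete and abelian, $\ker I$ is clopen and contains $D([T]_1)$, so it contains $\overline{D([T]_1)}$; being open it also contains the connected component of $\id$. For any periodic $U \in [T]_1$ one has $I(U) = 0$ by summing $c_U$ around each finite $U$-orbit, so periodic elements, and in particular involutions, lie in $\ker I$.

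The heart of the proof is to show $\ker I \subseteq \langle\text{periodic}\rangle$. Given $U \in \ker I$ and a large integer $N$, the Rokhlin lemma provides a tower $\{T^i A\}_{i=0}^{N-1}$ of measure close to $1$. On the part of $X$ where $|c_U|\leq N/2$ and where $U$ does not jump across the top of the tower, the restriction of $U$ is a permutation of finitely many length-$N$ $T$-blocks, hence periodic. The remaining ``long jumps'' and ``top-crossings'' must be absorbed into a second periodic factor, and this is exactly where the assumption $I(U) = 0$ is crucial: the global balance lets one pair positive and negative jumps and re-route them through a fresh Rokhlin tower on the complement, yielding a second bounded-period element whose product with the first recovers $U$. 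Producing a \emph{finite} (rather than merely topological) product of periodic elements is the main technical obstacle. Once this is done, every periodic element of period $k$ in $[T]_1$ is a product of at most $k-1$ involutions, obtained by grouping the $j$-th transposition across all $U$-orbits into a single element of $[T]_1$ with cocycle dominated by $c_U$. This gives $\langle\text{periodic}\rangle = \langle\text{involutions}\rangle$, and hence the third inclusion of the chain.

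To close the chain with $\overline{\langle\text{involutions}\rangle} \subseteq \overline{D([T]_1)}$, I would show that each two-level Rokhlin swap $\sigma$ over a base is a limit of commutators: slicing the base in halves and using alternating conjugations inside a taller Rokhlin tower realises $\sigma$ as $[\alpha,\beta]$ up to an $\LL^1$-error comparable to the measure of the tower top, which tends to $0$ as the height grows. Such swaps are $\LL^1$-dense in the involutions of $[T]_1$, so the chain closes and the four groups coincide. Finally, for the connected component of $\id$: since every $U \in \ker I$ is a finite product of involutions by the previous steps, it suffices to path-connect each involution $\sigma$ to $\id$ inside $\ker I$. Choosing a measurable family of ``partial swaps'' $\{\sigma_t\}_{t\in[0,1]}$ with $\sigma_0 = \id$, $\sigma_1 = \sigma$ and $\|c_{\sigma_t}\|_1 = t\,\|c_\sigma\|_1$ yields a continuous path of involutions, all lying in $\ker I$. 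This makes $\ker I$ path-connected, so equal to the connected component of $\id$, completing the four-way identification.
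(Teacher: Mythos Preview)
Your chain of inclusions is the right skeleton, but the argument you offer for the key step $\ker I \subseteq \langle\text{periodic}\rangle$ is not a proof. You yourself flag ``producing a finite (rather than merely topological) product of periodic elements'' as ``the main technical obstacle'' and then do not resolve it: the sentence about pairing positive and negative jumps and re-routing them through a fresh Rokhlin tower is a hope, not an argument. Making this precise is genuinely hard --- one has to control that the re-routing stays in $\LL^1$ and that finitely many periodic factors suffice, and a single Rokhlin tower will not do this because the cocycle $c_U$ can be unbounded. There is also a circularity early on: you invoke ``the cited Corollary~\ref{cor:index take values in Z}'' for integrality of $I$, but that corollary is precisely the result under discussion; integrality is a \emph{consequence} of the structural statement, not an input to it.

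The paper avoids the Rokhlin tower difficulty entirely by a different route, following Belinskaya. One first shows (Section~\ref{sec:generated by induced and periodic}) that every $S\in[T]_1$ decomposes as $S=S_pS_+S_-$ with $S_p$ periodic and $S_\pm$ ``almost positive/negative''; that almost positive elements are, up to a periodic factor, \emph{positive} (i.e.\ $S(x)\geq_T x$); and that every positive element is a finite product $T_{A_1}\cdots T_{A_k}$ of induced transformations. Since each $T_A$ differs from $T$ by a periodic element, one concludes $[T]_1=\langle\text{periodic},\,T\rangle$. Now the normal subgroup $G_{per}$ generated by periodic elements satisfies $[T]_1/G_{per}\cong\Z$ via $T$, so $I$ takes integer values and $\ker I=G_{per}$. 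The inclusions $G_{per}\subseteq\overline{\langle\text{involutions}\rangle}\subseteq\overline{D([T]_1)}\subseteq\ker I$ are established earlier in the general graphing setting (Lemmas~\ref{commuinvol} and~\ref{lem: periodic in derived group}, Theorem~\ref{thm: invol gen}), closing the chain. Your idea for connectedness (a path of partial involutions) is essentially the paper's Corollary~\ref{cor: derived group is connected}; the reverse inclusion then follows because $I$ is continuous with discrete image.
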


Since the index map actually takes values into $\Z$  we conclude from the first item of the above theorem that the (topological) abelianization of the $\LL^1$ full group of an ergodic transformation is always equal to $\Z$.

In the context of topological full groups of minimal homeomorphisms, it is also true that the index map takes values into $\Z$, but its kernel is often much larger than the derived group. Nevertheless, Matui succesfully computed the abelianization of $[T]_c$ in $K$-theoretic terms and gave examples where this abelianisation is not even finitely generated (see \cite{MR2205435}). So the situation for $\LL^1$ full groups is much simpler with respect to the (topological) abelianization. 

An important feature of the derived group of the topological full group of a minimal homeomorphism is that it is a simple group. The following result is a natural analogue.

\begin{thm}\label{thmintro: topological simplicity}
Let $T$ be a measure-preserving transformation. Then $T$ is ergodic if and only if its derived $\LL^1$ full group $\overline{D([T]_1)}$ is topologically simple. 
\end{thm}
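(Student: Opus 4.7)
The plan is to prove the two directions separately. For the easier direction, assume $T$ is not ergodic and let $A$ be a $T$-invariant Borel set with $0<\mu(A)<1$. Every $U\in[T]_1$ preserves $A$ modulo null sets, so the restriction map $U\mapsto U|_A$ is a continuous group homomorphism $[T]_1\to[T|_A]_1$, with closed normal kernel $K_A$ consisting of those $U$ fixing $A$ pointwise. Intersecting $K_A$ with $\overline{D([T]_1)}$ yields a proper non-trivial closed normal subgroup: it is non-trivial because any non-trivial $T_{X\setminus A}$-transposition is a periodic element of $\overline{D([T]_1)}$ (by Theorem~\ref{thm:index take values in Z}) fixing $A$ pointwise, and it is proper because there are also non-trivial periodic elements of $\overline{D([T]_1)}$ supported inside $A$. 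Hence $\overline{D([T]_1)}$ is not topologically simple.

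For the reverse direction, assume $T$ is ergodic and let $N$ be a closed non-trivial normal subgroup of $\overline{D([T]_1)}$. The goal is to show $N=\overline{D([T]_1)}$. The first step is to extract from $N$ a non-trivial involution. Pick $g\in N\setminus\{\id\}$ and, by a standard Borel-separation argument on the non-fixed set of $g$, find a Borel set $B$ of positive measure with $g(B)\cap B=\emptyset$. By ergodicity and Kac's lemma, the induced transformation $T_B$ is aperiodic with integrable return-time cocycle, so one can pick a non-trivial involution $\sigma\in\overline{D([T]_1)}$ supported in $B$, for instance a $T_B$-transposition. Then $g\sigma g\inv$ is an involution supported in $g(B)$, disjoint from $B$; the two involutions commute, so $(g\sigma g\inv)\sigma=[g,\sigma]$ is itself a non-trivial involution, and lies in $N$ by normality.

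The second step is to show that the normal closure in $\overline{D([T]_1)}$ of any non-trivial involution is dense. Using ergodicity, one conjugates the involution obtained above to realise basic transpositions $\tau_C$ (swapping $C$ and $T(C)$) of arbitrarily small support along sufficiently refined Rokhlin towers; finite products of such transpositions then approximate in $\LL^1$ any involution with bounded cocycle. Since $\overline{D([T]_1)}$ is topologically generated by involutions (again Theorem~\ref{thm:index take values in Z}) and $N$ is closed, this forces $N=\overline{D([T]_1)}$.

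The main obstacle is precisely this last approximation step: a general involution in $\overline{D([T]_1)}$ has a merely integrable (possibly unbounded) cocycle, so one cannot reduce it to finitely many basic transpositions directly. The expected remedy is a two-stage approach — first truncate the cocycle along a Rokhlin tower to reduce to the bounded case, then decompose each bounded involution into finitely many conjugates of $[g,\sigma]$ — with completeness of the right-invariant $\LL^1$ metric on $[T]_1$ used to pass to the limit inside $N$.
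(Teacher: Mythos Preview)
Your non-ergodic direction is correct and matches the paper: a non-trivial $T$-invariant set $A$ gives the proper non-trivial closed normal subgroup $\overline{D([T]_1)}_A$.

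For the ergodic direction, your extraction of a non-trivial involution from $N$ via the commutator $[g,\sigma]$ is right and is essentially the paper's Proposition~\ref{prop: closed normal gen by T}. But your Step~2 is a genuine gap, not a proof: you yourself call the approximation of an arbitrary involution by conjugates of a fixed one ``the main obstacle'' and only sketch a Rokhlin-tower remedy. The sketch does not explain how to conjugate your fixed involution $[g,\sigma]$ (whose support has some specific positive measure) onto a basic transposition $\tau_C$ of \emph{much smaller} support. The conjugating element must lie in $\overline{D([T]_1)}$, not merely in the full group $[\mathcal R_T]$, so you need $\LL^1$ control on it; nothing in your outline produces that control, and the measures of the supports do not even match.

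The paper's route avoids Rokhlin towers entirely and handles this difficulty directly. The key is Lemma~\ref{lem: conjugate involutions}: if $U,V$ are involutions with \emph{disjoint supports of equal conditional measure}, then $V$ lies in the closure of the $\overline{D([\Phi]_1)}$-conjugacy class of $U$. The proof takes any full-group involution swapping fundamental domains of $U$ and $V$, truncates it to the set where $d_\Phi(x,\cdot)<n$ to force it into $[\Phi]_1$, and applies dominated convergence. With this in hand, Lemma~\ref{lem: closed normal gen by invol} first uses an auxiliary involution $V$ (again truncated) to manufacture inside $N$ involutions $U_{\tilde E}$ of \emph{arbitrarily small} support, then cuts the support of any target involution $W$ into pieces of the same small measure (Corollary~\ref{cor: cut a set in small subsets}) and applies Lemma~\ref{lem: conjugate involutions} piecewise, passing to the limit via Proposition~\ref{prop: weak order continuity}. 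This works for arbitrary aperiodic graphings, not just $\Z$-actions, and never needs a bounded-cocycle reduction.
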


This result is actually true in a greater generality for $\LL^1$ full groups of \textit{graphings} as defined by Levitt \cite{MR1366313}, see section \ref{sec:closed normal subgroups}. We also prove that $\overline{D([T]_1)}$ is \textit{never} simple using the fact that there are many Borel sets whose escape time is not integrable (see section \ref{sec:nonsimple}). Let us now seek finer invariants. \\

The \textbf{topological rank} of a separable topological group is the minimal number of elements needed to generate a dense subgroup. The main result of this paper  is that the topological rank of the $\LL^1$ full group of an ergodic measure-preserving transformation $T$  is related to the \textit{entropy} of $T$. 

\begin{thm}\label{thmintro:topological finite generate}Let $T\in\Aut(X,\mu)$ be ergodic. Then the following are equivalent
\begin{enumerate}[(1)]
\item the  $\LL^1$ full group $[T]_1$ is topologically finitely generated;
\item the derived $\LL^1$ full group $\overline{D([T])_1}$ is topologically finitely generated;
\item the transformation $T$ has finite entropy.
\end{enumerate}
\end{thm}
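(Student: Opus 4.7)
The plan is to handle the three implications $(1)\ssi(2)$, $(3)\impl(1)$, and $(1)\impl(3)$ separately.

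\textbf{Equivalence $(1)\ssi(2)$.} This is immediate from Theorem \ref{thm:index take values in Z}, which identifies $\overline{D([T]_1)}$ with the kernel of the continuous surjection $I:[T]_1\onto\Z$. Since $n\mapsto T^n$ is a continuous section, adjoining or removing the element $T$ relates topological finite generation of the two groups: if $S$ topologically generates $\overline{D([T]_1)}$, then $S\cup\{T\}$ topologically generates $[T]_1$; conversely, any finite topological generating set of $[T]_1$ can be modified by $T$-powers to yield finitely many elements of index zero that topologically generate $\overline{D([T]_1)}$.

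\textbf{$(3)\impl(1)$.} The strategy is to reduce to the Cantor setting and invoke Matui. By Krieger's generator theorem combined with Jewett-Krieger, every ergodic finite-entropy measure-preserving $\Z$-action is measurably conjugate to a strictly ergodic subshift $\sigma$ on a Cantor space $\mathcal C$. Since $\LL^1$ full groups are invariants of measurable conjugacy, we may assume $T=\sigma$. By the density theorem stated in the abstract, $[\sigma]_c$ is dense in $[\sigma]_1$. Matui's theorem then guarantees that $[\sigma]_c$ is abstractly finitely generated whenever $\sigma$ is a minimal subshift, and any finite generating set of $[\sigma]_c$ topologically generates $[T]_1$; the derived version is analogous.

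\textbf{$(1)\impl(3)$.} Argue by contrapositive using Kolmogorov--Sinai entropy. Suppose $\{U_1,\ldots,U_n\}\subseteq[T]_1$ topologically generates $[T]_1$, and consider the countable partition $\mathcal P$ of $X$ whose atoms are the joint level sets of $(c_{U_1},\ldots,c_{U_n})$. The first key point is that $H(\mathcal P)<+\infty$: any $\Z$-valued random variable $Y$ with $\E\abs{Y}<+\infty$ has finite Shannon entropy, since on $\N$ the maximum entropy distribution with prescribed mean is geometric with finite entropy, and one applies this to each $c_{U_i}$ separately. The second key point is that $\mathcal P$ is a Kolmogorov--Sinai generator for $T$. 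By the cocycle identity $c_{UV}=c_V+c_U\circ V$, the cocycle of any word in $U_1,\ldots,U_n$ is measurable with respect to $\bigvee_{k\in\Z}T^{-k}\mathcal P$, and passing to $\LL^1$-limits (hence a.e.\ limits along subsequences) preserves this measurability. To exhibit arbitrary Borel sets: for any Borel $B\subseteq X$ with $B\cap T(B)=\emptyset$, the involution $\sigma_B$ that exchanges $x\in B$ with $Tx$ lies in $[T]_1$ and satisfies $B=\{c_{\sigma_B}=1\}$; so $B\in\bigvee_kT^{-k}\mathcal P$, and the case of general $B$ follows by decomposing into finitely many $T$-disjoint pieces (using aperiodicity). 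The Kolmogorov--Sinai theorem then yields $h_\mu(T)\leq H(\mathcal P)<+\infty$.

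\textbf{Main obstacle.} The delicate step is the generating-partition argument in $(1)\impl(3)$: certifying that the partition built from the cocycles of the topological generators actually $T$-generates the full Borel $\sigma$-algebra. This combines the cocycle calculus for group operations, preservation of measurability under $\LL^1$-limits, and a construction producing enough elements of $[T]_1$ to detect arbitrary Borel sets. This is also where the $\LL^1$-topology (rather than a coarser topology) is crucial, since only $\LL^1$-convergence of cocycles forces a.e.\ convergence along subsequences.
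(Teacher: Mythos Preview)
Your proof is correct and follows essentially the same route as the paper: the equivalence $(1)\Leftrightarrow(2)$ via the index map, $(3)\Rightarrow(1)$ via Krieger's realization as a minimal subshift combined with Matui's result and the density of the topological full group, and $(1)\Rightarrow(3)$ by showing the joint cocycle partition of a finite topological generating set has finite entropy and is $T$-generating.

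Two small points. First, in $(3)\Rightarrow(1)$ you write that Matui's theorem gives finite generation of $[\sigma]_c$; in fact Matui's result is that $D([\sigma]_c)$ is finitely generated for a minimal subshift $\sigma$, while finite generation of $[\sigma]_c$ itself requires a strictly stronger hypothesis. This does not affect your argument, since $D([\sigma]_c)$ dense in $\overline{D([T]_1)}$ already gives $(2)$, and $(1)$ follows by adjoining $T$. Second, in $(1)\Rightarrow(3)$ you detect an arbitrary Borel set via the involutions $I_{T,B}$ for $B\cap T(B)=\emptyset$, then piece together general sets. The paper instead uses that for any Borel $A$ the induced transformation $T_A$ lies in $[T]_1$ with $\supp T_A=A$, so approximating $T_A$ in $\tilde d_T$ (hence in $d_u$) by words in the generators exhibits $A$ directly as a $d_\mu$-limit of sets in the algebra generated by the $T$-translates of $\mathcal P$. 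This avoids both the passage to a.e.\ convergent subsequences and the decomposition of $A$ into $T$-disjoint pieces, but your argument is equally valid.
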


A crucial tool in the previous theorem is Matui's characterization of finite-generatedness  of topological full groups \cite[Thm. 5.4]{MR2205435}. He proved that the derived group of the topological full group of a minimal homeomorphism $T$ is  finitely generated if and only if $T$ is a subshift. He also characterised finite-generatedness of the whole topological full group of $T$ in terms of a strictly more restrictive condition. In our context the situation is again simpler: Theorem \ref{thmintro:topological finite generate} shows that for an ergodic measure-preserving transformation $T$, the $\LL^1$ full group of $T$ is topologically finitely generated if and only if the closure of  its derived group is. 

In order to use Matui's aforementioned results so as to prove Theorem \ref{thmintro:topological finite generate}, we establish the following important connection between $\LL^1$ full groups and topological full groups.

\begin{thm}[see Thm. \ref{thm: density of the whole topo full group}]\label{propintro:dense full group}Let $T$ be a minimal homeomorphism of the Cantor space and let $\mu$ be an invariant measure. Then $D([T]_c)$ is dense in $\overline{D([T]_1)}$ and $[T]_c$ is dense in $[T]_1$. 
\end{thm}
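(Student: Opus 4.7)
The plan is to prove first that $[T]_c$ is dense in $[T]_1$ and then derive the density of $D([T]_c)$ in $\overline{D([T]_1)}$ by a short commutator approximation. Fix $U \in [T]_1$ and $\eps > 0$; the goal is to construct $V \in [T]_c$ with $\int_X |c_U - c_V|\,d\mu < \eps$.

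By integrability of $c_U$, pick $N$ with $\int_{\{|c_U|>N\}}|c_U|\,d\mu$ small. Since $T$ is a minimal homeomorphism of the Cantor space, there exist Kakutani--Rokhlin partitions with arbitrarily tall clopen towers; choose one $X = \bigsqcup_n \bigsqcup_{0 \leq i < h_n} T^i Y_n$ with $Y_n$ clopen and all heights $h_n \geq H$ for $H$ much larger than $N/\eps$, obtained by taking $\bigsqcup_n Y_n$ to be a clopen set of sufficiently small measure. On any point $T^i y$ strictly interior to its column (say with $N \leq i < h_n - N$) and satisfying $|c_U(T^i y)| \leq N$, the image $U(T^i y)$ lies in the same fiber $\{T^j y : 0 \leq j < h_n\}$. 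Thus, outside a \emph{bad} set (consisting of the top and bottom $N$ levels of each column together with points where $|c_U| > N$) of small $\LL^1$-mass, the restriction of $U$ to each fiber over $y \in Y_n$ is modelled by a measurable map $y \mapsto \sigma_y \in S_{h_n}$ into the finite symmetric group.

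I would then approximate each $y \mapsto \sigma_y$ by a clopen-constant map $y \mapsto \sigma'_y$ (which is possible since $S_{h_n}$ is finite and clopen sets generate the Borel $\sigma$-algebra on the Cantor space) and complete the definition on the bad set by an orbit-wise matching minimising the $\LL^1$ repair cost. The resulting $V$ is fiber-preserving (hence lies in $[T]$) and its cocycle is constant on the pieces of a clopen refinement of the Kakutani--Rokhlin partition, placing $V$ in $[T]_c$. The $\LL^1$-discrepancy $\int |c_U - c_V|\,d\mu$ is controlled by the contribution on the bad set, which in turn is bounded by $\int_{\{|c_U|>N\}}|c_U|\,d\mu$ plus an edge contribution that tends to zero as $H \to \infty$ by absolute continuity of $\int|c_U|\,d\mu$ with respect to $\mu$.

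For the second statement, any $U \in \overline{D([T]_1)}$ is a $d_1$-limit of finite products of commutators of elements of $[T]_1$. Approximating each factor by an element of $[T]_c$ using the first statement and invoking continuity of the group operations in the Polish topology of $[T]_1$, the corresponding products of commutators lie in $D([T]_c)$ and approximate $U$ in $d_1$. The main technical obstacle is the construction of $V$ described above: defining the fiber-preserving permutations so that they coincide with $U$ on most points, and in particular setting up the orbit-wise matching on the bad set so as to simultaneously preserve bijectivity and keep the $\LL^1$ cost of the repair controlled by the $\LL^1$ mass of the cocycle on the bad set.
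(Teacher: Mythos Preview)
Your approach is genuinely different from the paper's and, as you yourself flag, the repair step contains a real gap.

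The paper proceeds in the opposite order. It first shows (for any continuous graphing, hence for $\Phi=\{T\}$) that the group generated by involutions in $[T]_c$ is dense in $\overline{D([T]_1)}$: by Theorem~\ref{thm: topo gen by involutions induced by graphing} the latter is topologically generated by involutions of the form $I_{T,A}$ with $A\cap T(A)=\emptyset$, and one approximates $A$ by clopen sets $A'_n$ with $A'_n\cap T(A'_n)=\emptyset$, which gives $I_{T,A'_n}\in[T]_c$ converging to $I_{T,A}$. Density of $[T]_c$ in $[T]_1$ then follows from the structural fact (Corollary~\ref{cor: generated by T along with periodic}) that $[T]_1$ is generated by periodic elements together with $T$ itself, the periodic elements lying in $\overline{D([T]_1)}$.

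Your Kakutani--Rokhlin argument attempts a direct approximation of an arbitrary $U\in[T]_1$ by a fibre-preserving $V$, but the claimed control on the repair cost is not correct as stated. You invoke absolute continuity of $\int|c_U|\,d\mu$ to bound the edge contribution, but the repair cost is $\int_{\text{bad}}|c_U-c_V|\,d\mu$, and it is $|c_V|$, not $|c_U|$, that may be large there: a fibre-preserving $V$ can move a bad point by as much as $h_n-1$. Concretely, take $U=T^N$ (already in $[T]_c$). Then the interior good set in each fibre is $\{N,\dots,h_n-N-1\}$, mapped by $U$ onto $\{2N,\dots,h_n-1\}$, so the complement $B'$ in the fibre is $\{0,\dots,2N-1\}$ while the bad set $B$ is $\{0,\dots,N-1\}\cup\{h_n-N,\dots,h_n-1\}$. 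Any bijection $B\to B'$ must send the top $N$ edge levels across essentially the whole tower, incurring a per-fibre cost $\asymp N h_n$; summing gives $\sum_n N h_n\mu(Y_n)\approx N$, which does not tend to $0$ as $H\to\infty$. Thus the fibre-preserving ansatz cannot approximate even $T^N$ well, precisely because such elements ``wrap around'' the tower.

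The paper sidesteps this by never trying to approximate an arbitrary element directly: it reduces to involutions $I_{T,A}$ (which do not wrap around) and then uses the decomposition of $[T]_1$ into $\langle T\rangle$ and periodic elements. If you want to salvage the tower approach, you would need to first strip off a power of $T$ (using the index map) before attempting a fibre-preserving approximation, which essentially rediscovers the paper's structural input.
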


Our proof of Theorem \ref{thmintro:topological finite generate} also relies on a deep theorem of Krieger which characterizes finite entropy transformations as those which can be realised as uniquely ergodic minimal subshifts \cite{MR0393402}. It would be nice to have a purely ergodic-theoretic proof of Theorem \ref{thmintro:topological finite generate}.\\

Theorem \ref{propintro:dense full group} has another interesting consequence. Recall that a topological group is amenable if whenever it acts continuously on a compact space, the action admits an invariant Borel probability measure. Juschenko and Monod have proven that the topological full group of any minimal homeomorphism of the Cantor space is amenable \cite{zbMATH06203677}. Using the Jewett-Krieger theorem we obtained the following result.

\begin{thm}[{See Thm. \ref{thm:amenable for Z}}]\label{thmintro:L1 full group ergodic Z action is amenable}Let $T$ be an ergodic measure-preserving transformation. Then the $\LL^1$ full group of $T$ is amenable. 
\end{thm}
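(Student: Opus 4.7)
The plan is to bolt together three ingredients: the Jewett--Krieger theorem, the density result Theorem \ref{propintro:dense full group}, the Juschenko--Monod amenability theorem, and the elementary fact that amenability passes from a dense subgroup to the ambient topological group.

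First, I would reduce to the Cantor setting. By Jewett--Krieger, the ergodic system $(X,\mu,T)$ is measurably isomorphic to a uniquely ergodic minimal homeomorphism $(\mathcal C,\nu,S)$ of the Cantor space equipped with its unique invariant probability measure $\nu$. Because $[T]_1$ is defined purely in measure-theoretic terms and its metric is just the $\LL^1$-norm of the $\Z$-valued cocycle, conjugation by the measure-preserving isomorphism induces an isometric topological group isomorphism $[T]_1\cong [S]_1$. It is therefore enough to prove that $[S]_1$ is amenable as a topological group.

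Next, Theorem \ref{propintro:dense full group} applied to the minimal homeomorphism $S$ tells us that the countable topological full group $[S]_c$ is dense in $[S]_1$. By the Juschenko--Monod theorem, $[S]_c$ is amenable as a countable discrete group. It then suffices to invoke the following general principle: if $\Gamma$ is a dense subgroup of a Polish group $G$ and $\Gamma$ is amenable as a discrete group, then $G$ is amenable as a topological group. Indeed, given a continuous $G$-action on a compact metric space $K$, restriction to $\Gamma$ yields a $\Gamma$-action by homeomorphisms, which admits a $\Gamma$-invariant Borel probability measure $\lambda$ by discrete amenability; continuity of the map $g\mapsto g_*\lambda$ from $G$ into the weak-$*$ compact space of probability measures on $K$, together with density of $\Gamma$ in $G$, forces $\lambda$ to be $G$-invariant as well.

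The ingredients are all either quoted or proved earlier in the paper, so no serious obstacle remains; the only point that requires a little care is the first step, namely checking that a measurable isomorphism between the original system and its uniquely ergodic Cantor model genuinely transfers the topological group structure on $\LL^1$ full groups. This is however immediate from the fact that the metric $d(U,V)=\int|c_U-c_V|\,d\mu$ depends only on the orbit partition and the measure, both of which are preserved under the isomorphism provided by Jewett--Krieger.
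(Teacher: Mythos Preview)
Your proof is correct and follows essentially the same approach as the paper: Jewett--Krieger reduces to a minimal uniquely ergodic Cantor model, Theorem \ref{thm: density of the whole topo full group} gives density of $[T]_c$ in $[T]_1$, Juschenko--Monod gives amenability of $[T]_c$, and the dense-subgroup principle finishes. The paper's own proof is in fact terser than yours, merely quoting these three ingredients without spelling out the weak-$*$ argument or the transfer along the measurable isomorphism.
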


\subsection{$\LL^1$ full groups of actions of finitely generated groups}

A measure-preserving transformation is nothing but a measure-preserving $\Z$-action, so it is natural to ask which of the above definitions and results generalize to measure-preserving  actions of finitely generated groups. 

\begin{df}Let $\Gamma$ be a finitely generated group acting in a measure-preserving manner on a standard probability space $(X,\mu)$. The full group of this action is denoted by $[\Gamma]$ and defined by $$[\Gamma]=\{T\in\Aut(X,\mu): \forall x\in X, T(x)\in\Gamma\cdot x\}.$$
Let $S$ be a finite generating set for $\Gamma$. Then every $\Gamma$-orbit is naturally equipped with a Schreier graph metric $d_S$, and we let the $\LL^1$ full group of the $\Gamma$-action be the group $[\Gamma]_1$ of all $T\in[\Gamma]$ such that the map $x\mapsto d_S(x,T(x))$ is integrable. 
\end{df}

The definition of this group does not depend on the choice of a finite generating set, and the $\LL^1$ full group $[\Gamma]_1$  has a natural Polish group topology. The following result is a counterpart to Theorem \ref{thmintro:L1 full group ergodic Z action is amenable}. 

\begin{thm}[see Thm. \ref{thm:fg non amenable}]\label{thmi:fg non amenable}Let $\Gamma$ be a finitely generated non amenable group acting freely in a measure-preserving manner on $(X,\mu)$. Then neither $[\Gamma]_1$ nor $\overline{D([\Gamma]_1)}$ are amenable.
\end{thm}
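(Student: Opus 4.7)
The plan is to witness the non-amenability of $[\Gamma]_1$ and of its closed derived subgroup by exhibiting closed, discrete copies of $\Gamma$ and of $[\Gamma,\Gamma]$ inside them, and then transferring non-amenability through the canonical measure-preserving action on $(X,\mu)$.

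For each $\gamma\in\Gamma$, define $T_\gamma\in\Aut(X,\mu)$ by $T_\gamma(x)=\gamma\cdot x$. Freeness makes $\gamma\mapsto T_\gamma$ injective and gives $d_S(x,T_\gamma(x))=|\gamma|_S$ almost everywhere, which is constant and hence integrable, so $T_\gamma\in[\Gamma]_1$. A direct computation produces
$$d_{\LL^1}(T_\gamma,T_\delta)=\int_X d_S(\gamma\cdot x,\delta\cdot x)\,d\mu(x)=|\gamma^{-1}\delta|_S,$$
identifying $(\Gamma,d_S)$ isometrically with a $1$-discrete, hence closed, subgroup of $([\Gamma]_1,d_{\LL^1})$. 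Since the embedding is a group homomorphism, commutators go to commutators, so $[\Gamma,\Gamma]$ appears as a closed discrete subgroup of $D([\Gamma]_1)\subset\overline{D([\Gamma]_1)}$. Because $\Gamma/[\Gamma,\Gamma]$ is abelian, non-amenability of $\Gamma$ forces $[\Gamma,\Gamma]$ to be non-amenable as well.

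To finish, I would exploit the canonical measure-preserving action on $(X,\mu)$. Since $[\Gamma]_1$ and $\overline{D([\Gamma]_1)}$ both lie inside $[\Gamma]$, they act on $(X,\mu)$ by Borel automorphisms with countable orbits, and their orbit equivalence relations are countable and Borel. The $[\Gamma]_1$-orbit equivalence relation coincides with the $\Gamma$-orbit equivalence relation, while the $\overline{D([\Gamma]_1)}$-orbit equivalence relation contains the $[\Gamma,\Gamma]$-orbit equivalence relation. By Connes--Feldman--Weiss, a free measure-preserving action of a non-amenable countable group produces a non-hyperfinite orbit equivalence relation, so both the $\Gamma$- and the $[\Gamma,\Gamma]$-orbit relations are non-hyperfinite. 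On the other hand, an amenable Polish group acting in a measure-preserving way on a probability space with countable orbits necessarily produces a hyperfinite orbit equivalence relation, and any sub-equivalence relation of a hyperfinite relation is itself hyperfinite. Both amenability assumptions therefore lead to a contradiction.

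The principal obstacle is that amenability does not in general transfer from a Polish group to its closed subgroups: for instance, $F_2$ sits as a closed discrete subgroup in the amenable Polish group $U(\ell^2)$ endowed with the strong operator topology. The argument has to go through the measure-preserving action rather than through pure subgroup considerations, and verifying the ``amenable Polish group acting measure-preservingly with countable orbits $\Rightarrow$ hyperfinite orbit equivalence relation'' step in this precise setting is the technical heart of the proof.
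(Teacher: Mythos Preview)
Your argument has a genuine gap at precisely the point you flag: the implication ``amenable Polish group acting measure-preservingly with countable orbits $\Rightarrow$ hyperfinite orbit equivalence relation'' is asserted but never established, and it is not a result one can simply quote. Connes--Feldman--Weiss treats countable discrete groups and Zimmer's framework treats locally compact ones; neither applies directly to $[\Gamma]_1$ with its $\LL^1$ topology. Without this step the proof does not close, and your (correct) isometric embedding of $\Gamma$ and $[\Gamma,\Gamma]$ as closed discrete subgroups ends up doing no work, as you yourself observe.

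The paper's argument is much shorter and avoids this difficulty. It uses two facts: first, $[\Gamma]_1$ is dense in the full group $[\mathcal R_\Gamma]$ for the \emph{uniform} topology (Theorem~\ref{thm: L1 is dense}), and since $\mathcal R_\Gamma$ is aperiodic the full group $[\mathcal R_\Gamma]$ is perfect, so $\overline{D([\Gamma]_1)}$ is dense there as well; second, $[\mathcal R_\Gamma]$ with the uniform topology is non-amenable whenever $\Gamma$ is non-amenable and acts freely. Since a Hausdorff topological group containing a continuous dense image of an amenable group is itself amenable, amenability of $[\Gamma]_1$ or of $\overline{D([\Gamma]_1)}$ in the (finer) $\LL^1$ topology would force $[\mathcal R_\Gamma]$ to be amenable, a contradiction.

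Notice that if you try to fill your gap in this concrete situation, the natural route is exactly this one: show your group has dense image in $[\mathcal R_\Gamma]$, deduce $[\mathcal R_\Gamma]$ is amenable, then use the equivalence ``$[\mathcal R]$ amenable $\Leftrightarrow$ $\mathcal R$ hyperfinite'' to reach hyperfiniteness. So your proposed approach, once completed, collapses to the paper's argument with an unnecessary detour through Connes--Feldman--Weiss.
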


In an upcoming paper, we show that the converse of the above result holds, and we generalize Theorem \ref{thmintro:topological finite generate} by showing that given a free ergodic action of a finitely generated group, the action has finite Rokhlin entropy if and only if the derived $\LL^1$ full group has finite topological rank. 

Finally, we should stress out that $\LL^1$ full groups are \textit{not} invariants of $\LL^1$ orbit equivalence a priori but only of $\LL^\infty$ orbit equivalence (see Corollary \ref{cor: L1 OE but not same L1 full group}). I do not know wether they are \textit{complete} invariants of $\LL^\infty$ orbit equivalence: if $[\Gamma]_1=[\Lambda]_1$, does it follow that $\Gamma$ and $\Lambda$ are $\LL^\infty$ orbit equivalent? 

\subsection{Outline}

This paper is organised as follows. Section \ref{sec:prelim} begins with some general facts from ergodic theory. We then recall the definition of graphings, which provide a natural setup for $\LL^1$ full groups. The section ends with a few basic facts from entropy theory. 

In section \ref{sec:L1 full groups of graphings} we give the definition of the $\LL^1$ full group of a graphing and establish some basic properties such as stability under taking induced transformations. We show that the closure of their derived group is topologically generated by involutions and hence connected. We then establish that $\LL^1$ full groups of ergodic graphings satisfy a reconstruction theorem: any isomorphism between them arises as the conjugacy by a measure preserving transformation.
We also use involutions to prove that the only closed normal subgroups of the closure of their derived group are the obvious ones when the graphing is aperiodic (Theorem \ref{thm: closed normal subgroups}). From this we conclude that the graphing is ergodic if and only if the closure of the derived group of its $\LL^1$ full group is topologically simple. Finally, we relate topological full groups to $\LL^1$ full groups in terms of density. 

 Section \ref{sec: L1 full groups of Z actions} contains the main results on $\LL^1$ full groups of $\Z$-actions. We first prove Theorem \ref{thmi:l1fullgroup is complete invariant of flip conjugacy} and then continue by defining the index map and studying its behaviour with respect to induced transformations. We then study involutions more closely by characterizing the Borel sets which arise as supports of involutions (and more generally those which arise as supports of $n$-\textit{cycles}). After that, we show that  the $\LL^1$ full group of $T\in\Aut(X,\mu)$ is generated by transformation induced by $T$ together with periodic transformations, using the work of Belinskaya. We then show that $\overline{D([T]_1)}$ is never simple. Finally, we prove Theorem \ref{thmintro:L1 full group ergodic Z action is amenable} and Theorem \ref{thmintro:topological finite generate}, using the fact that the topological full group is dense in the $\LL^1$ full group. 

In section \ref{sec: further} we first prove Theorem \ref{thmi:fg non amenable}. We then show that $\LL^1$ full groups are not stable under ‘‘$\LL^1$ cutting and pasting''. After that we make a few observations about $\LL^p$ full groups which are defined analogously and we conclude with some more questions.

%

%
%
%
%
%
%

\section{Preliminaries}\label{sec:prelim}

\subsection{Basic ergodic theory}

Let $(X,\mu)$ be a standard probability space, i.e. a standard Borel space equipped with a Borel non-atomic probability measure $\mu$. All such spaces are isomorphic to the interval $[0,1]$ equipped with the Lebesgue measure, in particular they admit a Borel linear order.

We will always work up to measure zero, e.g. for two Borel sets $A$ and $B$ if we write $A\subseteq B$ we mean $\mu(B\setminus A)=0$. For us a \textbf{partition} of a Borel set $A\subseteq X$ will be a countable family of disjoint Borel subsets of $A$ whose union is equal to $A$ up to measure zero. Note that some of these sets may have measure zero or even be empty. 

A Borel bijection $T:X\to X$ is called a \textbf{measure-preserving transformation} of $(X,\mu)$ if for all Borel $A\subseteq X$, one has $\mu(A)=\mu(T\inv (A))$. We denote by $\Aut(X,\mu)$ the group of all measure-preserving transformations, two such transformations being identified if they coincide up to measure zero. An important fact to keep in mind is that every Borel bijection between full measure Borel subsets of $X$ which is measure-preserving can be seen as an element of $\Aut(X,\mu)$: there is a measure-preserving transformation $T'$ of $(X,\mu)$ such that $T(x)=T'(x)$ for almost all $x\in X$.

\begin{df}A measure-preserving transformation $T$ is \textbf{ergodic} if every Borel set which is a union of $T$-orbits has measure $0$ or $1$. It is \textbf{aperiodic} if all its orbits are infinite. 
\end{df}

A fundamental construction for us will be that of the induced transformation: given $T\in\Aut(X,\mu)$ and $A\subseteq X$ of positive measure, the Poincaré recurrence theorem ensures us that for almost all $x\in A$ there are infinitely many $n\in\N^*$ such that $T^n(x)$. We let $n_{A,T}(x)$ denote the smallest such $n$, also called the \textbf{return time} to $A$. When the transformation $T$ we consider is clear from the context, we will denote the return time simply by $n_A$. 

The transformation $T_A\in\Aut(X,\mu)$ \textbf{induced} by $T$ on $A$ by for all $x\in X$,
$$T_A(x)=\left\{\begin{array}{cc}T^{n_{A,T}}(x) & \text{if }x\in A \\x & \text{else.}\end{array}\right.$$
It can easily be checked that $T_A$ is indeed a measure-preserving transformation, and that moreover for all $x\in A$ the $T_A$-orbit of $x$ is the intersection of the $T$-orbit of $x$ with $A$. Also note that $(T_A)\inv=(T\inv)_A$

The following lemma is a direct consequence of the fact that the orbits of the restriction of  $T_A$ to $A$ are precisely the intersections of the $T$-orbits with $A$. 

\begin{lem}\label{lem: properties of induced equiv to that of T}
Let $S\in\Aut(X,\mu)$ be a measure-preserving transformation and $A\subseteq X$ intersects every $S$-orbit, then $S$ is aperiodic if and only if $S_A$ is aperiodic when restricted to $A$, and $S$ is ergodic if and only if $S_A$ is ergodic when restricted to $A$.
\end{lem}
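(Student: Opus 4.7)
The plan is to derive both equivalences from the orbit identification stated just before the lemma: for every $x \in A$, the $S_A|_A$-orbit of $x$ equals the intersection of the $S$-orbit of $x$ with $A$. Once this is in hand, each equivalence reduces to combining Poincaré recurrence with a saturation argument.

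For the aperiodicity equivalence, I would first argue forward using Poincaré recurrence applied to $S$: for almost every $x \in A$ there exist infinitely many integers $n_1 < n_2 < \cdots$ with $S^{n_k}(x) \in A$. When the $S$-orbit of $x$ is infinite, these return points are pairwise distinct (otherwise some $S^{n_i - n_j}$ would fix $x$, contradicting aperiodicity), so the intersection of the $S$-orbit with $A$ is infinite, and hence so is the $S_A|_A$-orbit of $x$. Conversely, if $S$ is not aperiodic, the set $P$ of $S$-periodic points is $S$-invariant with positive measure; since $A$ meets almost every $S$-orbit, $P \cap A$ has positive measure, and for any $x \in P \cap A$ the intersection of the (finite) $S$-orbit of $x$ with $A$ is finite, so $S_A|_A$ is not aperiodic on $A$.

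For the ergodicity equivalence, the orbit identification gives that a Borel set $B \subseteq A$ is $S_A|_A$-invariant if and only if $B = \widetilde B \cap A$ modulo null, where $\widetilde B := \bigcup_{n \in \Z} S^n(B)$ is the $S$-saturation of $B$; indeed, being a union of $S_A|_A$-orbits amounts to being a union of sets of the form ($S$-orbit of some $x$) $\cap A$. If $S$ is ergodic, then $\widetilde B$ has measure $0$ or $1$, which forces $B$ to have measure $0$ or $\mu(A)$, so $S_A|_A$ is ergodic on $A$. Conversely, if $C \subseteq X$ is $S$-invariant with $0 < \mu(C) < 1$, then $A \cap C$ and $A \setminus C$ both have positive measure (since $A$ meets almost every orbit in $C$ and almost every orbit in $X \setminus C$), and $A \cap C$ is then a nontrivial $S_A|_A$-invariant subset of $A$.

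The only mildly delicate points are the appeal to Poincaré recurrence in the forward aperiodicity direction---needed to upgrade ``$x$ returns to $A$'' into ``$x$ returns to $A$ infinitely often'' so that infinitude of the $S$-orbit passes to infinitude of its trace on $A$---and the repeated use of the hypothesis that $A$ meets almost every $S$-orbit to transfer positive measure from $S$-invariant subsets of $X$ to their intersections with $A$. Both are standard and pose no real obstacle, so no single step stands out as the hard part.
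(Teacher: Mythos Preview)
Your proposal is correct and follows essentially the same route as the paper: both arguments rest entirely on the orbit identification ``for $x\in A$, the $S_A|_A$-orbit of $x$ is the intersection of the $S$-orbit of $x$ with $A$,'' from which aperiodicity and ergodicity transfer in the manner you describe. The paper merely states this as a one-line consequence and omits the details you have written out (Poincaré recurrence, saturation, and the positive-measure transfer via the hypothesis on $A$), so your write-up is a faithful expansion of what the paper leaves implicit.
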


\begin{df}\label{df: periodic}An element $T\in\Aut(X,\mu)$ is \textbf{periodic} if all its orbits are finite.
\end{df}

If $T$ is periodic, the Borel set of minimums of $T$-orbits (for a fixed Borel total order on $X$) intersects every $T$-orbit at exactly one point: it is a Borel \textbf{fundamental domain} for $T$. Such fundamental domains actually exist if and only if $T$ is periodic.

A nice way to build a new transformation from a periodic one is to compose it with a transformation supported on a fundamental domain. 

\begin{lem}\label{lem: induced from periodic}
Let $T$ be a periodic measure-preserving transformation, and let $A$ be a fundamental domain of $T$. Let $U$ be a measure-preserving transformation supported on $A$. Then $(UT)_A=U=(TU)_A$. 
\end{lem}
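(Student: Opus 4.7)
The plan is to verify both equalities by a direct orbit-by-orbit computation, exploiting the two structural facts: since $A$ is a fundamental domain for the periodic $T$, every $T$-orbit meets $A$ in exactly one point; and since $U$ is supported on $A$, it fixes $X\setminus A$ pointwise and permutes $A$ bijectively. On $X\setminus A$ both identities are trivial since $(UT)_A$ and $(TU)_A$ restrict to the identity there and so does $U$, so the content lies in the computation on $A$.

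First I would handle $(UT)_A=U$. Fix $x\in A$, and let $k\geq 1$ be the length of the $T$-orbit of $x$. Because $A$ is a fundamental domain, $T^i(x)\notin A$ for $1\leq i\leq k-1$ while $T^k(x)=x\in A$. Since $U$ fixes $X\setminus A$, a short induction on $i$ shows $(UT)^i(x)=T^i(x)$ for $1\leq i\leq k-1$: assuming it for $i<k-1$, one has $T^i(x)\notin A$, so $U$ fixes it, and hence $(UT)^{i+1}(x)=U(T(T^i(x)))=T^{i+1}(x)$, again outside $A$. At the final step $(UT)^k(x)=U(T(T^{k-1}(x)))=U(T^k(x))=U(x)\in A$. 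Therefore the first return to $A$ under $UT$ happens precisely at time $k$, giving $(UT)_A(x)=U(x)$.

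The argument for $(TU)_A=U$ is parallel, with the order of composition shuffled. For $x\in A$, let $m$ be the $T$-orbit length of $U(x)\in A$. One computes $(TU)(x)=T(U(x))$, and then $(TU)^i(x)=T^i(U(x))$ for $1\leq i\leq m-1$ by the same induction (these points lie outside $A$, where $U$ is the identity), while $(TU)^m(x)=T^m(U(x))=U(x)\in A$. So the first return to $A$ happens at time $m$, and $(TU)_A(x)=U(x)$.

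There is no real obstacle here; the only point to be careful about is orbits of length $k=1$, i.e. $T$-fixed points in $A$, for which the iteration is vacuous and one immediately gets $(UT)_A(x)=U(x)=(TU)_A(x)$ since $T$ acts trivially on the orbit $\{x\}$ and $U(x)\in A$. Putting the two cases together yields the claimed identities almost everywhere.
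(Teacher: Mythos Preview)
Your proof is correct and, for the equality $(UT)_A=U$, follows essentially the same direct orbit computation as the paper. For the second equality $(TU)_A=U$, the paper takes a slightly different shortcut: it applies the first result to $T^{-1}$ and $U^{-1}$ (observing that $A$ is also a fundamental domain for $T^{-1}$ and that $U^{-1}$ is supported on $A$) to obtain $(U^{-1}T^{-1})_A=U^{-1}$, and then takes inverses. Your parallel direct computation, tracking the $T$-period of $U(x)$ rather than of $x$, is equally valid and arguably more transparent. One small expository point: in your induction for $(UT)_A$, the relevant fact at step $i\to i+1$ is that $T^{i+1}(x)\notin A$ (so that $U$ fixes it), not that $T^i(x)\notin A$; your conclusion is correct, but the justification as written cites the wrong point.
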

\begin{proof}
Let $x\in A$, let $n\in\N^*$ be the least positive integer such that $T^n(x)=x$. Since $A$ is a fundamental domain for $T$, for all $i\in\{0,...,n-1\}$ we have $T^i(x)\not\in A$. Since $U(y)=y$ for all $y\not\in A$, we deduce by induction that  $(UT)^i(x)=T^i(x)\not\in A$ for all $i\in\{1,...,n-1\}$. In particular $(UT)^{n-1}(x)=T^{n-1}(x)$ so $(UT)^n(x)=UT^n(x)=U(x)\in A$, which proves that $(UT)_A=U$ as desired.

For the equality $(TU)_A=U$, one can run a similar argument or apply the previously obtained equality to $T\inv$ and $U\inv$ to get
$(U\inv T\inv)_A=U\inv$,
which by taking inverses yields $(TU)_A=U$.
\end{proof}

\begin{prop}\label{prop: building ergodic and aperiodic from periodic}
Let $T$ be a periodic measure-preserving transformation, and let $A$ be a fundamental domain of $T$. Let $U$ be a measure-preserving transformation supported on $A$. Then the following equivalences hold:
\begin{enumerate}[(1)]
\item The restriction of $U$ to $A$ is aperiodic $\Leftrightarrow$ $UT$ is aperiodic  $\Leftrightarrow$ $TU$ is aperiodic.
\item T restriction of $U$ to $A$ is ergodic  $\Leftrightarrow$ $UT$ is ergodic $\Leftrightarrow$ $TU$ is ergodic. 
\end{enumerate}
\end{prop}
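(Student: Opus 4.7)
The plan is to reduce both equivalences to a direct application of Lemma \ref{lem: properties of induced equiv to that of T}, using Lemma \ref{lem: induced from periodic} to identify the induced transformations.

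First I would verify that $A$ intersects every $UT$-orbit and every $TU$-orbit, so that Lemma \ref{lem: properties of induced equiv to that of T} applies. Since $U$ is supported on $A$, it fixes every point outside $A$, and therefore for $x \notin A$ one has $UT(x) = T(x)$ and $TU(x) = T(x)$. A straightforward induction then shows that as long as $T, T^2(x), \dots, T^{k-1}(x)$ all lie outside $A$, both $(UT)^i(x)$ and $(TU)^i(x)$ coincide with $T^i(x)$ for $i \leq k-1$. Because $A$ is a fundamental domain of the periodic transformation $T$, some iterate $T^k(x)$ must land in $A$, and then $(UT)^k(x) = U(T^k(x)) \in A$ (using that $U$ restricts to a measure-preserving transformation of $A$), and similarly $(TU)^k(x) = T^k(x) \in A$.

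Next, by Lemma \ref{lem: induced from periodic} we already have $(UT)_A = U = (TU)_A$. Restricted to $A$, both sides equal $U$ restricted to $A$. Therefore applying Lemma \ref{lem: properties of induced equiv to that of T} with $S = UT$ (respectively $S = TU$), we obtain that $UT$ is aperiodic if and only if $(UT)_A = U$ restricted to $A$ is aperiodic, and likewise $TU$ is aperiodic if and only if $U$ restricted to $A$ is aperiodic; this proves (1). The same argument applied to ergodicity yields (2).

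There is no real obstacle in this proof; the only care needed is in the routine verification that $A$ intersects every $UT$- and $TU$-orbit, which is the hypothesis required to invoke Lemma \ref{lem: properties of induced equiv to that of T}. Once this is done, the equivalences are immediate from Lemmas \ref{lem: properties of induced equiv to that of T} and \ref{lem: induced from periodic}.
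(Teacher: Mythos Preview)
Your approach is exactly the one in the paper: the paper's proof simply notes that the argument of Lemma~\ref{lem: induced from periodic} shows $A$ meets every $UT$- and $TU$-orbit, and then invokes Lemmas~\ref{lem: induced from periodic} and~\ref{lem: properties of induced equiv to that of T}. One small slip: the claim ``for $x\notin A$ one has $UT(x)=T(x)$'' is not correct as stated (it requires $T(x)\notin A$, not $x\notin A$), but your subsequent induction hypothesis is formulated correctly and the conclusion is unaffected.
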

\begin{proof}
It follows from the proof of Lemma \ref{lem: induced from periodic} that $A$ intersects every $TU$ orbit and every $UT$ orbit. The conclusion then follows from Lemmas \ref{lem: induced from periodic} and  \ref{lem: properties of induced equiv to that of T}.
\end{proof}

Let us observe that one can go the other way around in the above constructions: starting from an aperiodic transformation $T$ and a Borel set $A$ which intersects almost every $T$-orbit, the map $T_A\inv T$ is periodic with $A$ as a Borel fundamental domain and same return time to $A$ as $T$. This fact is the key idea for the following result of Belinskaya which we won't use but which is  an important step towards to the proof of her Theorem   \ref{thm: belink L1OE implies flip conj}. 

\begin{prop}[{Belinskaya, \cite[Thm. 3.6]{belinskaya1968partitions}}]
Let $S$ and $T$ be two measure-preserving transformations, let $A$ be a Borel subset which intersects every $S$ and every $T$-orbit. Suppose that $S_A=T_A$ and that the return times of $S$ and $T$ to $A$ are the same, i.e. $n_{A,S}=n_{A,T}$. Then $S$ and $T$ are conjugate. 
\end{prop}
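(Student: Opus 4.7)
The natural approach is via the Kakutani skyscraper (tower) representation. Both $S$ and $T$ give rise to the same ``tower data'' over $A$, namely the base transformation $S_A=T_A$ on $A$ together with the height function $n_{A,S}=n_{A,T}$, so one expects the conjugacy to come from identifying the two towers level-by-level.

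Concretely, I would proceed as follows. For each $n\geq 1$, set $A_n=\{x\in A:n_{A,S}(x)=n\}=\{x\in A:n_{A,T}(x)=n\}$. Since $A$ meets every $S$-orbit (and every $T$-orbit) by assumption, the Poincaré-style partition
\[
X=\bigsqcup_{n\geq 1}\bigsqcup_{k=0}^{n-1}S^k(A_n)
\]
holds modulo null sets, and every $y\in X$ has a unique representation $y=S^k(x)$ with $x\in A$ and $0\leq k<n_{A,S}(x)$; the analogous statement holds for $T$ using the equality of return times. Define
\[
\Phi:X\to X,\qquad \Phi(S^k(x))=T^k(x)\quad\text{for }x\in A,\ 0\leq k<n_{A,S}(x),
\]
and the obvious inverse $\Psi(T^k(x))=S^k(x)$. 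Each ``level'' $S^k(A_n)$ has the same measure as $A_n$ (because $S\in\Aut(X,\mu)$) and is sent bijectively onto $T^k(A_n)$, which also has measure $\mu(A_n)$; piecing these bijections together shows that $\Phi$ is a well-defined measurable measure-preserving bijection, i.e.\ an element of $\Aut(X,\mu)$.

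It remains to check $\Phi\circ S=T\circ\Phi$. Take $y=S^k(x)$ with $x\in A$ and $0\leq k<n_{A,S}(x)$. If $k+1<n_{A,S}(x)$, then $S(y)=S^{k+1}(x)$ is still strictly below the top of the tower, and directly
\[
\Phi(S(y))=T^{k+1}(x)=T(T^k(x))=T(\Phi(y)).
\]
If $k+1=n_{A,S}(x)$, then $S(y)=S^{n_{A,S}(x)}(x)=S_A(x)\in A$, so $\Phi(S(y))=S_A(x)$; on the other hand $T(\Phi(y))=T^{n_{A,S}(x)}(x)=T^{n_{A,T}(x)}(x)=T_A(x)$. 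The hypotheses $n_{A,S}=n_{A,T}$ and $S_A=T_A$ yield equality. Hence $\Phi S\Phi^{-1}=T$, which is the required conjugacy.

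The only potentially delicate point is the measurability of $\Phi$ and the verification that it is genuinely measure-preserving, but both reduce to the fact that the sets $S^k(A_n)$ (and likewise for $T$) form a Borel partition of $X$ modulo null sets and that $S$ and $T$ preserve $\mu$; no subtle argument is needed, since by construction $\Phi$ restricts to a Borel measure-preserving bijection between $S^k(A_n)$ and $T^k(A_n)$ for each pair $(n,k)$. I expect no substantial obstacle beyond carefully writing down this level-by-level identification.
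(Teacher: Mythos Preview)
Your proof is correct and is the standard Kakutani-tower argument for this result. The paper does not actually give its own proof of this proposition (it is stated with a citation to Belinskaya), but the sentence preceding it points to exactly the idea you used: that $T_A^{-1}T$ is periodic with $A$ as a fundamental domain and with the same return-time function as $T$, which amounts to saying that $T$ is completely determined by the tower data $(T_A,n_{A,T})$ over $A$; your level-by-level map $\Phi(S^k x)=T^k x$ is precisely the conjugacy this observation suggests.
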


We will use the following well-known proposition several times. A proof is provided for completeness.

\begin{prop}\label{prop: small rohlin tower}
Let $T\in\Aut(X,\mu)$. There is a partition $(A_1,A_2,B_1,B_2,B_3)$ of $\supp T$ such that $T(A_1)=A_2$, $T(B_1)=B_2$ and $T(B_2)=B_3$.
\end{prop}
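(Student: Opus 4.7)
The plan is to split $\supp T = P \sqcup A$ into the periodic part $P$ (points with finite $T$-orbit) and the aperiodic part $A$ (points with infinite orbit), both of which are $T$-invariant, and to handle each separately. The combinatorial input everywhere is that any integer $n \geq 2$ can be written as $2a + 3b$ with $a, b \geq 0$, so any cyclic orbit of length $\geq 2$ and any $\Z$-orbit can be partitioned into consecutive ``$2$-blocks'' $\{y, Ty\}$ (which will contribute to $A_1, A_2$) and ``$3$-blocks'' $\{y, Ty, T^2 y\}$ (which will contribute to $B_1, B_2, B_3$).

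For the periodic part, I would decompose $P = \bigsqcup_{n \geq 2} P_n$ by minimal period, and using the Borel linear order on $X$ pick a Borel fundamental domain $D_n$ for $T|_{P_n}$. Choosing $a_n, b_n \geq 0$ with $n = 2 a_n + 3 b_n$, I partition each orbit $\{x, Tx, \ldots, T^{n-1} x\}$ for $x \in D_n$ into $a_n$ initial $2$-blocks followed by $b_n$ $3$-blocks; taking the union over $x \in D_n$ and $n$ assembles the required contribution to the partition of $P$.

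For the aperiodic part, the heart of the argument is to produce a Borel set $F \subseteq A$ with $F \cap TF = \emptyset$ that meets every $T$-orbit of $A$ modulo null. Granting this, the Kakutani skyscraper decomposition $A = \bigsqcup_{n \geq 2} \bigsqcup_{i=0}^{n-1} T^i F_n$ with $F_n = \{x \in F : n_F(x) = n\}$ has all columns of height $\geq 2$, so each column splits into $2$-blocks and $3$-blocks exactly as in the periodic case, and the various $T^i F_n$ get labelled accordingly.

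To construct $F$, I would iterate Rokhlin's lemma. At stage $k$, given a $T$-invariant Borel set $A^{(k)} \subseteq A$ (with $A^{(0)} = A$) on which $T$ is aperiodic, Rokhlin's lemma with $N = 2$ and $\varepsilon = 1/2$ yields $F^{(k+1)} \subseteq A^{(k)}$ with $F^{(k+1)} \cap TF^{(k+1)} = \emptyset$ and $\mu(F^{(k+1)} \cup TF^{(k+1)}) \geq \mu(A^{(k)})/2$; let $A^{(k+1)}$ be the union of $T$-orbits in $A^{(k)}$ entirely avoiding $F^{(k+1)} \cup TF^{(k+1)}$, which is $T$-invariant and of measure at most $\mu(A^{(k)})/2$. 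Setting $F = \bigsqcup_k F^{(k)}$, the nested structure of the $A^{(k)}$ ensures $F \cap TF = \emptyset$, while $\mu(A^{(k)}) \to 0$ guarantees that $F$ meets almost every orbit. The main obstacle I anticipate is precisely this measurable construction of $F$: one has to argue simultaneously that the iterated Rokhlin base avoids $TF$-collisions across stages and exhausts the aperiodic part; once this is in hand the $2a + 3b$ bookkeeping on each column is routine.
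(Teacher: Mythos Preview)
Your argument is correct. The iterated Rokhlin construction of $F$ works as you describe: the sets $A^{(k)}$ are $T$-invariant and nested, so for $j<k$ one has $F^{(k)}\subseteq A^{(j)}$ with $A^{(j)}$ disjoint from $F^{(j)}\cup TF^{(j)}$, which kills all cross-terms $F^{(j)}\cap TF^{(k)}$; and the halving estimate $\mu(A^{(k+1)})\leq\mu(A^{(k)})/2$ forces $F$ to meet almost every aperiodic orbit. The Kakutani tower over $F$ then has all columns of height $\geq 2$, and the $2a+3b$ bookkeeping handles each column and each finite periodic orbit uniformly.

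The paper, however, proceeds quite differently and more economically. It avoids the periodic/aperiodic split and Rokhlin's lemma altogether: starting from a countable partition $(X_i)$ of $\supp T$ with $T(X_i)\cap X_i=\emptyset$, it greedily builds $Y=\bigcup_i Y_i$ by adjoining at each stage those $x\in X_{i+1}$ with $T(x),T^{-1}(x)\notin Y_i$. This single set $Y$ satisfies $Y\cap T(Y)=\emptyset$ and $\supp T=Y\cup T(Y)\cup T^2(Y)$ directly, after which one simply takes $A_1=\{x\in Y: T^2(x)\in Y\}$ and $B_1=Y\setminus A_1$. The gain is twofold: the argument is shorter, and it is purely Borel (it never uses that $T$ preserves $\mu$), a point the paper explicitly remarks on. Your route, by contrast, genuinely needs the measure via Rokhlin's lemma, so it does not transfer to the Borel category.
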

\begin{proof}
Since $X$ is a standard Borel space, there is a countable partition $(X_i)_{i\in\N}$ of $\supp T$ into Borel sets such that for all $i\in\N$, $T(X_i)$ is disjoint from $X_i$ (see for instance \cite[Lem. 5.1]{Eisenmann:2014oq}).

We then define by recurrence an increasing family of sets $(Y_i)_{i\in\N}$ by letting $Y_0=X_0$ and then for all $i\in\N$ $$Y_{i+1}=Y_i\sqcup \{x\in X_{i+1}: T(x)\not\in Y_i\text{ and } T\inv(x)\not\in Y_i\}.$$

Let $Y=\bigcup_{n\in\N} Y_n$, then by construction $Y$ and $T(Y)$ are disjoint. Moreover, we have $\supp T=T\inv(Y)\cup Y\cup T(Y)$. Indeed, if $x\in \supp T \setminus Y$ there is $i\in\N$ such that $x\in X_{i+1}$, and since $x\not\in Y_{i+1}$ either $T(x)\in Y_i$ or $T\inv(x)\in Y_i$ so that in any case $x\in T\inv(Y)\cup T(Y)$. 

Since $\supp T$ is $T$-invariant, we conclude that $\supp T=Y\cup T(Y)\cup T^2(Y)$. 
Now let $A_1=\{x\in Y: T^2(x)\in Y\}$ and $B_1=\{Y\setminus A_1\}$. Then since $Y$ and $T(Y)$ are disjoint, the sets $A_1, A_2:=T(A_1),B_1,B_2:=T(B_1)$ are disjoint. Let $B_3:=T^2(B_1)$. By the definition of $B_1$ the set $B_3$ is disjoint from $Y$. Moreover since $T(B_1)$ is disjoint from $Y$ we also have $T^2(B_1)$ disjoint from $T(Y)$ so that $B_3$ is actually disjoint from $Y\sqcup T(Y)$. We conclude that $A_1, A_2, B_1, B_2, B_3$ are as desired.
\end{proof}

\begin{rmq}Note that the proof does not use that $T$ is measure-preserving and actually works in the purely Borel context.
\end{rmq}

\subsection{Graphings, equivalence relations and full groups}\label{sec:graphings}

If $(X,\mu)$ is a standard probability space, and $A,B$ are Borel subsets of $X$, a \textbf{partial isomorphism} of $(X,\mu)$ of \textbf{domain} $A$ and \textbf{range} $B$ is a Borel bijection $f: A\to B$ which is measure-preserving for the measures induced by $\mu$ on $A$ and $B$ respectively. We denote by $\dom f=A$ its domain, and by $\rng f=B$ its range. Given two partial isomorphisms $\varphi_1: A\to B$ and $\varphi_2: C\to D$, we define their composition $\varphi_2\circ \varphi_1$ as the map $\varphi_1\inv(B\cap C)\to \varphi_2(B\cap C)$ given by $\varphi_2\circ\varphi_1(x)=\varphi_1\varphi_2(x)$. We also define the \textbf{inverse} $\varphi\inv$ of a partial isomorphism $\varphi: A\to B$ by $\varphi\inv: B\to A$ and the equation $\varphi\inv\circ \varphi=\mathrm{id}_A$.

By definition a \textbf{graphing} is a countable set of partial isomorphisms. Every graphing $\Phi$ \textbf{generates} a \textbf{measure-preserving equivalence relation} $\mathcal R_\Phi$, defined to be the smallest equivalence relation containing $(x,\varphi(x))$ for every $\varphi\in \Phi$ and $x\in\dom\varphi$. Given a graphing $\Phi$, a $\Phi$\textbf{-word} is a composition of finitely many elements of $\Phi$ or their inverses. Obviously $(x,y)\in\mathcal R_\Phi$ iff and only if there exists a $\Phi$-word $w$ such that $y=w(x)$. Given a Borel set $A$, its $\Phi$\textbf{-closure} is the union of all $w(A)$ where $w$ is a $\Phi$-word. Say that $\Phi$ is \textbf{ergodic} when the $\Phi$-closure of any non-null Borel set $A$ has full measure. 

The \textbf{full group} of a measure-preserving equivalence relation $\mathcal R$ is the group $[\mathcal R]$ of automorphisms of $(X,\mu)$ which induce permutations in the $\mathcal R$-classes, that is
$$[\mathcal R]=\{\varphi\in\Aut(X,\mu): \forall x\in X, \varphi(x)\,\mathcal R\, x\}.$$
It is a separable group when equipped with the complete metric $d_u$ defined by $$d_u(T,U)=\mu(\{x\in X: T(x)\neq U(x)\}.$$ The metric $d_u$ is called the \textbf{uniform metric} and the topology it induces is called the uniform topology. One also defines the \textbf{pseudo full group} of $\mathcal R$, denoted by $[[\mathcal R]]$, which consists of all partial isomorphisms $\varphi$ such that $\varphi(x)\, \mathcal R \, x$ for all $x\in \dom\varphi$.

Given a measure-preserving equivalence relation $\mathcal R$, we denote by $M_\mathcal R$ the $\sigma$-algebra of all Borel sets $A$ such that $T(A)=A$ for all $T\in[\mathcal R]$. Consider the Hilbert space of $\mathcal R$-invariant functions, i.e. the set of all $f\in\LL^2(X,\mu)$ such that for any $T\in [\mathcal R]$, one has $f\circ T=f$ a.e. We denote this closed Hilbert space by $\LL^2_{\mathcal R}(X,\mu)$ and remark that it consists of all the $M_{\mathcal R}$-measurable elements of $\LL^2(X,\mu)$. By definition $\mathcal R$ is \textbf{ergodic} if $\LL^2_{\mathcal R}(X,\mu)$ only consists of constant functions. A graphing $\Phi$ is ergodic if and only if $\mathcal R_\Phi$ is.

The orthogonal projection $\mathbb E_\mathcal R$ from the Hilbert space $\LL^2(X)$ onto the closed subspace $\LL^2_{\mathcal R}(X)$ satisfies the following equality, which defines it uniquely: for any $f\in \LL^2(X,\mu)$ and $g\in \LL^2_{\mathcal R}(X,\mu)$,
$$\int_X fg=\int_X\mathbb E_{\mathcal R}(f)g.$$
$\mathbb E_{\mathcal R}$ is called a \textbf{conditional expectation}. When $A$ is a subset of $X$, its characteristic function is an element of $\LL^2(X,\mu)$, and we call $\mathbb E_{\mathcal R}(\chi_A)$ the $\mathcal R$-\textbf{conditional measure} of $A$, denoted by $\mu_{\mathcal R}(A)$ or $\mu_\Phi(A)$ when $\mathcal R=\mathcal R_\Phi$. Often the equivalence relation $\mathcal R$ will be clear from the context and we will simply call  $\mu_{\mathcal R}(A)$ the conditional measure of $A$.  Because $\mathbb E_{\mathcal R}$ is a contraction for the $\LL^\infty$ norm, $\mu_{\mathcal R}(A)$ takes values in $[0,1]$. For those who are familiar with the ergodic decomposition, $\mu_{\mathcal R}(A)$ is just the function $x\mapsto \mu_x(A)$ where $(\mu_x)_{x\in X}$ is the ergodic decomposition of $\mathcal R$. Given $\varphi\in[[\mathcal R]]$, we have $\mu_{\mathcal R}(\dom\varphi)=\mu_{\mathcal R}(\rng\varphi)$. The following well-known result is a kind of converse (see \cite[Sec. 2.1]{lm14nonerg}). 

\begin{prop}[Dye]\label{transitive}Let $\mathcal R$ be a measure-preserving equivalence relation. Then if two sets $A$ and $B$ have the same $\mathcal R$-conditional measure, there exists $\varphi\in[[\mathcal R]]$ whose domain is $A$ and whose range is $B$.
\end{prop}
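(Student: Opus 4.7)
My plan is a greedy exhaustion. Since $\mathcal R$ is a countable Borel equivalence relation (it is generated by a graphing, which furnishes countably many partial isomorphisms; one can also invoke Feldman--Moore to obtain generators in $[\mathcal R]$), I fix a countable family $(\psi_n)_{n\in\N}$ in $[[\mathcal R]]$ whose graphs cover $\mathcal R$. I then define recursively
$$D_n = \bigl\{x\in (A\cap\dom\psi_n)\setminus \bigsqcup_{i<n}D_i :\ \psi_n(x)\in B\setminus \bigsqcup_{i<n}\rng\varphi_i\bigr\},\qquad \varphi_n=\psi_n|_{D_n},$$
and set $\varphi:=\bigsqcup_n\varphi_n$. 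The sets $D_n$ are pairwise disjoint and so are the $\rng\varphi_n$, so $\varphi$ is a well-defined partial isomorphism in $[[\mathcal R]]$ with $\dom\varphi\subseteq A$ and $\rng\varphi\subseteq B$. It remains to show that $A_\infty:=A\setminus\dom\varphi$ and $B_\infty:=B\setminus\rng\varphi$ are both null.

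The first step is to observe the key property of the construction: no $x\in A_\infty$ is $\mathcal R$-equivalent to any $y\in B_\infty$. Indeed, any such equivalence would be witnessed by some $\psi_n$, and then the conjunction $x\in A_\infty\cap\dom\psi_n$ with $\psi_n(x)=y\in B_\infty$ would force $x\in D_n$, a contradiction. Consequently the $\mathcal R$-saturations $[A_\infty]_\mathcal R$ and $[B_\infty]_\mathcal R$ are disjoint.

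Next I would use the hypothesis to compare conditional measures. Any element of $[\mathcal R]$ pointwise fixes the $\sigma$-algebra $M_\mathcal R$, hence commutes with the conditional expectation $\mathbb E_\mathcal R$; extending $\varphi$ to an automorphism of $[\mathcal R]$ therefore yields $\mu_\mathcal R(\dom\varphi)=\mu_\mathcal R(\rng\varphi)$. Subtracting this equality from the hypothesis $\mu_\mathcal R(A)=\mu_\mathcal R(B)$ gives $\mu_\mathcal R(A_\infty)=\mu_\mathcal R(B_\infty)$. On the other hand, since $\chi_{A_\infty}\leq\chi_{[A_\infty]_\mathcal R}$ and the latter is $M_\mathcal R$-measurable, monotonicity of $\mathbb E_\mathcal R$ yields $\mu_\mathcal R(A_\infty)\leq\chi_{[A_\infty]_\mathcal R}$, and analogously $\mu_\mathcal R(B_\infty)\leq\chi_{[B_\infty]_\mathcal R}$. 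The disjointness of the two saturations then forces the common nonnegative function $\mu_\mathcal R(A_\infty)=\mu_\mathcal R(B_\infty)$ to vanish everywhere, and integrating gives $\mu(A_\infty)=\mu(B_\infty)=0$, i.e.\ $\dom\varphi=A$ and $\rng\varphi=B$.

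The main (mild) obstacle is the conditional-measure bookkeeping at the end: one needs to piece together the facts that $[[\mathcal R]]$-elements preserve $\mu_\mathcal R$, that $\mu_\mathcal R(C)$ is supported on the $\mathcal R$-saturation of $C$, and that two equal nonnegative functions dominated respectively by indicators of disjoint sets must both be zero. The greedy exhaustion and the saturation argument are essentially mechanical once one commits to enumerating $\mathcal R$ by a countable family of partial isomorphisms.
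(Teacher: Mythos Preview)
Your proof is correct and is the standard exhaustion argument going back to Dye. The paper does not actually prove this proposition: it merely cites \cite[Sec.~2.1]{lm14nonerg}, so there is no in-paper proof to compare against. Your greedy construction and saturation argument are exactly what one finds in the cited reference.

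One minor remark: you justify $\mu_{\mathcal R}(\dom\varphi)=\mu_{\mathcal R}(\rng\varphi)$ by ``extending $\varphi$ to an automorphism of $[\mathcal R]$'', but this extension is not needed (and arguing it cleanly without circularity requires a little care). The paper already records this identity for arbitrary $\varphi\in[[\mathcal R]]$ in the paragraph immediately preceding the proposition, and it follows directly by decomposing $\dom\varphi$ into pieces on which $\varphi$ agrees with elements of a countable group generating $\mathcal R$ (via Feldman--Moore), each of which commutes with $\mathbb E_{\mathcal R}$. With that in hand, your final step---two equal nonnegative functions supported on disjoint $\mathcal R$-saturations must vanish---is clean and complete.
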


Note that in the ergodic case, the previous proposition implies that any two sets of the same measure can be mapped to each other by an element of the pseudo full group of $\mathcal R$. A measure-preserving equivalence relation is \textbf{aperiodic} if almost all its equivalence classes are infinite. The following proposition is well-known, see \cite[Sec. 2.1]{lm14nonerg} for a proof. 

\begin{prop}[\cite{MR0131516}, Maharam's lemma]\label{maha}
A measure-preserving equivalence relation $\mathcal R$ is aperiodic iff for any $A\subseteq X$, and for any $M_\mathcal R$-measurable function $f$ such that $0\leq f\leq \mathbb \mu_{\mathcal R}(A)$, there exists $B\subseteq A$ such that the $\mathcal R$-conditional measure of $B$ equals $f$.
\end{prop}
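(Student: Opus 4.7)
The plan is to prove the two directions separately; the converse is a direct counterexample, while the forward direction proceeds by Zorn's lemma on the family $\mathcal F = \{B\subseteq A : \mu_\mathcal R(B)\le f\text{ a.e.}\}$ followed by a saturation argument using aperiodicity.

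For the converse direction ($\Leftarrow$), suppose $\mathcal R$ is not aperiodic. Then the set of points lying in a finite $\mathcal R$-class has positive measure, and partitioning by class size yields $n\ge 1$ and an $\mathcal R$-invariant Borel set $Y$ of positive measure on which every class has exactly $n$ elements. Using the Borel linear order on $X$, pick a Borel fundamental domain $A\subseteq Y$ for $\mathcal R$ restricted to $Y$. Averaging $\chi_A$ over each size-$n$ class gives $\mu_\mathcal R(A)=\tfrac{1}{n}\chi_Y$, so $f:=\tfrac{1}{2n}\chi_Y$ is $M_\mathcal R$-measurable with $0\le f\le\mu_\mathcal R(A)$; but any $B\subseteq A$ meets each class of $Y$ in at most one point, forcing $\mu_\mathcal R(B)$ to be $\{0,1/n\}$-valued on $Y$, and so $\mu_\mathcal R(B)$ cannot equal $f$.

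For the forward direction ($\Rightarrow$), assume $\mathcal R$ is aperiodic and order $\mathcal F$ by essential inclusion. Any chain in $\mathcal F$ admits a countable increasing cofinal subchain $(B_n)$, and by monotone convergence of conditional expectation the union $B_\infty=\bigcup_n B_n$ satisfies $\mu_\mathcal R(B_\infty)=\sup_n\mu_\mathcal R(B_n)\le f$, hence $B_\infty\in\mathcal F$ is an upper bound. Zorn's lemma then yields a maximal $B^\ast\in\mathcal F$.

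The main obstacle is showing $\mu_\mathcal R(B^\ast)=f$ a.e.; if not, the $\mathcal R$-invariant set $C=\{\mu_\mathcal R(B^\ast)<f\}$ has positive measure, the $M_\mathcal R$-measurable function $g:=f-\mu_\mathcal R(B^\ast)$ is strictly positive on $C$, and $(A\setminus B^\ast)\cap C$ has positive conditional measure throughout $C$. To contradict maximality one seeks $D\subseteq (A\setminus B^\ast)\cap C$ with $\mu(D)>0$ and $\mu_\mathcal R(D)\le g$ a.e., since then $B^\ast\cup D\in\mathcal F$ strictly contains $B^\ast$. To build such $D$, fix $k$ so that $E_k:=C\cap\{g\ge 1/k\}$ has positive measure, pick an aperiodic transformation $T\in[\mathcal R]$ (whose existence follows from aperiodicity of $\mathcal R$), and iterate Proposition \ref{prop: small rohlin tower} on $T$, passing to induced transformations on the resulting pieces, until $X$ is partitioned into finitely many Borel pieces each of conditional measure at most $1/k$. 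Some such piece $P$ intersects $(A\setminus B^\ast)\cap E_k$ in positive measure; taking $D=P\cap(A\setminus B^\ast)\cap E_k\subseteq E_k$, the $\mathcal R$-invariance of $E_k$ implies $\mu_\mathcal R(D)$ is supported on $E_k$ and bounded by $\mu_\mathcal R(P)\le 1/k\le g$, yielding the desired contradiction.
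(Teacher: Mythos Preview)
The paper does not prove this proposition: it is quoted as a well-known result and the reader is referred to \cite[Sec.~2.1]{lm14nonerg} for a proof. So there is no in-paper argument to compare against; I will simply assess your proof on its merits.

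Your argument is correct and follows the classical ``exhaustion/maximality plus small-pieces'' strategy. A couple of places deserve a little more care:

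\begin{itemize}
\item \emph{Chains and Zorn.} The claim that any chain in $\mathcal F$ has a countable cofinal subchain is true but needs the small observation that either the chain has a maximum (done), or else no element attains the supremal measure and any sequence with measures increasing to that supremum is cofinal. Alternatively, you can bypass Zorn entirely by the usual exhaustion: let $s=\sup\{\mu(B):B\in\mathcal F\}$, pick $B_n\in\mathcal F$ with $\mu(B_n)\to s$, replace $B_n$ by $B_1\cup\dots\cup B_n$ (still in $\mathcal F$ since $\mu_{\mathcal R}$ is monotone), and take $B^\ast=\bigcup_n B_n$.

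\item \emph{The iteration of Proposition~\ref{prop: small rohlin tower}.} Your phrase ``passing to induced transformations on the resulting pieces'' is slightly imprecise: an individual piece such as $A_1$ need not meet every $T$-orbit, so $T_{A_1}$ need not be aperiodic on $A_1$. The fix is standard: after one application the set $C=A_1\cup B_1$ does meet every $T$-orbit, so $T_C$ is aperiodic on $C$ and Proposition~\ref{prop: small rohlin tower} may be reapplied there; the refinement is then pushed forward to $A_2=T(A_1)$, $B_2=T(B_1)$, $B_3=T^2(B_1)$ by $T$. Iterating halves the maximal conditional measure at each step and yields a finite partition into pieces with $\mu_{\mathcal R}\le 1/k$.

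\item \emph{Existence of an aperiodic $T\in[\mathcal R]$.} You invoke this as a consequence of aperiodicity of $\mathcal R$. It is indeed a standard fact, but it is not stated among the paper's preliminaries, so it is worth a brief citation or a one-line justification.
\end{itemize}

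With these clarifications in place the proof is complete.
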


\begin{cor}\label{cor: cut a set in small subsets}
Let $\mathcal R$ be an aperiodic measure-preserving equivalence relation, and let $f:X\to [0,1]$ be an $M_\mathcal R$-measurable function. Let $A$ be a subset of $X$ such that for almost all $x\in A$, one has $f(x)>0$. Then there is a countable partition $(A_n)_{n\in\N}$ of $A$ such that for all $x\in X$ and all $n\in\N$, 
$$\mu_{\mathcal R}(A_n)(x)\leq f(x).$$
\end{cor}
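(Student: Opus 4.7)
The plan is to construct the pieces $A_n$ iteratively by peeling off from $A$ the largest allowable piece at each step using Maharam's lemma (Proposition \ref{maha}). Set $B_0 := A$, and define inductively
$$g_n := \min(f, \mu_\mathcal R(B_n)),$$
which is $M_\mathcal R$-measurable and satisfies $0 \leq g_n \leq \mu_\mathcal R(B_n)$, so by Maharam's lemma there exists $A_n \subseteq B_n$ with $\mu_\mathcal R(A_n) = g_n \leq f$. Let $B_{n+1} := B_n \setminus A_n$, so that $\mu_\mathcal R(B_{n+1}) = \mu_\mathcal R(B_n) - g_n = \max(0, \mu_\mathcal R(B_n) - f)$.

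The key observation is that the sequence $\mu_\mathcal R(B_n)(x)$ goes to $0$ pointwise on $\{f > 0\}$: as long as $\mu_\mathcal R(B_n)(x) > f(x)$, the sequence strictly decreases by $f(x) > 0$ at each step, and since it is nonnegative this can only happen finitely many times; once $\mu_\mathcal R(B_n)(x) \leq f(x)$, the next step gives $\mu_\mathcal R(B_{n+1})(x) = 0$ and the sequence stays at $0$ thereafter. By hypothesis $\{f > 0\}$ contains $A$ up to a null set, so on $A$ (and in particular on $B_\infty := \bigcap_n B_n \subseteq A$) we have $\mu_\mathcal R(B_n) \to 0$ almost everywhere, and hence $\mu_\mathcal R(B_\infty) = 0$ a.e.\ on $A$.

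It remains to show $\mu(B_\infty) = 0$, after which we may simply replace $A_0$ by $A_0 \cup B_\infty$ without affecting $\mu_\mathcal R(A_0)$. Using that $\mu_\mathcal R(B_\infty)$ is $M_\mathcal R$-measurable and the defining property of conditional expectation,
$$\int \mu_\mathcal R(B_\infty)^2 \, d\mu = \int \chi_{B_\infty} \, \mu_\mathcal R(B_\infty) \, d\mu = \int_{B_\infty} \mu_\mathcal R(B_\infty)\, d\mu = 0,$$
since $B_\infty \subseteq A$ and $\mu_\mathcal R(B_\infty)$ vanishes a.e.\ on $A$. Therefore $\mu_\mathcal R(B_\infty) = 0$ a.e., and integrating gives $\mu(B_\infty) = 0$.

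The only subtle point is that $f$ is not assumed bounded away from $0$, so the number of steps required to drain each fiber depends on $x$ and is unbounded in general. The approach sidesteps this by extracting only pointwise convergence on $\{f > 0\}$ and then converting this into $\mu(B_\infty) = 0$ via the $\LL^2$ orthogonality identity above; this small trick is what I expect to be the main (minor) obstacle if one tries a more naive iteration.
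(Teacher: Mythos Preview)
Your proof is correct and follows essentially the same greedy iteration as the paper: both peel off from $A$ successive pieces of conditional measure $\min(f,\mu_{\mathcal R}(\text{remainder}))$ via Maharam's lemma, which is exactly what the paper's (somewhat typo-laden) closed-form functions $f_n$ encode. The only cosmetic difference is the endgame: the paper concludes by integrating $\sum_n \mu_{\mathcal R}(A_n)=\mu_{\mathcal R}(A)$, whereas you use the $\LL^2$ identity to kill $B_\infty$—a correct but unnecessary flourish, since $\{f>0\}$ is $M_{\mathcal R}$-measurable and hence $\mathcal R$-invariant, so $\mu_{\mathcal R}(B_\infty)\leq \chi_{\{f>0\}}$ already forces $\mu_{\mathcal R}(B_\infty)=0$ everywhere once you know it vanishes on $\{f>0\}$.
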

\begin{proof}
The proof relies the following basic fact: there is a countable family of $M_{\mathcal R}$-measurable positive functions $f_n:X\to[0,1]$ such that for all $x\in X$, we have $f_n(x)\leq f(x)$ for all $n\in\N$  and 
$\sum_{n\in\N}f_n(x)=\mu_{\mathcal R}(x)$. Indeed, if for all $x\in X$ we let $f_n(x)=0$ if $\mu_\mathcal R(A)(x)=0$ and else
$$f_n(x)=\left\{\begin{array}{ll}f(x) & \text{if }n<\left\lfloor{\mu_\mathcal R(A)(x)}\right\rfloor \\
f(x)-\left\lfloor\frac{f(x)}{g(x)}\right\rfloor & \text{if }n=\left\lfloor{\mu_\mathcal R(A)(x)}\right\rfloor \\
0 & \text{else,}\end{array}\right.$$
it is easily checked that such functions satisfy the desired properties. We can now use the previous proposition to build by recurrence a sequence $(A_n)$ of disjoint subsets of $A$ such that for every $n\in\N$, $\mu_\mathcal R(A_n)=f_n\leq f$. Then $\mu_\mathcal R(A)=\sum_{n\in\N}\mu_\mathcal R(A_n)$ hence by integrating $\mu(A)=\sum_{n\in\N}\mu(A_n)=\mu(\bigsqcup_{n\in\N} A_n)$ so that $(A_n)$ is a partition of $A$ meeting all our requirements.
\end{proof}

Let us also record the following easy consequences of the previous results.

\begin{lem}\label{lem: basic involution construction}Let $\mathcal R$ be a measure-preserving equivalence relation. Then if two disjoint Borel sets $A$ and $B$ have the same $\mathcal R$-conditional measure, there exists an involution $U\in[\mathcal R]$ whose support is equal to  $A\sqcup B$ such that $U(A)=B$.
\end{lem}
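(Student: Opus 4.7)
The plan is to build $U$ directly from a partial isomorphism $A\to B$ given by Proposition \ref{transitive}, and then check the required properties.

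First, by Proposition \ref{transitive} applied to the sets $A$ and $B$ (which have the same $\mathcal R$-conditional measure by hypothesis), there exists a partial isomorphism $\varphi\in [[\mathcal R]]$ with $\dom\varphi=A$ and $\rng\varphi=B$. Define $U:X\to X$ by
\[
U(x)=\begin{cases} \varphi(x) & \text{if }x\in A,\\ \varphi^{-1}(x) & \text{if }x\in B,\\ x & \text{otherwise.}\end{cases}
\]
Since $A$ and $B$ are disjoint, this is well defined; since $\varphi$ is measure-preserving from $A$ onto $B$ (so that $\varphi^{-1}$ is measure-preserving from $B$ onto $A$), $U$ is a measure-preserving Borel bijection of $X$, hence an element of $\Aut(X,\mu)$.

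Next, $U$ is an involution: for $x\in A$, $U(x)=\varphi(x)\in B$, and so $U^2(x)=\varphi^{-1}(\varphi(x))=x$; symmetrically on $B$; and trivially outside $A\sqcup B$. Because $A$ and $B$ are disjoint, for $x\in A$ we have $U(x)=\varphi(x)\in B$ so $U(x)\neq x$, and similarly for $x\in B$, so the support of $U$ is exactly $A\sqcup B$ and $U(A)=B$. Finally $U\in [\mathcal R]$: outside $A\sqcup B$ this is clear; on $A$ it follows from $\varphi\in[[\mathcal R]]$, which gives $\varphi(x)\,\mathcal R\, x$ for all $x\in A$; and on $B$ it follows symmetrically.

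There is no real obstacle here: the lemma is essentially a packaging of Proposition \ref{transitive}, the only mild point being to arrange the formula on $A\sqcup B$ so that $U$ is genuinely an involution with the prescribed support, which is why we explicitly use the disjointness of $A$ and $B$ both to make the two-case definition unambiguous and to guarantee that no point of $A\sqcup B$ is fixed by $U$.
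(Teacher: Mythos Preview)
Your proof is correct and follows exactly the same approach as the paper: apply Proposition \ref{transitive} to obtain $\varphi\in[[\mathcal R]]$ with domain $A$ and range $B$, and define $U$ piecewise as $\varphi$ on $A$, $\varphi^{-1}$ on $B$, and the identity elsewhere. You have simply spelled out the verifications (involution, support, membership in $[\mathcal R]$) that the paper leaves implicit.
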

\begin{proof}
By Proposition \ref{transitive} there exists $\varphi\in[[\mathcal R]]$ such that $\varphi(A)=B$. Then the involution $U\in[\mathcal R_\Phi]$ defined by 
$$U(x)=\left\{\begin{array}{cl}\varphi(x) & \text{if }x\in A \\\varphi\inv(x) & \text{if }x\in B \\x & \text{otherwise}\end{array}\right.$$
clearly works.
\end{proof}

\begin{lem}\label{lem: invol of any support}Let $\mathcal R$ be a measure-preserving aperiodic equivalence relation and let  $C$ be a Borel subset of $X$. Then there exists an involution $U\in[\mathcal R]$ such that $\supp U=C$.
\end{lem}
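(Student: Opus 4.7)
The plan is to split $C$ into two disjoint halves of equal $\mathcal R$-conditional measure and then swap them by an involution using Lemma \ref{lem: basic involution construction}.

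First I would apply Maharam's lemma (Proposition \ref{maha}) to the function $f = \tfrac{1}{2}\mu_\mathcal R(C)$. This function is $M_\mathcal R$-measurable and satisfies $0 \leq f \leq \mu_\mathcal R(C)$, so since $\mathcal R$ is aperiodic there exists a Borel subset $A \subseteq C$ with $\mu_\mathcal R(A) = \tfrac{1}{2}\mu_\mathcal R(C)$. Setting $B := C \setminus A$, the additivity of the conditional expectation gives $\mu_\mathcal R(B) = \mu_\mathcal R(C) - \mu_\mathcal R(A) = \tfrac{1}{2}\mu_\mathcal R(C) = \mu_\mathcal R(A)$, and by construction $A$ and $B$ are disjoint with $A \sqcup B = C$.

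Then I would invoke Lemma \ref{lem: basic involution construction} applied to the disjoint pair $(A,B)$: it produces an involution $U \in [\mathcal R]$ whose support is exactly $A \sqcup B = C$ and which swaps $A$ with $B$. Since $A$ and $B$ are disjoint, for $x \in A$ we have $U(x) \in B$ so $U(x) \neq x$, and similarly on $B$; hence $\supp U = C$ as required.

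There is no serious obstacle here — the only point that requires care is verifying that Maharam's lemma is indeed applicable to the constant fraction $\tfrac{1}{2}\mu_\mathcal R(C)$, i.e. that this is an $M_\mathcal R$-measurable function bounded above by $\mu_\mathcal R(C)$, which is immediate from the fact that $\mu_\mathcal R(C)$ is itself $M_\mathcal R$-measurable.
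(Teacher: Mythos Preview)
Your proof is correct and follows exactly the same approach as the paper: split $C$ into two halves of equal $\mathcal R$-conditional measure via Maharam's lemma (Proposition \ref{maha}) and then apply Lemma \ref{lem: basic involution construction}. You have simply spelled out a bit more carefully why the hypotheses of Maharam's lemma are met and why the resulting involution has support exactly $C$.
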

\begin{proof}By Maharam's Lemma (Lemma \ref{maha}), one can write $C=A\sqcup B$ where $\mu_{\mathcal R}(A)=\mu_{\mathcal R}(B)=\mu_{\mathcal R}(C)/2$. The previous lemma can then readily be applied. 
\end{proof}

\begin{rmq} In Section \ref{sec: chara supports}, we will characterise which sets arise as supports of involutions belonging to the $\LL^1$ full group of $T$, where $T$ is an ergodic transformation. 
\end{rmq}

\subsection{Entropy}

The \textbf{entropy} of a countable partition $(A_i)_{I\in I}$ of $(X,\mu)$ is the quantity 
$$H((A_i)_{i\in I}):=-\sum_{i\in I}\mu(A_i)\ln\mu(A_i)$$
where we use the convention $0\ln0=0$. It represents the average amount of information we get when we choose an element $x\in X$ at random and only learn in which piece $A_i$ of the partition $x$ is. The following statement is well-known, see e.g. \cite[Fact 1.1.4]{MR2809170}. 

\begin{lem}\label{lem: l1 partition has finite entropy}Let $(A_n)_{n\in\Z}$ be a partition of $X$ such that $\sum_{n\in\Z} \abs n\mu(A_n)<+\infty$. Then $H((A_n)_{n\in\Z})<+\infty$. 
\end{lem}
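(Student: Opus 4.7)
The plan is to bound the entropy sum by splitting each term $-\mu(A_n)\ln\mu(A_n)$ into one of two regimes according to whether $\mu(A_n)$ is at least $e^{-|n|}$ or strictly less. The only elementary input I need is that $f(t) := -t\ln t$ is non-negative on $[0,1]$, bounded above by $1/e$, and monotonically increasing on $[0, 1/e]$ (which follows from $f'(t) = -\ln t - 1$).

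For $n\in\Z\setminus\{0\}$, my first step is to observe that if $\mu(A_n) \geq e^{-|n|}$, then $-\ln\mu(A_n) \leq |n|$, so $f(\mu(A_n)) \leq |n|\mu(A_n)$; summed over such $n$, this contribution is controlled by the hypothesis $\sum_n |n|\mu(A_n) < +\infty$. If instead $0 < \mu(A_n) < e^{-|n|} \leq 1/e$, I would exploit the monotonicity of $f$ on $[0,1/e]$ to get $f(\mu(A_n)) \leq f(e^{-|n|}) = |n|e^{-|n|}$, and the series $\sum_{n\neq 0} |n|e^{-|n|}$ is a standard convergent exponential tail. The remaining term $n = 0$ is absorbed by the universal bound $f(\mu(A_0)) \leq 1/e$.

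Adding the three contributions then gives
\[
H((A_n)_{n\in\Z}) \;\leq\; \frac{1}{e} \;+\; \sum_{n\in\Z} |n|\mu(A_n) \;+\; \sum_{n\in\Z\setminus\{0\}} |n|e^{-|n|} \;<\; +\infty,
\]
which is the desired conclusion. I do not anticipate any real obstacle: the whole argument reduces to a one-variable calculus estimate for $f$. The only point to notice is that the threshold $e^{-|n|}$ has to be chosen precisely so that the breakpoint $-\ln\mu(A_n) = |n|$ of the first regime matches the value at which $f$ is controlled by $|n|e^{-|n|}$ in the second. A slicker alternative route would be to compare with the reference probability distribution $q_n := e^{-|n|}/Z$ on $\Z$ and invoke non-negativity of relative entropy, which directly yields $H \leq \sum_n |n|\mu(A_n) + \ln Z$.
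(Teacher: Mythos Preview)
Your proof is correct and follows essentially the same idea as the paper's: both bound $-\mu(A_n)\ln\mu(A_n)$ by $|n|\mu(A_n)$ plus a summable remainder in $n$. The paper packages this as the single calculus inequality $-t\ln t \leq mt + e^{-m-1}$ (obtained by maximizing $-t\ln t - mt$ over $t$) applied with $m=|n|$, whereas you obtain the same type of bound via a case split at the threshold $e^{-|n|}$; your closing remark about relative entropy with respect to $q_n\propto e^{-|n|}$ is indeed the cleanest way to see why such a bound must exist.
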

\begin{proof}
Some elementary calculus yields the inequality
$$-t\ln t\leq mt+e^{-m-1}$$
for all $t,m>0$. In particular for all $n\in\Z^*$ we have 
$$-\mu(A_n)\ln\mu(A_n)\leq \abs n\mu(A_n)+e^{-\abs n-1}.$$
Summing this over $n\in\Z^*$ and applying our hypothesis $\sum_{n\in\Z^*} \abs n\mu(A_n)<+\infty$, we obtain that $-\sum_{n\in \Z^*}\mu(A_n)\ln\mu(A_n)<+\infty$. So $H((A_n)_{n\in\N})<+\infty$ as wanted. 
\end{proof}

\begin{rmq}A result of Austin yields more generally that whenever $\Gamma$ is a finitely generated group, the cocycle partitions of elements of the $\LL^1$ full group of any free $\Gamma$-action have finite entropy (see Lemma 2.1 in \cite{Austin:2016rm}).\end{rmq}

Given two partitions $(A_i)$ and $(B_j)$ of $X$, we let $(A_i)\vee (B_j)$ be their \textbf{join}, i.e. the partition $(A_i\cap B_j)$. We have the following natural inequality (for a proof see e.g. \cite[1.6.10]{MR2809170}).
\begin{lem}\label{lem: entropy bound on the join}Let $(A_i)$ and $(B_j)$ be two partitions of $X$. Then
$$H((A_i)\vee (B_j))\leq H((A_i))+H((B_j)).$$
\end{lem}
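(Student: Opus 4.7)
The plan is to reduce the inequality to a single application of Jensen's inequality for the concave logarithm. Set $p_i=\mu(A_i)$, $q_j=\mu(B_j)$, and $r_{ij}=\mu(A_i\cap B_j)$, so that $\sum_j r_{ij}=p_i$, $\sum_i r_{ij}=q_j$, and the family $(r_{ij})$ is a probability distribution on the set of pairs $(i,j)$.

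First I would rewrite the desired inequality $H((A_i)\vee(B_j))\le H((A_i))+H((B_j))$ by expanding all three entropies. Substituting $p_i=\sum_j r_{ij}$ inside $-p_i\ln p_i$ (and symmetrically for $q_j$), the inequality becomes equivalent to
$$\sum_{(i,j):\,r_{ij}>0} r_{ij}\ln\frac{p_iq_j}{r_{ij}}\le 0,$$
where pairs with $r_{ij}=0$ are harmlessly discarded, thanks to the convention $0\ln 0=0$ used in the definition of entropy.

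Then I would apply Jensen's inequality to the concave function $\ln$ with the probability weights $r_{ij}$ (which sum to one): this yields
$$\sum_{(i,j):\,r_{ij}>0} r_{ij}\ln\frac{p_iq_j}{r_{ij}}\le \ln\sum_{(i,j):\,r_{ij}>0}p_iq_j\le \ln\sum_{i,j}p_iq_j=\ln 1=0,$$
which is exactly what was wanted.

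The whole argument is classical and I do not anticipate any real obstacle: the only step requiring a little care is the bookkeeping around vanishing $r_{ij}$, which is purely a matter of applying the $0\ln 0=0$ convention consistently when restricting the sum to the support of the joint distribution.
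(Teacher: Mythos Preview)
Your argument is correct: the reduction to $\sum_{r_{ij}>0} r_{ij}\ln\frac{p_iq_j}{r_{ij}}\leq 0$ is exactly the nonnegativity of the Kullback--Leibler divergence of the joint distribution $(r_{ij})$ with respect to the product $(p_iq_j)$, and your Jensen step establishes this cleanly (the restriction to the support of $(r_{ij})$ is indeed the only place where care is needed, and you handle it properly).

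There is nothing to compare with here: the paper does not give its own proof of this lemma but simply refers the reader to Downarowicz's book \cite[1.6.10]{MR2809170}. Your proof is the standard one and would fit perfectly well in place of the citation.
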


Note that in particular, the join of finitely many partitions with finite entropy has finite entropy.

Lemma \ref{lem: entropy bound on the join} enables one to give the classical definition of the entropy of $T\in\Aut(X,\mu)$. First, given a partition $\mathcal Q$, we let 
$$h(T,\mathcal P):=\lim_{n\to+\infty} \frac 1nH\left(\bigvee_{i=0}^{n-1}T^i(\mathcal P)\right),$$
where the limit exists by subadditivity of the sequence $H(\bigvee_{i=0}^{n-1}T^i(\mathcal P))$ (cf. Lemma \ref{lem: entropy bound on the join}). 
The \textbf{entropy} of $T$ is 
$$h(T)=\sup_{\mathcal P\text{ partition}}h(T,\mathcal P).$$
A partition $\mathcal P$ is called \textbf{generating} if up to measure zero, the $\sigma$-algebra generated by the $T$-translates of $\mathcal P$ is equal to the Borel $\sigma$-algebra of $X$. A more convenient way to see this for us is in terms of the measure algebra of $(X,\mu)$. 

Recall that given a standard probability space $(X,\mu)$, its \textbf{measure algebra} is denoted by $\MAlg(X,\mu)$ and consists of all Borel subsets of $X$, two such sets being identified if their symmetric difference has measure zero. It is naturally endowed with a metric $d_\mu$ defined by $d_\mu(A,B)=\mu(A\bigtriangleup B)$ which is complete and separable (see \cite[40.A]{MR0033869}). Then a partition $\mathcal P$ is generating if and only if the algebra generated by the $T$-translates of $\mathcal P$ is dense in $\MAlg(X,\mu)$.

Kolmogorov-Sinai's theorem states that whenever $\mathcal P$ is a generating partition \textit{of finite entropy}, one has 
$$h(T)=h(T,\mathcal P)<+\infty$$
Conversely, Krieger's finite generator theorem states that every transformation of finite entropy admits a finite generating partition \cite{MR0259068} (moreover, there is such a partition whose cardinality $k$ satisfies the inequality $e^{h(T)}\leq k\leq e^{h(T)}+1$). Krieger then proved  that every such transformation is actually conjugate to a minimal uniquely ergodic subshift \cite{MR0393402}, which will be relevant to us when we connect topological full groups to $\LL^1$ full groups (see Section \ref{sec: main thms}).

\section{$\LL^1$ full groups of graphings}\label{sec:L1 full groups of graphings}

\subsection{Definition and first properties}

Let $\Phi$ be a graphing, and let $\mathcal R_\Phi$ be the measure-preserving equivalence relation it generates. Such a graphing induces a metric $d_\Phi$ on the orbits of $\mathcal R_\Phi$ by putting for all $(x,y)\in\mathcal R$
$$d_\Phi(x,y)=\min \{n\in\N:\exists \varphi_1,..., \varphi_n\in \Phi^{±1}\text{ such that }y=\varphi_n\cdots \varphi_1 (x)\}.$$
 The $\LL^1$ full group of $\Phi$, denoted by $[\Phi]_1$, is then defined by
 $$[\Phi]_1:=\{T\in [\mathcal R_\Phi]: x\mapsto d_\Phi(x,T(x))\text{ is integrable}\}.$$
The  triangle inequality implies that $[\Phi]_1$ is indeed a group, which we equip with a right-invariant metric $\tilde d_\Phi$ defined by 
$$\tilde d_\Phi(S,T):=\int_Xd_\Phi(S(x),T(x)).$$
Our first example is fundamental since the study of single measure-preserving transformations is a central subject in ergodic theory. 
\begin{exemple}\label{ex: Z actions}
Suppose we are given a measure-preserving aperiodic $\Z$-action, that is, an element $T\in\Aut(X,\mu)$, all whose orbits are infinite. Then $\Phi=\{T\}$ is a graphing, and we denote by $[T]_1$ the associated $\LL^1$ full group. Given $S\in[\mathcal R_T]$, for all $x\in X$ there is a unique $c_S(x)\in\Z$ such that $S(x)=T^{c_S(x)}(x)$. The map $x\mapsto c_S(x)$ is called the \textbf{cocycle} associated to $S$ and it completely determines $S$. The partition  $(\{x\in X: c_S(x)=n\})_{n\in\Z}$ is called the \textbf{cocycle partition} associated to $S$. 
 For all $x\in X$ and all $S,S'\in[\mathcal R_T]$, we have the \textbf{cocycle identity} 
$$c_{S'S}(x)=c_{S'}(S(x))+c_S(x).$$ 
The $\LL^1$ full group $[T]_1:=[\mathcal R_\Phi]_1$ consists of all the $S\in[\mathcal R_T]$ such that the corresponding cocycle $c_S$ is integrable. The metric $\tilde d_\Phi$ is then given by
\begin{align*}\tilde d_\Phi(S,S')&=\int_X\abs{c_{S}(x)-c_{S'}(x)}d\mu(x)\end{align*}
This fundamental example will be explored in Section \ref{sec: L1 full groups of Z actions}. \end{exemple}

\begin{exemple}\label{ex: full group of R}
Suppose $\Phi$ is a graphing, all whose connected components are complete graphs. Then $d_\Phi$ is the discrete metric: for all $(x,y)\in\mathcal R_\Phi$,  $$d_\Phi(x,y)=\left\{\begin{array}{cl}0 & \text{ if }x=y,  \\1 & \text{ else.} \end{array}\right.$$
Hence we have $[\Phi]_1=[\mathcal R_\Phi]$ and $\tilde d_\Phi$ is equal to the uniform metric $d_u$.
\end{exemple}

\begin{rmq}
Observe that given a graphing $\Phi$, we may view $\mathcal R_\Phi$ as a graphing all whose connected components are complete graphs. Furthermore, for every $(x,y)\in\mathcal R_\Phi$ we clearly have $d_{\mathcal R_\Phi}(x,y)\leq d_\Phi(x,y)$. By integrating, we deduce that
 for every $T,U\in[\Phi]_1$ the following inequality holds:
$$d_u(T,U)\leq \tilde d_\Phi(T,U).$$
\end{rmq}

\begin{exemple}
Let $\Gamma$ be a finitely generated group and consider a measure-preserving $\Gamma$-action on $(X,\mu)$. If $S$ is a finite generating set of $\Gamma$, it defines a graphing which generates $\mathcal R_\Gamma$, and we can consider the associated $\LL^1$ full group $[S]_1$. Observe that if $S'$ is another finite generating set, then the metrics $d_S$ and $d_{S'}$ are bilipschitz-equivalent on every orbit with a uniform constant. So the metrics $\tilde d_{S}$ and $\tilde d_{S'}$ are bilipschitz-equivalent and we have $[S]_1=[S']_1$. We conclude that we can actually define the $\LL^1$ full group of the $\Gamma$-action by $[\Gamma]_1=[S]_1$ since it does not depend on the choice of a finite generating set. 

Note that the $\LL^1$ full group of the action of a finitely generated group comes naturally equipped with a complete left invariant metric well-defined up to bilipschitz equivalence. Of course Example \ref{ex: Z actions} falls into this category with $\Gamma=\Z$. Moreover, if $\Gamma$ acts freely, every $T\in[\Gamma]$ is uniquely defined by the associated cocycle $c_T: X\to\Gamma$ defined by $T(x)=c_T(x)\cdot x$. Such a cocycle satisfies the cocycle identity $c_{T'T}(x)=c_{T'}(T(x))c_T(x)$. 
\end{exemple}

\begin{prop}\label{prop: complete separable metric}Let $\Phi$ be a graphing. 
The metric $\tilde d_\Phi$ is complete and separable.
\end{prop}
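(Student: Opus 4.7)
My plan is to split the proof into a completeness part and a separability part, both leveraging the inequality $d_u\le \tilde d_\Phi$ noted just before the statement and the completeness/separability of $([\mathcal{R}_\Phi],d_u)$.

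\textbf{Completeness.} I would start with a $\tilde d_\Phi$-Cauchy sequence $(T_n)$ in $[\Phi]_1$. Since $d_u\le \tilde d_\Phi$, this sequence is Cauchy for the uniform metric, and $([\mathcal{R}_\Phi],d_u)$ is well-known to be complete, so $T_n\to T$ in $d_u$ for some $T\in[\mathcal R_\Phi]$. I would then extract a fast subsequence $(T_{n_k})$ with $\sum_k d_u(T_{n_k},T_{n_{k+1}})<+\infty$; Borel--Cantelli gives that for almost every $x$, the values $T_{n_k}(x)$ are eventually constant equal to $T(x)$, so $d_\Phi(T_{n_k}(x),T(x))=0$ eventually a.e. Fatou's lemma applied to $d_\Phi(x,T_{n_k}(x))$ yields $\int d_\Phi(x,T(x))\,d\mu\le \liminf_k \tilde d_\Phi(\mathrm{id},T_{n_k})<+\infty$ (the right-hand side being bounded because Cauchy sequences are bounded), so $T\in[\Phi]_1$. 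Applying Fatou once more to $d_\Phi(T_n(x),T_{n_k}(x))$ shows $\tilde d_\Phi(T_n,T)\le \liminf_k \tilde d_\Phi(T_n,T_{n_k})$, which tends to $0$ by the Cauchy property.

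\textbf{Separability: the bounded-cocycle layers.} For each $N\in\N$, define
$$B_N:=\{T\in[\Phi]_1:d_\Phi(x,T(x))\le N\text{ for a.e. }x\}.$$
On $B_N$, the triangle inequality yields $\tilde d_\Phi(S,T)\le 2N\,d_u(S,T)$, while the remark gives $d_u\le\tilde d_\Phi$ unconditionally. Hence $\tilde d_\Phi$ and $d_u$ are bi-Lipschitz equivalent on $B_N$, and since $B_N\subseteq[\mathcal R_\Phi]$ is separable for $d_u$, it is also separable for $\tilde d_\Phi$. Taking a countable dense subset $D_N$ of each $B_N$, the countable union $\bigcup_N D_N$ is my candidate for a dense set.

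\textbf{Separability: the density step.} I would show $\bigcup_N B_N$ is dense in $[\Phi]_1$ by a truncation argument. Fix $T\in[\Phi]_1$ and $\eps>0$. Choose $N$ so large that the ``tail'' set $A:=\{x:d_\Phi(x,T(x))>N\}$ satisfies $\int_A d_\Phi(x,T(x))\,d\mu<\eps/3$ and $\mu(A\cup T^{-1}(A))\le \eps/(3N)$. Then $T$ restricted to $X\setminus(A\cup T^{-1}(A))$ is a partial isomorphism into $X\setminus(A\cup T(A))$ with $d_\Phi(x,T(x))\le N$ on its domain. Using Proposition~\ref{transitive} (and Maharam's lemma, Proposition~\ref{maha}, to split the remainder into pieces of small $\mathcal R_\Phi$-conditional measure) I extend this partial isomorphism to an element $T_N\in[\Phi]_1$ whose restriction to the complementary ``patch'' is built out of partial isomorphisms coming from $\Phi$-words of length $\le N$, ensuring $T_N\in B_N$. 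The final bound
$$\tilde d_\Phi(T,T_N)\le\int_{A\cup T^{-1}(A)}\bigl(d_\Phi(x,T(x))+d_\Phi(x,T_N(x))\bigr)d\mu\le \eps/3+N\mu(A\cup T^{-1}(A))\le \eps$$
completes the density step.

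\textbf{Main obstacle.} The completeness part is entirely routine once one has the $d_u\le\tilde d_\Phi$ inequality; Fatou does all the work. The genuinely delicate point is the density of $\bigcup_N B_N$: modifying $T$ on the ``bad'' set $A$ must respect the bijectivity of $T_N$ while keeping the patched bijection inside a bounded layer $B_N$. This requires combining Maharam's lemma to get the right conditional-measure decomposition with Dye's transitivity (Proposition~\ref{transitive}) applied inside a subequivalence relation generated by short $\Phi$-words, which is the main technical step.
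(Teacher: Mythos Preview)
Your completeness argument is correct and is essentially the paper's: both pass to the $d_u$-limit of a fast subsequence and then control $d_\Phi(T_n(x),T(x))$ pointwise. The paper uses the dominated convergence theorem where you invoke Fatou, but either route works.

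The separability argument has a genuine gap in the density step. Your layers $B_N$ satisfy $\bigcup_N B_N=[\Phi]_\infty$, so what you are really asserting is that the $\LL^\infty$ full group is dense in the $\LL^1$ full group. The paper explicitly records this as an \emph{open question} in its final section (``Let $1\leq p<q\leq +\infty$. Is it true that $[\Phi]_q$ is dense in $[\Phi]_p$?''), so your patching argument cannot be right as it stands. Concretely: Proposition~\ref{transitive} only produces a partial isomorphism in $[[\mathcal R_\Phi]]$ with \emph{no} control on the $\Phi$-word length, and ``applying it inside the subequivalence relation generated by short $\Phi$-words'' does not help, since words of length~$1$ already generate all of $\mathcal R_\Phi$. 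After you use the identity on $A$ (or $T$ on $T^{-1}(A)\setminus A$), you are left needing a bijection between $T^{-1}(A)\setminus A$ and $T(A)\setminus A$ using words of length $\le N$; there is no reason this exists, and Maharam's lemma, which only controls conditional measures, cannot supply it.

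The paper avoids this issue entirely. It picks a countable family $S$ of involutions in $[\mathcal R_\Phi]$ whose graphs cover the edges of the $\Phi$-graph, sets $\Gamma=\langle S\rangle$ with its word metric, and observes that the map $f\mapsto(x\mapsto f(x)\cdot x)$ exhibits $[\Phi]_1$ as a continuous image of a subspace of the separable metric space $\LL^1(X,\mu,\Gamma)$. Separability is then immediate, with no need to approximate by bounded-cocycle elements.
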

\begin{proof}
Let $(T_n)$ be a Cauchy sequence for $\tilde d_\Phi$. Note that since a Cauchy sequence converges iff it admits a converging subsequence we can freely replace $(T_n)$ by a subsequence. 

By the remark following Example \ref{ex: full group of R}, for all $T,T'\in [\Phi]_1$ we have $d_u(T,T')\leq \tilde d_\Phi(T,T')$, so the sequence $(T_n)$ is $d_u$-Cauchy and admits a $d_u$-limit $T\in[\mathcal R_\Phi]$. Up to taking a subsequence, we may assume that $T_n\to T$ pointwise and that for all $n\in\N$ we have $\tilde d_\Phi(T_n,T_{n+1})<\frac1{2^n}$. We deduce that 
$$\int_X\sum_{n\in\N} d_\Phi(T_n(x),T_{n+1}(x))d\mu(x)<2.$$
Now for all $n\in\N$ and all $x\in X$ we have \begin{align*}d_\Phi(T_n(x),T(x))&=\lim_{m\to+\infty}d_\Phi(T_n(x),T_m(x))\\&\leq\lim_{m\to+\infty} \sum_{i=n}^{m-1}d_\Phi(T_i(x),T_{i+1}(x))\\&\leq \sum_{i\in\N} d_\Phi(T_i(x),T_{i+1}(x))\end{align*}
By integrating this inequality for $n=0$ we deduce that $\tilde d_\Phi(T_0,T)<+\infty$ so that $T\in[\Phi]_1$.
Moreover this inequality allows us apply the Lebesgue dominated convergence theorem to the sequence of functions $x\mapsto d_\Phi(T_n(x),T(x))$ to conclude that $d_{\tilde\Phi}(T_n,T)\to 0$. The metric $\tilde d_\Phi$ is therefore complete. 

The separability follows from the line of ideas present in \cite[Thm. 3.17]{lcfullgroups}. Namely, let $S$ be a family of involutions such that
$$\bigcup_{s\in S}\{(x,s(x)): x\in X\}= \mathcal G_\Phi\cup\{(x,x): x\in X\}$$
and define $\Gamma=\la S\ra$ equipped with the word metric $d_S$. We consider the space $\LL^1(X,\mu,\Gamma)$ of measurable functions taking values in $\Gamma$ such that $\int_Xd_S(f(x),1_\Gamma)<+\infty$. Such a space is separable for the natural $\LL^1$ metric, and $[\Phi]_1$ is a continuous quotient of a subspace of the metric space $\LL^1(X,\mu,\Gamma)$ (namely, the subspace of maps $f\in\LL^1(X,\mu,\Gamma)$ such that $T_f: x\mapsto f(x)\cdot x$ belongs to $\Aut(X,\mu)$), hence separable. 
\end{proof}

\begin{prop}
Let $\Phi$ be a graphing. The topology induced by the metric $\tilde d_\Phi$ is a group topology.
\end{prop}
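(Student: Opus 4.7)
The plan is to check the three standard requirements for a group topology: right-invariance (which makes right translation an isometry), continuity of inversion, and continuity of left translation. Right-invariance of $\tilde d_\Phi$ follows at once from the measure-preserving change of variables $y=U(x)$ inside the integral defining $\tilde d_\Phi(SU,TU)$, which shows this equals $\tilde d_\Phi(S,T)$. A similar substitution (this time $y=S(x)$ in the integral for $\tilde d_\Phi(T^{-1},S^{-1})$) together with the symmetry of $d_\Phi$ shows that inversion is also an isometry; so only continuity of left multiplication requires real work.

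By right-invariance, joint continuity of multiplication reduces to showing that for each fixed $S\in[\Phi]_1$, the map $T\mapsto ST$ is continuous at the identity, i.e.\ that $\tilde d_\Phi(T_n,\id)\to 0$ implies $\tilde d_\Phi(ST_n,S)\to 0$. The key observation is that $ST_n(x)=S(x)$ whenever $T_n(x)=x$, so the integrand $d_\Phi(ST_n(x),S(x))$ is supported on $\supp T_n$. Applying the triangle inequality gives
\[
d_\Phi(ST_n(x),S(x))\leq d_\Phi(ST_n(x),T_n(x))+d_\Phi(T_n(x),x)+d_\Phi(x,S(x))
\]
on $\supp T_n$; integrating and using once more the substitution $y=T_n(x)$ (which preserves the support of $T_n$) yields the bound
\[
\tilde d_\Phi(ST_n,S)\leq 2\int_{\supp T_n} d_\Phi(x,S(x))\,d\mu(x)+\tilde d_\Phi(T_n,\id).
\]

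To conclude, I would invoke the inequality $d_u\leq \tilde d_\Phi$ from the remark following Example \ref{ex: full group of R}: since $\tilde d_\Phi(T_n,\id)\to 0$, we have $\mu(\supp T_n)\to 0$. Because $x\mapsto d_\Phi(x,S(x))$ is integrable (this is precisely what $S\in[\Phi]_1$ means), absolute continuity of the integral forces the first term above to vanish, hence $\tilde d_\Phi(ST_n,S)\to 0$. Combining this with right-invariance gives joint continuity of multiplication via the estimate
\[
\tilde d_\Phi(S_nT_n,ST)\leq \tilde d_\Phi(S_n,S)+\tilde d_\Phi(ST_n,ST),
\]
and together with continuity of inversion this shows $\tilde d_\Phi$ defines a group topology. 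The main subtlety is handling left multiplication, since $d_\Phi$ on orbits is not invariant under pre-composition by arbitrary elements of $[\Phi]_1$; the resolution is the localization of the integrand to $\supp T_n$, which lets us trade the lack of left-invariance for absolute continuity of the integral.
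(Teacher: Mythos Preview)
Your argument for the continuity of left multiplication is correct and is essentially the paper's proof: both localise the integrand to the set where the two maps differ (the paper uses $A_n=\{x:U_n(x)\neq U(x)\}$, you reduce to the identity and use $\supp T_n$), apply the triangle inequality, use a measure-preserving substitution on one term, and finish by absolute continuity of the integral since $\mu(A_n)\to 0$ via $d_u\leq\tilde d_\Phi$.

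There is, however, one incorrect claim: inversion is \emph{not} an isometry of $\tilde d_\Phi$ in general. Your substitution $x=S(y)$ in $\int d_\Phi(T^{-1}(x),S^{-1}(x))\,d\mu$ only yields $\tilde d_\Phi(T^{-1},S^{-1})=\tilde d_\Phi(T^{-1}S,\id)$, whereas $\tilde d_\Phi(T,S)=\tilde d_\Phi(TS^{-1},\id)$; the elements $T^{-1}S$ and $TS^{-1}$ are conjugate but need not have the same norm. Indeed, the paper remarks just after this proposition that for aperiodic $T$ the group $[T]_1$ is never SIN, which rules out bi-invariance of $\tilde d_\Phi$.

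This does not damage the proof, since continuity of inversion follows from what you have already shown: if $T_n\to T$ then $T^{-1}T_n\to\id$ by continuity of left multiplication, and since $\tilde d_\Phi(U^{-1},\id)=\tilde d_\Phi(U,\id)$ (one substitution) and the metric is right-invariant, $\tilde d_\Phi(T_n^{-1},T^{-1})=\tilde d_\Phi(T_n^{-1}T,\id)=\tilde d_\Phi(T^{-1}T_n,\id)\to 0$. The paper simply invokes the standard fact that for a right-invariant metric it suffices to check continuity of left multiplication, without spelling out inversion separately.
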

\begin{proof}
Since the metric $\tilde d_\Phi$ is right-invariant it suffices to show that left multiplication is continuous. 
Let $V\in [\Phi]_1$ and suppose $\tilde d_\Phi(U_n,U)\to 0$. In particular $d_u(U_n,U)\to 0$ so if we let $A_n=\{x\in X: U_n(x)\neq U(x)\}$, then $\mu(A_n)\to 0$ and we have
\begin{align*}
\tilde d_\Phi(VU_n, VU)&=\int_{A_n} d_\Phi(VU_n(x),VU(x))\\
&\leq \int_{A_n}d_\Phi(VU_n(x),U_n(x))+\int_{A_n}d_\Phi(U_n(x),U(x))+\int_{A_n}d_\Phi(U(x),VU(x))\\
&\leq \int_{U_n(A_n)}d_\Phi(V(x),x)+\tilde d_\Phi(U_n,U)+\int_{A_n}d_\Phi(U(x),VU(x)).
\end{align*}
Now since $\mu(A_n)\to 0$ the right term converges to zero by the Lebesgue dominated convergence theorem. Since $U_n$ is measure-preserving we also have $\mu(U_n(A_n))\to 0$ so the left term also converges to zero. By assumption the middle term converges to zero, so $\tilde d_\Phi(VU_n, VU)\to 0$ as wanted. 
\end{proof}

\begin{rmq}
Because $\tilde d_\Phi$ is right-invariant, the two previous propositions show that $[\Phi]_1$ is a \textbf{cli} Polish group, which means it admits a complete left (or equivalently right)-invariant  metric compatible with its topology. Examples of cli Polish groups include all the locally compact second-countable groups (see \cite[Prop. 3.C.2]{MR1478843}), and non-examples include the following Polish groups:  $\Aut(X,\mu)$, the group of unitaries of an infinite dimensional Hilbert space and the group of permutation of the integers. 

Using Rohlin's lemma, it can be shown that whenever $T\in\Aut(X,\mu)$ is aperiodic, the $\LL^1$ full group $[T]_1$ is never SIN (meaning that its topology does not admit a basis at the identity made of conjugacy-invariant neighborhoods). To our knowledge, such $\LL^1$ full groups are the first examples of cli Polish groups which are non-SIN but at the same time far from being locally compact groups. 
\end{rmq}

A fundamental feature of $\LL^1$ full groups is that they are stable under taking induced maps. 

\begin{prop}\label{prop: induced transfo}Let $\Phi$ be a graphing, let $[\Phi]_1$ be the associated $\LL^1$-full group. Then for all $T\in[\Phi]_1$, we have $T_A\in[\Phi]_1$, and 
$$\tilde d_\Phi(T_A,\mathrm{id}_X)\leq \tilde d_\Phi(T,\mathrm{id}_X).$$
\end{prop}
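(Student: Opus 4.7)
The plan is to establish the inequality via the Kakutani skyscraper decomposition associated with the first return to $A$, using the triangle inequality along $T$-orbits to control $d_\Phi(x, T_A(x))$ by a sum of graph distances along consecutive iterates of $T$.

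First I would check that $T_A$ lies in the full group $[\mathcal R_\Phi]$: for every $x \in X$, either $T_A(x) = x$ or $T_A(x) = T^{n_{A,T}(x)}(x)$, and in both cases this point lies in the $\mathcal R_\Phi$-class of $x$ because $T \in [\Phi]_1 \subseteq [\mathcal R_\Phi]$. Next, I would partition $A$ according to return time by setting $A_n := \{x \in A : n_{A,T}(x) = n\}$ for $n \geq 1$; Poincaré recurrence (applied to $T$, which is measure-preserving) ensures $A = \bigsqcup_{n \geq 1} A_n$ up to null sets. The key step is the triangle inequality along the orbit: for $x \in A_n$, telescoping along $x, T(x), \ldots, T^n(x) = T_A(x)$ yields
$$d_\Phi(x, T_A(x)) \leq \sum_{i=0}^{n-1} d_\Phi(T^i(x), T^{i+1}(x)).$$
Since $T_A$ fixes $X \setminus A$, I would integrate this over $X$ and perform the change of variable $y = T^i(x)$ in each summand (valid because $T$ is measure-preserving) to obtain
$$\tilde d_\Phi(T_A, \mathrm{id}_X) \leq \sum_{n \geq 1} \sum_{i=0}^{n-1} \int_{T^i(A_n)} d_\Phi(y, T(y))\, d\mu(y).$$

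The step requiring the most care is verifying that the total measure of the domain of integration is bounded by $1$, i.e. that the sets $\{T^i(A_n) : n \geq 1,\ 0 \leq i < n\}$ are pairwise disjoint. This is the standard Kakutani tower disjointness: the definition of $A_n$ via the \emph{first} return time forces $T^i(x) \notin A$ for $x \in A_n$ and $0 < i < n$, and a short case analysis on the relative positions of $i, j, n, m$ rules out any collision $T^i(A_n) \cap T^j(A_m) \neq \emptyset$ unless $(i, n) = (j, m)$. Once this is in place, the double sum collapses into a single integral bounded above by $\int_X d_\Phi(y, T(y))\, d\mu(y) = \tilde d_\Phi(T, \mathrm{id}_X) < +\infty$, which simultaneously certifies that $T_A \in [\Phi]_1$ and delivers the stated inequality.
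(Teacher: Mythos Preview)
Your proof is correct and follows essentially the same Kac-type argument as the paper: partition $A$ by first-return time, telescope $d_\Phi(x,T_A(x))$ along the orbit segment via the triangle inequality, change variables, and use disjointness of the Kakutani tower levels $T^i(A_n)$ to bound the resulting sum by $\int_X d_\Phi(x,T(x))\,d\mu(x)$. You supply a bit more detail on the tower disjointness than the paper does, but the structure is identical.
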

\begin{proof}
For all $n\in\N$, let $A_n:=\{x\in X: T_A(x)=T^n(x)\}$. Then the $A_n$'s form a partition of $A$. For all $n\in\N$ and all $0\leq m<n$, we let $B_{n,m}:=T^m(A_n)$. The family $(B_{n,m})_{0\leq m<n}$ is a collection of disjoint sets.

We now fix $n\in\N$. By the triangle inequality, we have that for all $x\in A_n$,

$$d_\Phi(x,T_A(x))=d_\Phi(x,T^n(x))\leq \sum_{m=0}^nd_\Phi(T^m(x), T^{m+1}(x)).$$
By integrating and summing this over all $n\in\N$, we deduce that

$$\int_A d_\Phi(x,T_A(x))=\sum_{n\in\N}\sum_{m=0}^n \int_{B_{n,m}}d_\Phi(x,T(x))\leq \int_Xd_\Phi(x,T(x)).$$
To conclude, we note that the left term above is equal to $\tilde d_\Phi(T_A,\mathrm{id}_X)$, while the right term is equal to $\tilde d_\Phi(T,\mathrm{id_X})$.
\end{proof}
\begin{rmq}The above proof is basically the same as that of Kac's formula \cite[Thm.2']{MR0022323}. We will use this proof again to get later a similar statement in the case of $\LL^1$ full groups of $\Z$-actions (see Prop. \ref{prop:indexinduced}). 
\end{rmq}

The following lemma will provide us many involutions in $\LL^1$ full groups. 

\begin{lem}\label{manyinvol}
Let $\Phi$ be a graphing. Then for every involution $U\in[\mathcal R_\Phi]$, there exists an increasing sequence of $U$-invariant sets $A_n\subseteq \supp U$ such that $\supp U=\bigcup_{n\in\N} A_n$ and for all $n\in\N$, $U_{A_n}\in[\Phi]_1$.
\end{lem}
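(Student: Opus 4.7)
The idea is to exhaust $\supp U$ by the level sets of the $\Phi$-distance function associated to $U$.

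The plan is to define $f : \supp U \to \N \cup \{\infty\}$ by $f(x) = d_\Phi(x, U(x))$ and to set $A_n := \{x \in \supp U : f(x) \leq n\}$. Because $U \in [\mathcal R_\Phi]$, for almost every $x$ the pair $(x, U(x))$ lies in $\mathcal R_\Phi$, so $f$ is finite almost everywhere; hence the sequence $(A_n)$ is increasing and its union is all of $\supp U$ up to measure zero.

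The key observation is that $f$ is $U$-invariant: since $U$ is an involution, for all $x$ one has
\[
f(U(x)) = d_\Phi(U(x), U^2(x)) = d_\Phi(U(x), x) = f(x),
\]
using the symmetry of $d_\Phi$. Consequently each $A_n$ is $U$-invariant. Because $A_n$ is $U$-invariant and $U^2 = \mathrm{id}$, the return time of $U$ to $A_n$ equals $1$ on $A_n$, so by the definition of the induced transformation $U_{A_n}$ coincides with $U$ on $A_n$ and with the identity on $X \setminus A_n$. In particular $U_{A_n}$ is itself an involution belonging to $[\mathcal R_\Phi]$.

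It remains to check that $U_{A_n} \in [\Phi]_1$. We compute
\[
\tilde d_\Phi(U_{A_n}, \mathrm{id}_X) = \int_X d_\Phi(x, U_{A_n}(x))\, d\mu(x) = \int_{A_n} d_\Phi(x, U(x))\, d\mu(x) \leq n\, \mu(A_n) \leq n < +\infty,
\]
since on $A_n$ the function $f$ is bounded by $n$ by construction. This shows $U_{A_n} \in [\Phi]_1$ as required, and no step is really an obstacle here — the only subtlety to keep in mind is the $U$-invariance of $f$, which is exactly what makes the level sets $A_n$ themselves $U$-invariant and reduces the induced transformation to the restriction of $U$.
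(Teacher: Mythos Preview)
Your proof is correct and follows essentially the same approach as the paper: define $A_n$ as the sublevel sets of $x\mapsto d_\Phi(x,U(x))$ on $\supp U$, use that $U$ is an involution and $d_\Phi$ is symmetric to get $U$-invariance of $A_n$, and conclude that $U_{A_n}\in[\Phi]_1$ because the distance is bounded by $n$ on $A_n$. The only cosmetic difference is that the paper uses strict inequality $d_\Phi(x,U(x))<n$ rather than $\leq n$, which is immaterial.
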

\begin{proof}
Let $U\in[\mathcal R_\Phi]$ be an involution and for all $n\in\N$, let $A_n=\{x\in \supp U: d_\Phi(x,U(x))<n\}$. Since $U$ is an involution and $d_\Phi$ is symmetric each $A_n$ is $U$-invariant, and $\bigcup_{n\in\N} A_n=\supp U$, so $\mu(\supp U\setminus A_n)\to 0$. By the definition of $A_n$, each $U_{A_n}$ belongs to $[\Phi]_1$.
\end{proof}

\begin{thm}\label{thm: L1 is dense}
Let $\Phi$ be a graphing. Then $[\Phi]_1$ is dense in $[\mathcal R_\Phi]$ for the uniform topology.
\end{thm}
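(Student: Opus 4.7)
The plan is to show $[\Phi]_1$ is uniformly dense in $[\mathcal R_\Phi]$ via a two-step approximation. The first step generalizes Lemma \ref{manyinvol} from involutions to arbitrary periodic transformations, showing that every periodic element of $[\mathcal R_\Phi]$ lies in the uniform closure of $[\Phi]_1$. The second step is the classical uniform density of periodic elements in $[\mathcal R_\Phi]$, a Rohlin-type result.

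For the first step, given a periodic $T\in [\mathcal R_\Phi]$, I would set
\[ A_n = \left\{ x\in \supp T : \max_{i\in\Z} d_\Phi(T^i(x), T^{i+1}(x))\leq n \right\}. \]
Each $A_n$ is $T$-invariant, since the defining condition is constant along $T$-orbits. Moreover $A_n\uparrow \supp T$: each $T$-orbit is finite, and since $T\in [\mathcal R_\Phi]$ the consecutive $d_\Phi$-distances along $T$-orbits are finite a.e. The transformation $T_n\in [\mathcal R_\Phi]$ that agrees with $T$ on $A_n$ and with the identity on $X\setminus A_n$ is then a measure-preserving bijection whose cocycle is bounded by $n$, so $T_n\in [\Phi]_1$, and $\tilde d_u(T, T_n) = \mu(\supp T\setminus A_n)\to 0$. (When $T$ is an involution, this recovers the construction of Lemma \ref{manyinvol}.)

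For the second step, given $T\in [\mathcal R_\Phi]$ and $\epsilon>0$, I would decompose $X$ into the $T$-invariant aperiodic and periodic parts (on the latter, $T$ is already periodic). On the aperiodic part, Rohlin's lemma yields a tower $A, T(A), \ldots, T^{N-1}(A)$ of disjoint Borel sets whose total measure is at least $1-\epsilon$. Since $T\in [\mathcal R_\Phi]$, the sets $A$ and $T^{N-1}(A)$ share the same $\mathcal R_\Phi$-conditional measure, so Proposition \ref{transitive} provides a partial isomorphism $\varphi\in [[\mathcal R_\Phi]]$ from $T^{N-1}(A)$ onto $A$. Defining $T'$ to equal $T$ on the first $N-1$ levels of the tower and on the periodic part, to equal $\varphi$ on the top level (thereby closing the tower into a cycle of length $N$), and to equal the identity off the tower yields a periodic element $T'\in [\mathcal R_\Phi]$ with $d_u(T, T')<2\epsilon$ for $N$ large. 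Combining both steps concludes the proof. The main obstacle is this second step, a classical Rohlin-type argument whose extension to non-ergodic $[\mathcal R_\Phi]$ requires using Proposition \ref{transitive} to perform the cycle-back within $[[\mathcal R_\Phi]]$.
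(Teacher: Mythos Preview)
Your argument is correct, but the paper's route is shorter. Rather than proving a Rohlin-type density result for periodic elements inside $[\mathcal R_\Phi]$, the paper quotes the fact that full groups are \emph{generated by involutions} (\cite[Lem.~4.5]{MR2583950}). This reduces the problem immediately to approximating a single involution $U\in[\mathcal R_\Phi]$, and then Lemma~\ref{manyinvol} applies verbatim: the sets $A_n=\{x\in\supp U: d_\Phi(x,U(x))<n\}$ are $U$-invariant, exhaust $\supp U$, and $U_{A_n}\in[\Phi]_1$ with $d_u(U_{A_n},U)\to 0$.

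Your Step~1 is a clean generalisation of Lemma~\ref{manyinvol} to arbitrary periodic elements, and your Step~2 is a standard Rohlin approximation; both are fine. Two minor remarks: in Step~2 the appeal to Proposition~\ref{transitive} is unnecessary, since $T^{-(N-1)}$ already maps $T^{N-1}(A)$ onto $A$ within $[[\mathcal R_\Phi]]$; and your approach has the mild advantage of being self-contained, whereas the paper's relies on the external ``generated by involutions'' lemma. Either way the theorem follows.
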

\begin{proof}
Because full groups are generated by involutions (see \cite[Lem. 4.5]{MR2583950}), we only need to show that every involution in $[\mathcal R_\Phi]$ can be approximated by elements of $[\Phi]_1$. Let $U\in[\mathcal R_\Phi]$ be such an involution. 

By the previous lemma we have an increasing sequence $(A_n)$ of $U$-invariant subsets of $\supp U$ such that $\bigcup_{n\in\N} A_n=\supp U$ and for all $n\in\N$ the involution $U_{A_n}$ belongs to $[\Phi]_1$. So $\mu(\supp U\setminus A_n)\to 0$ and we conclude that $d_u(U_{A_n},U)\to 0$ as desired.
\end{proof}

\subsection{The closure of the derived group is topologically generated by involutions}\label{sec: gen closure of derived group with involutions}

As we will see later, given a graphing $\Phi$ and $T\in[\Phi]_1$, the map $A\in\MAlg(X,\mu)\mapsto T_A\in[\Phi]_1$ is not continuous in general. We however have the following very useful statement.

\begin{prop}\label{prop: weak order continuity}
Let $A\subseteq X$, let $\Phi$ be a graphing and take $T\in[\Phi]_1$. If $(A_n)$ is a sequence of subsets of $A$ such that $A=\bigcup_{n\in\N} A_n$ then 
$$T_A=\lim_{n\to+\infty}T_{A_n}.$$
\end{prop}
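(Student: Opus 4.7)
The plan is to reduce the problem to a convergence statement for $T_A$ itself. The key observation is the ``tower-of-inductions'' identity $T_{A_n} = (T_A)_{A_n}$, valid whenever $A_n \subseteq A$: indeed, a return of $x \in A_n$ to $A_n$ under $T$ must pass through $A$, so the least $n \geq 1$ with $T^n(x) \in A_n$ coincides with the least $m \geq 1$ with $T_A^m(x) \in A_n$. Since $T_A \in [\Phi]_1$ by Proposition \ref{prop: induced transfo}, setting $S := T_A$ it suffices to prove $S_{A_n} \to S$ in $[\Phi]_1$. I will also implicitly take the sequence $(A_n)$ to be increasing (the natural reading of the hypothesis, which ensures $\mu(A \setminus A_n) \to 0$).

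Next I estimate $\tilde d_\Phi(S_{A_n}, S) = \int_X d_\Phi(S_{A_n}(x), S(x))\, d\mu(x)$ by splitting $X = (X \setminus A) \sqcup (A \setminus A_n) \sqcup A_n$. The integrand vanishes on $X \setminus A$ since both transformations restrict to the identity there. On $A \setminus A_n$ we have $S_{A_n}(x) = x$, so the contribution is $\int_{A \setminus A_n} d_\Phi(x, S(x))\, d\mu$, which tends to $0$ by dominated convergence applied to the integrable function $x \mapsto d_\Phi(x, S(x))$ (since $\chi_{A \setminus A_n} \to 0$ a.e.). The main work is on $A_n$, which I partition as $A_n = \bigsqcup_{k \geq 1} B_{n,k}$ according to the first return time to $A_n$ under $S$: on $B_{n,1}$ the integrand vanishes, while for $k \geq 2$ on $B_{n,k}$ the telescoping triangle inequality yields
\[
d_\Phi(S_{A_n}(x), S(x)) = d_\Phi(S^k(x), S(x)) \leq \sum_{i=1}^{k-1} d_\Phi(S^i(x), S^{i+1}(x)).
\]

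Integrating term by term and substituting $y = S^i(x)$, using that $S$ is measure-preserving, this converts into a double sum $\sum_{k \geq 2}\sum_{i=1}^{k-1}\int_{S^i(B_{n,k})} d_\Phi(y, S(y))\, d\mu$. A Kakutani--Kac bookkeeping, identical in spirit to the one in the proof of Proposition \ref{prop: induced transfo}, shows that the sets $S^i(B_{n,k})$ for $k \geq 2,\, 1 \leq i \leq k-1$ are pairwise disjoint and tile $A \setminus A_n$ up to null sets, so the double sum is bounded by $\int_{A \setminus A_n} d_\Phi(y, S(y))\, d\mu$, which again tends to zero by dominated convergence. The only delicate point is this tiling of $A \setminus A_n$ by the rungs of the Kakutani tower over $A_n$, but it is standard and follows from Poincar\'e recurrence applied to $S$ on $A$.
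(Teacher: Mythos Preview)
Your proof is correct and rests on the same Kac--Kakutani tower idea as the paper. One minor point: the claim that the sets $S^i(B_{n,k})$ \emph{tile} $A \setminus A_n$ is stronger than you need and may fail (nothing guarantees that $A_n$ meets every $S$-orbit in $A$); but since you only use the upper bound $\sum_{k,i}\int_{S^i(B_{n,k})} \leq \int_{A\setminus A_n}$, disjointness and containment in $A \setminus A_n$ suffice, and both of those do hold.

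The paper's argument packages the same estimate differently: rather than redoing the tower bookkeeping, it invokes Proposition~\ref{prop: induced transfo} as a black box. Writing $B_n = \{x: T_{A_n}(x) \neq T_A(x)\}$, the triangle inequality gives $\tilde d_\Phi(T_{A_n}, T_A) \leq \int_{B_n} d_\Phi(T_{A_n}(x),x) + \int_{B_n} d_\Phi(T_A(x),x)$; then the inequality $\tilde d_\Phi(T_{A_n},\mathrm{id}_X) \leq \tilde d_\Phi(T_A,\mathrm{id}_X)$ from Proposition~\ref{prop: induced transfo}, combined with $T_{A_n} = T_A$ off $B_n$, yields $\int_{B_n} d_\Phi(T_{A_n}(x),x) \leq \int_{B_n} d_\Phi(T_A(x),x)$ in one line, without revisiting the tower. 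Your version is more self-contained; the paper's is more economical given what has already been proved.
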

\begin{proof}
Let $k:X\to \N$ be defined by $T_A(x)=T^{k(x)}(x)$ and similarly for all $n\in\N$  define $k_n:X\to\N$ by $T_{A_n}(x)=T^{k_n(x)}(x)$. Since $\mu(A_n\bigtriangleup A)\to 0$, we have that $k_n\to k$ pointwise. So if we let $B_n=\{x\in X: k(x)\neq k_n(x)\}$, we have $\mu(B_n)\to 0$. By the definition of $B_n$ we get
\begin{align*}\tilde d_\Phi(T_{A_n},T_A)
&=\int_{B_n} d_\Phi(T_{A_n}(x),T_A(x))\end{align*}
The triangle inequality now yields
\begin{align}\label{ineq:bound the distance}
\tilde d_\Phi(T_{A_n},T_A)&\leq \int_{B_n}d_\Phi(T_{A_n}(x),x)+\int_{B_n}d_\Phi(T_A(x),x).
\end{align}

Since $A_n\subseteq A$ we have $(T_A)_{A_n}=T_{A_n}$, so Proposition \ref{prop: induced transfo} yields 
$$\tilde d_\Phi(T_{A_n},\mathrm{id_X})\leq \tilde d_\Phi(T_A,\mathrm{id_X}).$$ By developing the left-hand term we get 
$$\int_{B_n}d_\Phi(T_{A_n}(x),x)+\int_{X\setminus B_n}d_\Phi(T_{A_n}(x),x)\leq\int_X\tilde d_\Phi(T_A(x),x)).$$
Since for all $x\in X\setminus B_n$ we have $T_A(x)=T_{A_n}(x)$, we may rewrite this as
\begin{align*}
\int_{B_n}d_\Phi(T_{A_n}(x),x)&\leq\int_{X}d_\Phi(T_A(x),x)-\int_{X\setminus B_n}d_\Phi(T_A(x),x)\\
\int_{B_n}d_\Phi(T_{A_n}(x),x)&\leq \int_{B_n}d_\Phi(T_A(x),x).
\end{align*}
We now reinject this inequality into inequality (\ref{ineq:bound the distance}) and obtain
\begin{align*}
\tilde d_\Phi(T_{A_n},T_A)&\leq 2\int_{B_n}d_\Phi(T_{A}(x),x).
\end{align*}
By the dominated convergence theorem we now have $\int_{B_n}d_\Phi(T_{A}(x),x)\to 0$ so we  conclude that $d_\Phi(T_{A_n},T_A)\to 0$ as desired.
\end{proof}

The above proposition is a direct consequence of the dominated convergence theorem when the sets $A_n$ are already $T_A$-invariant. We will often use it in this easier case, but the general statement will appear at a crucial point towards proving that the closure of the derived group of $[\Phi]_1$ is topologically generated by involutions. Let us for the moment make sure that every involution belongs to the closure of the derived group. 

\begin{lem}\label{commuinvol}Suppose $\Phi$ is aperiodic. Then every involution in $[\Phi]_1$ belongs to the closure of derived group of $[\Phi]_1$.
\end{lem}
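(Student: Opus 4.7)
My plan is to exhibit $U$ as a commutator $[V_1, g]$ inside $[\Phi]_1$, thereby placing $U$ in $D([\Phi]_1) \subseteq \overline{D([\Phi]_1)}$. As a first reduction, Lemma~\ref{manyinvol} applied to $U \in [\Phi]_1$ yields $U$-invariant sets $A_n \uparrow \supp U$ with each $U_{A_n} \in [\Phi]_1$ of essentially bounded $\Phi$-displacement; since $x \mapsto d_\Phi(x, U(x))$ is integrable, the dominated convergence theorem gives $\tilde d_\Phi(U_{A_n}, U) \to 0$. As $\overline{D([\Phi]_1)}$ is closed, we may therefore assume $d_\Phi(x, U(x)) \leq N$ almost everywhere on $\supp U$ for some integer $N$.

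Pick a Borel transversal so that $\supp U = A \sqcup U(A)$. The aperiodicity of $\mathcal R_\Phi$ combined with Maharam's lemma (Proposition~\ref{maha}) provides a partition $A = A_1 \sqcup A_2$ with $\mu_\Phi(A_1) = \mu_\Phi(A_2)$. Setting $V_i := U|_{A_i \sqcup U(A_i)}$ produces two commuting involutions in $[\Phi]_1$ factoring $U = V_1 V_2$, whose supports are disjoint and of equal $\mathcal R_\Phi$-conditional measure. Now if $g \in [\Phi]_1$ satisfies $g V_1 g^{-1} = V_2$, then the involutive nature of $V_1$ gives
\[
[V_1, g] = V_1 g V_1^{-1} g^{-1} = V_1 (g V_1 g^{-1}) = V_1 V_2 = U,
\]
and we conclude.

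A natural candidate for $g$ is built from any partial $\mathcal R_\Phi$-isomorphism $\psi : A_1 \to A_2$: define $g$ to equal $\psi$ on $A_1$, $\psi^{-1}$ on $A_2$, $U \psi U$ on $U(A_1)$, $U \psi^{-1} U$ on $U(A_2)$, and the identity elsewhere. This is an involution in $[\mathcal R_\Phi]$ intertwining $V_1$ and $V_2$. Because $U$ has bounded displacement $N$, the triangle inequality bounds the total $\Phi$-cost of $g$ by a multiple of $N \mu(A_1) + \int_{A_1} d_\Phi(x, \psi(x))\, d\mu(x)$, so $g \in [\Phi]_1$ as soon as $\psi$ has integrable $\Phi$-displacement.

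The main obstacle is thus producing such a $\psi$. Proposition~\ref{transitive} supplies a partial isomorphism $A_1 \to A_2$ in $[[\mathcal R_\Phi]]$ but with no a priori control on its $\Phi$-cost. To enforce integrable cost I would exploit the countability of $\Phi$-words: the layers $L_n := \{x \in A_1 : d_\Phi(x, A_2) \leq n\}$ exhaust $A_1$ up to measure zero, so $\psi$ can be built piece by piece, at each stage peeling off a subset of $L_n \setminus L_{n-1}$ whose $\mathcal R_\Phi$-conditional measure (trimmed using Maharam's lemma and Proposition~\ref{transitive}) is small enough for $n \cdot \mu(\text{$n$-th piece})$ to be summable. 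Should this direct construction prove too delicate, a fallback is to refine $A$ into many small pairs of equal conditional measure, express $U$ as a product of commutators in $[\Phi]_1$ via local pairings of $\Phi$-close pieces, and then conclude $U \in \overline{D([\Phi]_1)}$ by a $\tilde d_\Phi$-approximation argument.
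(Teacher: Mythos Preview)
Your overall strategy---write $U=V_1V_2$ as a product of two commuting involutions with disjoint supports and then conjugate $V_1$ to $V_2$---is sound, but the proof has a real gap at the construction of $\psi:A_1\to A_2$ with integrable $\Phi$-displacement. Your ``layers'' sketch does not work: the sets $L_n\setminus L_{n-1}$ have measures dictated by the geometry of $A_1,A_2$, and you do not get to trim them to make $\sum_n n\,\mu(L_n\setminus L_{n-1})$ finite while still covering all of $A_1$. Worse, even on $L_n$ the bijection constraint is fatal: many points of $L_n$ may all be $\Phi$-close to the same small pocket of $A_2$, so a bijective $\psi$ is forced to send most of them far away. That this obstruction is genuine is exactly the content of Corollary~\ref{cor: cannot send to disjoint}: for a generic set $A$ there is \emph{no} element of $[T]_1$ sending $A$ onto a disjoint set of the same measure. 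So an integrable $\psi:A_1\to A_2$ need not exist for an arbitrary Maharam halving of $A$, and your argument stalls precisely there. The ``fallback'' paragraph is too vague to rescue this.

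The paper's proof avoids the problem by applying Lemma~\ref{manyinvol} to a \emph{different} involution. Rather than first bounding the displacement of the given involution, it fixes a fundamental domain $B$ of $\supp T=B\sqcup T(B)$, picks (via aperiodicity) an auxiliary involution $\tilde U\in[\mathcal R_\Phi]$ supported on $B$, and then invokes Lemma~\ref{manyinvol} on $\tilde U$ to get $\tilde U$-invariant $B_n\uparrow B$ with $\tilde U_{B_n}\in[\Phi]_1$. From $\tilde U_{B_n}$ and $T$ one assembles an involution $U_n\in[\Phi]_1$ (equal to $\tilde U$ on $B_n$ and to $T\tilde U T$ on $T(B_n)$), and checks that $[T_n,U_n]$ is exactly the restriction of $T$ to the $T$-invariant set $B_n\sqcup T(B_n)$, which increases to $\supp T$; Proposition~\ref{prop: weak order continuity} then gives $[T_n,U_n]\to T$. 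The point is that the conjugating data comes for free with bounded displacement from Lemma~\ref{manyinvol}, at the price of only realising a \emph{restriction} of $T$ as a commutator at each finite stage---but that is all one needs to land in the closure.
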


\begin{proof}
Let $T\in [\Phi]_1$ be an involution. We can find $B\subseteq X$ such that $\supp T= B\sqcup T(B)$. By aperiodicity, there is an involution $\tilde U \in [\mathcal R_\Phi]$ such that $\supp \tilde U=B$ (see Lemma \ref{lem: invol of any support}). We then find $C\subseteq B$ such that $B=C\sqcup \tilde U(C)$. Lemma \ref{manyinvol} then yields an increasing sequence of $\tilde U$-invariant sets $B_n$ such that $B=\bigcup_{n\in\N} B_n$, and for all $n\in\N$, $\tilde U_{B_n}\in [\Phi]_1$. We let $C_n=C\cap B_n$, and $T_n:=T_{C_n\sqcup T(C_n)}$. 

Now we define an involution $U_n$ in the $\LL^1$ full group of $\Phi$ by 
$$U_n(x):=\left\{\begin{array}{cl}\tilde U(x) & \text{if }x\in{B_n\sqcup U(B_n)}, \\T\tilde UT(x) & \text{if }x\in{T(B_n\sqcup U(B_n))}, \\x & \text{else.}\end{array}\right.$$
By construction, the commutator $[T_n,U_n]$ is the tranformation induced by $T$ on the $T$-invariant set $A_n:=B_n\sqcup T(B_n)$, and we have $\bigcup_{n\in\N} A_n=\supp T$. We conclude by the previous proposition that $[T_n,U_n]\to T$ as desired. 
\end{proof}

\begin{rmq}I do not know wether involutions actually belong to the derived group itself. As a more general question, when does it happen that $D([\Phi]_1)$ is closed? Note that full groups of measure-preserving equivalence relations (which are a special case of $\LL^1$ full groups by \ref{ex: full group of R}) are perfect if and only if the equivalence relation is aperiodic (see \cite[Prop. 3.6]{lemai2014}). 
\end{rmq}

We will now gradually establish that conversely, the closure of $D([\Phi]_1)$ is topologically generated by involutions. 

\begin{lem}\label{lem: periodic in derived group} Let $\Phi$ be an aperiodic graphing. Then every periodic element of $[\Phi]_1$ belongs to the closed subgroup of $[\Phi]_1$ generated by involutions and hence to the closure of the derived group of $[\Phi]_1$.
\end{lem}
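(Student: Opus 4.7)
The plan is to exhibit every periodic $T\in[\Phi]_1$ as a limit of finite products of involutions belonging to $[\Phi]_1$; the second assertion then follows from Lemma \ref{commuinvol}. First, stratify the support by period: for $n\geq 2$ set
\[
X_n=\{x\in X: T^n(x)=x\text{ and }T^k(x)\neq x\text{ for }1\leq k<n\},
\]
which is a $T$-invariant Borel set, and $\supp T=\bigsqcup_{n\geq 2}X_n$. Using a fixed Borel total order on $X$, choose a Borel fundamental domain $F_n\subseteq X_n$ so that $X_n=F_n\sqcup T(F_n)\sqcup\cdots\sqcup T^{n-1}(F_n)$.

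Second, for each $n\geq 2$ and each $0\leq k\leq n-2$, define $\sigma_k^{(n)}\in\Aut(X,\mu)$ to be the involution which, on each $T$-orbit inside $X_n$, swaps $T^k(y)$ with $T^{k+1}(y)$ for $y\in F_n$, and which is the identity off $X_n$. The classical factorization of the $n$-cycle as adjacent transpositions, $(1\ 2\ \cdots\ n)=\sigma_0\sigma_1\cdots\sigma_{n-2}$ (read right-to-left), transcribes orbitwise to the identity $T|_{X_n}=\sigma_0^{(n)}\circ\cdots\circ\sigma_{n-2}^{(n)}$, where $T|_{X_n}$ is extended by the identity off $X_n$. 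On the support of $\sigma_k^{(n)}$, we have $\sigma_k^{(n)}(x)\in\{T(x),T^{-1}(x)\}$, hence
\[
d_\Phi(\sigma_k^{(n)}(x),x)\leq d_\Phi(T(x),x)+d_\Phi(T^{-1}(x),x),
\]
so $\tilde d_\Phi(\sigma_k^{(n)},\mathrm{id}_X)\leq 2\tilde d_\Phi(T,\mathrm{id}_X)<+\infty$, confirming $\sigma_k^{(n)}\in[\Phi]_1$.

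Third, set $A_N:=\bigsqcup_{n=2}^N X_n$, so the sets $A_N$ increase to $\supp T$. Since each $X_n$ is $T$-invariant, $A_N$ is $T$-invariant, so the induced transformation $T_{A_N}$ simply equals $T$ on $A_N$ and the identity off $A_N$; it therefore coincides with the finite product
\[
T_{A_N}=\prod_{n=2}^N\prod_{k=0}^{n-2}\sigma_k^{(n)},
\]
where the inner products have disjoint supports. Noting that $T_{\supp T}=T$, Proposition \ref{prop: weak order continuity} applied with $A=\supp T$ yields $T_{A_N}\to T$ in $[\Phi]_1$. Consequently $T$ lies in the closed subgroup topologically generated by involutions of $[\Phi]_1$, which by Lemma \ref{commuinvol} is contained in $\overline{D([\Phi]_1)}$.

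The main point is that the factorizing involutions only displace points by $T^{\pm 1}$, so the $\LL^1$-cost of each $\sigma_k^{(n)}$ is absorbed uniformly by that of $T$; this is what makes the approach compatible with $\LL^1$ integrability. The only nontrivial convergence step is the passage from bounded periods to unbounded ones, and this is exactly what Proposition \ref{prop: weak order continuity} is tailored for.
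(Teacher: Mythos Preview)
Your proof is correct and follows essentially the same approach as the paper: both arguments stratify by period and use Proposition \ref{prop: weak order continuity} to reduce to cycles, then factor each $n$-cycle as a product of $n-1$ adjacent-transposition involutions (each moving points by $T^{\pm 1}$, hence in $[\Phi]_1$), and finally invoke Lemma \ref{commuinvol} for the derived-group conclusion. Your write-up is slightly more explicit about the integrability estimate for the $\sigma_k^{(n)}$, but the structure is the same.
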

\begin{proof}
Let $U\in [\Phi]_1$ be periodic, and for all $n\in\N^*$ let $A_n$ denote the $U$-invariant set of all $x\in X$ whose $U$-orbit has cardinality $n$. By Proposition \ref{prop: weak order continuity}, we have $$U=\lim_{n\to+\infty}U_{\bigcup_{i=1}^nA_i}=\lim_{n\to+\infty}\prod_{i=2}^n U_{A_i}.$$
so it suffices to prove the statement for cycles, i.e. elements $U$ of $[\Phi]_1$ for which there is $n\in\N$ such that every non-trivial $U$-orbit has cardinality $n$. If $U\in[\Phi]_1$ is such an element, we may find $A\subseteq X$ such that 
$$\supp U=\bigsqcup_{i=1}^{n}U ^i(A).$$
For $i\in\{1,...,n-1\}$ we  let $U_i\in[\Phi]_1$ be the involution defined by 
$$U_i(x)=\left\{\begin{array}{cl}U(x) & \text{if }x\in U ^{i}(A) \\U\inv(x) & \text{if }x\in U^{i+1}(A) \\x & \text{otherwise.}\end{array}\right.$$
It now follows from the well-known identity in symmetric groups $$(1\quad 2\;\cdots\; n)=(1\quad 2)(2\quad 3)\cdots(n-1\quad n)$$ that $U=U_1U_2\cdots U_{n-1}$, which proves the first part of the lemma. The ‘‘as well as'' part follows from Lemma \ref{commuinvol}.
\end{proof}

\begin{lem}Let $\Phi$ be an aperiodic graphing. The $\LL^1$ full group $[\Phi]_1$ is generated by elements whose support has $\Phi$-conditional measure everywhere less than $1/4$. 
\end{lem}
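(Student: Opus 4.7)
The plan is to express any $T \in [\Phi]_1$ as a finite product of elements whose supports have $\Phi$-conditional measure everywhere below $1/4$, by iteratively ``eating up'' a definite fraction of $\supp T$ using products of small-support involutions. The main tools will be Proposition \ref{prop: small rohlin tower} and Maharam's lemma (Proposition \ref{maha}).

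First I would apply Proposition \ref{prop: small rohlin tower} to $T$ to obtain a partition $\supp T = A_1 \sqcup A_2 \sqcup B_1 \sqcup B_2 \sqcup B_3$ with $T(A_1)=A_2$, $T(B_1)=B_2$ and $T(B_2)=B_3$. Setting $P = A_1 \sqcup B_1$, the set $T(P) = A_2 \sqcup B_2$ is disjoint from $P$, and the relations $\mu_\Phi(\supp T) = 2\mu_\Phi(A_1) + 3\mu_\Phi(B_1)$ and $\mu_\Phi(P) = \mu_\Phi(A_1) + \mu_\Phi(B_1)$ yield the pointwise inequality $\mu_\Phi(P) \geq \mu_\Phi(\supp T)/3$. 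Moreover $\mu_\Phi(P) \leq 1/2$ since $P \sqcup T(P) \subseteq \supp T$.

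Using Maharam's lemma on the aperiodic equivalence relation $\mathcal R_\Phi$, I would then partition $P$ into five pieces $P^{(1)},\dots,P^{(5)}$ of equal $\Phi$-conditional measure $\mu_\Phi(P)/5 \leq 1/10$, and define for each $k$ the involution $U_k \in [\Phi]_1$ swapping $P^{(k)}$ with $T(P^{(k)})$ via $T$ (that is, $U_k = T$ on $P^{(k)}$, $U_k = T^{-1}$ on $T(P^{(k)})$, and identity elsewhere). Each $U_k$ lies in $[\Phi]_1$ because its cocycle is dominated by the one of $T$, and its support satisfies $\mu_\Phi(\supp U_k) = 2 \mu_\Phi(P^{(k)}) \leq 1/5 < 1/4$. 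The involutions $U_k$ have pairwise disjoint supports, so they commute, and $W := U_1 \cdots U_5$ is again an involution. A direct check shows that $WT$ is the identity on $P$: for $x \in P^{(k)}$ the point $T(x) \in T(P^{(k)})$ is untouched by the other $U_j$'s, and $U_k$ then sends it back to $x$. Therefore $\supp(WT) \subseteq \supp T \setminus P$ and consequently $\mu_\Phi(\supp(WT)) \leq (2/3)\mu_\Phi(\supp T)$ pointwise.

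Finally, iterating the construction on $WT$, and recording that $T = W(WT)$ because $W$ is an involution, one obtains after at most four iterations a factorization $T = W_0 W_1 W_2 W_3 \cdot T_4$ in which each $W_i$ is a product of five involutions with $\Phi$-conditional support $< 1/4$, and $\mu_\Phi(\supp T_4) \leq (2/3)^4 < 1/4$ everywhere. Since $T_4$ itself is then a valid generator, this presents $T$ as a product of at most twenty-one elements with supports of $\Phi$-conditional measure $< 1/4$. The main subtlety is guaranteeing that the reduction $\mu_\Phi(\supp(WT)) \leq (2/3)\mu_\Phi(\supp T)$ holds \emph{uniformly} in $x$ so that the iteration terminates in a uniformly bounded number of steps; this is precisely where the pointwise bound $\mu_\Phi(P) \geq \mu_\Phi(\supp T)/3$ extracted from Proposition \ref{prop: small rohlin tower} is crucial.
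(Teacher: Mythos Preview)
Your proof is correct and takes a genuinely different route from the paper's. The paper first disposes of periodic elements separately (splitting a fundamental domain into small pieces), then writes a general $T$ as $T_pT'$ with $T_p$ periodic and $T'$ aperiodic on its support, and finally uses the induced transformation $T'_A$ on a set $A\subseteq\supp T'$ of small $T'$-conditional measure to factor $T' = \bigl(T'(T'_A)^{-1}\bigr)\,T'_A$ into a periodic element times one small-support element. Your argument instead treats all $T$ uniformly: Proposition~\ref{prop: small rohlin tower} gives a set $P$ covering at least one third of $\supp T$ (in $\Phi$-conditional measure) on which $T$ can be undone by five commuting involutions of support $\leq 1/5$, and a four-fold iteration finishes. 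This avoids both induced transformations and the separate treatment of the periodic case, and yields an explicit bound (at most $21$ factors), whereas the paper's argument is shorter once the machinery of induced maps (Proposition~\ref{prop: induced transfo}) is in place but gives no explicit bound and requires the periodic case to be fleshed out. One small terminological point: your phrase ``its cocycle is dominated by the one of $T$'' is specific to the $\Z$-action setting; here you should rather say that $d_\Phi(x,U_k(x))$ coincides with $d_\Phi(x,T(x))$ or $d_\Phi(T^{-1}(x),x)$ on $\supp U_k$, which makes the integrability immediate.
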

\begin{rmq}As the proof shows, $1/4$ can be replaced by any $\epsilon>0$, but we won't need that.
\end{rmq}

\begin{proof}
First note that by breaking a fundamental domain into pieces of small conditional measure (using Maharam's lemma \ref{maha}), one can show every periodic element is the product of elements whose support has $\Phi$-conditional measure everywhere less than $1/4$. 

Now if $T\in[\Phi]_1$, one may write $T=T_pT'$ where $T_p$ is periodic and all the non-trivial $T'$-orbits are infinite. By Maharam's lemma we then find $A\subseteq \supp T'$  such that for all $x\in \supp T'$, we have $$0<\mu_{T'}(A)(x)<1/4.$$
Then $A$ intersects almost every non-trivial $T'$-orbit so that $T'{T'}_A\inv$ is periodic. Since $A$ has $T'$-conditional measure everywhere less than $1/4$, the same is true of its $\Phi$-conditional measure. So the support of ${T'}_A$ has $\Phi$-conditional measure everywhere less than $1/4$ as wanted, and $T=T_p(T'{T'}_A\inv){T'}_A$ can be written as a product of elements whose support has $\Phi$-conditional measure everywhere less than $1/4$. 
\end{proof}

\begin{thm}\label{thm: invol gen}Let $\Phi$ be an aperiodic graphing. Then the closure of the derived group of $[\Phi]_1$ is topologically generated by involutions.
\end{thm}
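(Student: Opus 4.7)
The plan is as follows. Let $H$ denote the closed subgroup of $[\Phi]_1$ topologically generated by its involutions; by Lemma \ref{commuinvol} we already have $H \subseteq \overline{D([\Phi]_1)}$, and the task is to establish the reverse inclusion. Since the set of involutions is closed under conjugation in $[\Phi]_1$, the subgroup $H$ is normal, so $[\Phi]_1/H$ is a topological group; the reverse inclusion is therefore equivalent to showing that $[\Phi]_1/H$ is \emph{abelian}.

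To prove that $[\Phi]_1/H$ is abelian I will invoke the commutator identity $[g_1 g_2, h] = (g_1 [g_2, h] g_1^{-1}) \cdot [g_1, h]$ (and its analogue in the second argument). Combined with the normality of $H$ and the previous lemma, this reduces the problem to showing $[g, h] \in H$ for every pair $g, h \in [\Phi]_1$ whose supports $A_g, A_h$ both have $\Phi$-conditional measure everywhere less than $1/4$. Given such $g, h$, since $\mu_\Phi(A_g) + \mu_\Phi(A_h) < 1/2$, Maharam's Lemma (Proposition \ref{maha}) yields a Borel set $B$ disjoint from $A_g \cup A_h$ with $\mu_\Phi(B) = \mu_\Phi(A_g)$, and Proposition \ref{transitive} provides a partial isomorphism $\alpha : A_g \to B$ in $[[\mathcal{R}_\Phi]]$, which I promote to an involution $\sigma_0 \in [\mathcal{R}_\Phi]$ of support $A_g \sqcup B$. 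A priori $\sigma_0 \notin [\Phi]_1$, but Lemma \ref{manyinvol} provides an exhausting sequence of $\sigma_0$-invariant sets $C_n \uparrow A_g \sqcup B$ such that $\sigma_n := (\sigma_0)_{C_n} \in [\Phi]_1$.

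The algebraic heart of the argument is then the following. Set $g_n := g_{A_g \cap C_n} \in [\Phi]_1$ (using Proposition \ref{prop: induced transfo}); its support is contained in $A_g \cap C_n$, so $\sigma_n g_n \sigma_n^{-1}$ has support in $\sigma_n(A_g \cap C_n) = B \cap C_n \subseteq B$, which is disjoint from $A_h$, and hence $\sigma_n g_n \sigma_n^{-1}$ commutes with $h$. From the identity $g_n = [g_n, \sigma_n] \cdot (\sigma_n g_n \sigma_n^{-1})$ and the commutator formula above, one obtains
\[
[g_n, h] = [[g_n, \sigma_n], h].
\]
Now $[g_n, \sigma_n] = (g_n \sigma_n g_n^{-1}) \cdot \sigma_n$ is a product of two involutions in $[\Phi]_1$, so belongs to $H$; normality of $H$ then gives $[[g_n, \sigma_n], h] \in H$, whence $[g_n, h] \in H$. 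Finally, Proposition \ref{prop: weak order continuity} ensures $g_n \to g$ in $[\Phi]_1$, so $[g_n, h] \to [g, h]$, and closedness of $H$ yields $[g, h] \in H$.

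The main technical obstacle I anticipate is controlling integrability throughout the construction: the original involution $\sigma_0$ built from Dye's theorem need not sit inside $[\Phi]_1$, and the conjugate $g_n \sigma_n g_n^{-1}$ appearing in $[g_n, \sigma_n]$ must still be verified to belong to $[\Phi]_1$. The former point is handled by the exhaustion in Lemma \ref{manyinvol}, and the latter by a triangle-inequality computation expressing $d_\Phi(x, g_n \sigma_n g_n^{-1}(x))$ as a sum of three integrable terms coming from $g_n$, $\sigma_n$, and $g_n^{-1}$.
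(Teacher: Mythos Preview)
Your proposal is correct and follows essentially the same strategy as the paper's proof: reduce via the preceding lemma to commutators $[g,h]$ of elements with supports of small $\Phi$-conditional measure, displace the support of one of them via an involution in $[\mathcal R_\Phi]$, approximate that involution inside $[\Phi]_1$ using Lemma~\ref{manyinvol}, and then exhibit the (approximated) commutator as lying in $H$ by the ``product of two involutions'' trick. The only cosmetic differences are that you displace only $\supp g$ (the paper displaces $\supp g\cup\supp h$) and hence only need $g_n\to g$ via Proposition~\ref{prop: weak order continuity}; also, your worry at the end about $g_n\sigma_n g_n^{-1}$ lying in $[\Phi]_1$ is a non-issue, since $[\Phi]_1$ is a group and conjugation preserves involutions.
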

\begin{proof}
Denote by $\mathcal S$ the set of $T\in[\Phi]_1$ whose support has $\Phi$-conditional measure everywhere less than $1/4$. By the previous lemma $\mathcal S$ generates $[\Phi]_1$. Since $\mathcal S$ is moreover invariant under conjugacy, it suffices to show that the commutator of any two elements of $S$ belongs to the closure of the subgroup generated by involutions. 

So let $T,U\in \mathcal S$ and let $B=\supp T\cup \supp U$. There exists an involution $V\in[\mathcal R_\Phi]$ such that $B$ and $V(B)$ are disjoint and $\supp V=B\sqcup V(B)=:A$. By Lemma \ref{manyinvol}
there exists an increasing sequence $(A_n)$ of $V$-invariant subsets of $A$ such that $V_{A_n}\in[\Phi]_1$ and $\bigcup A_n=A$. 

By Proposition \ref{prop: weak order continuity} we have $T_{A_n}\to T$ and $U_{A_n}\to U$, so it suffices to show that for every $n\in\N$, the commutator $[T_{A_n},U_{A_n}]$ belongs to the closed group generated by involutions. 

To this end, we fix $n\in\N$ and observe that the set $\supp T_{A_n}\cup \supp U_{A_n}$ is disjoint from 
$V_{A_n}(\supp T_{A_n}\cup \supp U_{A_n})$. It follows that if we let $T'_n=V_{A_n}T_{A_n}\inv V_{A_n}$, then $T'_n$ commutes with both $T_{A_n}$ and $U_{A_n}$. So we have 
\begin{align*}[T_{A_n},U_{A_n}]&=[T_{A_n}T'_n,U_{A_n}]\\
&=[(T_{A_n}V_{A_n}T_{A_n}\inv)V_{A_n},U_{A_n}]
\end{align*} 
and since taking a commutator means multiplying by a conjugate of the inverse, this equality implies $[T_{A_n},U_{A_n}]$ is a product of involutions in $[\Phi]_1$. As explained before $[T_{A_n},U_{A_n}]\to[T,U]$ so this ends the proof. 
\end{proof}

As a consequence, we can easily derive that the closure of the derived group is connected. 

\begin{cor}\label{cor: derived group is connected}Let $\Phi$ be an aperiodic graphing. Then $\overline{D([\Phi]_1)}$ is connected.
\end{cor}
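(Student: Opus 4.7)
My plan is to exploit Theorem \ref{thm: invol gen}: since $\overline{D([\Phi]_1)}$ is topologically generated by involutions, and since the connected component $G_0$ of the identity in any topological group is a closed subgroup, it suffices to show that every involution $U\in[\Phi]_1$ lies in $G_0$. Indeed, once this is established, the closed subgroup generated by the involutions is contained in $G_0$, so $\overline{D([\Phi]_1)}\subseteq G_0$, which forces $\overline{D([\Phi]_1)}$ to be connected. Note we are entitled to consider such involutions as members of $\overline{D([\Phi]_1)}$ by Lemma \ref{commuinvol}.

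To place an involution $U\in[\Phi]_1$ in $G_0$, I will build an explicit continuous path $t\mapsto U_t$ in $\overline{D([\Phi]_1)}$ from the identity to $U$. Pick a Borel set $B$ such that $\supp U = B\sqcup U(B)$. Since $(X,\mu)$ is a non-atomic standard probability space, so is $(B,\mu\restriction_B)$ (when $\mu(B)>0$; otherwise $U=\mathrm{id}$ and there is nothing to prove), and we can choose an increasing family $(B_t)_{t\in[0,1]}$ of Borel subsets of $B$ with $B_0=\emptyset$, $B_1=B$, and $\mu(B_t\setminus B_s)=(t-s)\mu(B)$ for $s\leq t$. Define $U_t\in[\Phi]_1$ by
$$U_t(x)=\begin{cases} U(x) & \text{if }x\in B_t\sqcup U(B_t),\\ x & \text{otherwise.}\end{cases}$$
Each $U_t$ is an involution in $[\Phi]_1$, and by Lemma \ref{commuinvol} each $U_t$ lies in $\overline{D([\Phi]_1)}$.

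It only remains to check that $t\mapsto U_t$ is continuous. For $s\leq t$, the maps $U_t$ and $U_s$ agree on $(B_s\sqcup U(B_s))\cup(X\setminus(B_t\sqcup U(B_t)))$ and differ on $(B_t\setminus B_s)\sqcup U(B_t\setminus B_s)$, where one is $U$ and the other is the identity. Using the fact that $U$ is measure-preserving and $d_\Phi$ is symmetric, we get
$$\tilde d_\Phi(U_t,U_s)=2\int_{B_t\setminus B_s}d_\Phi(x,U(x))\,d\mu(x).$$
Since $U\in[\Phi]_1$, the function $x\mapsto d_\Phi(x,U(x))$ is integrable, so by absolute continuity of the integral the right-hand side tends to $0$ as $\mu(B_t\setminus B_s)\to 0$, which is what we need. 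Thus $U_0=\mathrm{id}$ and $U_1=U$ are joined by a continuous path in $\overline{D([\Phi]_1)}$, so $U\in G_0$, completing the argument.

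There is no serious obstacle in this plan: once Theorem \ref{thm: invol gen} is in hand, the only work is the construction of the path, and the slight subtlety there is merely pointing to the standard non-atomic probability structure on $B$ to guarantee the existence of the increasing family $(B_t)$.
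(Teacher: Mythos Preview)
Your proof is correct and follows essentially the same approach as the paper: both invoke Theorem \ref{thm: invol gen}, pick a fundamental domain for an involution $U$, build an increasing family of subsets indexed by $[0,1]$, and use the resulting path of restricted involutions $U_t$ to connect $\mathrm{id}_X$ to $U$. The only cosmetic differences are that you explicitly invoke Lemma \ref{commuinvol} to keep each $U_t$ inside $\overline{D([\Phi]_1)}$ and justify continuity via absolute continuity of the integral, whereas the paper cites dominated convergence.
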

\begin{proof}
By the previous theorem it suffices to show that the group generated by involutions in $[\Phi]_1$ is connected, and we will actually prove that it is path connected. Let $U\in[\Phi]_1$ be an involution and let $A$ be a fundamental domain for $U$. There is an increasing family of Borel subsets $(A_t)_{t\in[0,1]}$ such that $A_{0}=\emptyset$ and $A_1=A$ and for all $t\in[0,1]$, $\mu(A_t)=t\mu(A)$. 

Then by the dominated convergence theorem the family of involutions $U_t:=U_{A_t\cup U(A_t)}$ is a continuous path from $U_0=\mathrm{id_X}$ to $U_1=U$. So every involution in $[\Phi]_1$ is connected by a path to the identity, and we conclude that the group generated by involutions is path connected as desired. 
\end{proof}

For ergodic $\Z$-actions, $\overline{D([T]_1)}$ is actually the connected component of the identity (this follows from Corollary \ref{cor:index take values in Z} along with the above result). \\

We end this section by refining the fact that the closure of the derived group is topologically generated by involutions so as to obtain the same statements that Kittrell and Tsankov had obtained for full groups in \cite[Sec. 4.2]{MR2599891}. The proof is actually the same as Kittrell and Tsankov's modulo the use of the dominated convergence theorem and we reproduce it here for the reader's convenience. 

Let $\Phi$ be a graphing and $\varphi\in\Phi$. If $A$ is a Borel subset of $\dom\varphi$ such that $A\cap \varphi(A)=\emptyset$, we denote by $I_{\varphi,A}$ the involution in $\Aut(X,\mu)$ defined by: for all $x\in X$,
$$I_{\varphi,A}(x)=
\left\{\begin{array}{cl}\varphi(x) & \text{if }x\in A \\\varphi\inv(x) & \text{if }x\in \varphi(A) \\x & \text{otherwise.}\end{array}\right.$$
Note that $I_{\varphi,A}\in [\Phi]_1$ since $d_\Phi(x,I_{\varphi,A}(x))\leq 1$ for all $x\in X$. 

\begin{thm}\label{thm: topo gen by involutions induced by graphing}Let $\Phi$ be an aperiodic graphing. Then the group $\overline{D([\Phi]_1)}$ is topologically generated by the set
$$\left\{I_{\varphi,A}: \varphi\in\Phi, A\subseteq \dom\varphi\text{ and } A\cap \varphi(A)=\emptyset\right\}.$$
\end{thm}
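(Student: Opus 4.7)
The plan is to adapt Kittrell and Tsankov's strategy for full groups, inserting the dominated convergence theorem where pointwise continuity arguments are no longer available.

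By Theorem~\ref{thm: invol gen} it suffices to show that every involution $U \in [\Phi]_1$ lies in the closed subgroup $G$ of $[\Phi]_1$ topologically generated by the elements $I_{\varphi, A}$. Given such a $U$, for each $n \in \N$ set $A_n := \{x \in \supp U : d_\Phi(x, U(x)) \leq n\}$. Since $d_\Phi$ is symmetric each $A_n$ is $U$-invariant, and $A_n \nearrow \supp U$, so by dominated convergence
\[
\tilde d_\Phi(U_{A_n}, U) \;=\; \int_{\supp U \setminus A_n} d_\Phi(x, U(x))\, d\mu(x) \;\longrightarrow\; 0.
\]
Thus we may assume there exists $N$ with $d_\Phi(x, U(x)) \leq N$ for almost every $x \in \supp U$.

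Fix a Borel linear order on $X$ and let $B := \{x \in \supp U : x < U(x)\}$ be a Borel fundamental domain for $U$ restricted to $\supp U$. Enumerate the $\Phi^{\pm 1}$-words of length at most $N$ in nondecreasing order of length as $(w_n)_{n \in \N}$, and partition $B = \bigsqcup_n B_n$ by letting $B_n$ consist of those $x \in B$ for which $w_n$ is the first word of the enumeration satisfying $w_n(x) = U(x)$. On $B_n$ the word $w_n = \psi_k \cdots \psi_1$ thus realizes $U$ with minimal length. For every pair $0 \leq a < b \leq k$, the partial isomorphism $\tau_{a,b} := (\psi_b \cdots \psi_1)^{-1} \circ (\psi_a \cdots \psi_1)$ must be fixed-point-free a.e.\ on $B_n$: any fixed point would produce the strictly shorter word $\psi_k \cdots \psi_{b+1}\psi_a \cdots \psi_1$ still realizing $U$, contradicting minimality. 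A standard Borel combinatorial argument applied to the finite family $\{\tau_{a,b}\}$ (iterating Proposition~\ref{prop: small rohlin tower}, for instance) then produces a countable Borel refinement $B_n = \bigsqcup_j B_{n,j}$ such that for every $j$ the $k+1$ sets $C_i := \psi_i \cdots \psi_1(B_{n,j})$, $i = 0, \ldots, k$, are pairwise disjoint.

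On each piece $B_{n,j}$ set $V_i := I_{\psi_i, C_{i-1}} \in G$ for $i = 1, \ldots, k$. Using the disjointness of the $C_i$, a direct case-by-case verification shows that the palindromic product
\[
V_1\, V_2 \cdots V_{k-1}\, V_k\, V_{k-1} \cdots V_2\, V_1
\]
equals the involution $U_{n,j}$ which coincides with $U$ on $B_{n,j} \cup U(B_{n,j})$ and with the identity elsewhere; in particular $U_{n,j} \in G$. The involutions $U_{n,j}$ have pairwise disjoint supports and hence commute, so for any finite set $F$ of indices the product $P_F := \prod_{(n,j) \in F} U_{n,j}$ belongs to $G$. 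As $F$ exhausts all indices, a second application of the dominated convergence theorem yields $\tilde d_\Phi(P_F, U) \to 0$, and the closedness of $G$ gives $U \in G$. The main obstacle is the Borel refinement guaranteeing pairwise disjointness of the $k+1$ path sets $C_i$; the factorization on each piece and both invocations of dominated convergence are routine.
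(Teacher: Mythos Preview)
Your proof is correct and follows essentially the same route as the paper's: reduce to involutions via Theorem~\ref{thm: invol gen}, partition a fundamental domain according to a minimal $\Phi$-word realizing $U$, refine so that the successive images along the word are pairwise disjoint, and then factor the resulting involution $I_{w,C}$ as a palindrome in the elementary involutions $I_{\psi_i,C_{i-1}}$. The only cosmetic differences are that you add a preliminary (unnecessary but harmless) truncation to bounded $d_\Phi$, and you write the palindromic product out explicitly whereas the paper obtains it by unwinding the recursion $I_{\varphi_i\cdots\varphi_k,C}=I_{\varphi_i,\varphi_{i+1}\cdots\varphi_k(C)}\,I_{\varphi_{i+1}\cdots\varphi_k,C}\,I_{\varphi_i,\varphi_{i+1}\cdots\varphi_k(C)}$; for the refinement step the paper invokes \cite[Lem.~5.2]{Eisenmann:2014oq} rather than Proposition~\ref{prop: small rohlin tower}, but the underlying argument is the same standard one.
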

\begin{proof}
By Theorem \ref{thm: invol gen}, we only need to show that involutions belong to the group topologically generated by the set 
$$\left\{I_{\varphi,A}: \varphi\in\Phi, A\subseteq \dom\varphi\text{ and } A\cap \varphi(A)=\emptyset\right\}.$$
So let $U\in[\Phi]_1$ be an involution, and let $A$ be a fundamental domain for the restriction of $U$ to its support. Let $(w_n)$ enumerate the $\Phi$-words, and for each $n\in\N$ let 
\begin{align*}
A_n=&\{x\in A: U(x)=w_n(x)\text{ and for all }m\in\N\text{ if }U(x)=w_m(x) \\ &\text{ then either }\abs{w_m}\geq \abs{w_n}\text{ or }m\geq n\}
\end{align*}
Then $(A_n)$ forms a partition of $A$ and for all $n\in\N$, if $w_n=\varphi_1\cdots\varphi_k$, then for all $x\in A_n$  we have that for all $i\neq j\in\{1,...,k\}$ $$\varphi_i\cdots \varphi_k(x)\neq \varphi_j\cdots\varphi_k(x)$$ by minimality of the length of $w_n$. By the dominated convergence theorem, 
$$U=\lim_{N\to+\infty}\prod_{n=0}^N I_{w_n,A_n},$$
so we may actually assume that $U=I_{\varphi_1\cdots\varphi_k,B}$ for some $B\subseteq X$ and $\varphi_1,...,\varphi_k\in\Phi^{\pm 1}$ such that
for all $i\neq j\in\{1,...,k\}$ and all $x\in B$, $\varphi_i\cdots \varphi_k(x)\neq \varphi_j\cdots\varphi_k(x)$.

 Since $X$ is a standard Borel space, a well-known argument yields that we may actually find a partition $(B_{m})_{m\in\N}$ of $B$ such that for all $m\in\N$ and all $i\neq j\in\{1,...,k\}$, 
$$\varphi_i\cdots \varphi_k(B_m)\cap \varphi_j\cdots\varphi_k(B_m)=\emptyset$$
(see for instance \cite[Lem. 5.2]{Eisenmann:2014oq}). 
Again by the dominated convergence theorem 
$$U=\lim_{N\to+\infty}\prod_{n=0}^N I_{\varphi_1\cdots\varphi_k,B_m},$$
so we may actually assume that $U=I_{\varphi_1\cdots\varphi_k,C}$ for some $C\subseteq X$ such that for  all $i\neq j\in\{1,...,k\}$, 
$$\varphi_i\cdots \varphi_k(C)\cap \varphi_j\cdots\varphi_k(C)=\emptyset$$
Then note that for all $m\in\N$ and $i\in\{1,...,k-1\}$, the identity $$(i-1\quad k)=(i-1\quad i)(i\quad k)(i-1\quad i)$$ in the symmetric group over $\{0,...,k\}$ yields 
$$I_{\varphi_i\cdots\varphi_k,C}=I_{\varphi_i, \varphi_{i+1}\cdots \varphi_k(C)}I_{\varphi_{i+1}\cdots\varphi_k,C}I_{\varphi_i, \varphi_{i+1}\cdots \varphi_k(C)}$$
so that by a downward recurrence on $i\in\{1,...,k\}$ we have that $I_{\varphi_i\cdots\varphi_k,C}$ belongs to the group generated by 
$$\left\{I_{\varphi,A}: \varphi\in\Phi, A\subseteq \dom\varphi\text{ and } A\cap \varphi(A)=\emptyset\right\}.$$
In particular, $U=I_{\varphi_1\cdots\varphi_k,C}$ belongs to such a group, as wanted.
\end{proof}

\subsection{Dye's reconstruction theorem}

We now show that $\LL^1$ full groups as abstract groups can only be isomorphic through a measure-preserving transformation, which is a key step in order to see than $\LL^1$ full groups of ergodic $\Z$-actions are a complete invariant of flip conjugacy. Fortunately, the main result that we need for this has already been proven by Fremlin in a much more general context. 

\begin{df}[Fremlin] A subgroup $G\leq \Aut(X,\mu)$ \textbf{has many involutions} if for every Borel $A\subseteq X$ non-null, there exists a non-trivial involution $U\in G$ such that $\supp U\subseteq A$. 
\end{df}

\begin{prop}\label{prop: chara many involutions}
Let $\Phi$ be a graphing. The following are equivalent
\begin{enumerate}[(1)]
\item $\Phi$ is aperiodic;
\item $[\Phi]_1$ has many involutions;
\item $\overline{D([\Phi]_1)}$ has many involutions;
\item $D([\Phi]_1)$ has many involutions.
\end{enumerate}
\end{prop}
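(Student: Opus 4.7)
The implications $(4)\Rightarrow(3)\Rightarrow(2)$ are immediate from the inclusions $D([\Phi]_1)\subseteq\overline{D([\Phi]_1)}\subseteq[\Phi]_1$, so the real content of the proposition lies in $(2)\Rightarrow(1)$ and $(1)\Rightarrow(4)$.

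For $(2)\Rightarrow(1)$ I argue by contraposition. Assume $\Phi$ is not aperiodic; then the $\mathcal R_\Phi$-invariant set $B$ of points with finite $\mathcal R_\Phi$-class is non-null, and decomposing $B$ according to class size gives some non-null $B_n:=\{x\in B:\abs{[x]_{\mathcal R_\Phi}}=n\}$. Using a Borel linear order on $X$ and selecting the minimum of each class, I obtain a Borel transversal $A\subseteq B_n$ of measure $\mu(B_n)/n>0$. Any involution $U\in[\mathcal R_\Phi]$ with $\supp U\subseteq A$ must satisfy $U(x)\in\supp U\cap[x]_{\mathcal R_\Phi}\subseteq A\cap[x]_{\mathcal R_\Phi}=\{x\}$ for every $x\in\supp U$, forcing $U$ to be trivial. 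In particular $[\Phi]_1\subseteq[\mathcal R_\Phi]$ has no non-trivial involution supported in $A$, so $[\Phi]_1$ does not have many involutions.

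For $(1)\Rightarrow(4)$ I fix a non-null Borel $A\subseteq X$. Lemma \ref{lem: invol of any support} applied to the aperiodic relation $\mathcal R_\Phi$ yields an involution $U_0\in[\mathcal R_\Phi]$ with $\supp U_0=A$, and Lemma \ref{manyinvol} then produces an increasing sequence of $U_0$-invariant subsets $A_k\subseteq A$ whose union is $A$ and such that $(U_0)_{A_k}\in[\Phi]_1$ for every $k$. Choosing $k$ large enough that $A_k$ is non-null, I obtain a non-trivial involution $T:=(U_0)_{A_k}\in[\Phi]_1$ with $\supp T\subseteq A$. I now replay the construction from the proof of Lemma \ref{commuinvol} on this $T$: it delivers, for each sufficiently large index $m$, elements $T_m,V_m\in[\Phi]_1$ whose commutator $[T_m,V_m]$ equals the involution induced by $T$ on a $T$-invariant subset $A'_m\subseteq\supp T$, and such that the sets $A'_m$ exhaust $\supp T$. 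For $m$ large enough, $A'_m$ is non-null, and $A'_m\subseteq\supp T\subseteq A$.

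The main point to watch is precisely this last step: we want a genuine element of $D([\Phi]_1)$, not merely of $\overline{D([\Phi]_1)}$, so it is essential to note that the proof of Lemma \ref{commuinvol} realises each finite-stage approximation $T_{A'_m}$ as an actual commutator $[T_m,V_m]$ of elements of $[\Phi]_1$, while only the passage $T_{A'_m}\to T$ requires a limit. Consequently each $[T_m,V_m]$ is a bona fide non-trivial involution of $D([\Phi]_1)$ supported in $A$, which finishes the proof.
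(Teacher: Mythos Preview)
Your proof is correct, and the logical structure (the cycle $(1)\Rightarrow(4)\Rightarrow(3)\Rightarrow(2)\Rightarrow(1)$) is in fact slightly more economical than the paper's, which first establishes $(1)\Leftrightarrow(2)\Leftrightarrow(3)$ and then separately proves $(2)\Rightarrow(4)$. Your contrapositive argument for $(2)\Rightarrow(1)$ is essentially the same as the paper's, just with the transversal construction spelled out explicitly.

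The one place where your approach genuinely differs is in producing an involution inside $D([\Phi]_1)$. You do this by importing the machinery from the proof of Lemma~\ref{commuinvol}, observing (correctly) that each finite-stage approximant $T_{A'_m}$ there is an honest commutator rather than merely a limit of commutators. This works, but the paper's argument for $(2)\Rightarrow(4)$ is considerably lighter: having already established that $[\Phi]_1$ has many involutions, it takes an involution $U\in[\Phi]_1$ supported in $A$, picks a fundamental domain $B$ for $U_{\restriction\supp U}$, applies the many-involutions hypothesis \emph{again} to obtain a nontrivial involution $V\in[\Phi]_1$ supported in $B$, and checks that since $V$ and $UVU$ have disjoint supports, the commutator $[U,V]=UVUV$ is a nontrivial involution in $D([\Phi]_1)$ supported in $A$. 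This two-line trick avoids re-running the entire approximation scheme of Lemma~\ref{commuinvol}; what your route buys in exchange is that it proves $(1)\Rightarrow(4)$ directly, without passing through $(2)$ as an intermediate step.
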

\begin{proof}
Note that (2) and (3) are equivalent since every involution in $[\Phi]_1$ belongs to $\overline{D([\Phi]_1)}$ by Lemma \ref{lem: periodic in derived group}. 

Let us show by contrapositive that (2)$\impl$(1): if $\Phi$ is not aperiodic, then we may find a non-null set $A$ which intersects every $\Phi$-orbit in at most one point. Clearly every element of  $[\Phi]_1$ (actually of $[\mathcal R_\Phi]$) supported in $A$ is trivial, in particular $[\Phi]_1$ does not have many involutions.

For the converse (1)$\impl$(2), suppose that $\Phi$ is aperiodic and let $A\subseteq X$ be non-null. Then by Lemma \ref{lem: invol of any support}, there exists an involution $U\in[\mathcal R_\Phi]$ whose support is equal to $A$. Now Lemma \ref{manyinvol} ensures us that there is a non-null $U$-invariant set $A'$ such that $U_{A'}\in [\Phi]_1$, witnessing that $[\Phi]_1$ has many involutions. So the implication (1)$\impl$(2) also holds and we conclude that (1), (2) and (3) are equivalent.

Clearly $(4)\impl(2)$, so we now only need to show that (2)$\impl$(4). Suppose that $[\Phi]_1$ has many involutions and let $A$ be a non-null Borel subset of $X$. Let $U\in[\Phi]_1$ be an involution supported in $A$, let $B$ be a fundamental domain of the restriction of $U$ to its support. Finally, let $V\in[\Phi]_1$ be an involution supported in $B$. Since $B$ and $U(B)$ are disjoint, $UVUV$ is an involution supported in $A$ and  belonging to $D([\Phi]_1)$ as wanted: $D([\Phi]_1)$ has many involutions.
\end{proof}

We denote by $\Aut^*(X,\mu)$ the group of non-singular transformations of $(X,\mu)$, i.e. Borel bijections $T:X\to X$ such that for all Borel $A\subseteq X$, one has $\mu(A)=0$ if and only if $\mu(T(A))=0$. 

We can now state and use Fremlin's theorem, which is a generalization of Dye's reconstruction theorem for full groups. 

\begin{thm}[{Fremlin \cite[384D]{MR2459668}}]Let $G,H\leq \Aut(X,\mu)$ be two groups with many involutions. Then any isomorphism between $G$ and $H$ is the conjugacy by some non-singular transformation: for all $\psi:G\to H$ group isomorphism, there is $S\in\Aut^*(X,\mu)$ such that for all $T\in G$, 
$$\psi(T)=STS\inv.$$
\end{thm}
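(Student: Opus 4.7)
The strategy is to reconstruct the measure algebra $\MAlg(X,\mu)$, together with the natural action of $G$ on it by $T\mapsto \{A\mapsto T(A)\}$, purely from the abstract group structure of $G$. Once this is achieved, the isomorphism $\psi:G\to H$ will induce a Boolean-algebra automorphism of $\MAlg(X,\mu)$ that can be realized by a non-singular transformation $S$, and a straightforward identification will show that this $S$ conjugates $\psi$.

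The first and crucial step is to characterize the support-containment relation $\supp T_1\subseteq\supp T_2$ in the pure group language. The basic observation is that involutions with disjoint supports commute, so one shows that $\supp T_1\cap\supp T_2=\emptyset$ is equivalent to the statement that every pair of involutions $U_i\in G$ with $\supp U_i\subseteq \supp T_i$ commute. The many-involutions hypothesis is used on both sides: it ensures a rich supply of test involutions in each support, and, when the intersection $A=\supp T_1\cap\supp T_2$ is non-null, it lets one produce non-commuting involutions inside $A$ by first choosing an involution $U$ supported in $A$ and then an involution $V$ supported in a half-fundamental-domain for $U$, so that $UV\neq VU$ on $\supp V$. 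Once disjointness is group-theoretically definable, so is containment (via disjointness from complements), and one obtains a formula $\Phi(T_1,T_2)$ in the group language that detects $\supp T_1\subseteq \supp T_2$.

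Given this formula, $\psi$ induces an order-isomorphism $\hat\psi$ on $\mathcal A:=\{\supp T:T\in G\}$. The many-involutions hypothesis implies that $\mathcal A$ is order-dense in $\MAlg(X,\mu)$, and since $\MAlg(X,\mu)$ is a complete Boolean algebra, $\hat\psi$ extends uniquely to a Boolean-algebra automorphism $\tilde\psi$ of $\MAlg(X,\mu)$. By the classical realization theorem for automorphisms of the measure algebra, $\tilde\psi$ is of the form $A\mapsto S(A)$ for some $S\in\Aut^*(X,\mu)$, uniquely determined up to a null set.

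To conclude that $\psi(T)=STS^{-1}$ for all $T\in G$, one uses the group-theoretic identity $T(\supp T')=\supp(TT'T^{-1})$, valid for every $T'\in G$. Transporting it through $\psi$ and $\tilde\psi$ yields $S(T(S^{-1}(A)))=\psi(T)(A)$ for every $A\in\mathcal A$. Density of $\mathcal A$ in $\MAlg(X,\mu)$ and the fact that a non-singular transformation is determined by its action on the measure algebra then give $\psi(T)=STS^{-1}$. The main obstacle in this program is the first step: setting up a robust group-theoretic formula for disjointness of supports is delicate, because one must simultaneously handle the ``only if'' direction (enough test involutions to witness disjointness) and the ``if'' direction (finding non-commuting involutions in any non-null intersection), and it is precisely here that the many-involutions hypothesis is essential.
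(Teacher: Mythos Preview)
The paper does not prove this theorem; it is quoted from Fremlin's treatise and immediately applied to obtain the corollary that follows. There is therefore no in-paper proof to compare against.

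Your sketch follows the classical Dye--Fremlin reconstruction strategy and the overall architecture (recover the support lattice group-theoretically, extend to a Boolean automorphism of $\MAlg(X,\mu)$, realize it by some $S\in\Aut^*(X,\mu)$, and verify conjugation via $T(\supp T')=\supp(TT'T^{-1})$) is correct. One point needs sharpening, and it is precisely the one you flag as the ``main obstacle'': the equivalence you state for disjointness, namely that $\supp T_1\cap\supp T_2=\emptyset$ iff every pair of involutions $U_i\in G$ with $\supp U_i\subseteq\supp T_i$ commute, is true, but as written it is \emph{not} a formula in the group language, since the quantification ``$\supp U_i\subseteq\supp T_i$'' is exactly the relation you are trying to define. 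This circularity has to be broken. In Fremlin's treatment (and in Dye's original argument) one first works purely with involutions and characterizes disjointness or equality of their supports via centralizers and commutation patterns alone, and only afterwards extends the relation to arbitrary elements. Until that bootstrap is made explicit your first step remains a genuine gap; the remaining steps (order-density of $\{\supp T:T\in G\}$ from the many-involutions hypothesis, unique extension to a complete Boolean algebra, realization by a non-singular map, and the density argument for the conjugation identity) are standard and go through as you describe.
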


\begin{cor}\label{cor: Dye reconstruction for L1}
Let $\Phi$ and $\Psi$ be two ergodic graphings. Then every isomorphism between $[\Phi]_1$ and $[\Psi]_1$ is the conjugacy by some measure-preserving transformation.  
\end{cor}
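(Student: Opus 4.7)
The plan is to derive this from Fremlin's reconstruction theorem and then upgrade the non-singular conjugating transformation to a measure-preserving one via an ergodicity argument. First I would observe that on a non-atomic standard probability space, an ergodic graphing $\Phi$ is automatically aperiodic: if $\mathcal R_\Phi$ had all classes finite, ergodicity would force them to share a common cardinality $n$, yielding a Borel fundamental domain $D$ of measure $1/n$; splitting $D$ into two pieces of positive measure then produces $\mathcal R_\Phi$-saturated sets of intermediate measure, contradicting ergodicity. Combined with Proposition \ref{prop: chara many involutions}, this shows that both $[\Phi]_1$ and $[\Psi]_1$ have many involutions, so Fremlin's theorem applies and any isomorphism $\psi \colon [\Phi]_1 \to [\Psi]_1$ is of the form $\psi(T) = STS^{-1}$ for some $S \in \Aut^*(X,\mu)$.

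Next I would show that $S$ is in fact measure-preserving. Set $\nu := S^{-1}_*\mu$; since $S$ is non-singular, $\nu$ is equivalent to $\mu$. For each $T \in [\Phi]_1$, the transformation $\psi(T) = STS^{-1}$ preserves $\mu$, and hence $T = S^{-1}\psi(T)S$ preserves $\nu$. So every element of $[\Phi]_1$ preserves both $\mu$ and $\nu$, and the Radon-Nikodym derivative $f := d\nu/d\mu$ is $[\Phi]_1$-invariant almost everywhere.

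The final step is to invoke ergodicity of the $[\Phi]_1$-action on $(X,\mu)$ to deduce that $f$ is almost surely constant, whence $f = 1$ as $\mu$ and $\nu$ are both probability measures. For this I would use Theorem \ref{thm: L1 is dense}: since $[\Phi]_1$ is uniformly dense in $[\mathcal R_\Phi]$ and uniform convergence transfers invariance of a Borel set from a dense subgroup to the closure, any $[\Phi]_1$-invariant Borel set is $[\mathcal R_\Phi]$-invariant, hence trivial by ergodicity of $\Phi$. This yields $\nu = \mu$, so $S \in \Aut(X,\mu)$.

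The main conceptual obstacle is precisely this last step, showing $S$ is measure-preserving rather than merely non-singular; everything preceding it is a direct application of previously established results. The key trick is to produce two $[\Phi]_1$-invariant probability measures $\mu$ and $S^{-1}_*\mu$ that are mutually absolutely continuous, and to conclude via ergodicity that they coincide.
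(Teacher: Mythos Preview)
Your proof is correct and follows essentially the same approach as the paper's: apply Fremlin's theorem via Proposition~\ref{prop: chara many involutions} to obtain a non-singular $S$, then use a Radon--Nikodym argument together with ergodicity to upgrade $S$ to measure-preserving. The only cosmetic differences are that the paper pushes $\mu$ forward by $S$ (obtaining a $[\Psi]_1$-invariant measure and invoking ergodicity of $\Psi$) rather than pulling back by $S^{-1}$, and that the paper does not route the ergodicity step through the density Theorem~\ref{thm: L1 is dense}---it simply observes directly that a $[\Psi]_1$-invariant function is $\mathcal R_\Psi$-invariant, since the involutions $I_{\varphi,A}$ with $\varphi\in\Psi$ already lie in $[\Psi]_1$; your density detour works but is not needed.
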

\begin{proof}
By Proposition \ref{prop: chara many involutions} and the previous theorem, if $\psi:[\Phi]_1\to [\Psi]_1$ is a group isomorphism then there is $S\in\Aut^*(X,\mu)$ such that for all $T\in [\Phi]_1$ one has 
$$\psi(T)=STS\inv.$$
Let $f$ be the Radon-Nikodym derivative of the probability measure $S_*\mu$ with respect to $\mu$. Since $\mu$ is preserved by every $T\in[\Phi]_1$, the measure $S_*\mu$ is preserved by every $T'\in [\Psi]_1$. So $f\circ T'=f$ a.e. for every $T'\in[\Psi]_1$. But $\Psi$ is ergodic so this implies $f$ is constant: the measure $\mu$ is preserved by $S$ as wanted.
\end{proof}

\subsection{Closed normal subgroups of the closure of the derived group}\label{sec:closed normal subgroups}

 Given a group $G\subseteq \Aut(X,\mu)$, we denote by $G_A$ the subgroup of all $T\in G$ such that $\supp T\subseteq A$. Note that if $\Phi$ is a graphing and $A$ is $\Phi$-invariant then $\overline{D([\Phi]_1)}_A$ is a closed normal subgroup of $\overline{D([\Phi]_1)}$. 
 
 We will see that conversely if $\Phi$ is aperiodic then all the closed normal subgroups of $\overline{D([\Phi]_1)}$ arise in this manner. Our approach is based on the study of closed normal subgroups generated by involutions.

\begin{lem}\label{lem: restriction of derived group}Let $\Phi$ be a graphing. If $A$ is a $\Phi$-invariant set then $$\overline{D({[\Phi]_1}_A)}=\overline{(D([\Phi]_1)_A)}=(\overline{D([\Phi]_1)})_A.$$
\end{lem}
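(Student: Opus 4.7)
The strategy is to establish three inclusions:
\begin{enumerate}
\item[(i)] $\overline{D({[\Phi]_1}_A)} \subseteq \overline{D([\Phi]_1)_A}$,
\item[(ii)] $\overline{D([\Phi]_1)_A} \subseteq (\overline{D([\Phi]_1)})_A$,
\item[(iii)] $(\overline{D([\Phi]_1)})_A \subseteq \overline{D({[\Phi]_1}_A)}$.
\end{enumerate}
The unifying observation, used throughout, is that because $A$ is $\Phi$-invariant, each $\mathcal{R}_\Phi$-orbit sits entirely in $A$ or entirely in $X \setminus A$. Hence every element of $[\Phi]_1$ (indeed of $[\mathcal{R}_\Phi]$) preserves $A$, so the \emph{restriction homomorphism}
\[
r_A : [\Phi]_1 \to [\Phi]_1, \qquad r_A(T)(x) = \begin{cases} T(x) & x \in A,\\ x & x \notin A,\end{cases}
\]
is a well-defined group homomorphism whose image is exactly ${[\Phi]_1}_A$.

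For (i), any commutator of elements of ${[\Phi]_1}_A$ is supported in $A$ and lies in $D([\Phi]_1)$, so $D({[\Phi]_1}_A) \subseteq D([\Phi]_1)_A$; take closures. For (ii), I would first observe that for any subgroup $G \leq [\Phi]_1$ which is closed in the $\tilde d_\Phi$-topology, $G_A$ is also closed: if $T_n \in G_A$ converges to $T$ in $\tilde d_\Phi$, then (as noted after Example~\ref{ex: full group of R}) $T_n \to T$ in $d_u$ as well, and uniform convergence of transformations fixing $X \setminus A$ forces $\supp T \subseteq A$. Applying this to $G = \overline{D([\Phi]_1)}$ gives that $(\overline{D([\Phi]_1)})_A$ is closed, so it contains the closure of $D([\Phi]_1)_A$.

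The interesting direction is (iii), and this is where $r_A$ does the work. From the inequality
\[
\tilde d_\Phi(r_A(T), r_A(S)) = \int_A d_\Phi(T(x), S(x))\, d\mu(x) \leq \tilde d_\Phi(T, S),
\]
$r_A$ is a $1$-Lipschitz group homomorphism, hence sends $D([\Phi]_1)$ into $D({[\Phi]_1}_A)$. Now given $T \in (\overline{D([\Phi]_1)})_A$, choose $T_n \in D([\Phi]_1)$ with $T_n \to T$ in $\tilde d_\Phi$. Since $\supp T \subseteq A$ we have $r_A(T) = T$, so by continuity $r_A(T_n) \to T$; but $r_A(T_n) \in D({[\Phi]_1}_A)$, so $T \in \overline{D({[\Phi]_1}_A)}$.

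The main conceptual obstacle is recognizing that $r_A$ is available at all: this requires both the $\Phi$-invariance of $A$ (to ensure every $T \in [\Phi]_1$ preserves $A$) and the fact that the $\tilde d_\Phi$-seminorm of the restricted element is bounded by that of the original (so that $r_A$ is continuous and in particular lands in $[\Phi]_1$). Once $r_A$ is in hand, all three inclusions become formal, and the only real verification is the closedness statement underlying (ii), which follows from $\tilde d_\Phi \geq d_u$.
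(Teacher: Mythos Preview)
Your proof is correct and follows essentially the same route as the paper's: both establish the same cycle of three inclusions, and for the non-trivial inclusion (iii) both use that restricting an element to the $\Phi$-invariant set $A$ gives a continuous group homomorphism $[\Phi]_1\to{[\Phi]_1}_A$. The paper writes this restriction as the induced transformation ${T_n}_A$ (which coincides with your $r_A(T_n)$ since $A$ is $\Phi$-invariant) and observes that one can replace each commutator by the commutator of restricted elements; your formulation via an explicit $1$-Lipschitz homomorphism $r_A$ is a slightly cleaner packaging of exactly the same idea.
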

\begin{proof}
To simplify notation we let $G=[\Phi_1]$, so that we aim to prove that 
$$\overline{D(G_A)}=\overline{(D(G)_A)}=(\overline{D(G)})_A.$$
To this end we show that the chain of inclusions $\overline{D(G_A)}\subseteq\overline{(D(G)_A)}\subseteq(\overline{D(G)})_A\subseteq\overline{D(G_A)}$ holds. 
\begin{itemize}
\item We clearly have the inclusion $D(G_A)\subseteq D(G)_A$ so that $\overline{D(G_A)}\subseteq\overline{(D(G)_A)}$. 
\item Next, note that $(\overline{D(G)})_A$ is a closed subgroup of $G$ which contains every element of $D(G)_A$. It follows that $\overline{(D(G)_A)}\subseteq (\overline{D(G)})_A$. 
\item For the last remaining inclusion $(\overline{D(G)})_A\subseteq\overline{D(G_A)}$, let $T\in (\overline{D(G)})_A$. Then $T$ is supported on $A$ and is a limit of elements $T_n$ belonging to $D(G)$. Now note that by definition of the topology $T$ is also the limit of the sequence of transformations $({T_n}_A)_{n\in\N}$. Moreover each ${T_n}_A$ can be rewritten as a product of commutators of elements supported in $A$ by taking the induced transformations instead, which is harmless since $A$ is invariant by every element appearing in such a product. We conclude that  $T\in \overline{D(G_A)}$ as desired.\qedhere
\end{itemize}
\end{proof}

We now establish a crucial lemma allowing a better understanding of closures of conjugacy classes of involutions.

\begin{lem}\label{lem: conjugate involutions}Let $\Phi$ be a graphing. Let $U$ and $V$ be two involutions such that their supports are disjoint and have the same $\Phi$-conditional measure. Then $V$ belongs to the closure of the conjugacy class of  $U$. 
\end{lem}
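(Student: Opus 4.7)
The plan is to construct a single involution $W \in [\mathcal R_\Phi]$ conjugating $U$ to $V$ by swapping their supports via a partial $\mathcal R_\Phi$-isomorphism between fundamental domains, and then to approximate $W$ by elements of $[\Phi]_1$.

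First, pick fundamental domains $A \subseteq \supp U$ and $B \subseteq \supp V$ for the restrictions of $U$ and $V$ to their supports. Then $\mu_\Phi(A) = \mu_\Phi(\supp U)/2 = \mu_\Phi(\supp V)/2 = \mu_\Phi(B)$, so Proposition \ref{transitive} yields $\varphi \in [[\mathcal R_\Phi]]$ with $\dom \varphi = A$ and $\rng \varphi = B$. Define $W : X \to X$ by
$$
W(x) = \begin{cases} \varphi(x) & \text{if } x \in A, \\ \varphi\inv(x) & \text{if } x \in B, \\ V\varphi U(x) & \text{if } x \in U(A), \\ U\varphi\inv V(x) & \text{if } x \in V(B), \\ x & \text{otherwise.} \end{cases}
$$
Using the disjointness of $\supp U$ and $\supp V$, a direct case check shows that $W$ is an involution and that $WUW\inv = V$.

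Since $\varphi$ is only a partial $\mathcal R_\Phi$-isomorphism, $W$ need not belong to $[\Phi]_1$, so we truncate. For each $n \in \N$ let $A_n = \{x \in A : d_\Phi(x, \varphi(x)) \leq n\}$; this sequence increases to $A$. Let $W_n$ be defined by the same formula as $W$ but with $A_n$, $\varphi(A_n)$, $U(A_n)$, $V\varphi(A_n)$ in place of $A$, $B$, $U(A)$, $V(B)$, and $W_n = \mathrm{id}$ outside these four sets. Then $W_n \in [\Phi]_1$: on $A_n$ and $\varphi(A_n)$ the displacement is at most $n$ pointwise, while on $U(A_n)$ the triangle inequality yields
$$
d_\Phi(x, V\varphi U(x)) \leq d_\Phi(x, U(x)) + d_\Phi(U(x), \varphi U(x)) + d_\Phi(\varphi U(x), V\varphi U(x)),
$$
whose integral over $U(A_n)$ is, by measure-preservation of $U$ and $\varphi$, bounded by $\tilde d_\Phi(U, \mathrm{id}_X) + n + \tilde d_\Phi(V, \mathrm{id}_X) < +\infty$. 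The piece on $V\varphi(A_n)$ is handled symmetrically.

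Finally, the same case analysis as for $W$ shows that $W_n U W_n\inv$ agrees with $V$ on $A_n \cup \varphi(A_n) \cup U(A_n) \cup V\varphi(A_n)$ and also on $X \setminus (\supp U \cup \supp V)$. On the remaining set, whose measure tends to $0$, $W_n U W_n\inv$ coincides with $U$ on $\supp U \setminus (A_n \cup U(A_n))$ and with $\mathrm{id}$ on $\supp V \setminus (\varphi(A_n) \cup V\varphi(A_n))$, while $V$ is the identity on the first piece and equal to $V$ on the second. Consequently
$$
\tilde d_\Phi(W_n U W_n\inv, V) \leq \int_{\supp U \setminus (A_n \cup U(A_n))} d_\Phi(x, U(x))\, d\mu(x) + \int_{\supp V \setminus (\varphi(A_n) \cup V\varphi(A_n))} d_\Phi(x, V(x))\, d\mu(x),
$$
and both integrals go to $0$ by the dominated convergence theorem, since the integrands are integrable and the domains shrink to null sets. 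The main obstacle is really just the bookkeeping: verifying $W_n \in [\Phi]_1$, and correctly identifying the set on which the approximate conjugate differs from $V$ so that integrability of the cocycles of $U$ and $V$ can be applied.
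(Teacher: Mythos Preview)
Your argument is correct and follows essentially the same route as the paper: choose fundamental domains $A,B$, pick a partial $\mathcal R_\Phi$-isomorphism $\varphi:A\to B$, build the conjugating involution acting as $\varphi$ on $A$, $\varphi^{-1}$ on $B$, $V\varphi U$ on $U(A)$, $U\varphi^{-1}V$ on $V(B)$, truncate using $A_n=\{x\in A:d_\Phi(x,\varphi(x))\leq n\}$, and apply dominated convergence. The paper packages $\varphi$ as an involution $T\in[\mathcal R_\Phi]$ and bounds $d_\Phi(x,T'_nUT'_n(x))\leq d_\Phi(x,U(x))+d_\Phi(x,V(x))$ globally rather than computing the difference piecewise, but these are cosmetic variations of the same proof.
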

\begin{proof}
Let $A$ and $B$ be fundamental domains of the respective restrictions of $U$ and $V$ to their support. Then $\mu_\Phi(A)=\mu_\Phi(B)$ so there exists an involution $T\in[\mathcal R_\Phi]$ such that $T(A)=B$ and we can moreover assume that $T(x)=x$ for all $x\in A\cap B$. 

For all $n\in\N$ we let $A_n=\{x\in A: d_\Phi(x,T(x))<n\}$ and $B_n=\{x\in B: d_\Phi(x,T(x))<n\}$. Since $T$ is an involution and $T(A)=B$, we also have $B_n=T(A_n)$. Note that $(A_n)$ and $(B_n)$ are increasing sequences of sets and that $\bigcup_nA_n=A$, $\bigcup_nB_n=B$. We now define a new involution $T'_n$
$$T'_n(x)=\left\{\begin{array}{cl}T(x) & \text{if }x\in A_n\sqcup B_n \\VTU(x) & \text{if } x\in U(A_n) \\
UTV(x)&\text{if }x\in V(B_n)
\\x & \text{otherwise.}\end{array}\right.$$
The definition of $A_n$ and the fact that $V,U\in[\Phi]_1$ yield that $T'_n\in[\Phi]_1$ as well. For all $n\in\N$ and all $x\in X$, an easy calculation yields that:
\begin{itemize}
\item if $x\in (A\cup U(A))\setminus (A_n\cup U(A_n))$, then $T'_nUT'_n(x)=U(x)$, 
\item else if $x\in (B_n\cup V(B_n)$, then $T'_nUT'_n(x)=V(x)$ and
\item else $T'_nUT'_n(x)=x$. 
\end{itemize}
So $T'_nUT'_n\to V$ pointwise. Moreover the trichotomy above also shows that for all $x\in X$ we have  $d_\Phi(x, T'_nUT'_n(x))\leq d_\Phi(x,U(x))+d_\Phi(x,V(x))$, so by the dominated convergence theorem $\tilde d_\Phi(T'_nUT'_n,V)\to 0$. We have therefore established that $V$ belongs to the closure of the conjugacy class of $U$.
\end{proof}

\begin{lem}\label{lem: closed normal gen by invol}Let $\Phi$ be an aperiodic graphing,  let $U\in[\Phi]_1$ be an involution such that $\mu_\Phi(\supp U)\leq 1/2$. Let $A$ be the $\Phi$-closure of the support of $U$. Then the closure of the subgroup generated by conjugates of $U$ by elements of $\overline{D([\Phi]_1)}$ contains $\overline{D([\Phi]_1)}_A$.
\end{lem}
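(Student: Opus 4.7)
The plan is to reduce the statement to showing that every involution $V \in [\Phi]_1$ with $\supp V \subseteq A$ lies in the closed normal subgroup $N$ of $\overline{D([\Phi]_1)}$ generated by $U$. Since $A$ is $\Phi$-invariant and $\Phi$ is aperiodic, the restriction $\Phi|_A$ is also aperiodic, so Lemma \ref{lem: restriction of derived group} together with Theorem \ref{thm: invol gen} applied to $\Phi|_A$ gives that $\overline{D([\Phi]_1)}_A$ is topologically generated by such involutions.

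The crucial observation is that the conjugating elements produced in the proof of Lemma \ref{lem: conjugate involutions} are themselves involutions, and hence belong to $\overline{D([\Phi]_1)}$ by Lemma \ref{commuinvol}. Consequently $N$ already contains every involution $W \in [\Phi]_1$ with $\supp W \subseteq A$, $\supp W \cap \supp U = \emptyset$, and $\mu_\Phi(\supp W) = \mu_\Phi(\supp U)$. Such $W$ exist because the hypothesis $\mu_\Phi(\supp U) \leq 1/2$ forces $\mu_\Phi(A \setminus \supp U) \geq \mu_\Phi(\supp U)$ on $A$, so Maharam's lemma, Proposition \ref{transitive}, and Lemma \ref{manyinvol} provide enough room in $A \setminus \supp U$. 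Now I would take a general involution $V \in [\Phi]_1$ with $\supp V \subseteq A$ and cut it into small pieces by applying Corollary \ref{cor: cut a set in small subsets} to a fundamental domain of $V|_{\supp V}$ with bounding function $\mu_\Phi(\supp U)/2$, which is strictly positive on $A$ by the very definition of the $\Phi$-closure. This yields involutions $V_n$ with pairwise disjoint supports of $\Phi$-conditional measure at most $\mu_\Phi(\supp U)$; Proposition \ref{prop: weak order continuity} ensures the finite products $V_1\cdots V_n$ converge to $V$ in $\tilde d_\Phi$, so it suffices to show each $V_n \in N$. For every such $V_n$, Maharam's lemma furnishes an involution $W_n$ supported in $A \setminus (\supp U \cup \supp V_n)$ with $\mu_\Phi(\supp W_n) = \mu_\Phi(\supp U) - \mu_\Phi(\supp V_n)$; then $\tilde V_n := V_n W_n$ is an involution of $\Phi$-conditional support $\mu_\Phi(\supp U)$ disjoint from $\supp U$, so $\tilde V_n \in N$ by the previous paragraph, and the question reduces to showing $W_n \in N$.

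The main obstacle is precisely to prove that the companion involution $W_n$ also lies in $N$. The plan is to iterate the enlargement: construct another involution $V_n'$ supported in $A\setminus(\supp U \cup \supp W_n)$ with $\mu_\Phi(\supp V_n') = \mu_\Phi(\supp V_n)$ so that $\tilde W_n := W_n V_n' \in N$ by the same procedure; the mutual disjointness of $\supp V_n, \supp W_n, \supp V_n'$ then gives the commuting product identity $\tilde V_n \tilde W_n = V_n V_n' \in N$. The delicate combinatorial core is to choose $V_n'$ as the conjugate of a judiciously selected sub-involution of $U$ by an involution in $[\Phi]_1$ that moves a piece of $\supp U$ into the available room $A\setminus(\supp U\cup\supp W_n)$ — using Lemma \ref{manyinvol} to keep all intermediate involutions in $[\Phi]_1$ and Proposition \ref{transitive} to arrange the right $\Phi$-conditional measures — so that $V_n'$ can itself be displayed in $N$ via a further application of Lemma \ref{lem: conjugate involutions}. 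Pushing this iteration through closes the loop and gives $V_n \in N$, completing the proof.
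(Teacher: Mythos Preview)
Your opening reduction and the observation that the conjugators in Lemma \ref{lem: conjugate involutions} are involutions (hence lie in $\overline{D([\Phi]_1)}$) are both correct, and the idea of cutting a general involution into pieces of small $\Phi$-conditional measure via Corollary \ref{cor: cut a set in small subsets} is also what the paper does. But the core of your argument has a genuine gap, and it is precisely the point where the paper uses a trick you are missing.

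First, two technical issues. When you form $\tilde V_n=V_nW_n$ and claim its support is disjoint from $\supp U$, you have not arranged $\supp V_n\cap\supp U=\emptyset$: your cutting of $V$ controls only the size of $\supp V_n$, not its location. And when you say ``Maharam's lemma furnishes an involution $W_n$'', Maharam only gives a \emph{set}; Lemma \ref{lem: invol of any support} then gives an involution in $[\mathcal R_\Phi]$, and Lemma \ref{manyinvol} only produces $[\Phi]_1$-involutions supported on an exhausting sequence of subsets. There is no mechanism to hit the exact conditional measure $\mu_\Phi(\supp U)-\mu_\Phi(\supp V_n)$ with an involution in $[\Phi]_1$, which you need for Lemma \ref{lem: conjugate involutions}.

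More seriously, your iteration does not close. Granting $\tilde V_n,\tilde W_n\in N$, you obtain $V_nV_n'\in N$ and are reduced to $V_n'\in N$. You propose taking $V_n'$ to be a conjugate of a restriction $U_E$ of $U$; but $U_E$ is not a conjugate of $U$, and nothing so far places $U_E$ in $N$. Invoking Lemma \ref{lem: conjugate involutions} again would only show $V_n'$ lies in the closure of the conjugacy class of $U_E$---it does not manufacture membership of $U_E$ in $N$. You are back to the original problem with $U_E$ playing the role of $V_n$, and ``pushing this iteration through'' is circular.

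The paper breaks this circle with a commutator identity. Fix an auxiliary involution $V\in[\mathcal R_\Phi]$ sending $B=\supp U$ onto a disjoint copy $C$, and pass to $U$-invariant sets $B_n\subseteq B$ on which the relevant restrictions of $U$ and $V$ lie in $[\Phi]_1$. Given a small piece $W_m$, choose $E\subseteq B_n$ with $\mu_\Phi(E)=\tfrac12\mu_\Phi(\supp W_m)$ so that the $\langle U,V\rangle$-saturation $\tilde E=E\sqcup U(E)\sqcup V(E)\sqcup VU(E)$ is disjoint from $\supp W_m$. Setting $\tilde V=V_{\tilde E}\in[\Phi]_1$, the product
\[
U\,\tilde V\,U\,\tilde V\;=\;U\cdot(\tilde V U\tilde V^{-1})
\]
is the product of $U$ with a $\overline{D([\Phi]_1)}$-conjugate of $U$, hence lies in $N$, and a direct check shows it is an involution with support exactly $\tilde E$, so $\mu_\Phi(\supp(U\tilde V U\tilde V))=4\mu_\Phi(E)=\mu_\Phi(\supp W_m)$. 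Now Lemma \ref{lem: conjugate involutions} gives $W_m\in N$. The point is that one \emph{produces} involutions of arbitrarily small prescribed conditional measure inside $N$ by multiplying $U$ against a conjugate of itself; your enlargement strategy, which tries to go the other way, cannot reach this.
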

\begin{proof}

By Lemma \ref{lem: restriction of derived group} we may as well assume that $A=X$ so that our aim becomes to show that whenever $U$ has support whose $\Phi$-closure is equal to $X$ and which satisfies $\mu_\Phi(\supp U)\leq 1/2$, the closure of the subgroup generated by conjugates of $U$ by involutions contains $\overline{D([\Phi]_1)}$.

So let $U$ be as such, let $B=\supp U$ and denote by $G$ the closure of the subgroup generated by conjugates of $U$ by elements of $\overline{D([\Phi]_1)}$. Since $\mu_\Phi(\supp U)\leq 1/2$ by Maharam's lemma we may find a set $C$ disjoint from $B$ such that $\mu_\Phi(C)=\mu_\Phi(B)$. There is an involution $V\in[\mathcal R_\Phi]$ supported on $B\sqcup C$ such that $V(B)=C$. For all $n\in\N$ we let $$B_n=\{x\in B: d_\Phi(x,V(x))<n\text{ and }d_\Phi(U(x),VU(x))<n\}$$
Then $(B_n)$ is an increasing sequence of $U$-invariant Borel subsets of $B$ such that $\bigcup_{n\in\N} B_n=B$. Let $A_n$ be the $\Phi$-closure of $B_n$. Observe that $\bigcup_{n\in\N} A_n=X$, so the proof will be finished once the following claim is proven.

\begin{claim} For all $n\in\N$, the group $G$ contains $\overline{D([\Phi])}_{A_n}$.
\end{claim}

We now prove the claim. Let $n\in\N$; the sets and functions that are to be defined depend on $n$ but we will omit the subscript $n$ for readability. Let $F$ be a fundamental domain of the restriction of $U$ to its support, let $F_n$ be its intersection with $B_n$, which is a fundamental domain for the restriction of $U$ to $B_n$ .

Let $f=\mu_\Phi(F_n)=\mu_\Phi(B_n)/2$. We know that $\overline{D([\Phi])}_{A_n}$ is topologically generated by involutions, so it suffices to show that any involution in $[\Phi]_1$ supported in $A_n$ belongs to $G$. 

So let $W$ be an involution supported in $A_n$. Let $D$ be a Borel fundamental domain for the restriction of $W$ to its support: then $\supp W=D\sqcup W(D)$. By Corollary \ref{cor: cut a set in small subsets} we have a partition $(D_{m})_{m\in\N}$ of  $D$ such that for all $m\in\N$,
$\mu_\Phi(D_m)\leq \frac f 6$. For all $m\in\N$ we let $W_m=W_{D_m\sqcup W(D_m)}$. By the dominated convergence theorem,
$$W=\lim_{k\to+\infty}\prod_{m=0}^k W_m$$
so we only need to show that every $W_m$ belongs to $G$. 
\begin{center}
\includegraphics[scale=1.5]{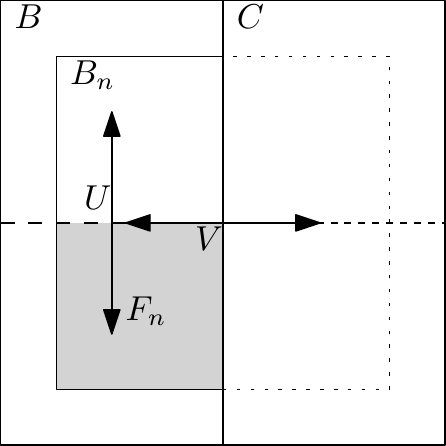}
\end{center}
Let us fix $m\in\N$, and for notational simplicity let $\Gamma=\la U,V\ra\simeq (\Z/2\Z\times \Z/2\Z)\rtimes\Z/2\Z$. Since $\mu_\Phi(D_m)\leq \frac {\mu_\Phi(F_n)}{10}$ we can find a subset $E\subseteq F_n$ such that
$\mu_\Phi(E)=\mu_\Phi(D_m)/2$ and  $E$ is disjoint from $\Gamma(D_m\sqcup W(D_m))$. Then $\Gamma(E)$ is disjoint from $D_m\sqcup W(D_m)=\supp W_m$.

Consider the $\Gamma$-invariant set $\tilde E=(E\sqcup U(E))\sqcup V(E\sqcup U(E))$ and the involution $\tilde V=V_{\tilde E}$. An easy calculation yields $U\tilde VU\tilde V=U_{\tilde E}$ so that $U_{\tilde E}\in G$. Now the support of $U_{\tilde E}$ has $\Phi$-conditional measure 
$$\mu_\Phi(\tilde E)=4\mu_\Phi(E)=2\mu_\Phi(D_m)=\mu_\Phi(\supp W_m).$$
Since moreover  $\tilde E$ and $\supp W_m$ are disjoint, we infer by Lemma \ref{lem: conjugate involutions} that $W_m$ belongs to the closure of the conjugacy class of of $U_{\tilde E}$. Since we have already established that $U_{\tilde E}\in G$, we can conclude that $W_m\in G$. As already observed, this ends the proof of the claim and therefore of the theorem.
\end{proof}

\begin{prop}\label{prop: closed normal gen by T}Let $\Phi$ be an aperiodic graphing,  let $T\in[\Phi]_1$ and let $A$ be the $\Phi$-closure of the support of $T$. Then the closure of the subgroup generated by conjugates of $T$ by elements of $\overline{D([\Phi]_1)}$ contains $$\overline{D([\Phi]_1)}_A.$$
\end{prop}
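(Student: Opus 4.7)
The overall strategy is to reduce the general case to the involution case already handled by Lemma~\ref{lem: closed normal gen by invol} by exhibiting, inside the closed normal subgroup $N$ generated by the conjugates of $T$, an involution whose $\Phi$-closure of support is $A$ and whose $\Phi$-conditional measure is everywhere at most $1/2$. The key trick is the following commutator observation: whenever $U$ is an involution in $[\Phi]_1$ with $T(\supp U)\cap\supp U=\emptyset$, the transformations $TUT\inv$ and $U$ have disjoint supports and therefore commute, so $[T,U]=TUT\inv U$ is itself an involution, with support $\supp U\sqcup T(\supp U)$. Since $U$ belongs to $\overline{D([\Phi]_1)}$ by Lemma~\ref{commuinvol} and $N$ is by construction normal under $\overline{D([\Phi]_1)}$-conjugation, this commutator lies in $N$.

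To manufacture a useful $U$, I would apply Proposition~\ref{prop: small rohlin tower} to $T$, obtaining a partition $\supp T=A_1\sqcup A_2\sqcup B_1\sqcup B_2\sqcup B_3$ with $T(A_1)=A_2$, $T(B_1)=B_2$, $T(B_2)=B_3$. The set $A_1\cup B_1$ is then disjoint from its image $T(A_1\cup B_1)=A_2\cup B_2$ while having the same $\Phi$-closure as $\supp T$, namely $A$. Using Maharam's Lemma~\ref{maha}, I would carve out $S\subseteq A_1\cup B_1$ with $\mu_\Phi(S)=\min(\mu_\Phi(A_1\cup B_1),1/4)$; this forces $\mu_\Phi(S)\leq 1/4$ while preserving the $\Phi$-closure ($S$ has positive conditional measure on precisely the same $\mathcal R_\Phi$-invariant set as $A_1\cup B_1$). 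Aperiodicity of $\Phi$ combined with Lemma~\ref{lem: invol of any support} yields an involution $\tilde U\in[\mathcal R_\Phi]$ supported on $S$, and Lemma~\ref{manyinvol} then provides an increasing sequence of $\tilde U$-invariant subsets $S_n\nearrow S$ with $\tilde U_n:=\tilde U_{S_n}\in[\Phi]_1$.

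Setting $V_n:=[T,\tilde U_n]$, each $V_n$ is an involution in $N$ with support $S_n\sqcup T(S_n)$, so $\mu_\Phi(\supp V_n)=2\mu_\Phi(S_n)\leq 1/2$. The $\Phi$-closure of $\supp V_n$ coincides with that of $S_n$, call it $A_n$, because $T$ preserves $\mathcal R_\Phi$-orbits. Lemma~\ref{lem: closed normal gen by invol} applied to $V_n$ then delivers $\overline{D([\Phi]_1)}_{A_n}\subseteq N$. As $S_n\nearrow S$ and $S$ has $\Phi$-closure $A$, the $\mathcal R_\Phi$-invariant sets $A_n$ increase to $A$ modulo null sets.

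To obtain the full inclusion $\overline{D([\Phi]_1)}_A\subseteq N$, I would combine Lemma~\ref{lem: restriction of derived group} with Theorem~\ref{thm: invol gen} (applied to the restriction of $\Phi$ to the invariant set $A$) to see that $\overline{D([\Phi]_1)}_A$ is topologically generated by involutions supported in $A$. For any such involution $W$, the restriction $W|_{A_n}$ (well-defined since $A_n$ is $\mathcal R_\Phi$-invariant and hence $W$-invariant) is an involution supported in $A_n$, lies in $\overline{D([\Phi]_1)}_{A_n}\subseteq N$, and satisfies $W|_{A_n}\to W$ in $\tilde d_\Phi$ by dominated convergence; closedness of $N$ then yields $W\in N$. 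The main delicate point is arranging for $V_n$ to have simultaneously small $\Phi$-conditional measure and $\Phi$-closure close to $A$, which is exactly what the interaction of Proposition~\ref{prop: small rohlin tower} with Maharam's Lemma accomplishes.
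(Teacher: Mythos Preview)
Your proof is correct and follows essentially the same route as the paper: use Proposition~\ref{prop: small rohlin tower} to find $C=A_1\cup B_1\subseteq\supp T$ disjoint from $T(C)$ with the same $\Phi$-closure as $\supp T$, build involutions $\tilde U_n$ supported in (subsets of) $C$ via Lemma~\ref{lem: invol of any support} and Lemma~\ref{manyinvol}, observe that the commutators $[T,\tilde U_n]$ are involutions in $N$, feed these into Lemma~\ref{lem: closed normal gen by invol}, and pass to the limit over the increasing sequence of $\Phi$-closures.

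The one substantive difference is your insertion of the Maharam step carving out $S\subseteq A_1\cup B_1$ with $\mu_\Phi(S)\leq 1/4$. The paper skips this and applies Lemma~\ref{lem: closed normal gen by invol} directly to $[T,U_{C_n}]$, whose support is $C_n\sqcup T(C_n)$; but $\mu_\Phi(C_n\sqcup T(C_n))$ is only bounded by $\mu_\Phi(\supp T)-\mu_\Phi(B_1)$, which need not be $\leq 1/2$ (e.g.\ if $T$ is an involution with large support, $B_1$ is empty). So your extra step actually patches a small gap in the paper's argument. Your final passage to the limit, via involutions restricted to the $\mathcal R_\Phi$-invariant sets $A_n$ and dominated convergence, is a slight rephrasing of the paper's appeal to Proposition~\ref{prop: weak order continuity}; both are fine.
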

\begin{proof}
Again by Lemma \ref{lem: restriction of derived group} we only need to show that whenever $T$ has a support whose $\Phi$-closure is equal to $X$, the closure of the subgroup generated by conjugates of $T$ by involutions contains $\overline{D([\Phi]_1)}$.

Let $G$ be the closure of the subgroup generated by conjugates of $T$ by elements of $\overline{D([\Phi]_1)}$. We start by applying Proposition \ref{prop: small rohlin tower} and find a partition $(A_1,A_2,B_1,B_2,B_3)$ of $\supp T$ such that $A_2=T(A_1)$, $B_2=T(B_1)$ and $B_3=T^2(B_1)$. Let $C=A_1\sqcup b_1$, then by construction $C\cap T(C)=\emptyset$ and $C$ intersects every non-trivial $T$-orbit. Since the $\Phi$-closure of $\supp T$ is equal to $X$ this implies $C$ intersects every $\Phi$-orbit.

Let $G$ be the closure of the subgroup generated by conjugates of $T$ by elements of $\overline{D([\Phi]_1)}$. Let $U$ be an involution in $[\Phi]_1$ whose support is equal to $C$. For all $n\in\N$, let $C_n=\{x\in X, d_\Phi(x,U(x))<n\}$. Then $\bigcup_{n\in\N} C_n=C$ and for all $n\in\N$ we have $U_{C_n}\in[\Phi]_1$. Now since $U(C)$ is disjoint from $C$ we also have that $U(C_n)$ and $C_n$ are disjoint so that $[T,U_{C_n}]$ is an involution belonging to $G$ whose support is $C_n\sqcup T(C_n)$. 

Applying Lemma \ref{lem: closed normal gen by invol} we deduce that $G$ contains $\overline{D([\Phi]_1)}_{A_n}$ where $A_n$ is the $\Phi$-closure of $C_n$. Since $\bigcup_{n\in\N}C_n=C$ and $C$ intersects every $\Phi$-orbit, we have that $\bigcup_{n\in\N} A_n=X$. Moreover the sequence $(A_n)$ is increasing, so a direct application of Proposition \ref{prop: weak order continuity}  yields that $\bigcup_{n\in\N} \overline{D([\Phi]_1)}_{A_n}$ is dense in $\overline{D([\Phi]_1)}$ so that $G$ contains $\overline{D([\Phi]_1)}$ as wanted. 
\end{proof}

We can now easily prove our main results for this section. 

\begin{thm}\label{thm: closed normal subgroups}Let $\Phi$ be an aperiodic graphing. Then for every closed normal subgroup $N$ of $\overline{D([\Phi]_1)}$ there is a (unique up to measure zero) $\Phi$-invariant Borel set $A$ such that
$$N=\overline{D([\Phi]_1)}_A.$$
\end{thm}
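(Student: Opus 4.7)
The plan is to exhibit the candidate $A$ explicitly as the $\Phi$-closure of the supports of a countable dense subset of $N$, and then verify the two inclusions together with uniqueness. Since $[\Phi]_1$ is separable by Proposition~\ref{prop: complete separable metric}, I would fix a countable sequence $(T_n)_{n\in\N}\subseteq N$ which is dense in $N$, and let $A$ be the $\Phi$-closure of $\bigcup_n \supp T_n$; this is $\Phi$-invariant by construction. The inclusion $N\subseteq \overline{D([\Phi]_1)}_A$ is immediate: the inequality $d_u\leq \tilde d_\Phi$ noted after Example~\ref{ex: full group of R} forces any $T\in N$, being a $\tilde d_\Phi$-limit of elements of $(T_n)$, to satisfy $\supp T\subseteq \bigcup_n \supp T_n\subseteq A$ up to measure zero.

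The reverse inclusion is the heart of the argument. For each $n$, let $A_n$ denote the $\Phi$-closure of $\supp T_n$. Proposition~\ref{prop: closed normal gen by T}, combined with the fact that $N$ is closed and normal in $\overline{D([\Phi]_1)}$ and contains $T_n$, gives $\overline{D([\Phi]_1)}_{A_n}\subseteq N$. Setting $B_n=\bigcup_{k\leq n}A_k$ (a $\Phi$-invariant increasing exhaustion of $A$) and the disjoint $\Phi$-invariant pieces $C_k=A_k\setminus\bigcup_{j<k}A_j$, I would argue that any $T\in \overline{D([\Phi]_1)}_{B_n}$ factors as a commuting product $T=T_{C_1}\cdots T_{C_n}$ with each $T_{C_k}$ lying in $\overline{D([\Phi]_1)}_{C_k}\subseteq \overline{D([\Phi]_1)}_{A_k}\subseteq N$. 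This step uses crucially that restriction to a $\Phi$-invariant set is $\tilde d_\Phi$-continuous and respects commutators, so by Lemma~\ref{lem: restriction of derived group} it sends $\overline{D([\Phi]_1)}$ into $\overline{D([\Phi]_1)}_{C_k}$. Hence $\overline{D([\Phi]_1)}_{B_n}\subseteq N$ for every $n$. Given finally an arbitrary $S\in \overline{D([\Phi]_1)}_A$, Proposition~\ref{prop: weak order continuity} applied with the exhaustion $B_n\uparrow A$ yields $S_{B_n}\to S_A=S$ (using that $S$ preserves the $\Phi$-invariant set $A$ and is supported in it), and the same restriction argument gives $S_{B_n}\in \overline{D([\Phi]_1)}_{B_n}\subseteq N$, so $S\in N$ by closedness.

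For uniqueness, suppose $\overline{D([\Phi]_1)}_A=\overline{D([\Phi]_1)}_B$ with $A$ and $B$ both $\Phi$-invariant, and assume towards a contradiction that $\mu(A\setminus B)>0$. Since $\Phi$ is aperiodic and $A\setminus B$ is non-null, Proposition~\ref{prop: chara many involutions} provides a nontrivial involution $U\in [\Phi]_1$ supported in $A\setminus B$; by Lemma~\ref{commuinvol} we have $U\in \overline{D([\Phi]_1)}_{A\setminus B}\subseteq \overline{D([\Phi]_1)}_A=\overline{D([\Phi]_1)}_B$, but the support of $U$ is not contained in $B$, a contradiction. The main obstacle I anticipate is the careful justification of the continuity and commutator-preservation of the restriction map $S\mapsto S_C$ when $C$ is $\Phi$-invariant, which underpins both the factorization in the second paragraph and the statement $S_{B_n}\in \overline{D([\Phi]_1)}_{B_n}$; the remark preceding Proposition~\ref{prop: weak order continuity} warns that without $\Phi$-invariance the analogous continuity fails, so the choice of $\Phi$-invariant exhaustion $B_n$ is essential.
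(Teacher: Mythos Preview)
Your proof is correct. Both your argument and the paper's rely on Proposition~\ref{prop: closed normal gen by T} and Proposition~\ref{prop: weak order continuity} as the essential ingredients, but the organizing principle differs. The paper defines $A$ via the supremum $t=\sup\{\mu(A'): A'\text{ is }\Phi\text{-invariant and }\overline{D([\Phi]_1)}_{A'}\subseteq N\}$, which makes the inclusion $N\subseteq \overline{D([\Phi]_1)}_A$ a one-line maximality argument: for $T\in N$ with $B$ the $\Phi$-closure of $\supp T$, Proposition~\ref{prop: closed normal gen by T} gives $\overline{D([\Phi]_1)}_{A\cup B}\subseteq N$, whence $\mu(A\cup B)\leq t=\mu(A)$ forces $B\subseteq A$. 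Your explicit construction from a countable dense sequence exploits separability instead and reverses which inclusion is immediate; the price is having to spell out the factorization along the disjoint $\Phi$-invariant pieces $C_k$ and the continuity of the restriction homomorphism, which the paper handles in passing via the remark that $\overline{D([\Phi]_1)}_{A'\cup A''}$ is generated by $\overline{D([\Phi]_1)}_{A'}$ and $\overline{D([\Phi]_1)}_{A''}$ for $\Phi$-invariant $A',A''$. The two routes are of comparable length and use the same machinery.
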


\begin{proof}
Let $t=\sup \{\mu(A): A\text{ is }\Phi\text{-invariant and }\overline{D([\Phi]_1)}_A\subseteq N\}$, and let $A_n$ be a sequence of subsets of $X$ such that $\mu(A_n)\to t$ and $\overline{D([\Phi]_1)}_{A_n}\subseteq N$. Note that whenever $A', A''$ are two $\Phi$-invariant subsets of $X$ then the group generated by $\overline{D([\Phi]_1)}_{A'}\cup\overline{D([\Phi]_1)}_{A''}$ is equal to 
$\overline{D([\Phi]_1)}_{A'\cup A''}$. So we may assume the sequence $(A_n)$ is increasing.

Let $A=\bigcup_{n\in\N} A_n$, then $\mu(A)=t$. By Proposition \ref{prop: weak order continuity} since $N$ is closed we actually have that $N$ contains $\overline{D([\Phi]_1)}_A$. 
Conversely, let $T\in N$ and let $B$ be the $\Phi$-closure of the support of $T$. By Proposition \ref{prop: closed normal gen by T} the group $N$ contains $\overline{D([\Phi]_1)}_B$. So $N$ contains $\overline{D([\Phi]_1)}_{A\cup B}$ hence $\mu(A\cup B)\leq t$. But $\mu(A)=t$ so $B\subseteq A$ up to measure zero, hence $T\in \overline{D([\Phi]_1}_A$. So $\overline{D([\Phi]_1)}_A$ contains $N$ and we conclude that $N=\overline{D([\Phi]_1)}_A$.

Finally, the uniqueness of $A$ up to measure zero follows from the fact that given any $B\subseteq X$ of positive measure, there is a non trivial $T\in \overline{D([\Phi]_1)}$ supported in $B$ (see Proposition \ref{prop: chara many involutions}). 
\end{proof}

\begin{cor}Let $\Phi$ be an aperiodic graphing. Then $\Phi$ is ergodic if and only if $\overline{D([\Phi]_1)}$ is topologically simple.\qed
\end{cor}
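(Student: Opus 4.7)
The plan is to derive the corollary as an almost immediate consequence of Theorem \ref{thm: closed normal subgroups} combined with Proposition \ref{prop: chara many involutions}, so the proof reduces to translating ergodicity of the graphing into a statement about $\Phi$-invariant Borel sets.

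For the direct implication, suppose $\Phi$ is ergodic. By Theorem \ref{thm: closed normal subgroups}, every closed normal subgroup $N$ of $\overline{D([\Phi]_1)}$ is of the form $\overline{D([\Phi]_1)}_A$ for some $\Phi$-invariant Borel set $A$. Ergodicity of $\Phi$ forces $\mu(A) \in \{0,1\}$, so either $A$ is null and $N$ is trivial, or $A$ is conull and $N = \overline{D([\Phi]_1)}$. Hence $\overline{D([\Phi]_1)}$ is topologically simple.

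For the converse, assume $\Phi$ is not ergodic; I want to exhibit a non-trivial proper closed normal subgroup. Pick a $\Phi$-invariant Borel set $A$ with $0 < \mu(A) < 1$ and consider $N = \overline{D([\Phi]_1)}_A$, which is a closed normal subgroup of $\overline{D([\Phi]_1)}$ since $A$ is $\Phi$-invariant. Since $\Phi$ is aperiodic, Proposition \ref{prop: chara many involutions} tells us that $\overline{D([\Phi]_1)}$ has many involutions, and applied to the non-null sets $A$ and $X\setminus A$ respectively, this produces a non-trivial involution supported in $A$ (witnessing $N \neq \{\mathrm{id}\}$) and a non-trivial involution supported in $X\setminus A$ (witnessing $N \neq \overline{D([\Phi]_1)}$). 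So $\overline{D([\Phi]_1)}$ fails to be topologically simple.

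There is no real obstacle here: both directions are essentially bookkeeping once Theorem \ref{thm: closed normal subgroups} and the ‘‘many involutions'' property of the derived group are in hand. The only thing to be careful about is that we really are working inside $\overline{D([\Phi]_1)}$ rather than $[\Phi]_1$ itself when invoking the many-involutions property, but this is precisely what item (3) of Proposition \ref{prop: chara many involutions} provides.
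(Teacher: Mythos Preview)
Your proof is correct and is exactly the argument the paper has in mind: the corollary is stated with a \qed and no proof, signalling that it is an immediate consequence of Theorem \ref{thm: closed normal subgroups} together with the ``many involutions'' property from Proposition \ref{prop: chara many involutions}, which is precisely what you have written out.
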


We will see that $\overline{D([\Phi]_1)}$ and $D([\Phi]_1)$ are never simple when $\Phi=\{T\}$ and $T\in\Aut(X,\mu)$ is aperiodic (see Theorem \ref{thm:non simplicity}).

\subsection{Link with topological full groups}

In this section we connect $\LL^1$ full groups to the well-studied topological full groups. Since we are dealing with $\LL^1$ full groups associated to graphings, we need to introduce their natural topological counterparts. 

Let $2^\N$ be the Cantor space and let $\nu$ be a Borel probability measure on it. A \textbf{continuous graphing} on the measured Cantor space $(2^\N,\nu)$ is a graphing $\Phi$ such that for all $\varphi\in\Phi$, the sets $\dom \varphi$, $\rng\varphi$ are clopen and $\varphi: \dom\varphi\to \rng\varphi$ is a homeomorphism.

\begin{df}Let $\Phi$ be a continuous graphing on $(2^\N,\nu)$. The \textbf{topological full group} of $\Phi$, denoted by $[\Phi]_c$, is the group of all homeomorphisms $T$ of $2^\N$ such that for all $x\in 2^\N$ there is a neighborhood $U$ of $x$ and a $\Phi$-word $w$ such that for all $y\in U$, $T(x)=w(x)$. 
\end{df}

It follows from compactness that one can think equivalently of the topological full group of a continuous graphing $\Phi$ as the group of all maps obtained by cutting and pasting finitely many $\Phi$-words along a partition into clopen sets. In particular,  $[\Phi]_c$ is always a countable group. The most interesting case is when $\Phi$ is actually a countable group acting by homeomorphisms on a compact set (or a generating set for such a countable group): we then recover the usual definition of the topological full group. 

We denote by $[\Phi]_{c-inv}$ denote the group generated by involutions in a topological full group $[\Phi]_c$. The following result is an $\LL^1$ full group version of a result of Miller \cite{millerdensity} for full groups, also proved by Kittrell-Tsankov \cite[Prop. 4.1]{MR2599891} and Medynets \cite{MR2291714} in the case of a single homeomorphism.

\begin{thm}\label{thm: density gp gen by invol in topo ful group}
Let $\nu$ be a Borel probability measure on $2^\N$, and let $\Phi$ be a continuous graphing on $(2^\N,\mu)$. Then the group $[\Phi]_{c-inv}$ is dense in $\overline{D([\Phi]_1)}$. 
\end{thm}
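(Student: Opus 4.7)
The plan is to use Theorem \ref{thm: topo gen by involutions induced by graphing}, which tells us that $\overline{D([\Phi]_1)}$ is topologically generated by the set of basic involutions
$$\{I_{\varphi,A} : \varphi \in \Phi,\ A \subseteq \dom \varphi,\ A \cap \varphi(A) = \emptyset\}.$$
Since $[\Phi]_{c-inv}$ is already a group, it suffices to show that every such $I_{\varphi,A}$ lies in the $\tilde d_\Phi$-closure of $[\Phi]_{c-inv}$. So I would fix $\varphi \in \Phi$ and a Borel $A \subseteq \dom \varphi$ with $A \cap \varphi(A) = \emptyset$, and approximate $I_{\varphi,A}$ by involutions of the form $I_{\varphi,A'}$ with $A'$ \emph{clopen} and satisfying $A' \cap \varphi(A') = \emptyset$.

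The concrete step is then: by density of the clopen algebra in $\MAlg(2^\N, \nu)$, pick a clopen set $A_0 \subseteq \dom \varphi$ (recall $\dom \varphi$ is clopen) with $\nu(A \triangle A_0)$ arbitrarily small. The set $A_0$ need not be disjoint from $\varphi(A_0)$, so I would replace it with $A' := A_0 \setminus \varphi(A_0)$, which is still clopen (since $\varphi(A_0)$ is clopen) and automatically satisfies $A' \cap \varphi(A') \subseteq A' \cap \varphi(A_0) = \emptyset$. A short computation using $A \cap \varphi(A) = \emptyset$ shows $\nu(A_0 \cap \varphi(A_0)) \leq 2\nu(A \triangle A_0)$, so $\nu(A \triangle A') \leq 3\nu(A \triangle A_0)$ remains arbitrarily small. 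Now $I_{\varphi,A'}$ is a well-defined element of $\mathrm{Aut}(X,\mu)$ which, when restricted to the three clopen pieces $A'$, $\varphi(A')$, and the complement, is given by restrictions of the $\Phi$-words $\varphi$, $\varphi^{-1}$, and the empty word respectively; hence $I_{\varphi,A'} \in [\Phi]_c$, and being an involution it belongs to $[\Phi]_{c-inv}$.

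Finally, I would bound $\tilde d_\Phi(I_{\varphi,A}, I_{\varphi,A'})$. Both involutions act on any point as either the identity or $\varphi^{\pm 1}$, so at each point the $d_\Phi$-distance between $I_{\varphi,A}(x)$ and $I_{\varphi,A'}(x)$ is at most $2$, and they agree outside the symmetric difference of their supports $(A \cup \varphi(A)) \triangle (A' \cup \varphi(A'))$. Since $\varphi$ is measure-preserving, this symmetric difference has measure at most $2\nu(A \triangle A')$, yielding the crude bound $\tilde d_\Phi(I_{\varphi,A}, I_{\varphi,A'}) \leq 4\nu(A \triangle A')$, which we have made arbitrarily small. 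The main (minor) obstacle is the adjustment ensuring $A' \cap \varphi(A') = \emptyset$ after clopen approximation; once that is handled cleanly everything else is routine, and the rest of the proof is essentially bookkeeping that the piecewise-defined $I_{\varphi,A'}$ actually belongs to the topological full group.
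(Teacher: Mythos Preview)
Your proof is correct and follows essentially the same route as the paper's: reduce via Theorem~\ref{thm: topo gen by involutions induced by graphing} to approximating the basic involutions $I_{\varphi,A}$, approximate $A$ by a clopen set inside $\dom\varphi$, then trim by removing $\varphi(A_0)$ to restore disjointness and bound the resulting $\tilde d_\Phi$-distance. The only minor slip is the claim that the two involutions agree on the \emph{intersection} of their supports (a point in $A\cap\varphi(A')$ lies in both supports yet is sent to $\varphi(x)$ by one and $\varphi^{-1}(x)$ by the other), but since $A\cap\varphi(A)=\emptyset$ forces $\nu(A\cap\varphi(A'))\leq\nu(A'\setminus A)$ this extra set is still controlled by $\nu(A\triangle A')$ and your crude bound survives with a slightly larger constant.
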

\begin{proof}
Let us recall that if $\varphi\in\Phi$ and $A$ is a Borel subset of $\dom\varphi$  such that $A\cap \varphi(A)=\emptyset$, we denote by $I_{\varphi,A}$ the involution in $[\Phi]_1$ defined by: for all $x\in X$,
$$I_{\varphi,A}(x)=
\left\{\begin{array}{cl}\varphi(x) & \text{if }x\in A \\\varphi\inv(x) & \text{if }x\in \varphi(A) \\x & \text{otherwise.}\end{array}\right.$$
By Theorem \ref{thm: topo gen by involutions induced by graphing}, the group $\overline{D([\Phi]_1)}$ is topologically generated by such involutions, so we only need to check that, given $\varphi\in\Phi$ and a Borel subset $A$ of $\dom\varphi$  such that $A\cap \varphi(A)=\emptyset$, the involution $I_{\varphi,A}$ can be approximated by elements of $[\Phi]_{c-inv}$. 

So let $\varphi\in\Phi$ and let $A$ be a Borel subset of $\dom\varphi$  such that $A\cap \varphi(A)=\emptyset$. Since $\nu$ is regular, there is a sequence $(A_n)$ of clopen subsets of $2^\N$ such that $\nu(A_n\bigtriangleup A)\to 0$. 

Consider the set $A'_n:=A_n\setminus \varphi(A_n)$. Note that since $\varphi$ is a partial homeomorphism, $A'_n$ is still clopen, and by definition $A'_n$ is disjoint from $\varphi(A'_n)$ so the involution $I_{\varphi,A'_n}$ belongs to $[\Phi]_{c-inv}$.  We now have
\begin{align*}\tilde d_\Phi(I_{\varphi,A},I_{\varphi,A'_n)})=&\int_{A\setminus A'_n}d_\Phi(\varphi(x),x)+\int_{A'_n\setminus A}d_\Phi(x,\varphi(x))\\ &+\int_{\varphi(A\setminus A'_n)}d_\Phi(\varphi\inv(x),x)+\int_{\varphi(A'_n\setminus A)}d_\Phi(x,\varphi\inv(x))\\
=&2\mu(A'_n\bigtriangleup A).\end{align*}
Moreover since $\nu(A_n\bigtriangleup A)\to 0$, since $\varphi$ preserves the measure and since $A$ is disjoint from $\varphi(A)$, a straightforward computation yields $\mu( A'_n\bigtriangleup A)\to 0$.
We conclude that $\tilde d_\Phi(I_{\varphi,A},I_{\varphi,A'_n)})\to 0$ as desired. 
\end{proof}

Let us now show that the similar statement that $[\Phi]_c$ is dense in $[\Phi]_1$ is not true in general.

\begin{exemple}\label{example: full topological not dense in L1}Consider the odometer $T_0$ on $2^\N$ equipped with the unique $T_0$-invariant probability measure. For every word $s$ with letters in $\{0,1\}$, we let $N_s\subseteq 2^\N$ be the set of infinite words which start by $s$. For every $n\in\N$, let $\varphi_n$ be the restriction of $T_0$ to $N_{1^n0}$, and then let $\Phi=\{\varphi_n: n\in\N\}$. It is easily checked that the topological full group $[\Phi]_c$ is the group $\mathfrak S_{2^\infty}$ of dyadic permutations. In particular $[\Phi]_c$ is locally finite, hence contained in $\overline{D([\Phi]_1)}$. We conclude from Theorem \ref{thm: density gp gen by invol in topo ful group} that $\overline{[\Phi]_c}=\overline{D([\Phi]_1)}$. 

However the $\LL^1$ full group of $\Phi$ is equal to the $\LL^1$ full group of $T_0$ since up to measure zero they induce the same graph. But $T_0$ has index $1$ (see Definition \ref{df:index}) hence cannot belong to $\overline{D([T_0]_1)}=\overline{D([\Phi]_1)}$, and we conclude that $[\Phi]_c$ is not dense in $[\Phi]_1$. 
\end{exemple}

It is well-known that every countable group $\Gamma$ of measure-preserving transformation of $(X,\mu)$ is up to measure zero conjugate by a measure-preserving isomorphism to a group of homeomorphism of the Cantor space preserving a Borel probability measure $\nu$ (see for instance \cite[Thm. 2.15]{MR1958753}). Here we check that the same can be done for graphings so as to be able to apply our density result above in the right level of generality.

Recall that the group of homeomorphisms of the Cantor space is canonically isomorphic to the group of automorphisms of the Boolean algebra of clopen subsets of the Cantor space. Furthermore, the Boolean algebra of clopen subsets of the Cantor space is the unique countable atomless Boolean algebra (see for instance \cite[Chap. 16]{MR2466574}). We can now state and prove that any graphing is conjugate to a continuous one up to measure zero. 

\begin{prop}Let $(X,\mu)$ be a standard Borel space, and let $\Phi$ be a graphing on $X$. Then there exists a Borel probability measure $\nu$ on $2^\N$, a continuous graphing $\Psi$ on $(2^\N,\nu)$, a $\Phi$-invariant full measure Borel subset $X_0$ of $X$ and Borel measure-preserving injection $T: (X_0,\mu)\to (2^\N,\nu)$ and a continuous graphing $\Psi$ on $2^\N$ such that  $$\Psi_{\restriction T(X_0)}=T\Phi_{\restriction X_0} T\inv.$$\end{prop}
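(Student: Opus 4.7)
The plan is to use Stone duality, realizing the desired continuous graphing on the Stone space of a suitable countable Boolean algebra of Borel subsets of $X$. The main work is to build a countable Boolean subalgebra $\mathcal{A}$ of the Borel $\sigma$-algebra of $X$ with three properties: (i) it generates the Borel $\sigma$-algebra modulo null sets; (ii) for every $\varphi\in\Phi$ one has $\dom\varphi,\rng\varphi\in\mathcal{A}$, and for every $A\in\mathcal{A}$ with $A\subseteq\dom\varphi$ we have $\varphi(A)\in\mathcal{A}$, with the symmetric statement for $\varphi\inv$; (iii) $\mathcal{A}$ is atomless.

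To construct $\mathcal{A}$, I would start from any countable generating Boolean subalgebra $\mathcal{A}_0$ of the Borel $\sigma$-algebra that contains $\dom\varphi$ and $\rng\varphi$ for every $\varphi\in\Phi$ (possible since $\Phi$ is countable and standard Borel spaces have countable generating subalgebras). Then I recursively enlarge $\mathcal{A}_0\subseteq\mathcal{A}_1\subseteq\cdots$ by closing at each stage under Boolean operations and under the operations $A\mapsto \varphi(A\cap \dom\varphi)$ and $A\mapsto\varphi\inv(A\cap\rng\varphi)$ for every $\varphi\in\Phi$. Each stage remains countable because $\Phi$ is countable, so $\mathcal{A}:=\bigcup_n\mathcal{A}_n$ is a countable Boolean algebra with the desired closure properties. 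Atomlessness of $\mathcal{A}$ can be arranged from the start by throwing into $\mathcal{A}_0$ a countable system of refinements (e.g.\ a sequence of partitions into sets of diameter tending to zero in some Borel metrization), since the measure is nonatomic.

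By Stone duality, $\mathcal{A}$ is canonically isomorphic to the Boolean algebra of clopen subsets of its Stone space, a compact totally disconnected metrizable space which is atomless and countable-based, hence homeomorphic to $2^\N$. Define $T:X\to 2^\N$ by sending $x$ to the ultrafilter $\{A\in\mathcal{A}:x\in A\}$ viewed as a point of the Stone space. Let $\nu:=T_*\mu$. Since $\mathcal{A}$ generates the Borel $\sigma$-algebra up to measure zero, $T$ is injective outside a Borel null set $N$; enlarging $N$ by its $\Phi$-saturation, which is a countable union of $\mu$-preserving images of $N$ and hence still null, we obtain a $\Phi$-invariant Borel null set. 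Setting $X_0:=X\setminus N$ gives a $\Phi$-invariant conull Borel set on which $T$ is a Borel measure-preserving injection.

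For each $\varphi\in\Phi$, the map $A\mapsto\varphi(A)$ is a Boolean algebra isomorphism between $\{A\in\mathcal{A}:A\subseteq\dom\varphi\}$ and $\{B\in\mathcal{A}:B\subseteq\rng\varphi\}$ by condition (ii), so it dualizes to a homeomorphism $\psi_\varphi$ between the clopen sets $T(\dom\varphi)$ and $T(\rng\varphi)$ of $2^\N$, and $\nu$ is preserved by construction. Moreover $\psi_\varphi$ and $T\varphi T\inv$ agree at every ultrafilter of the form $T(x)$ with $x\in X_0\cap\dom\varphi$, since both send such an ultrafilter to the ultrafilter of sets in $\mathcal{A}$ containing $\varphi(x)$. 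Setting $\Psi:=\{\psi_\varphi:\varphi\in\Phi\}$ yields a continuous graphing on $(2^\N,\nu)$ with $\Psi_{\restriction T(X_0)}=T\Phi_{\restriction X_0}T\inv$. The main obstacle is the construction of $\mathcal{A}$ with the closure property (ii); once this is in place, Stone duality does the rest automatically.
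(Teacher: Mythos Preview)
Your approach is essentially the paper's: build a countable $\Phi$-invariant separating Boolean subalgebra of the Borel sets, pass to its Stone space via the ultrafilter map, and let each $\varphi\in\Phi$ act by the induced homeomorphism between clopen sets. The paper constructs the algebra in one shot as finite Boolean combinations of $w(C)$ for $\Phi$-words $w$ and $C$ in a fixed countable separating family, whereas you close up recursively under single generators; the effect is identical.

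The one place your argument wobbles is atomlessness. Adding refinements to $\mathcal{A}_0$ makes $\mathcal{A}$ point-separating, but it cannot prevent singletons from arising as finite Boolean combinations or as $\varphi$-images, so $\mathcal{A}$ may well have atoms; nonatomicity of $\mu$ is irrelevant here since you are working with genuine Borel sets, not elements of the measure algebra. The paper's fix is the natural one: once $\mathcal{A}$ separates points, its atoms are exactly the singletons that happen to lie in $\mathcal{A}$, and there are only countably many; removing this countable (hence null) set of atom-points and restricting $\mathcal{A}$ to the complement yields an atomless algebra. That set is automatically $\Phi$-invariant because $\mathcal{A}$ is closed under $\varphi^{\pm 1}$, so $\varphi$ carries singleton-atoms to singleton-atoms. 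Your null-set removal step for injectivity is in the same spirit but, as written, does not address the atoms; folding this in (or simply noting that since your $\mathcal{A}_0$ already generates the Borel $\sigma$-algebra outright, $T$ is injective everywhere and only the atom-points need removing) completes the argument.
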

\begin{proof}

Let $\mathcal C$ be a countable separating family of  Borel sets containing $X$. Let $\mathcal D$ be  set of finite Boolean combinations of sets of the form $w(C)$ or $w\inv(C)$ for $w$ a $\Phi$-word and $C\in\mathcal C$. Then $\mathcal D$ is countable separating, contains the domain and range of every $\varphi\in\Phi$, and is $\Phi$-invariant in the sense that for every $\varphi\in\Phi$ and $D\in\mathcal D$, $\varphi^{±1}(D)\in \mathcal D$. 

Each atom of $\mathcal D$ is a singleton since $\mathcal D$ contains a separating family. Let $A$ be the union of the atoms of $D$, then $A$ is countable and hence has measure zero. Let $X_0:=X\setminus A$, and consider the Boolean algebra $\mathcal D_{X_0}:=\{D\cap X_0: D\in\mathcal D\}.$ induced by $\mathcal D$ on $X_0$.

By Stone duality, $\mathcal D_{X_0}$ is isomorphic to the  algebra of clopen subsets of its Stone space $Y$. 
Since $\mathcal D_{X_0}$ is a countable atomless algebra its Stone space $Y$ is homeomorphic to $2^\N$ so we may as well assume that $Y=2^\N$. 

By construction every $\varphi\in\Phi$ induces a Boolean algebra isomorphism between the Boolean algebra induced by $\mathcal D_{X_0}$ on $\dom\varphi$ and the Boolean algebra induced by $\mathcal D_{X_0}$ on $\rng\varphi$, so every $\varphi\in \Phi$ induces a homeomorphism $\tilde \varphi$ between the corresponding clopen subsets of $Y$. Let $\Psi=\{\tilde \varphi: \varphi\in\Phi\}$, we will check that such a continuous graphing works.

We define a map $T:X_0\to Y$ by associating to every $x\in X_0$ the unique element $T(x)$ of the Stone space of $\mathcal D_{X_0}$ defined by $x$. Since $\mathcal D_{X_0}$ contains a separating family of $X_0$, the map $T$ is injective. Moreover it conjugates the restriction $\Phi_{\restriction X_0}$ and $\Psi_{\restriction T(X_0)}$ by the definition of $\Psi$. Finally the map $T$ is also Borel since the preimage of a clopen set belongs to $\mathcal D_{X_0}$, so by putting $\nu=T_*\mu$ we get the desired statement. 
\end{proof}

\section{$\LL^1$ full groups of $\Z$-actions}\label{sec: L1 full groups of Z actions}

We now focus on $\LL^1$ full groups of $\Z$-actions, for which we can say a lot more. The main feature is that for $\Z$-actions, $\LL^1$ full groups become invariants of \textit{flip-conjugacy}. To see this, we need the following fundamental result of Belinskaya. 

\begin{thm}[Belinskaya, \cite{belinskaya1968partitions}]\label{thm: belink L1OE implies flip conj}
Let $T$ and $T'$ be two ergodic transformations with the same orbits, and suppose furthermore that $T'\in[T]_1$. Then there exists $S\in[T]$ such that either 
$$T=ST'S\inv\text{ or }T\inv=ST'S\inv.$$
\end{thm}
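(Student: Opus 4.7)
The plan is to produce $S\in[T]$ whose cocycle $\phi:=c_S$ (so $S(x)=T^{\phi(x)}(x)$) conjugates $T'$ to $T^{\varepsilon}$ for some $\varepsilon\in\{\pm 1\}$. Writing $c:=c_{T'}\in\LL^1(X,\mu,\Z)$ so that $T'(x)=T^{c(x)}(x)$, the equation $ST'=T^{\varepsilon}S$ translates, via the freeness of the $T$-action on a single orbit, into the $\Z$-valued cohomological equation
\[
\phi\circ T'-\phi=\varepsilon-c.
\]
The first objective is to determine the possible values of the index $I(T'):=\int c\,d\mu$.

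First I would examine the partial sums $C_n(x):=\sum_{k=0}^{n-1}c(T'^kx)$, extended naturally to $n<0$, so that $T'^n(x)=T^{C_n(x)}(x)$. Since $T$ and $T'$ share orbits and $T$ acts freely on each orbit, the map $n\mapsto C_n(x)$ is a bijection of $\Z$ for a.e.\ $x$. Birkhoff's theorem applied to the ergodic transformation $T'$ and $c\in\LL^1$ gives $C_n(x)/n\to I(T')$ a.e.\ as $n\to\pm\infty$. A counting argument then forces $|I(T')|=1$: if $|I(T')|>1$, the $2N+1$ values $\{C_n(x):|n|\leq N\}$ lie essentially in $[-N|I(T')|,N|I(T')|]$ and hence can cover at most $2N+1$ of the $\sim 2N|I(T')|$ integers there, contradicting surjectivity onto $\Z$; the case $I(T')=0$ is excluded by a complementary (and more delicate) argument showing that sublinear integer-valued partial sums cannot enumerate $\Z$ bijectively on a set of positive measure.

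Up to replacing $T'$ by $(T')^{-1}$ (its cocycle is $-c\circ(T')^{-1}$, still integrable with opposite index), I may assume $I(T')=+1$; proving the conjugacy $ST'S^{-1}=T$ in this case yields the stated flip-conjugacy. To produce $S$, I would apply the preceding proposition of Belinskaya, which yields a conjugacy from a common cross-section $A\subseteq X$ satisfying $T_A=T'_A$ and equal return times $n_{A,T}=n_{A,T'}$. To construct such $A$: since $I(T')=1$ and $C_n-n$ is sublinear, the set of $x$ for which $C_n(x)=n$ for infinitely many $n>0$ has full measure; a Kakutani skyscraper decomposition over a suitable refinement produces a cross-section on which both induced transformations and return-time functions coincide. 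The resulting $S$ preserves every $T$-orbit by construction, hence $S\in[T]$.

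The main obstacle is the third step: extracting a cross-section $A$ on which $T_A=T'_A$ \emph{and} $n_{A,T}=n_{A,T'}$ hold simultaneously. This requires carefully exploiting $I(T')=1$ through the bijective but unbounded discrepancy $(C_n-n)_{n\in\Z}$; Birkhoff only yields $(C_n-n)/n\to 0$, and more refined recurrence estimates (e.g., on the joint return times of $T$ and $T'$ to well-chosen sublevel sets of $|C_n-n|$) are needed to match return times rather than merely induced transformations. Ruling out $I(T')=0$ in the second step is a secondary subtle point which I expect to handle by the same Kakutani skyscraper reformulation.
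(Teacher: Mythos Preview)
The paper does not prove this theorem; it is quoted from Belinskaya's 1968 article and used as a black box. So there is no proof here to compare against, and your sketch has to be judged on its own.

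Your argument that $I(T')=\pm 1$ is correct in spirit but the two cases are swapped in difficulty. The case you flag as ``more delicate'' ($I(T')=0$, or more generally $|I(T')|<1$) is actually the easy one: by Birkhoff the $2N+1$ \emph{distinct} integers $\{C_n(x):|n|\le N\}$ lie in an interval of length $o(N)$ (resp.\ $\sim 2N|I(T')|<2N+1$), contradicting injectivity by pigeonhole. It is the case $|I(T')|>1$ that uses surjectivity, and your formulation there is off: the point is that every integer $m$ with $|m|\le N$ must equal some $C_n(x)$, and for large $N$ Birkhoff forces $|n|\lesssim N/|I(T')|$, so fewer than $2N+1$ values of $n$ would have to produce $2N+1$ distinct outputs.

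The real gap is the construction of $S$. You propose to find a cross-section $A$ with $T_A=T'_A$ and $n_{A,T}=n_{A,T'}$ and then invoke the preceding Belinskaya proposition. The paper does say that proposition is ``an important step towards'' the theorem, so the instinct is reasonable; but you have not produced such an $A$, and your candidate does not work. The set $\{x:C_n(x)=n\text{ for infinitely many }n>0\}$ has full measure (it is no cross-section), and passing to a Kakutani skyscraper over some refinement gives you control over \emph{one} return-time function, not both simultaneously. Nothing in your outline forces $n_{A,T}$ and $n_{A,T'}$ to agree, and ``more refined recurrence estimates'' is a placeholder, not an argument. In fact Belinskaya does not reduce to that proposition in this direction: once $I(T')=1$ she uses the almost-positivity of the enumeration $n\mapsto C_n(x)$ (exactly the material the present paper reworks in Section~\ref{sec:generated by induced and periodic}) to define $S$ directly on each orbit, essentially by solving your cohomological equation $\phi\circ T'-\phi=1-c$ with a measurable $\phi$ built from the finite sets $\{n\ge 0:C_n(x)<0\}$ and $\{n<0:C_n(x)\ge 0\}$. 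That is where the $\LL^1$ hypothesis is actually spent, and it is the piece your sketch is missing.
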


With this in hand, we can now prove that $\LL^1$ full groups of ergodic $\Z$-actions are complete invariants of flip conjugacy. 

\begin{thm}\label{thm:l1fullgroup is complete invariant of flip conjugacy}Let $T$ and $T'$ be two measure-preserving ergodic transformation of a standard probability space $(X,\mu)$. Then the following are equivalent: 
\begin{enumerate}[(1)]
\item $T$ and $T'$ are flip-conjugate;
\item the groups $[T]_1$ and $[T']_1$ are abstractly isomorphic;
\item the groups $[T]_1$ and $[T']_1$ are topologically isomorphic. 
\end{enumerate}
\end{thm}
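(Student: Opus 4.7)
The plan is to establish the chain $(1)\Rightarrow(3)\Rightarrow(2)\Rightarrow(1)$, using the Dye-style reconstruction result (Corollary \ref{cor: Dye reconstruction for L1}) and Belinskaya's theorem (Theorem \ref{thm: belink L1OE implies flip conj}) to close the loop.

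For $(1)\Rightarrow(3)$, first I would observe that $[T]_1=[T^{-1}]_1$ topologically, because $c_U$ with respect to $T^{-1}$ is simply $-c_U$ with respect to $T$, so the integrability condition and the metric $\tilde d_{\{T\}}$ are unchanged. So it suffices to handle the case $T'=STS^{-1}$ with $S\in\Aut(X,\mu)$. Conjugation by $S$ clearly sends $[T]$ to $[T']$ and maps $[T]_1$ onto $[T']_1$: a direct computation gives $c^{T'}_{SUS^{-1}}(x)=c^T_U(S^{-1}(x))$, so since $S$ is measure-preserving the $\LL^1$-norms of the cocycles are preserved, showing the conjugation map is a topological isomorphism.

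The implication $(3)\Rightarrow(2)$ is trivial. For $(2)\Rightarrow(1)$, I would apply Corollary \ref{cor: Dye reconstruction for L1}: since $T$ and $T'$ are ergodic, both $[T]_1$ and $[T']_1$ are ergodic $\LL^1$ full groups, and any abstract group isomorphism $\psi:[T]_1\to [T']_1$ is the conjugacy by a measure-preserving transformation $S\in\Aut(X,\mu)$. Setting $T'':=STS^{-1}$, which is ergodic (being conjugate to $T$), we have $[T'']_1=S[T]_1S^{-1}=[T']_1$ (by $(1)\Rightarrow(3)$ applied to $T,T''$). In particular $T'\in[T']_1=[T'']_1\subseteq[T'']$ and $T''\in[T'']_1=[T']_1\subseteq[T']$, so $T'$ and $T''$ have exactly the same orbits. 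Moreover $T'\in[T'']_1$, meaning the cocycle of $T'$ with respect to $T''$ is integrable.

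At this point Belinskaya's theorem (Theorem \ref{thm: belink L1OE implies flip conj}) applies: $T''$ and $T'$ are two ergodic transformations with the same orbits and $T'\in[T'']_1$, hence $T''$ and $T'$ are flip-conjugate. Composing with the conjugacy $T=S^{-1}T''S$ yields that $T$ and $T'$ are flip-conjugate, finishing the proof. The main (indeed only) real work is the reconstruction step $(2)\Rightarrow(1)$, and there the whole difficulty has already been packaged into the two powerful inputs from Fremlin/Dye and Belinskaya; assembling them is a routine orbit-equivalence manipulation.
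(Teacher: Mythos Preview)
Your proof is correct and follows essentially the same route as the paper: both argue $(1)\Rightarrow(3)\Rightarrow(2)$ trivially and then use Corollary \ref{cor: Dye reconstruction for L1} to realise an abstract isomorphism as conjugation by some $S\in\Aut(X,\mu)$, after which Belinskaya's theorem (Theorem \ref{thm: belink L1OE implies flip conj}) yields flip-conjugacy of $STS\inv$ and $T'$. Your write-up is in fact slightly more explicit than the paper's in justifying $(1)\Rightarrow(3)$ and in spelling out why $STS\inv$ and $T'$ have the same orbits.
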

\begin{proof}
Clearly $(1)$ implies $(3)$ and $(3)$ implies $(2)$. Suppose that $(2)$ holds and let $\psi:[T]_1\to[T']_1$ be an abstract group isomorphism. By Corollary \ref{cor: Dye reconstruction for L1}, there is $S\in\Aut(X,\mu)$ such that for all $U\in [T]_1$, we have 
$$\psi(U)=SUS\inv.$$
In particular, we have that $STS\inv\in [T']_1$. Since $\psi$ is surjective, $STS\inv$ must have the same orbits as $T'$. So by Belinkskaya's Theorem \ref{thm: belink L1OE implies flip conj}, $STS\inv$ and $T'$ are flip-conjugate, so $T$ and $T'$ are flip-conjugate. So $(2)$ implies $(1)$, which ends the proof. 
\end{proof}

\subsection{The index map}

One of the specific features of $\LL^1$ full groups of aperiodic $\Z$-actions is the existence of a natural homomorphism into $\R$, given by integrating the corresponding cocycles. We will see later that in the ergodic case, it actually takes values into $\Z$. 

\begin{df}\label{df:index}Let $T\in\Aut(X,\mu)$ be ergodic. The \textbf{index} of $S\in[T]_1$ is defined by
$$I_T(S):=\int_Xc_S(x)d\mu(x),$$
where $c_S$ is the cocycle associated to $S\in[T]$. When the transformation $T$ we are considering is clear from the context, we will also simply write the index map as $I$. 
\end{df}

From the cocycle identity $c_{SS'}(x)=c_S'(S(x))+c_S(x)$ and the fact that elements of $[T]_1$ are measure preserving we deduce that the index map is a homomorphism. We will denote its kernel by $[T]_0$. 

Our first observation is that taking induced transformations does not change the index. The proof is very similar to that of Kac's formula, which should be no surprise since Kac's formula corresponds to the case $S=T$. 

\begin{prop}\label{prop:indexinduced}Let $S\in[T]_1$, and let $A\subseteq X$ intersect every non trivial $S$-orbit. Then we have the equality 
$$I(S_A)=I(S).$$
\end{prop}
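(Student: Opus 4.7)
The plan is to mimic the proof of Proposition \ref{prop: induced transfo} (itself an adaptation of Kac's formula), but keep track of the signed cocycle instead of the absolute value of the distance.

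First, I would partition $A$ according to the return time: for each $n \in \N^*$, set
$$A_n := \{x \in A : n_{A,S}(x) = n\} = \{x \in A : S_A(x) = S^n(x)\},$$
and for $0 \leq m < n$ define $B_{n,m} := S^m(A_n)$. A standard argument shows that the family $(B_{n,m})_{n\geq 1,\, 0\leq m<n}$ consists of pairwise disjoint sets (two consecutive visits to $A$ exit $A$ in between) whose union is $\supp S$ up to measure zero: this is exactly where the hypothesis that $A$ meets every non-trivial $S$-orbit is used, via the Poincaré recurrence theorem applied to $S$ (for $x \in \supp S$, iterate $S^{-1}$ until one first hits $A$ at some $y \in A_n$, yielding $x = S^m(y) \in B_{n,m}$ with $0 \leq m < n$). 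Outside $\supp S$ both $c_S$ and $c_{S_A}$ vanish, so all integrals reduce to integrals over $\supp S$.

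Next, the cocycle identity gives, for $x \in A_n$,
$$c_{S_A}(x) = c_{S^n}(x) = \sum_{m=0}^{n-1} c_S(S^m(x)).$$
I would then integrate over $A_n$, change variables using the fact that $S$ preserves $\mu$ (so $\int_{A_n} c_S \circ S^m\, d\mu = \int_{B_{n,m}} c_S\, d\mu$), and sum over $n$:
\begin{align*}
I(S_A) &= \sum_{n \geq 1}\int_{A_n} c_{S_A}\, d\mu = \sum_{n \geq 1}\sum_{m=0}^{n-1}\int_{B_{n,m}} c_S\, d\mu = \int_{\supp S} c_S\, d\mu = I(S).
\end{align*}

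The only real subtlety is justifying the swap of sum and integral; this is where we exploit $S \in [T]_1$. Applying Proposition \ref{prop: induced transfo} (or just the dominated--monotone argument of Kac) to the pointwise bound $\sum_{n,m}|c_S|\chi_{B_{n,m}} = |c_S|\chi_{\supp S} \in \LL^1$ gives absolute convergence of the double series, so Fubini applies. Everything else is routine bookkeeping, and I do not expect any substantive obstacle beyond verifying that the $B_{n,m}$ really partition $\supp S$ when $A$ only intersects the non-trivial orbits of $S$ (rather than all $T$-orbits), which is the content of the hypothesis.
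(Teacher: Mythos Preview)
Your proof is correct and follows essentially the same approach as the paper: partition $A$ by the $S$-return time, set $B_{n,m}=S^m(A_n)$, use the cocycle identity to expand $c_{S_A}$ on each $A_n$, and sum over the partition of $\supp S$ to recover $I(S)$. If anything, you are more careful than the paper about justifying the Fubini step and the fact that the $B_{n,m}$ partition $\supp S$.
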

\begin{proof}
For all $n\in\N$, let $A_n:=\{x\in X: T_A(x)=T^n(x)\}$. Then the $A_n$'s form a partition of $A$. For all $n\in\N$ and all $0\leq m<n$, we let $B_{n,m}:=T^m(A_n)$. The family $(B_{n,m})_{0\leq m<n}$ is a partition of the support of $S$ since $A$ intersects every non trivial $S$-orbit. We now compute 
\begin{align*}
I(S_A)=&\int_Ac_{S_A}(x)d\mu(x)\\
&=\sum_{n\in\N}\int_{A_n}c_{S_A}(x)d\mu(x)\\
&=\sum_{n\in\N}\int_{A_n}\sum_{m=0}^{n-1}c_S(S^m(x))d\mu(x),
\end{align*}
the last equality being a consequence of the cocycle identity. We deduce that 
\begin{align*}
I(S_A)&=\sum_{n\in\N}\sum_{m=0}^{n-1}\int_{A_n}c_S(S^m(x))d\mu(x)\\
&=\sum_{n\in\N}\int_{B_{n,m}}c_S(x)d\mu(x)\\
&=\int_{\supp S}c_S(x)d\mu(x)\\
I(S_A)&=I(S)\qedhere
\end{align*}
\end{proof}

The following corollary can also be obtained directly by noting that if $S\in[T]_1$, then on every finite $S$-orbit the average of $n_S$ must be equal to zero. 

\begin{cor}\label{cor: periodic has index zero}Let $T\in\Aut(X,\mu)$ be aperiodic. Every periodic element of $[T]_1$ has index zero.
\end{cor}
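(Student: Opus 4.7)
The plan is to reduce this to the trivial identity $I(\mathrm{id}_X) = 0$ via Proposition \ref{prop:indexinduced}. Concretely, since $S \in [T]_1$ is periodic, the discussion following Definition \ref{df: periodic} furnishes a Borel fundamental domain $A \subseteq X$ for $S$ meeting each $S$-orbit at exactly one point.

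The key computation is then that $S_A = \mathrm{id}_X$. For $x \in A$ with $|\mathrm{Orb}_S(x)| = n$, the return time $n_{A,S}(x)$ must equal $n$: it is the smallest positive $k$ with $S^k(x) \in A$, and since $A$ is a transversal, $k$ cannot be smaller than $n$, while obviously $S^n(x) = x \in A$. Thus $S_A(x) = S^n(x) = x$. For $x \notin A$, $S_A(x) = x$ by definition. Since $A$ intersects every non-trivial $S$-orbit, Proposition \ref{prop:indexinduced} applies and gives $I(S) = I(S_A) = I(\mathrm{id}_X) = 0$.

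As the remark following the corollary hints, one may alternatively bypass Proposition \ref{prop:indexinduced} and argue directly using the aperiodicity of $T$: the cocycle identity applied around any finite $S$-orbit $\{x, S(x), \dots, S^{n-1}(x)\}$ yields $\sum_{i=0}^{n-1} c_S(S^i(x)) = c_{S^n}(x)$, and since $S^n(x) = x$ and $T$ is aperiodic this sum must vanish. Decomposing $X$ into the $S$-invariant sets $X_n := \{x : |\mathrm{Orb}_S(x)| = n\}$ and integrating $c_S$ over a fundamental domain for $S$ restricted to $X_n$, the fact that $S$ is measure-preserving lets one rewrite $\int_{X_n} c_S \, d\mu$ as the integral of the orbit-sum over the fundamental domain, which vanishes; summing over $n$ (justified by the integrability of $c_S$) gives $I(S) = 0$.

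The main obstacle here is essentially inexistent once Proposition \ref{prop:indexinduced} is in hand; the entire proof is a few lines. The only point that warrants a moment's care is the identification of the return time with the orbit length, which relies on $A$ being a \emph{transversal} rather than just a meeting set for the orbits, but this is guaranteed by the paper's construction of Borel fundamental domains for periodic transformations.
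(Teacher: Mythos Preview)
Your proof is correct and follows exactly the paper's approach: take a fundamental domain $A$ for $S$, observe that $S_A=\mathrm{id}_X$, and apply Proposition~\ref{prop:indexinduced}. You have in fact supplied more detail than the paper (the return-time computation) and also spelled out the direct orbit-sum argument alluded to in the remark.
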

\begin{proof}Let $S\in[T]_1$ be a periodic element, let $A\subseteq X$ be a fundamental domain for $S$ (see the paragraph following Definition \ref{df: periodic}). Then $S_A=\mathrm{id}_X$ by definition so $I(S_A)=0$. By the previous proposition we conclude that $I(S)=I(S_A)=0$. 
\end{proof}

One can completely describe the ergodic elements of the $\LL^1$ full group of $T\in\Aut(X,\mu)$ which are conjugate to $T$ in terms of their index. This will be very useful to us in the next section. 

\begin{thm}[{Belinskaya, \cite[Thm. 3.8]{belinskaya1968partitions}}]\label{thm: conjugate and index}
Let $T\in\Aut(X,\mu)$ be an ergodic transformation, and let $T'\in[T]_1$. Then $T$ and $T'$ are flip-conjugate iff $T'$ is ergodic and $I(T')=±1$.
\end{thm}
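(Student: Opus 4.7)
The plan is to prove the two implications separately, using Belinskaya's Theorem~\ref{thm: belink L1OE implies flip conj} as the main tool in both directions. In each case one must match the algebraic condition $I(T') = \pm 1$ with the orbit-theoretic condition that $T'$ and $T$ share the same orbits.

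For the direction $(\Rightarrow)$, ergodicity of $T'$ is immediate from flip-conjugacy. Writing $T' = \sigma T \sigma^{\pm 1} \sigma^{-1}$ with $\sigma \in \Aut(X,\mu)$, I would first argue that the set $E := \{y : \sigma(T\text{-orbit of }y) = T\text{-orbit of }\sigma y\}$ is $T$-invariant and hence has measure $0$ or $1$ by ergodicity of $T$; a measure-preservation argument combined with $T' \in [T]$ excludes the measure-zero case, so $T'$ and $T$ have the same orbits. Then Belinskaya's Theorem~\ref{thm: belink L1OE implies flip conj} provides $S \in [T]$ (with cocycle $c_S: X \to \Z$) such that $T' = S T S^{-1}$ (the flipped case being symmetric). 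A direct manipulation of cocycles yields
\[
c_{T'}(x) - 1 = f(T'(x)) - f(x), \qquad f := c_S \circ S^{-1},
\]
so that $c_{T'} - 1$ is an $\LL^1$ coboundary for the ergodic transformation $T'$. Although $f$ itself need not be integrable, the integral still vanishes by the following general ergodic-theoretic lemma: if $R$ is ergodic and $g \in \LL^1(X,\mu)$ can be written as $g = f \circ R - f$ for some measurable $f: X \to \R$, then $\int g\, d\mu = 0$. The proof of this lemma is a telescoping plus recurrence argument: Birkhoff gives $(f(R^N x) - f(x))/N \to \int g$ a.e., so if $\int g \neq 0$ then $|f(R^N x)| \to \infty$ a.e., contradicting the fact that $R^N x$ returns infinitely often to any positive-measure level set of $f$. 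Applied to $g = c_{T'} - 1$ this yields $I(T') = 1$.

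For the direction $(\Leftarrow)$, given $T' \in [T]_1$ ergodic with $I(T') = \pm 1$, it suffices by Belinskaya's Theorem~\ref{thm: belink L1OE implies flip conj} to show that $T'$ and $T$ have the same orbits. Let $k \in \{1,2,\ldots\} \cup \{\infty\}$ denote the a.e.\ constant orbit index $[R_T : R_{T'}]$ (constancy coming from ergodicity of $T$). When $k$ is finite, the induced ergodic action of $T$ on the $T'$-classes inside a $T$-orbit must be a single $k$-cycle; measurably labeling these classes by $\Z/k\Z$ in a $T$-compatible fashion then forces $c_{T'}$ to take values in $k\Z$, so $I(T') \in k\Z$, and $|I(T')| = 1$ gives $k = 1$. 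The case $k = \infty$ would be ruled out separately via a Birkhoff growth argument applied to $c_{T'^N}(x)/N$, whose finite nonzero limit $\pm 1$ is incompatible with $T'^N(x)$ lying in a density-zero subset of the $T$-orbit of $x$.

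The main obstacle is the orbit-matching step in both directions: for $(\Leftarrow)$, constructing the measurable $\Z/k\Z$-labeling when $k$ is finite and carefully excluding $k = \infty$; for $(\Rightarrow)$, ruling out the possibility that $\sigma$ collapses several $T$-orbits into one (equivalently, making the dichotomy on $E$ precise). Both rely on the interplay between ergodicity of $T$, ergodicity of $T'$, and the non-standard nature of the quotient $X/R_{T'}$.
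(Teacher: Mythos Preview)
The paper does not prove this result; it is quoted from Belinskaya's 1968 article, and only the implication $(\Leftarrow)$ is ever used later (in Theorem~\ref{thm: chara which sets are supports of n cycles}). So there is no in-paper proof to compare against, and I evaluate your argument directly.

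For $(\Leftarrow)$, your Birkhoff argument sketched for ``$k=\infty$'' is actually the whole proof and needs no case split. If $I(T')=1$ then the ergodic theorem for $T'$ gives $c_{(T')^n}(x)/n\to 1$ for a.e.\ $x$; since $T$-orbits are countable this holds for every point of a given $T$-orbit, so each $T'$-class inside it has natural density $1$ in $\Z$, and two disjoint density-one subsets of $\Z$ cannot coexist, forcing $k=1$. By contrast your finite-$k$ argument is flawed: a measurable $\ell:X\to\Z/k\Z$ with $\ell\circ T=\ell+1$ is exactly an eigenfunction for $T$ at $e^{2\pi i/k}$, and nothing guarantees one exists (for $T$ weakly mixing and $T'=T^2$ one has $k=2$ but no such $\ell$). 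Worse, $T$ need not even permute the $T'$-classes inside a $T$-orbit, so the labelling is not well-posed orbit-by-orbit. Drop the case distinction and run the density argument throughout.

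For $(\Rightarrow)$, once $R_T=R_{T'}$ is established, your coboundary computation and the accompanying lemma (an $\LL^1$ coboundary for an ergodic map integrates to zero even with merely measurable transfer function) are correct. The gap is the orbit-matching step: your set $E$ is $T$-invariant, but the promised ``measure-preservation argument'' excluding $\mu(E)=0$ is not given and in fact cannot be given in general. Take $T$ an infinite-entropy Bernoulli shift and $T'=T^2$: then $T'\in[T]_1$ is ergodic with $I(T')=2$, yet $T\cong T^2$ by Ornstein's theorem. Thus the forward implication, read literally as ``flip-conjugate in $\Aut(X,\mu)$'', fails. Belinskaya's actual statement concerns $T'$ \emph{having the same orbits as $T$} (equivalently, by Theorem~\ref{thm: belink L1OE implies flip conj}, flip-conjugacy by an element of $[T]$), and under that reading your coboundary argument goes through immediately since the conjugator $S$ lies in $[T]$ from the start.
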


\subsection{A characterization of supports of $n$-cycles}\label{sec: chara supports}

In full groups of aperiodic equivalence relations, every Borel set is the support of an involution. Here we will see that this is not true in general for $\LL^1$ full groups and give a complete characterization of sets which arise as supports of involutions, or more generally of $n$-cycles.

\begin{df}A measure-preserving transformation $T$ is an $n$\textbf{-cycle} if every $T$-orbit has cardinality either $1$ or $n$. 
\end{df}

Note that if $p$ is prime, a $p$-cycle is precisely an element of $\Aut(X,\mu)$ of order $p$. Our result is that $n$-cycles in $\LL^1$-full groups arise only in the most obvious manner: for instance there is an involution whose support is equal to $X$ if and only if there exists a Borel set $B$ such that $X=B\sqcup T(B)$.

\begin{thm}\label{thm: chara which sets are supports of n cycles}
Let $T$ be a measure-preserving ergodic transformation,  let $A$ be a Borel subset of $X$ and let $n\in\N$. Then $A$ is equal to the support of an $n$-cycle if and only if there is $B\subseteq A$ such that 
$$A=B\sqcup T_A(B)\cdots\sqcup T_A^{n-1}(B)$$
\end{thm}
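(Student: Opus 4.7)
The \textbf{``if'' direction} is explicit. Assume $A=B\sqcup T_A(B)\sqcup\cdots\sqcup T_A^{n-1}(B)$. Since $T_A$ permutes $A$, applying $T_A$ to both sides and cancelling the common terms forces $T_A^n(B)=B$, so the decomposition is genuinely cyclic. I would then define
$$U(x)=\begin{cases} T_A(x) & \text{if } x\in\bigsqcup_{i=0}^{n-2}T_A^i(B),\\ T_A^{-(n-1)}(x) & \text{if } x\in T_A^{n-1}(B),\\ x & \text{otherwise.}\end{cases}$$
A direct check gives $U^n=\mathrm{id}$ and $\supp U = A$, so $U$ is an $n$-cycle with support $A$. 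To see $U\in[T]_1$, bound the $T$-cocycle: $|c_U(x)|\leq n_A(x)$ on $\bigsqcup_{i<n-1}T_A^i(B)$, while on $T_A^{n-1}(B)$ one has $|c_U(x)|\leq \sum_{j=1}^{n-1}n_A(T_A^{-j}(x))$. Using that $T_A$ is measure-preserving and Kac's formula $\int_A n_A\,d\mu=\mu(X)$, integration yields $\int_X|c_U|\,d\mu\leq n\mu(X)<\infty$, confirming $U\in[T]_1$.

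The \textbf{``only if'' direction} is more delicate. Suppose $U\in[T]_1$ is an $n$-cycle with support $A$. I would first observe that the $T_A$-cocycle $\hat c_U$ of $U$, defined by $U(x)=T_A^{\hat c_U(x)}(x)$, is well-defined on $A$, is integrable (since $|\hat c_U|\leq |c_U|$ pointwise and $c_U$ is integrable), and satisfies the orbit-sum relation $\sum_{i=0}^{n-1}\hat c_U(U^i(x))=0$ for a.e.\ $x$ because $U^n=\mathrm{id}$. From this data the plan is to construct a measurable map $\phi\colon A\to\Z/n\Z$ with $\phi\circ T_A=\phi+1\pmod n$ and then take $B:=\phi^{-1}(0)$; indeed such a $\phi$ exists precisely when the cyclic decomposition of $A$ does. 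Concretely, I would first pick a Borel fundamental domain $F$ for the $U$-action (so $A=\bigsqcup_{k=0}^{n-1}U^k(F)$ and $\mu(F)=\mu(A)/n$) and let $g\colon A\to\Z/n\Z$ record the $U$-level $g(x)=k$ whenever $x\in U^k(F)$; then modify $g$ by the $T_A$-cocycle of $U$ to land at a genuine $T_A$-equivariant $\phi$.

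The \textbf{main obstacle} is exactly this backward direction: the construction of $\phi$ is equivalent to producing a $\Z/n\Z$-factor of the aperiodic ergodic transformation $T_A$ on $A$, and extracting it from the mere existence of the $n$-cycle $U$ is the heart of the argument. The tools available are the orbit-sum relation and integrability of $\hat c_U$, the zero-index property of periodic elements (Corollary \ref{cor: periodic has index zero}), Maharam's lemma to distribute conditional measure within $A$, and the stability of $[T]_1$ under taking induced transformations. I expect the technical crux to be aligning, measurably and orbit-by-orbit, the finite $U$-orbits with blocks of consecutive $T_A$-translates so that the resulting labeling is $T_A$-equivariant modulo $n$.
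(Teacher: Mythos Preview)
Your ``if'' direction matches the paper's argument and is fine.

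The ``only if'' direction has a genuine gap: you correctly identify that the problem reduces to exhibiting a $\Z/n\Z$-factor of $T_A$, but your proposed construction---taking the $U$-level function $g$ and ``modifying it by the $T_A$-cocycle of $U$''---is not a method. The function $g$ satisfies $g\circ U = g+1\pmod n$, which is a relation along $U$-orbits, whereas you need $\phi\circ T_A=\phi+1\pmod n$, a relation along $T_A$-orbits. There is no cohomological transfer that turns one into the other using only the orbit-sum identity $\sum_{i<n}\hat c_U(U^i x)=0$ and integrability of $\hat c_U$; these data are far too weak to force an eigenvalue for $T_A$. You acknowledge this is the crux, but you have not supplied the idea that resolves it.

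The paper's proof takes an entirely different route and avoids constructing $\phi$ directly. Starting from the $n$-cycle $T_0$ with support $A$ and a fundamental domain $C$ for $T_0$, one forms $S:=T_C T_0$, where $T_C=(T_A)_C$ is the transformation induced by $T_A$ on $C$. Then $S$ already has the cyclic tower structure $A=C\sqcup S(C)\sqcup\cdots\sqcup S^{n-1}(C)$ by construction, and $S$ is ergodic on $A$ (Proposition~\ref{prop: building ergodic and aperiodic from periodic}). Computing $T_A$-indices gives $I_{T_A}(T_C)=1$ (Proposition~\ref{prop:indexinduced}) and $I_{T_A}(T_0)=0$ (Corollary~\ref{cor: periodic has index zero}), so $I_{T_A}(S)=1$. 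Belinskaya's Theorem~\ref{thm: conjugate and index} then yields a conjugacy between $S$ and $T_A$ inside $A$, and pushing the tower for $S$ through this conjugacy gives the desired $B$. The key external input is Belinskaya's theorem, which does the heavy lifting you were trying to do by hand.
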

\begin{rmq}We should point out however that every Borel set arises as the support of a periodic transformation all whose orbits have cardinality at most 3 (see Lemma \ref{lem: maximal partition of support}).
\end{rmq}
\begin{proof}[Proof of Theorem \ref{thm: chara which sets are supports of n cycles}]
The sufficiency of the condition is easy:  since $T_A\in[T]_1$ by Kac's formula (see Prop. \ref{prop: induced transfo}), if $A=B\sqcup T_A(B)\cdots\sqcup T_A^{n-1}(B)$ we can define an $n$-cycle $T_0\in[T]_1$ whose support equates $A$ by putting 
$$T_0(x)=\left\{\begin{array}{cl}T_A(x) & \text{if }x\in \bigsqcup_{i=0}^{n-2}T_A^i(B)  \\
T_A^{-n-1}(x) & \text{if  }x\in T_A^{n-1}(B) \\
x & \text{otherwise.}\end{array}\right.$$

For the converse, let us recall that if $G\leq \Aut(X,\mu)$ and $A$ is a Borel subset of $X$, we let $G_A$ denote the group of elements of $G$ supported in $A$. Observe that if $T\in\Aut(X,\mu)$ then for all $x,y\in A$ we have $d_T(x,y)\geq d_{T_A}(x,y)$ so that ${[T]_1}_A\leq [T_A]_1$ (we will see in Theorem  \ref{thm:non stability under l1} that  the inclusion is actually strict). This observation will be used implicitly in what follows.

Suppose that $A$ is the support of an $n$-cycle $T_0$, let $C$ be a fundamental domain for $T_0$, and consider the transformation $S=T_CT_0$ whose support  also equates $A$ and satisfies 
$$A=C\sqcup S(C)\sqcup\cdots\sqcup S^{n-1}(C).$$ By Proposition \ref{prop: building ergodic and aperiodic from periodic} the restriction of $S$ to $A$ is ergodic. Moreover note that $T_C=(T_A)_C$ so the the $T_A$-index of $T_C$ is $I_{T_A}(T_C)=1$ by Proposition \ref{prop:indexinduced}. Since $T_0$ is periodic, its $T_A$-index is equal to $0$ by Corollary \ref{cor: periodic has index zero} and we conclude that 
$$I_{T_A}(S)=1.$$
By applying Belinskaya's theorem \ref{thm: conjugate and index} to the restriction of $S$ to $A$ which is ergodic, we conclude that $T_A$ and $S$ are conjugate by a measure-preserving transformation $U$ supported in $A$. But recall that $A=C\sqcup S(C)\sqcup\cdots\sqcup S^{n-1}(C)$ so we have $A=U(C)\sqcup T_A(U(C))\cdots\sqcup T_A^{n-1}(U(C))$ hence $T_A$ satisfies the desired conclusion with $B:=U(C)$.
\end{proof}
\begin{exemple}Let $T\in\Aut(X,\mu)$ be a weakly mixing transformation. Then for all $n\in\N$, there is no $n$-cycle in $[T]_1$ whose support equates $X$. 
\end{exemple}

\begin{rmq}The existence of $B\subseteq A$ such that 
$A=B\sqcup T_A(B)\cdots\sqcup T_A^{n-1}(B)$
is a spectral property of $T_A$: it is equivalent to the associated unitary operator on $\LL^2(A)$ having $e^{2i\pi/n}$ as an eigenvalue. 
\end{rmq}

\subsection{The $\LL^1$ full group is generated by induced transformations together with periodic transformations}\label{sec:generated by induced and periodic}

We will now use the index map to find a nice generating set for $[T]_1$. Our ideas are in the direct continuation of Belinskaya's paper \cite{belinskaya1968partitions}. Let us start by analyzing the action of elements of $[T]_1$ on the $T$-orbits. 

A sequence $(n_{k})_{k\in\N}$ of integers is called \textbf{almost positive} if for all but finitely $k\in\N$, we have $n_k\geq 0$. It is called \textbf{almost negative} if $(-n_k)_{k\in\N}$ is almost positive, and a sequence which is either almost negative or almost positive is called \textbf{almost sign invariant}. It is an easy exercise to check that an injective sequence is almost positive iff it tends to $+\infty$, while it is almost negative iff it tends to $-\infty$. The following lemma thus implies that under the identification of a $T$-orbit with $\Z$ via $k\mapsto T^k(x)$, every forward $S$-orbit either tends to $+\infty$ or $-\infty$.

\begin{lem}[{\cite[Thm. 3.2]{belinskaya1968partitions}}] Let $T\in\Aut(X,\mu)$ be aperiodic and  $S\in [T]_1$. For almost every $x\in X$ such that the $S$-orbit of $x$ is infinite, the sequence $sign(c_{S^k}(x))$ is almost sign invariant.
\end{lem}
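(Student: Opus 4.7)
My plan is to exploit the cocycle identity to turn $c_{S^k}(x)$ into a Birkhoff sum for the dynamics of $S$ acting on $(X,\mu)$, and then combine Birkhoff's ergodic theorem with Atkinson's recurrence theorem.

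First, I would apply the cocycle identity $c_{S'S}(x) = c_{S'}(S(x)) + c_S(x)$ iteratively to get
$$c_{S^k}(x) = \sum_{j=0}^{k-1} c_S(S^j(x)).$$
Since $S \in [T]_1$ is measure-preserving and $c_S \in \LL^1(X,\mu)$, Birkhoff's ergodic theorem applies: the averages $\frac{1}{k} c_{S^k}(x)$ converge $\mu$-a.e. to the conditional expectation $\E_S(c_S)(x)$ of $c_S$ with respect to the $\sigma$-algebra of $S$-invariant Borel sets. The function $\E_S(c_S)$ is itself $S$-invariant, so the sets $X_+ = \{\E_S(c_S) > 0\}$, $X_- = \{\E_S(c_S) < 0\}$ and $X_0 = \{\E_S(c_S) = 0\}$ are all $S$-invariant and partition $X$ up to measure zero.

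On $X_+$ we get $c_{S^k}(x) \to +\infty$, so $\mathrm{sign}(c_{S^k}(x)) = 1$ for all but finitely many $k$, i.e., the sign sequence is almost positive; symmetrically on $X_-$ it is almost negative. So the content of the lemma is concentrated on $X_0$: I need to show that for almost every $x \in X_0$ the $S$-orbit of $x$ is actually finite, so that $X_0$ is disjoint (mod null) from $\{x : \#\orb_S(x) = \infty\}$.

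This last step is where I would invoke Atkinson's recurrence theorem: for a measure-preserving transformation $S$ of a probability space and $f \in \LL^1$ with $\E_S(f) = 0$ a.e., one has $\liminf_k \abs{\sum_{j=0}^{k-1} f(S^j(x))} = 0$ for a.e.\ $x$ (apply the ergodic version on each ergodic component of $S$). Taking $f = c_S$ restricted to $X_0$, and using crucially that $c_S$ is \emph{integer-valued}, this $\liminf$ being $0$ forces $c_{S^k}(x) = 0$ for infinitely many $k$. But $c_{S^k}(x) = 0$ means $S^k(x) = x$, so such an $x$ has finite $S$-orbit. Hence $X_0 \cap \{x : \#\orb_S(x) = \infty\}$ is null, and the lemma follows.

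The main obstacle is precisely the case $\E_S(c_S) = 0$, where Birkhoff alone only tells us that the sums $c_{S^k}(x)$ grow sublinearly and gives no control on signs; one genuinely needs the recurrence statement of Atkinson together with the integer-valuedness of the cocycle to rule out sign oscillation along infinite $S$-orbits.
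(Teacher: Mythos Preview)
Your argument is correct. Note, however, that the paper does not supply its own proof of this lemma: it is simply quoted from Belinskaya's paper \cite[Thm.~3.2]{belinskaya1968partitions}, so there is no in-text proof to compare against. Your route via the Birkhoff ergodic theorem combined with Atkinson's recurrence theorem is the standard modern way to recover Belinskaya's statement, and all the steps go through as you describe. The only point worth making explicit is that Atkinson's theorem is stated for ergodic transformations, so one should pass to the ergodic decomposition of $S$ restricted to the $S$-invariant set $X_0$; on an ergodic component which is aperiodic the recurrence $\liminf_k\abs{c_{S^k}(x)}=0$ together with integer-valuedness forces $S^k(x)=x$ for some $k>0$ (here you use aperiodicity of $T$ so that $c_{S^k}(x)=0$ genuinely implies $S^k(x)=x$), contradicting aperiodicity of that component. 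Hence every ergodic component contained in $X_0$ is periodic, and the lemma follows.
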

%
%
%
%

\begin{df}An element $S\in[T]_1$ such that for all $x\in \supp S$, the sequence $(c_{S^k}(x))_{k\in\N}$ is almost positive is called \textbf{almost positive}. If for  every $x\in \supp S$ the sequence $(c_{S^k}(x))_{k\in\N}$ is almost negative, we say that $S$ is \textbf{almost negative}. 
\end{df}

Note that every almost positive or almost negative element has to be aperiodic when restricted to its support. Also the inverse of an almost positive element is almost negative and vice-versa. We have a natural order on every $T$-orbit which allows for a more natural formulation of the previous definition.

\begin{df}
Let $T\in\Aut(X,\mu)$ be aperiodic. We define a partial order $\leq_T$ on $X$ by $x\leq_T y$ if $y=T^n(x)$ for some $n\geq 0$.
\end{df}

Note that $\leq_T$ is total when restricted to a $T$-orbit, and that such a restriction corresponds to the natural ordering $\leq$ on $\Z$ via $n\mapsto T^n(x)$. 

We may now reformulate the previous definition as: $S\in[T]_1$ is almost positive if for almost every $x\in\supp S$, $S^k(x)\geq_Tx$ for all but finitely many $k\in\N$.

\begin{prop}\label{prop:decomposition into almost +- and periodic}Let $T\in\Aut(X,\mu)$ be aperiodic. Every element $s\in [T]_1$ can be written as a product
$S=S_pS_+S_-$, where $S_p$ is periodic, $S_+$ is almost positive, $S_-$ is almost negative and these three elements have disjoint supports. 
\end{prop}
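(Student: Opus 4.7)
The plan is to partition $\supp S$ into three pairwise disjoint $S$-invariant Borel sets $A_p, A_+, A_-$ and then let $S_p, S_+, S_-$ be the obvious restrictions of $S$ to these sets (extended by the identity elsewhere). The splitting is dictated by Belinskaya's lemma quoted just before the statement: for a.e.\ $x \in \supp S$ the sequence $\mathrm{sign}(c_{S^k}(x))$ is almost sign invariant, which means that for a.e.\ $x \in \supp S$ with infinite $S$-orbit, either $c_{S^k}(x)\geq 0$ for all but finitely many $k \in \N$, or $c_{S^k}(x) \leq 0$ for all but finitely many $k \in \N$.

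Concretely, I set
\begin{align*}
A_p &= \{x \in \supp S : \text{the } S\text{-orbit of } x \text{ is finite}\}, \\
A_+ &= \{x \in \supp S : \exists N \in \N,\ \forall k \geq N,\ c_{S^k}(x) \geq 0\} \setminus A_p, \\
A_- &= \{x \in \supp S : \exists N \in \N,\ \forall k \geq N,\ c_{S^k}(x) \leq 0\} \setminus A_p.
\end{align*}
Each of these three sets is Borel, and by the lemma above they partition $\supp S$ up to measure zero. The set $A_p$ is plainly $S$-invariant, and $S$-invariance of $A_+$ and $A_-$ follows from the cocycle identity: $c_{S^k}(S(x)) = c_{S^{k+1}}(x) - c_S(x)$, so shifting $x$ by $S$ only changes the sequence $(c_{S^k}(x))_{k\in\N}$ by a constant and by one index, which does not affect the almost positive/negative property.

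I then define $S_p$, $S_+$, $S_-$ to act as $S$ on $A_p$, $A_+$, $A_-$ respectively and as the identity elsewhere. Since the $A_\bullet$'s are $S$-invariant, each of these is a measure-preserving bijection of $X$, and they have pairwise disjoint supports whose union is $\supp S$; in particular they pairwise commute and $S = S_p S_+ S_-$. Because $|c_{S_\bullet}| \leq |c_S|$ almost everywhere, each $S_\bullet$ lies in $[T]_1$ (and hence in $[\mathcal R_T]$). The transformation $S_p$ is periodic by construction. For $S_+$, since $A_+$ is $S$-invariant, $S_+^k$ agrees with $S^k$ on $A_+$ for every $k \in \N$, so the defining property of $A_+$ gives that $(c_{S_+^k}(x))_{k \in \N}$ is almost positive for every $x \in \supp S_+ = A_+$. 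The same argument applied to $A_-$ shows that $S_-$ is almost negative, completing the decomposition.

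The only content beyond bookkeeping is invoking Belinskaya's lemma, which is given; the rest is checking measurability, $S$-invariance, and the $\LL^1$ bound on the cocycles. There is no real obstacle, the mild delicate point being the verification of $S$-invariance of $A_+$ and $A_-$ via the cocycle identity above.
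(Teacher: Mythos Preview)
Your proof is correct and essentially identical to the paper's: define the three $S$-invariant Borel sets $A_p,A_+,A_-$ via the behaviour of $(c_{S^k}(x))_k$ and take $S_\bullet$ to be $S$ restricted to each. The only sloppy step is the assertion that a constant shift preserves almost positivity---this is false for arbitrary sequences, but it holds here because for $x\notin A_p$ the sequence $(c_{S^k}(x))_k$ is injective (distinct points of an infinite $S$-orbit have distinct $T$-cocycle values since $T$ is aperiodic) and an injective almost positive integer sequence tends to $+\infty$; the paper leaves this implicit as well.
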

\begin{proof}
Let $S\in[T]_1$. The set of $x\in X$ such that the sequence $(c_{S^k}(x))_{k\in\N}$ is almost positive (respectively almost negative) is $S$-invariant. Call this set $A_+$ (respectively $A_-$). Finally let $A_p$ denote the set of $x\in X$ whose $S$-orbit is finite. Then $(A_-,A_+, A_p)$ is a partition of $X$ by the previous lemma, and since they are $S$-invariant we deduce that $S=S_{A_-}S_{A_+}S_{A_p}$. Clearly $S_p:=S_{A_p}$, $S_+:=S_{A_+}$ and $S_-:=S_{A_-}$ are as wanted.
\end{proof}

\begin{df}Let $T\in\Aut(X,\mu)$ be aperiodic. If $S\in[T]_1$, we say that $S$ is \textbf{positive} if $S(x)\geq_T x$ for all $x\in X$. 
\end{df}

The proof of the following proposition uses the same ideas as in \cite[Lem. 3.5]{belinskaya1968partitions}. 

\begin{prop}\label{prop: almost positive is positive up to a periodic}Let $T\in\Aut(X,\mu)$ be aperiodic, and let $S\in[T]_1$ be almost positive.  Then there exists a positive element $S'\in[T]_1$ whose support is a subset of $\supp S$ such that $S{S'}\inv$ and ${S'}\inv S$ are periodic. 
\end{prop}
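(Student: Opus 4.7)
I would aim to write $S = \rho S'$ where $S' \in [T]_1$ is positive with $\supp S' \subseteq \supp S$ and $\rho \in [T]_1$ is periodic; this immediately gives that $SS'\inv = \rho$ is periodic and that $S'\inv S = S'\inv \rho S'$ is periodic as a conjugate of $\rho$.

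As a preliminary, I would establish that almost every $S$-orbit is unbounded in both directions along the $T$-order. Upward unboundedness follows directly from almost positivity: the sequence $(S^k(x))_{k \geq 0}$ is almost positive and injective, hence tends to $+\infty$ in $T$-order. For downward unboundedness, I would exploit that $S$ is aperiodic on $\supp S$ (as noted right after the definition of almost positivity): the set of those $x \in \supp S$ which are the $T$-minimum of their $S$-orbit would constitute a Borel transversal for the $S$-orbit equivalence relation restricted to the union of downward-bounded orbits, and such a transversal must have measure zero for an aperiodic measure-preserving system. Hence almost every $S$-orbit is order-isomorphic to $\Z$ when viewed as a subset of the relevant $T$-orbit equipped with the $T$-order.

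The bulk of the proof would then be to construct $S'$ and $\rho$ orbit by orbit, and to glue the pieces Borel-measurably. On each $S$-orbit $O$ identified with $\Z$ via the $T$-order, $S$ acts as a bijection $\sigma : \Z \to \Z$ with $\sigma^k(n) \to +\infty$ for every $n$, and the goal is to produce a positive bijection $\sigma' : \Z \to \Z$ such that $\rho_O := \sigma{\sigma'}\inv$ has only finite orbits. I would proceed by decomposing the action of $\sigma$ into finite ``excursions''---maximal stretches of iterates that deviate from pure forward motion before the orbit stably enters positive territory again---and then collapsing each excursion into a single positive step of $\sigma'$ while recording the internal permutation of the excursion as a finite cycle of $\rho_O$. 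Almost positivity guarantees that every such excursion is finite, securing simultaneously the bijectivity and positivity of $\sigma'$ and the periodicity of $\rho_O$. The integrability $S', \rho \in [T]_1$ then follows from an excursion-level bound of the form $c_{S'}(x) \leq \sum_{k=0}^{N(x)-1}|c_S(S^k(x))|$ (with $N(x)$ the length of the excursion beginning at $x$) and a Kac-style rearrangement bounding $\int c_{S'}$ by $\int|c_S| < +\infty$; the containment $\supp S' \subseteq \supp S$ holds by construction.

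The main obstacle is the orbit-wise construction itself: one must simultaneously ensure positivity of $\sigma'$, bijectivity of $\sigma'$, and periodicity of $\rho_O$. Naive choices (for instance taking $\sigma'$ to be the immediate $T$-order successor inside $O$) can fail to produce a periodic $\rho_O$ when $\sigma$ drifts from the shift in an unbounded but forward-escaping manner, so the careful excursion-based construction in the spirit of Belinskaya's Lemma 3.5 is essential.
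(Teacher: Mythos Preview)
Your approach is in the right spirit and, once made precise, reduces to the paper's argument --- but you are describing in an orbit-by-orbit, informal way what can be said globally in two lines. The paper simply sets
\[
A:=\{x\in X: S^k(x)>_T x\text{ for all }k>0\}
\]
and takes $S':=S_A$. Almost positivity says exactly that $A$ meets every non-trivial $S$-orbit (pick the last $k$ with $c_{S^k}(x)\leq 0$; then $S^k(x)\in A$), so $SS_A^{-1}$ and $S_A^{-1}S$ are periodic. Positivity of $S_A$ is immediate from the definition of $A$, and $S_A\in[T]_1$ follows directly from Proposition~\ref{prop: induced transfo}. Your ``excursions'' are precisely the stretches of $S$-orbit between consecutive visits to $A$, and your $\sigma'$ is $S_A$; once one sees this, the preliminary on downward unboundedness, the orbit-by-orbit gluing, and the Kac-style bound for integrability all become unnecessary --- they are already packaged in the induced-transformation machinery.
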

\begin{proof}
Consider the set $A:=\{x\in X: S^k(x)>_Tx\text{ for all }k>0\}$. Note that $T_A(x)$ may be defined as the first element greater than $x$ which belongs to $A$, so by the definition of $A$ we have $T_A(x)\leq_T S_A(x)$ for all $x\in A$. 

Let us show that $A$ intersects every non trivial $S$-orbit. Let $x\in \supp S$. Since $S$ is almost positive, the sequence $(n_{S^k}(x))_{k\in\N}$ is almost positive. Let $k$ be the last natural integer such that $n_{S^k(x)}\leq 0$, then by definition $S^k(x)\in A$ as wanted.

Now for all $x\in A$ we have $S_A(x)\geq_TT_A(x)\geq_T x$, in particular $S_A$ is positive. Since $A$ intersects every $S$-orbit, we moreover have that $SS_A\inv$ and $S_A\inv S$ are periodic, so $S':=S_A$ is as desired. 
\end{proof}

\begin{lem}
Let $T\in\Aut(X,\mu)$ be aperiodic, let $S\in[T]_1$ be positive and let $A=\supp S$. Then $ST_A\inv$ is positive.
\end{lem}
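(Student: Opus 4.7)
The plan is to unfold the definitions and check positivity pointwise on the two regions $X\setminus A$ and $A$. On $X\setminus A=X\setminus\supp S$ both $S$ and $T_A\inv$ act as the identity, so $ST_A\inv(x)=x\geq_T x$ trivially. All the work is therefore on $A$, and I would reduce the problem to the following claim: for every $y\in A$, one has $S(y)\geq_T T_A(y)$.

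To see why this claim suffices, fix $x\in A$ and set $y:=T_A\inv(x)$; by definition of $T_A\inv$ on $A$, we have $y\in A$ and $T_A(y)=x$, with $y<_T x$. Then $ST_A\inv(x)=S(y)$, and the claim gives $ST_A\inv(x)=S(y)\geq_T T_A(y)=x$, which is exactly the positivity of $ST_A\inv$ at $x$.

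For the claim itself, I would use two observations. First, since $A=\supp S$ is $S$-invariant, $S(y)\in A$ for every $y\in A$. Second, since $y\in\supp S$ we have $S(y)\neq y$, and positivity of $S$ upgrades $S(y)\geq_T y$ to the strict inequality $S(y)>_T y$. But $T_A(y)$ is by definition the smallest element of $A$ that is strictly greater than $y$ in the order $\leq_T$ (this is just the definition of the return time $n_{A,T}(y)$ restated in terms of $\leq_T$). Since $S(y)$ is \emph{some} element of $A$ strictly greater than $y$, we conclude $S(y)\geq_T T_A(y)$, proving the claim and hence the lemma.

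I do not anticipate any real obstacle here: the argument is essentially a one-line consequence of the fact that $T_A$ moves every point of $A$ to the very next point of $A$ along the $T$-orbit, so any map that moves points of $A$ forward inside $A$ automatically moves them at least as far as $T_A$ does. The only subtlety worth spelling out is the $S$-invariance of $A=\supp S$, which is immediate, together with the strictness $S(y)>_T y$ on $\supp S$, which follows from combining positivity with the definition of the support.
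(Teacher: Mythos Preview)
Your proof is correct and follows essentially the same route as the paper: both reduce to the key observation that for $y\in A=\supp S$ one has $S(y)\in A$ and $S(y)>_T y$, whence $S(y)\geq_T T_A(y)$ since $T_A(y)$ is by definition the $\leq_T$-smallest element of $A$ strictly above $y$; substituting $y=T_A^{-1}(x)$ then gives positivity on $A$, while on $X\setminus A$ the map is the identity. Your write-up just spells out a bit more explicitly the $S$-invariance of $\supp S$ and the strictness $S(y)>_T y$, which the paper leaves implicit.
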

\begin{proof}
Since $S$ is positive, we have $A= \{x\in X: S(x)>_Tx\}$. But then by the definition of $T_A$,for all $x\in A$ we have $S(x)\geq_T T_A(x)$. Since $A$ is $T_A$-invariant, we deduce that for all $x\in A$, $ST_A\inv(x)\geq_T x$. But the support of $ST_A\inv$ is contained in $A$ so the previous inequality actually holds for all $x\in X$, and we conclude that $ST_A\inv$ is positive. 
\end{proof}

\begin{prop}\label{prop: positive is product of induced}Let $T\in\Aut(X,\mu)$ be ergodic. Then every positive element is the product of finitely many  transformations which are induced by $T$: for all $S\in[T]_1^+$ there exists $k\in\N$ and $A_1,...,A_k\subseteq X$ such that $$S=T_{A_1}\cdots T_{A_k}.$$
In particular, the index of a positive element is a (positive) integer. 
\end{prop}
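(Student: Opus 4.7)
The plan is to iteratively peel off induced transformations from $S$, exploiting the preceding lemma: for positive $S\in[T]_1$ with support $A$, the transformation $ST_A\inv$ is again positive. Combined with Kac's formula and ergodicity of $T$, this will force the iteration to terminate after exactly $I(S)$ steps, simultaneously yielding the decomposition and the integrality of the index.

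Concretely, I would set $S_0:=S$ and inductively define $A_n:=\supp S_n$ and $S_{n+1}:=S_nT_{A_n}\inv$, continuing as long as $S_n\neq\mathrm{id}_X$. The previous lemma guarantees that each $S_n$ is positive, so the iteration is legitimate. Whenever $\mu(A_n)>0$, the ergodicity of $T$ together with Kac's formula gives
\[ I(T_{A_n})=\int_{A_n}n_{A_n,T}\,d\mu=\mu\left(\bigcup_{k\in\Z}T^k(A_n)\right)=1, \]
and since $I:[T]_1\to\R$ is a homomorphism we obtain $I(S_{n+1})=I(S_n)-1$. A direct induction then yields $I(S_n)=I(S)-n$ for every $n$ reached by the process.

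The key observation is that each $S_n$ is positive, so $c_{S_n}\geq 0$ and hence $I(S_n)\geq 0$. If the iteration continued indefinitely, $I(S_n)=I(S)-n$ would eventually become negative, a contradiction. Thus there is a least $N\in\N$ with $S_N=\mathrm{id}_X$, and the equality $0=I(S_N)=I(S)-N$ forces $I(S)=N$, which is a non-negative integer. Unwinding the recursion $S_n=S_{n+1}T_{A_n}$ yields
\[ S=S_0=T_{A_{N-1}}T_{A_{N-2}}\cdots T_{A_1}T_{A_0}, \]
the desired expression of $S$ as a finite product of transformations induced by $T$.

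There is no real obstacle in this argument; the only mild subtlety worth spelling out is that for a positive element $U\in[T]_1$ the condition $I(U)=0$ already forces $U=\mathrm{id}_X$, because $c_U\geq 0$ with $\int c_U\,d\mu=0$ implies $c_U\equiv 0$ almost everywhere. This is what allows us to recognize the termination step $S_N=\mathrm{id}_X$ from the index alone and to conclude that the number of factors in the decomposition is precisely $I(S)$.
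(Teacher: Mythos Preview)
Your proof is correct and follows essentially the same approach as the paper's: both iteratively peel off an induced transformation $T_{A}$ with $A=\supp S$ using the preceding lemma, observe via ergodicity (Kac's formula / Proposition~\ref{prop:indexinduced}) that each step decreases the index by exactly $1$, and conclude by positivity of the index. The only cosmetic difference is that the paper phrases the argument as an induction on $\lfloor I(S)\rfloor$ while you phrase it as a terminating recursion; your version has the minor advantage of directly exhibiting the number of factors as $I(S)$.
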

\begin{proof}
Note that the index of a positive element has to belong to $[0,+\infty[$, and that the index of a positive element $S$ is null if and only if $S=\mathrm{id}_X$. We prove the result by induction on $\lfloor I(S)\rfloor$.

First, if $\lfloor I(S)\rfloor=0$, suppose towards a contradiction that $S$ is non-trivial. let $A=\supp S$, by the previous lemma $ST_A\inv$ is positive. Since $T$ is ergodic, $A$ intersects almost every $T$-orbit. Proposition \ref{prop:indexinduced} thus yields that $I(T_A)=I(T)=1$. So $I(ST_A\inv)=I(S)-1<0$, but $ST_A\inv$ is positive, a contradiction.

The inductive step is basically the same: if $\lfloor I(S)\rfloor=n+1$ for some $n\in\N$, the support $A$ of $S$ is nontrivial so that $S=T_AS'$ where $\lfloor I(S')\rfloor=n$. 
\end{proof}

\begin{thm}\label{thm: decomposition of elements}Let $T\in\Aut(X,\mu)$ be ergodic, then $[T]_1$ is generated by the set $Per([T]_1)\cup Ind([T]_1)$, where $Per([T]_1)$ is the set of periodic elements in $[T]_1$ and $Ind([T]_1)$ is its set of transformations induced by $T$.  
\end{thm}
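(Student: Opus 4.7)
The plan is to simply combine the three preceding propositions in sequence. Given any $S\in[T]_1$, I would first invoke Proposition \ref{prop:decomposition into almost +- and periodic} to decompose $S=S_pS_+S_-$ as a product of a periodic element $S_p$, an almost positive element $S_+$ and an almost negative element $S_-$ with pairwise disjoint supports. Since $S_p\in\mathrm{Per}([T]_1)$ the first factor is handled, and it only remains to deal with the almost positive and almost negative pieces.

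For the almost positive piece $S_+$, Proposition \ref{prop: almost positive is positive up to a periodic} provides a positive element $S'_+\in[T]_1$ (supported in $\supp S_+$) such that $S_+(S'_+)^{-1}$ is periodic; equivalently $S_+=P_+S'_+$ with $P_+:=S_+(S'_+)^{-1}\in\mathrm{Per}([T]_1)$. Then Proposition \ref{prop: positive is product of induced} writes the positive element $S'_+$ as a finite product $T_{A_1}\cdots T_{A_k}$ of transformations induced by $T$, so $S_+\in\langle\mathrm{Per}([T]_1)\cup\mathrm{Ind}([T]_1)\rangle$.

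For the almost negative piece $S_-$, observe that $S_-^{-1}$ is almost positive, so by the same two-step argument $S_-^{-1}=P_-T_{B_1}\cdots T_{B_\ell}$ for some periodic $P_-$ and some Borel sets $B_j$. Taking inverses yields $S_-=T_{B_\ell}^{-1}\cdots T_{B_1}^{-1}P_-^{-1}$, which again belongs to the group generated by $\mathrm{Per}([T]_1)\cup\mathrm{Ind}([T]_1)$ (the inverses of induced transformations being automatically in that subgroup). Multiplying the three contributions gives $S=S_p\cdot P_+T_{A_1}\cdots T_{A_k}\cdot T_{B_\ell}^{-1}\cdots T_{B_1}^{-1}P_-^{-1}$, finishing the proof.

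There is essentially no main obstacle here: all the genuine work has already been carried out in Propositions \ref{prop:decomposition into almost +- and periodic}, \ref{prop: almost positive is positive up to a periodic} and \ref{prop: positive is product of induced}. The only point worth double-checking is that applying the positive-decomposition argument to $S_-^{-1}$ is legitimate, i.e. that $S_-^{-1}\in[T]_1$ and is almost positive, both of which are immediate from the definitions (since $[T]_1$ is a group and $c_{S^{-1}}=-c_S\circ S^{-1}$, inverting flips the sign of the cocycle along orbits).
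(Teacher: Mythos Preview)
Your proof is correct and follows essentially the same route as the paper's own argument: decompose via Proposition~\ref{prop:decomposition into almost +- and periodic}, reduce almost positive (and, by inversion, almost negative) elements to positive ones via Proposition~\ref{prop: almost positive is positive up to a periodic}, and then write positive elements as products of induced transformations via Proposition~\ref{prop: positive is product of induced}. The paper's proof is just a terser version of what you wrote, noting that almost negative elements are inverses of almost positive ones rather than spelling out the $S_-$ case separately.
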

\begin{proof}
By Proposition \ref{prop:decomposition into almost +- and periodic} the group $[T]_1$ is generated by almost positive elements together with periodic elements. Now every almost positive element is the product of a positive element by a periodic element (Proposition \ref{prop: almost positive is positive up to a periodic}) and every positive element is the product of finitely many induced transformations (Proposition \ref{prop: positive is product of induced}). 
\end{proof}

\begin{cor}\label{cor: generated by T along with periodic}Let $T\in\Aut(X,\mu)$ be ergodic, then $[T]_1$ is generated by the set $Per([T]_1)\cup \{T\}$, where $Per([T]_1)$ is the set of periodic elements in $[T]_1$ .
\end{cor}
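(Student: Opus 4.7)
The plan is to reduce the statement to Theorem \ref{thm: decomposition of elements} and then show that every induced transformation $T_A$ lies in the subgroup generated by $T$ together with periodic elements. By that theorem, $[T]_1$ is generated by the periodic elements $\mathrm{Per}([T]_1)$ together with the transformations $T_A$ for $A \subseteq X$, so this is all that needs to be done.

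The key tool is the remark made just after Proposition \ref{prop: building ergodic and aperiodic from periodic}: whenever $A$ intersects almost every $T$-orbit, the transformation $P := T_A^{-1} T$ is periodic (with $A$ as a Borel fundamental domain). Since $T$ is ergodic, any Borel set $A$ of positive measure intersects almost every $T$-orbit, so this remark applies (the $\mu(A)=0$ case is trivial since then $T_A = \mathrm{id}_X$). Moreover $P \in [T]_1$ because $T \in [T]_1$ trivially and $T_A \in [T]_1$ by Proposition \ref{prop: induced transfo}. Rearranging the identity $T = T_A P$ gives
\[ T_A = T P^{-1}, \]
which exhibits $T_A$ as a product of $T$ and a periodic element of $[T]_1$.

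Combining this with Theorem \ref{thm: decomposition of elements} finishes the proof: every $S \in [T]_1$ is a product of finitely many periodic elements and induced transformations $T_{A_i}$, and each $T_{A_i}$ is itself a product of $T$ and a periodic element, hence $S$ is a word in $\mathrm{Per}([T]_1) \cup \{T\}$. There is no real obstacle here; the whole content is packaged in the earlier remark about $T_A^{-1} T$ being periodic, whose applicability is guaranteed by ergodicity.
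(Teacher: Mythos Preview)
Your proof is correct and follows essentially the same route as the paper: reduce to Theorem~\ref{thm: decomposition of elements}, then observe that for any non-null $A$ ergodicity forces $A$ to meet almost every $T$-orbit, so $T_A^{-1}T$ is periodic and hence $T_A = T\,(T_A^{-1}T)^{-1}$ lies in the group generated by $T$ and the periodic elements. The paper's argument is just a terser version of yours.
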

\begin{proof}
If $A$ is a non-null subset of $X$, then by ergodicity $A$ intersects almost every $T$-orbit so that $T_A\inv T$ is periodic. In other words every induced transformation belongs to the group generated by periodic elements together with $T$, and since by the previous theorem periodic elements together with induced transformations generate $[T]_1$  we are done. 
\end{proof}

\begin{cor}\label{cor:index take values in Z}Let $T\in\Aut(X,\mu)$ be ergodic. Then $\overline{D([T]_1)}$ is equal to the following three groups:
\begin{itemize}
\item the group generated by periodic elements,
\item the group topologically generated by involutions and
\item the kernel of the index map.
\end{itemize}

Moreover we have the following short exact sequence 
$$1\to \overline{D([T]_1)}\to [T]_1 \xrightarrow I\Z\to 1.$$
\end{cor}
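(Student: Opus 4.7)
The plan is to establish the circular chain of inclusions
$G_{per} \subseteq \overline{D([T]_1)} = G_{inv} \subseteq K \subseteq G_{per}$,
where $G_{per}$ denotes the subgroup of $[T]_1$ generated by periodic elements, $G_{inv}$ the closed subgroup topologically generated by involutions, and $K := \ker I$. Three of the four links are essentially already in hand. The inclusion $G_{per} \subseteq \overline{D([T]_1)}$ is immediate from Lemma \ref{lem: periodic in derived group}; the equality $\overline{D([T]_1)} = G_{inv}$ is Theorem \ref{thm: invol gen} applied to the aperiodic graphing $\{T\}$; and for $G_{inv} \subseteq K$, involutions are periodic and therefore have index zero by Corollary \ref{cor: periodic has index zero}, while $K$ is closed because $I$ is $1$-Lipschitz (one has $|I(S) - I(S')| \leq \int_X |c_S - c_{S'}|\,d\mu = \tilde d_T(S,S')$).

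The substantive step, and the main obstacle, is the remaining inclusion $K \subseteq G_{per}$. The plan here is to use Corollary \ref{cor: generated by T along with periodic} to write any $S \in [T]_1$ as a word
$S = \pi_1 T^{a_1} \pi_2 T^{a_2} \cdots \pi_k T^{a_k} \pi_{k+1}$
with periodic $\pi_i$ and integers $a_i$. The key observation is that $T^a \pi T^{-a}$ is periodic whenever $\pi$ is (being a conjugate of a periodic element), so applying the identity $T^a \pi = (T^a \pi T^{-a}) T^a$ iteratively one pushes all the $T$-powers to the right, rewriting $S = \Pi \cdot T^N$ where $\Pi \in G_{per}$ and $N := a_1 + \cdots + a_k \in \Z$. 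Since $I$ is a homomorphism vanishing on periodic elements with $I(T) = 1$, this gives $I(S) = N$, which incidentally proves that the index always takes integer values. If $S \in K$ then $N = 0$, so $T^N = \mathrm{id}_X$ and $S = \Pi \in G_{per}$, closing the loop.

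The short exact sequence then comes for free: $I \colon [T]_1 \to \Z$ is a surjective homomorphism (witnessed by $I(T) = 1$) and its kernel equals $\overline{D([T]_1)}$ by what precedes. It is worth pointing out that the statement refers to the \emph{algebraic} group generated by periodic elements; a priori this need not be closed, but the equality $G_{per} = K$ together with the closedness of $K$ shows that $G_{per}$ is automatically closed, which is consistent with the claim.
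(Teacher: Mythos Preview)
Your proof is correct and follows essentially the same approach as the paper's: both set up a chain of inclusions among $G_{per}$, $\overline{G_{inv}}$, $\overline{D([T]_1)}$ and $\ker I$, and both close the loop via Corollary~\ref{cor: generated by T along with periodic} by writing an arbitrary $S$ as a periodic element times a power of $T$ (the paper phrases this as working modulo the normal subgroup $G_{per}$, you make the conjugation explicit). The only cosmetic differences are that you invoke Theorem~\ref{thm: invol gen} directly for $\overline{D([T]_1)}=G_{inv}$ and argue $G_{inv}\subseteq\ker I$ via Corollary~\ref{cor: periodic has index zero}, whereas the paper goes through $\overline{D([T]_1)}\subseteq\ker I$ using that $I$ is a continuous homomorphism to an abelian group.
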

\begin{proof}
Denote by $G_{per}$ the group generated by periodic elements in $[\Phi]_1$, and by $\overline{G_{inv}}$ the group topologically generated by involutions. 
Note that both are normal subgroups of $[T]_1$. By Lemma \ref{lem: periodic in derived group}, we have $G_{per}\leq \overline{G_{inv}}\leq\overline{D([T]_1)}$. Since the index map is continuous and takes values in an abelian group, we also have $\overline{D([T]_1)}\leq \ker(I)$. 

Let $S\in[T]_1$. We may write $S$ as a product of periodic elements and powers of $T$ by the previous corollary. So modulo $G_{per}$, we have $S=T^n$ for some $n\in\Z$. So $I(S)=n$ and we conclude that the index map takes values into $\Z$. Moreover if $S\in\ker(I)$  we infer that  $n=0$ so that $S\in G_{per}$. So $G_{per}=\ker(I)$ and since we have already established that $G_{per}\leq \overline{G_{inv}}\leq\overline{D([T]_1)}\leq \ker (I)$, we conclude that all these four normal subgroups are equal.
\end{proof}

\begin{rmq}Note that for topological full groups, the kernel of the index map can be larger than the derived group due to the fact that some involutions may not belong to the derived group. Matui has obtained a complete description of the quotient of the kernel of the index map by the derived group, see \cite[Thm. 4.8]{MR2205435}. 
\end{rmq}

\subsection{Escape time and non-simplicity  results}\label{sec:nonsimple}

In this section, we show that the closure of the derived group of $[T]_1$ is never simple. To this end, we will use the non-integrability of certain \textit{escape times}. 

Let us first recall that given a standard probability space $(X,\mu)$, its \textbf{measure algebra} is denoted by $\MAlg(X,\mu)$ and consists of all Borel subsets of $X$, two such sets being identified if their symmetric difference has measure zero. 

It is naturally endowed with a metric $d_\mu$ defined by $d_\mu(A,B)=\mu(A\bigtriangleup B)$ which is complete and separable (see \cite[40.A]{MR0033869}).

\begin{df}Let $T\in\Aut(X,\mu)$ and $A\in\MAlg(X,\mu)$. The $T$-\textbf{escape time} of $A$ is the map $\tau_A: A\to \N$ defined by\begin{itemize}
\item $\tau_A(x)=+\infty$ if there is no $n\in\N$ such that $T^n(x)\not\in A$,
\item  else $\tau_A(x)$ is the the first integer $n\in\N$ such that $T^{n}(x)\not\in A$ or $T^{-n}(x)\not\in A$.
\end{itemize}
\end{df}

By Kac's theorem, for any Borel set $A\subseteq X$ the $T$-return time to $A$ is integrable and its integral is equal to $1$. The situation for escape times is however very different. 

Indeed, any non-null $T$-invariant set fails to have an integrable escape time so a non ergodic transformation admits non-trivial sets with non-integrable escape time. This is actually true of any measure-preserving transformation in a strong sense as the following result shows.

\begin{thm}\label{thm:escape not integrable} Let $T$ be a measure-preserving transformation. Then there is a dense $G_\delta$ subset of $A\in\MAlg(X,\mu)$ whose $T$-escape time is not integrable.
\end{thm}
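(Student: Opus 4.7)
The plan is to show that $\mathcal{E} := \{A \in \MAlg(X,\mu) : \tau_A \text{ is not integrable}\}$ is both a $G_\delta$ subset of $\MAlg(X,\mu)$ and dense there; I treat the two properties separately.

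First, for the $G_\delta$ property, I will write $\int_A \tau_A\, d\mu$ as a pointwise supremum of continuous functionals of $A$. For each $n \in \N$, set $F_n(A) := \int_A \min(\tau_A, n)\, d\mu$; since $\{x \in A : \tau_A(x) \geq j\} = \bigcap_{k=-(j-1)}^{j-1} T^{-k}(A)$, we obtain the explicit formula
\[
F_n(A) = \sum_{j=1}^n \mu\Bigl(\bigcap_{k=-(j-1)}^{j-1} T^{-k}(A)\Bigr),
\]
which is continuous on $(\MAlg(X,\mu), d_\mu)$ because $\mu$ and finite Boolean operations are. By monotone convergence $\int_A \tau_A\, d\mu = \sup_n F_n(A)$, so
\[
\mathcal{E} = \bigcap_{k \in \N} \bigcup_{n \in \N} \{A : F_n(A) > k\}
\]
is $G_\delta$.

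For density in the aperiodic case, I plan to approximate an arbitrary $B \in \MAlg(X,\mu)$ by adjoining a symmetric Rohlin tower. Given $\epsilon > 0$ and $k \in \N$, I apply Rohlin's lemma with large $N$ to obtain a Borel set $F$ with $F, T F, \ldots, T^{2N} F$ pairwise disjoint and $\mu(F) \leq \epsilon / (2(2N+1))$. Set $F' := T^N F$ and $W := \bigsqcup_{i=-N}^N T^i F'$, and let $A := B \cup W$: then $\mu(A \triangle B) \leq \mu(W) < \epsilon$. For $x \in T^i F'$ with $|i| \leq N$, both $T^j(x)$ and $T^{-j}(x)$ lie in $W \subseteq A$ as long as $j \leq N - |i|$, so $\tau_A(x) \geq N - |i| + 1$. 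Summing along the tower,
\[
\int_A \tau_A\, d\mu \geq \mu(F')\sum_{i=-N}^N (N - |i| + 1) = (N+1)^2\, \mu(F'),
\]
which exceeds $k$ for $N$ large enough, since $(N+1)^2 \cdot \epsilon / (2(2N+1)) \sim N\epsilon/4$. In particular $F_{N+1}(A) > k$, witnessing that $A$ lies in the open set $\bigcup_n \{F_n > k\}$ while being $\epsilon$-close to $B$.

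The main subtlety is precisely this tension between the measure budget $\mu(A \triangle B) < \epsilon$ (which costs a linear amount $(2N+1)\mu(F')$) and the required lower bound on $\int_A \tau_A\, d\mu$ (which grows quadratically as $(N+1)^2 \mu(F')$): it works only because one collects escape-time contributions from the whole symmetric tower, not from $F'$ alone. For general $T$, decompose $X = X_a \sqcup X_p$ into aperiodic and periodic parts; since $\tau_A$ restricted to $X_a$ depends only on $A \cap X_a$, the above construction carries over verbatim whenever $\mu(X_a) > 0$. If $T$ is totally periodic, one instead picks a positive-measure $T$-invariant $W \subseteq X_p$ of measure less than $\epsilon$ (these exist because unions of orbits can realize any measure in $[0,\mu(X_p)]$ on a non-atomic space), and sets $A := B \cup W$; then $\tau_A \equiv +\infty$ on $W$, so $A \in \mathcal{E}$.
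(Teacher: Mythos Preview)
Your argument is correct and follows the same strategy as the paper's: write the set $\mathcal E$ as a countable intersection of open sets via truncated escape-time integrals, then obtain density by adjoining to an arbitrary $B$ a small Rohlin tower (or, in the absence of an aperiodic part, a small $T$-invariant set) on which the escape time is large. Your layer-cake formula $F_n(A)=\sum_{j=1}^n\mu\bigl(\bigcap_{|k|<j}T^{-k}A\bigr)$ is a bit cleaner than the paper's recursive continuity argument, and your aperiodic/periodic case split is the natural counterpart of the paper's diffuse/atomic split on the ergodic decomposition.

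Two small points to tighten. First, you state $\mu(F)\leq \epsilon/(2(2N+1))$ but then use it as an equality when bounding $(N+1)^2\mu(F')$ from below; you need a lower bound on $\mu(F)$ as well, so simply choose $F$ inside the Rohlin base with $\mu(F)=\epsilon/(2(2N+1))$ exactly (possible since $(X,\mu)$ is non-atomic). Second, in the aperiodic and mixed cases your construction shows that each open set $\bigcup_n\{F_n>k\}$ is dense, not that $\mathcal E$ itself is; you should invoke the Baire category theorem in $(\MAlg(X,\mu),d_\mu)$ to conclude.
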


\begin{proof}For $n\in\N$, let $\mathcal F_n$ be the set of $A\in\MAlg(X,\mu)$ whose escape time is either non integrable or has $\LL^1$ norm stricly bigger than $n$. We have to show that each $\mathcal F_n$ is open dense. To see that $\mathcal F_n$ is open, first note that for a fixed $k\in\N$, if we let $A_k:=\{x\in A: n_A(x)=k\}$ then
$$A_{k+1}=\left(A\setminus \bigcup_{l=1}^kA_l\right)\cap\left( T^{-(k+1)}(A)\cup T^{k+1}(A)\right)$$
so that $A_k$ depends continuously on $A$. Now take $A\in\mathcal F_n$, by definition we may find $N\in\N$ big enough so that $\sum_{k=1}^Nk\mu(A_k)>n$. Such an inequality will remain true for $A'$ sufficiently close to $A$ by continuity, so that $\mathcal F_n$ is indeed open.

Next, to check the density, we will be done if we can build a sequence $(A_m)$ such that $\norm{\tau_{A_m}}_1\to +\infty$ but $\mu(A_m)\to 0$. Indeed, then $A\cup A_m$ converges to $A$ and for any $n\in\N$, it belongs to $\mathcal F_n$ for $m$ large enough. 

If the ergodic decomposition of $T$ has a diffuse part, there is a sequence of $T$-invariant sets $(A_m)$ such that $\mu(A_m)\to 0$ and $\norm{\tau_{A_m}}_1=+\infty$ by $T$-invariance.

If the ergodic decomposition of $T$ does not have a diffuse part, then $T$ has to be aperiodic. So we apply Rohlin's lemma to $T$ for $N=6^m$, $\epsilon=\frac 12$ to get $B_m\subseteq X$ all whose first $6^m$ translates are disjoint such that $\frac 1{2\cdot 6^m}\leq \mu(B_n)\leq \frac1{6^n}$. Let $$A_n:=\bigsqcup_{i=0}^{4^m-1}T^i(B_m)$$be the union of the first $4^m$ translates of the base $B_m$ of the Rohlin tower. We now compute the integral of the escape time:
\begin{align*} \int_{A_m}\tau_{A_m}&=2\mu(B_m)\sum_{i=0}^{4^m/2-1} (i+1)\\
&=2\mu(B_m)\cdot \frac{4^m}2 \frac{\frac{4^m}2+1}2\\
&\geq \frac 1{6^m}\cdot \frac{16^m}{8}
\end{align*}
which tends to $+\infty$ while $\mu(A_m)\leq (\frac46)^m$ tends to zero as $m$ tends to $+\infty$.
\end{proof}

\begin{rmq} For any $T\in\Aut(X,\mu)$ of rank one, it is easily checked that the set of $A\in\MAlg(X,\mu)$ whose $T$-escape time is bounded is also dense, but I do not know wether this is true for any $T\in\Aut(X,\mu)$. 
\end{rmq}

The following corollary of Theorem \ref{thm:escape not integrable} is in sharp contrast with the situation for full groups of ergodic measure-preserving equivalence relations, which act transitively on elements of $\MAlg(X,\mu)$ of the same measure. 

\begin{cor}\label{cor: cannot send to disjoint}
Let $T\in\Aut(X,\mu)$ be ergodic. There is a dense $G_\delta$ subset of $A\in \MAlg(X,\mu)$ such that for every $B\in\MAlg(X,\mu)$ disjoint of $A$ of the same measure, no element of $[T]_1$ maps $A$ to $B$. \end{cor}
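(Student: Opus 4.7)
The plan is to combine Theorem \ref{thm:escape not integrable} with a direct estimate showing that if some $U\in [T]_1$ maps $A$ to a set $B$ disjoint from $A$, then the $T$-escape time $\tau_A$ must be integrable. Once this implication is established, the desired dense $G_\delta$ is the one supplied by Theorem \ref{thm:escape not integrable}.

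Concretely, suppose $A,B\in\MAlg(X,\mu)$ are disjoint and $U\in [T]_1$ satisfies $U(A)=B$. For almost every $x\in A$, write $U(x)=T^{c_U(x)}(x)$. Since $U(x)\in B$ is disjoint from $A$, we must have $c_U(x)\neq 0$ and $T^{c_U(x)}(x)\notin A$. By the definition of the escape time, this forces
\begin{equation*}
\tau_A(x)\leq \abs{c_U(x)}\quad\text{for a.e. }x\in A.
\end{equation*}
Integrating over $A$ yields
\begin{equation*}
\int_A \tau_A(x)\,d\mu(x)\leq \int_A \abs{c_U(x)}\,d\mu(x)\leq \int_X \abs{c_U(x)}\,d\mu(x)<+\infty,
\end{equation*}
since $U\in[T]_1$. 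Thus the existence of such a $U$ implies that $\tau_A$ is integrable.

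Now let $\mathcal G\subseteq \MAlg(X,\mu)$ be the dense $G_\delta$ set of Theorem \ref{thm:escape not integrable}, consisting of those $A$ whose $T$-escape time is not integrable. By the contrapositive of the above implication, for every $A\in\mathcal G$ and every $B\in\MAlg(X,\mu)$ disjoint from $A$ (with, in particular, $\mu(A)=\mu(B)$), there is no $U\in[T]_1$ sending $A$ to $B$. This is exactly the conclusion of the corollary, so no real obstacle should arise: the work is entirely contained in Theorem \ref{thm:escape not integrable}, and the step above is just an elementary observation about cocycles.
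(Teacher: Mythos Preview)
Your proof is correct and follows exactly the same approach as the paper: you show that if some $U\in[T]_1$ sends $A$ to a disjoint set, then $\tau_A(x)\leq\abs{c_U(x)}$ for a.e.\ $x\in A$, so $\tau_A$ is integrable, and then invoke Theorem~\ref{thm:escape not integrable}. The paper compresses this into a single sentence (``the $\LL^1$-norm of $T$ is an upper bound of the integral of the escape time of $A$''), while you spell out the pointwise inequality explicitly.
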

\begin{proof}
If $T$ maps $A$ to a set disjoint from $A$, then by definition the $\LL^1$-norm of $T$ is an upper bound of the integral of the escape time of $A$. By the previous theorem, this cannot happen for a dense $G_\delta$ of $A\in\MAlg(X,\mu)$. 
\end{proof}

We will now refine the preceding result to establish that for all $T\in[\mathcal R]$ aperiodic, neither the derived group of $[T]_1$ nor its closure are simple. We will need the following lemma, which is essentially contained in \cite{MR0486410}.

\begin{lem} \label{lem: maximal partition of support}Let $T\in\Aut(X,\mu)$ be aperiodic. For every $A\in\MAlg(X,\mu)$, there exists a periodic $U\in[T]_1$  whose support is equal to $A$ and all whose orbits have cardinality at most $3$. 
\end{lem}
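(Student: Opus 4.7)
The plan is to build $U$ directly out of the induced transformation $T_A$. By Proposition \ref{prop: induced transfo} (Kac's formula), $T_A\in[T]_1$, so since $[T]_1$ is a group, $T_A^{-1},T_A^{-2}\in[T]_1$ as well. Up to replacing $X$ by the $T$-saturation of $A$ (outside of which $T_A$, and hence $U$, will simply be the identity), we may assume that $A$ meets every $T$-orbit; Lemma \ref{lem: properties of induced equiv to that of T} then guarantees that $T_A$ restricts to an aperiodic measure-preserving automorphism of the standard Borel space $A$.

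First I would apply Proposition \ref{prop: small rohlin tower} to $T_A$ viewed as an automorphism of $A$. This yields a partition $(A_1,A_2,B_1,B_2,B_3)$ of $A$ with $T_A(A_1)=A_2$, $T_A(B_1)=B_2$ and $T_A(B_2)=B_3$. I would then define
\[
U(x)=\begin{cases}T_A(x) & \text{if }x\in A_1\cup B_1\cup B_2,\\ T_A^{-1}(x) & \text{if }x\in A_2,\\ T_A^{-2}(x) & \text{if }x\in B_3,\\ x & \text{otherwise,}\end{cases}
\]
so that $U$ swaps $A_1\leftrightarrow A_2$ and $3$-cycles $B_1\to B_2\to B_3\to B_1$. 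Hence $U$ is a periodic measure-preserving transformation whose non-trivial orbits all have cardinality $2$ or $3$ and whose support is exactly $A_1\sqcup A_2\sqcup B_1\sqcup B_2\sqcup B_3=A$.

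It remains to verify that $U\in[T]_1$. On each piece of the partition, $U$ coincides with one of $T_A$, $T_A^{-1}$ or $T_A^{-2}$, each of which has integrable $T$-cocycle, so
\[
\int_X\abs{c_U(x)}\,d\mu(x)\leq\int_X\bigl(\abs{c_{T_A}(x)}+\abs{c_{T_A^{-1}}(x)}+\abs{c_{T_A^{-2}}(x)}\bigr)\,d\mu(x)<+\infty.
\]
The only minor obstacle I anticipate is the preliminary reduction to the case where $A$ meets every $T$-orbit, which is needed for a clean application of Lemma \ref{lem: properties of induced equiv to that of T}; everything after that is a matter of assembling the pieces provided by the $T_A$-tower.
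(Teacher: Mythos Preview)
Your proof is correct and follows essentially the same route as the paper: both build $U$ by cutting $A$ into a $(2,3)$-tower for $T_A$ and gluing the obvious involution and $3$-cycle. The only cosmetic differences are that the paper cites an external reference for the tower decomposition whereas you invoke the paper's own Proposition~\ref{prop: small rohlin tower}, and your reduction via Lemma~\ref{lem: properties of induced equiv to that of T} is a bit more explicit than needed (Poincar\'e recurrence already gives that $T_A$ is aperiodic on $A$ for a.e.\ $x$, without assuming $A$ meets every $T$-orbit).
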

\begin{proof}
Consider the induced transformation $T_A$, which belongs to $[T]_1$ by Proposition \ref{prop: induced transfo}. Since $T$ is aperiodic, $\supp T_A=A$. By a maximality argument, there is a partition $(A_1,A_2,B_1,B_2,B_3)$ of $A$ such that $T_A(A_1)=A_2$, $T_A(B_1)=B_2$ and $T_A(B_2=B_3)$ (see \cite[Lem. 7.2]{MR0435350}).
We then have a naturally defined periodic element $U\in[T]_1$ given by
$$U(x)=\left\{\begin{array}{cl}T_A(x) & \text{if }x\in A_1\sqcup B_1\sqcup B_2 \\T_A\inv(x) & \text{if }x\in A_2 \\T_A^{-2}(x) & \text{if }x\in B_3 \\x & \text{else.}\end{array}\right.$$
It is easily checked that $U$ is as desired.
\end{proof}

\begin{lem}\label{lem: commutator with support A}Let $T\in\Aut(X,\mu)$ be aperiodic. For every Borel $A\subseteq X$, there exists $U\in D([T]_1)$ whose support is equal to $A$ and all whose orbits have cardinality at most $3$. 
\end{lem}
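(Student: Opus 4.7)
The plan is to construct $U$ as a product of two commutators in $[T]_1$, handling separately a ``$3$-cycle part'' and a ``$2$-cycle part'' of a periodic element supported on $A$. The starting point is to apply Proposition \ref{prop: small rohlin tower} to the induced transformation $T_A$: since $T$ is aperiodic one has $\supp T_A = A$, so this yields a partition $A = A_1 \sqcup A_2 \sqcup B_1 \sqcup B_2 \sqcup B_3$ with $T_A(A_1) = A_2$, $T_A(B_1) = B_2$, and $T_A(B_2) = B_3$. Note that $T_A \in [T]_1$ by Proposition \ref{prop: induced transfo}, so any transformation we build out of finitely many pieces of $T_A^{\pm 1}$ will automatically belong to $[T]_1$.

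For the $3$-cycle part, let $V_3 \in [T]_1$ be the transformation equal to $T_A$ on $B_1 \sqcup B_2$ and to $T_A^{-2}$ on $B_3$, so that every non-trivial orbit of $V_3$ is a $3$-cycle within a single $T_A$-orbit of $B_1 \sqcup B_2 \sqcup B_3$. The involutions $I_1, I_2 \in [T]_1$ that swap $B_1 \leftrightarrow B_2$ and $B_2 \leftrightarrow B_3$ respectively via $T_A$ have commutator equal to $V_3^{\pm 1}$, mimicking the $S_3$ identity $[(1\quad 2),(2\quad 3)] = (1\quad 3\quad 2)$. This places $V_3$ in $D([T]_1)$.

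For the $2$-cycle part, first define $V_2 \in [T]_1$ to be the involution swapping $A_1 \leftrightarrow A_2$ via $T_A$. The decisive move is to apply Lemma \ref{lem: maximal partition of support} to the set $A_1$, obtaining a periodic element $\rho \in [T]_1$ with $\supp \rho = A_1$ whose orbits have cardinality at most $3$. The conjugate $V_2 \rho V_2^{-1}$ has support $V_2(A_1) = A_2$ and orbits of cardinality at most $3$, as they are $V_2$-images of the orbits of $\rho$. Consequently the commutator $[V_2, \rho] = (V_2 \rho V_2^{-1}) \rho^{-1}$ is a product of two transformations with disjoint supports $A_2$ and $A_1$; it therefore has support exactly $A_1 \sqcup A_2$, all orbits of cardinality at most $3$, and lies in $D([T]_1)$.

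Putting things together, $U := V_3 \cdot [V_2, \rho]$ has its two factors supported on the disjoint sets $B_1 \sqcup B_2 \sqcup B_3$ and $A_1 \sqcup A_2$, whose union is $A$; hence $\supp U = A$ and every orbit of $U$ is an orbit of one of the two factors, of cardinality at most $3$. As a product of two commutators in $[T]_1$, $U$ belongs to $D([T]_1)$. No substantial obstacle arises once Proposition \ref{prop: small rohlin tower} and Lemma \ref{lem: maximal partition of support} are in hand; the only real content is the observation that applying Lemma \ref{lem: maximal partition of support} to the ``leftover'' piece $A_1$ provides a partner $\rho$ for $V_2$ whose commutator with $V_2$ realizes the desired $2$-cycle structure without actually producing a bare involution on $A_1 \sqcup A_2$.
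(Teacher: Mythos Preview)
Your proof is correct and follows essentially the same approach as the paper's: partition $A$ via a small Rokhlin tower for $T_A$, express the $3$-cycle part as a commutator of two transpositions, and handle the $2$-cycle part by applying Lemma~\ref{lem: maximal partition of support} to a fundamental domain (your $A_1$, the paper's $C$) and forming the commutator of the resulting periodic element with the involution. The only cosmetic difference is that the paper packages the first step as an application of Lemma~\ref{lem: maximal partition of support} to $A$ itself rather than invoking Proposition~\ref{prop: small rohlin tower} directly.
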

\begin{proof}
Consider the transformation $U\in[T]_1$ provided by the previous lemma. Observe that $U=U_2U_3$ where $U_2$ is the transformation induced by $U$ on the set of points whose $U$-orbit has cardinality $2$, and $U_3$ is the transformation induced by $U$ on the set of points whose $U$-orbit has cardinality $3$. 

The transformation $U_3$ is a commutator by the equality $(1\;2\;3)=(1\; 2)(1\; 3)(1\; 2)(1\;3)$ in the symmetric group over $\{1,2,3\}$. 

    Let $C$ be a fundamental domain of the restriction of $U_2$ to its support. Applying the above lemma again, we find $V\in[T]_1$ all whose orbits have cardinality at most $3$ such that $\supp V=C$. We then let $\tilde U_2=U_2VU_2V\inv\in D([T]_1)$ which has the same support as $U_2$. Now $U'=\tilde U_2U_3$ is as desired. 
\end{proof}

If every $A\in\MAlg(X,\mu)$ were the support of an \textit{involution} belonging to $[T]_1$, one could take $U$ from the previous lemma to be an involution. However $X$ itself may fail to be the support of an involution (see Theorem \ref{thm: chara which sets are supports of n cycles} and the example thereafter).

\begin{thm}\label{thm:non simplicity}
Let $T\in\Aut(X,\mu)$ be ergodic. Then neither $D([T]_1)$ nor $\overline{D([T]_1)}$ are simple.
\end{thm}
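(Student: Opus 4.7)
The plan is to exhibit a proper non-trivial normal subgroup of $\overline{D([T]_1)}$ (necessarily dense, by Theorem \ref{thm: closed normal subgroups}) whose intersection with $D([T]_1)$ is again proper. The key ingredients are the existence of sets with non-integrable $T$-escape time (Theorem \ref{thm:escape not integrable}) and the fact that every Borel set arises as the support of an element of $D([T]_1)$ (Lemma \ref{lem: commutator with support A}).

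First, using Theorem \ref{thm:escape not integrable} I pick a Borel set $A \subseteq X$ with $\tau_A \notin \LL^1(X,\mu)$, and by Lemma \ref{lem: commutator with support A} I obtain $U_A \in D([T]_1)$ with $\supp U_A = A$ and all $U_A$-orbits of cardinality at most $3$. The candidate normal subgroup is $N \triangleleft \overline{D([T]_1)}$, defined as the normal closure in $\overline{D([T]_1)}$ of the class $\mathcal I$ of all $V \in \overline{D([T]_1)}$ whose support has \emph{integrable} $T$-escape time. Then $N$ is non-trivial, since $\mathcal I$ contains many concrete elements: by Lemma \ref{lem: commutator with support A}, any Borel set $B$ arises as the support of a commutator in $D([T]_1)$, and many $B$ satisfy $\tau_B \in \LL^1$ (for instance, the base of any bounded Rokhlin tower).

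The crucial and most delicate step is to show that $U_A \notin N$. A naive attempt to track integrability of $\tau_{\supp V}$ along products and conjugates fails, because this quantity is not conjugation-invariant: a general $W \in [T]_1$ may dramatically alter the escape time of $\supp V$. The likely remedy is to refine the invariant into a quasi-morphism-like functional $\Phi : \overline{D([T]_1)} \to [0, +\infty]$ which is sub-additive under products, controlled under conjugation by the $\LL^1$-norm of the conjugating cocycle, finite on every element of $\mathcal I$, yet infinite on $U_A$ because $\tau_A \notin \LL^1$. Corollary \ref{cor: cannot send to disjoint} is the natural ingredient preventing one from sidestepping this obstruction via a clever $[T]_1$-rearrangement of $A$.

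Once $U_A \notin N$ is established, $N$ is a proper non-trivial normal subgroup of $\overline{D([T]_1)}$, giving its non-simplicity, and $N \cap D([T]_1)$ is then a proper non-trivial normal subgroup of $D([T]_1)$. The main obstacle, as indicated above, is the precise construction of $\Phi$ and the verification of its properties.
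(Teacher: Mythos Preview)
Your proposal identifies the right ingredients (Theorem \ref{thm:escape not integrable} and Lemma \ref{lem: commutator with support A}) and the right element $U_A$, but it does not actually prove the theorem: you explicitly leave the construction of the functional $\Phi$ and the verification that $U_A\notin N$ as an unresolved ``main obstacle''. As stated, there is no argument establishing non-simplicity.

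The paper's proof shows that this obstacle disappears once you make a single sharper choice. Rather than taking $N$ to be the normal closure of \emph{all} elements whose support has integrable escape time, fix one element $S_2\in D([T]_1)$ whose support $B$ is \emph{disjoint} from $A$ (such $S_2$ exists by Lemma \ref{lem: commutator with support A}), and let $S_1=U_A$. If $S_1$ were a product of conjugates $T_1S_2^{\pm1}T_1^{-1}\cdots T_nS_2^{\pm1}T_n^{-1}$ with $T_i\in[T]_1$, then $A\subseteq\bigcup_{i=1}^nT_i(B)$. For each $x\in A$ choose $i$ with $T_i^{-1}(x)\in B\subseteq X\setminus A$; since $T_i^{-1}(x)=T^{c_{T_i^{-1}}(x)}(x)$, this forces $\tau_A(x)\leq |c_{T_i^{-1}}(x)|$, and summing gives $\int_A\tau_A\leq\sum_i\tilde d_T(T_i^{-1},\mathrm{id}_X)<\infty$, contradicting the choice of $A$. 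Thus $S_1$ does not lie in the normal subgroup generated by $S_2$ in $[T]_1$, hence not in $D([T]_1)$ nor in $\overline{D([T]_1)}$, and both groups are non-simple. No auxiliary functional is needed; the disjointness of $B$ from $A$ is exactly what converts ``$x$ is moved into $B$'' into an escape-time bound, which is the step your sketch was missing.
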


The previous theorem follows trivially from the following statement which we will prove using Theorem \ref{thm:escape not integrable}.

\begin{lem}
Let $T\in\Aut(X,\mu)$ be ergodic. Then there exists $S_1,S_2\in D([T]_1)$ such that $S_1$ cannot be written as a product of conjugates of $S_2^{\pm 1}$ by elements of $[T]_1$.  
\end{lem}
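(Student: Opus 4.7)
The approach uses Theorem~\ref{thm:escape not integrable} together with the basic escape-time inequality: for every Borel $B \subseteq X$ and every $g \in [T]_1$,
\[
\int_{B \cap g^{-1}(X \setminus B)} \tau_B(x) \, d\mu(x) \;\leq\; \tilde d_T(g, \mathrm{id}_X),
\]
which holds because for $x \in B$ with $g(x) \notin B$ the cocycle satisfies $|c_g(x)| \geq \tau_B(x)$.

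First I would apply Theorem~\ref{thm:escape not integrable} to pick a Borel set $A \subseteq X$ with $\mu(A) < 1/2$ and $\int_A \tau_A \, d\mu = +\infty$; since a dense $G_\delta$ of sets have this property, one can even arrange simultaneously that $\tau_{X \setminus A}$ is not integrable on $X \setminus A$. Using Lemma~\ref{lem: commutator with support A}, choose $S_2 \in D([T]_1)$ whose support equals $X \setminus A$ (say, with all orbits of cardinality at most $3$). Similarly pick $S_1 \in D([T]_1)$ whose support equals $A$.

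Now suppose for contradiction that $S_1 = \prod_{i=1}^n g_i S_2^{\epsilon_i} g_i^{-1}$ with $g_i \in [T]_1$ and $\epsilon_i \in \{\pm 1\}$. Since each factor has support $g_i(X \setminus A)$, we have the support inclusion
\[
A \;=\; \supp S_1 \;\subseteq\; \bigcup_{i=1}^n g_i(X \setminus A),
\]
so $\sum_i \mu(A \cap g_i(X \setminus A)) \geq \mu(A)$. By measure preservation this rewrites as
\[
\sum_{i=1}^n \mu\bigl((X \setminus A) \cap g_i^{-1}(A)\bigr) \;\geq\; \mu(A).
\]
Applying the escape-time inequality with $B = X \setminus A$ to each $g_i$ gives
\[
\int_{(X \setminus A) \cap g_i^{-1}(A)} \tau_{X \setminus A} \, d\mu \;\leq\; \tilde d_T(g_i, \mathrm{id}_X),
\]
so summing over $i$ yields a lower bound on $\sum_i \tilde d_T(g_i, \mathrm{id}_X)$ that grows like a covering $\tau_{X \setminus A}$-mass of a set of measure at least $\mu(A)$ inside $X \setminus A$, which is unbounded because $\tau_{X \setminus A}$ fails to be integrable.

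The main obstacle lies in converting this tension into a clean contradiction. Since $\sum_i \tilde d_T(g_i, \mathrm{id}_X)$ is not, a priori, bounded by $\tilde d_T(S_1, \mathrm{id}_X)$ (conjugation does not preserve the $\LL^1$-norm), one must choose $S_1$ with some additional rigidity. The natural strategy is to select $S_1$ so that, in any product representation of $S_1$, the $g_i$'s are forced to carry out the "bulk transportation'' themselves (not absorbed by cancellations inside $S_2^{\pm 1}$), which is precisely where the non-integrability of $\tau_{X \setminus A}$ produces the required divergence. I expect this final step to be the most delicate part, requiring a careful weighted covering argument on the high-$\tau_{X \setminus A}$ portion of $X \setminus A$.
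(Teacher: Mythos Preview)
Your setup is essentially the same as the paper's, and you have the right basic inequality in hand, but you apply it to the wrong set and thereby manufacture an obstacle that is not really there.

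Concretely: once you know $A=\supp S_1\subseteq\bigcup_{i=1}^n g_i(X\setminus A)$, apply your escape-time inequality with $B=A$ (not $B=X\setminus A$) and with $g=g_i^{-1}$. For $x\in A$ with $g_i^{-1}(x)\in X\setminus A$ one has $g_i^{-1}(x)\notin A$, hence $\tau_A(x)\leq \abs{c_{g_i^{-1}}(x)}$. Summing over $i$ and using that the sets $A\cap g_i(X\setminus A)$ cover $A$ gives
\[
\int_A \tau_A\,d\mu \;\leq\; \sum_{i=1}^n \int_{A\cap g_i(X\setminus A)} \tau_A\,d\mu \;\leq\; \sum_{i=1}^n \tilde d_T(g_i^{-1},\mathrm{id}_X) \;<\; +\infty,
\]
contradicting $\int_A\tau_A=+\infty$. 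This is exactly the paper's argument (with $B=X\setminus A$ playing the role of the paper's $B$, which only needs to be disjoint from $A$). There is no need to control $\tau_{X\setminus A}$, no ``delicate weighted covering argument'', and no issue about conjugation failing to preserve the $\LL^1$-norm: the finiteness of $\sum_i\tilde d_T(g_i^{-1},\mathrm{id}_X)$ is all you use, and that holds simply because each $g_i\in[T]_1$.
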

\begin{proof}
Take $A\in\MAlg(X,\mu)$ whose escape time is not integrable, which exists by Theorem \ref{thm:escape not integrable}. 

By Lemma \ref{lem: commutator with support A}, we may fix $S_1\in D([T_1])$ whose support is equal to $A$ and $S_2\in D([T]_1)$ whose support $B$ is disjoint from $A$. Now assume $S_1$ is a product of conjugates of $S_2^{±1}$ by $T_1,...,T_n\in [T]_1$. We must have $A\subseteq\bigcup_{i=1}^nT_i(B)$, but then the integral of the escape time of $A$ must be less than 
$$\sum_{i=1}^nd_T(T_i\inv,\mathrm{id}_X)<+\infty,$$
a contradiction. 
\end{proof}
\begin{rmq}The two above results  are also true in the more general aperiodic case, but the non-trivial case is the ergodic one. Indeed, in the non-ergodic case we can directly use Lemma \ref{lem: commutator with support A} to build two non-trivial elements in $D([\Phi]_1)$ whose supports are $T$-invariant and disjoint, so  the normal subgroups they generate actually intersect trivially. 
\end{rmq}

\subsection{Density of topological full groups, amenability and finiteness of the topological rank}\label{sec: main thms}

\begin{thm}\label{thm: density of the whole topo full group}Let $T$ be a homeomorphism of the Cantor space, and let $\nu$ be a non-atomic ergodic  probability measure preserved by $T$. Then the topological full group $[T]_c$ is dense in the $\LL^1$ full group $[T]_1$. 
\end{thm}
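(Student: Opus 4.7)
The plan is to reduce the density of the full $[T]_c$ inside $[T]_1$ to the density of the involution subgroup already established in Theorem \ref{thm: density gp gen by invol in topo ful group}, exploiting the fact that $T$ itself lies in $[T]_c$ and provides an obvious lift of the generator of the index quotient.

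First I would check the standing hypotheses: since $\nu$ is non-atomic and $T$-ergodic, $T$ must be aperiodic (otherwise a $T$-invariant union of periodic orbits would have to be conull, contradicting non-atomicity). Hence Corollary \ref{cor:index take values in Z} applies and yields the short exact sequence
$$1 \to \overline{D([T]_1)} \to [T]_1 \xrightarrow{I} \Z \to 1,$$
in which $I(T) = \int_{2^\N} 1\, d\nu = 1$, so $T$ is a section of the index over the generator of $\Z$.

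Next I apply Theorem \ref{thm: density gp gen by invol in topo ful group} to the continuous graphing $\Phi = \{T\}$ (whose sole partial isomorphism $T$ has clopen domain and range $2^\N$ and is a homeomorphism). This yields that the subgroup $[\{T\}]_{c-inv}$ generated by involutions inside $[T]_c$ is dense in $\overline{D([T]_1)}$. Since $[\{T\}]_{c-inv} \subseteq [T]_c$, we obtain $\overline{D([T]_1)} \subseteq \overline{[T]_c}$.

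Finally, observe that $\overline{[T]_c}$ is a closed subgroup of $[T]_1$ containing both $\overline{D([T]_1)}$ (by the previous step) and $T$ (trivially). Given any $S \in [T]_1$, let $n = I(S) \in \Z$; then $ST^{-n} \in \ker I = \overline{D([T]_1)} \subseteq \overline{[T]_c}$, while $T^n \in \overline{[T]_c}$, so $S = (ST^{-n})T^n \in \overline{[T]_c}$. Thus $\overline{[T]_c} = [T]_1$, which is the claim. There is no real obstacle here: the delicate work sits in Theorem \ref{thm: density gp gen by invol in topo ful group} and in identifying the kernel of the index map with $\overline{D([T]_1)}$, and once these are in hand the statement is a formal consequence of the splitting of the index map afforded by $T$ itself.
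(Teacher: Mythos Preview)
Your proof is correct and follows essentially the same route as the paper's. The only cosmetic difference is that the paper invokes Corollary~\ref{cor: generated by T along with periodic} (that $[T]_1$ is generated by periodic elements together with $T$) rather than the short exact sequence of Corollary~\ref{cor:index take values in Z}, but since the latter is deduced from the former and both amount to the decomposition $[T]_1=\overline{D([T]_1)}\cdot\langle T\rangle$, the two arguments are interchangeable.
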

\begin{proof}
By Theorem \ref{thm: density gp gen by invol in topo ful group}, the group  $\overline{[T]_c}$ contains $\overline{D([\Phi]_1})$, in particular it contains every periodic element of $[\Phi]_1$ by Lemma \ref{lem: periodic in derived group}. By Corollary \ref{cor: generated by T along with periodic} the group $[T]_1$ is generated by periodic elements together with $T$, and since $T\in[T]_c$ we conclude that $\overline{[T]_c}=[T]_1$.
\end{proof}

So given a measure-preserving ergodic homeomorphism $T$ of $(2^\N,\lambda)$,  we have the following chain of Polish groups where each is dense in its successor:
$$[T]_c\leq[T]_1\leq [T]\leq \Aut(2^\N,\nu).$$ 
I don't know whether the above theorem can be generalised to other finitely generated groups acting by measure-preserving homeomorphisms. Recall however that this result is wrong for graphings in general (see Example  \ref{example: full topological not dense in L1}).\\

Recall that a topological group is amenable if all its continuous actions on compact spaces admit invariant probability measures. Note that if a topological group contains a dense subgroup which is amenable as a discrete group, it has to be amenable itself. We view countable groups as discrete groups. Juschenko and Monod have shown that topological full groups of minimal $\Z$-actions on the Cantor space are amenable \cite{zbMATH06203677}. Since the Jewett-Krieger theorem ensures us that every ergodic $\Z$-actions can be modelled by a minimal uniquely ergodic homeomorphism (see e.g. \cite{MR1073173}), the previous theorem has the following interesting application.

\begin{thm}\label{thm:amenable for Z}Let $T\in\Aut(X,\mu)$ be ergodic. Then its $\LL^1$ full group $[T]_1$ is an amenable topological group, and so is the closed subgroup $\overline{D([T]_1)}$.\qed
\end{thm}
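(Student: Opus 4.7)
The plan is to reduce to the topological (Cantor-dynamical) setting and then combine Juschenko–Monod's amenability theorem with the density result of Theorem \ref{thm: density of the whole topo full group}.

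First, I would invoke the Jewett–Krieger theorem, which tells us that any ergodic $T \in \Aut(X,\mu)$ is measurably conjugate to a minimal uniquely ergodic homeomorphism $\tilde T$ of the Cantor space $2^{\N}$ equipped with its unique invariant probability measure $\nu$. Since $\LL^1$ full groups are invariants of measurable conjugacy (they depend only on $T$ up to isomorphism of the measured dynamical system), we may replace $T$ by $\tilde T$ and thereby assume that $T$ is a minimal homeomorphism of $(2^{\N},\nu)$.

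Next, Juschenko–Monod's theorem ensures that the topological full group $[T]_c$ is amenable as a countable discrete group. By Theorem \ref{thm: density of the whole topo full group}, $[T]_c$ sits densely in the Polish group $[T]_1$. A standard fact about topological groups is that if $G$ is a topological group admitting a dense amenable subgroup $H$ (amenable as an abstract, hence discrete, group), then $G$ is amenable as a topological group: given a continuous $G$-action on a compact space $K$, the restriction to $H$ admits an invariant probability measure $m$ by amenability of $H$, and $m$ is automatically $G$-invariant by continuity of the action and density of $H$ in $G$ (the set of $g \in G$ fixing $m$ is closed and contains $H$). Applying this to $H = [T]_c$ and $G = [T]_1$ yields amenability of $[T]_1$.

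For $\overline{D([T]_1)}$, the argument is entirely analogous: by Theorem \ref{thm: density gp gen by invol in topo ful group} (and the fact that $[T]_{c-inv} \subseteq D([T]_c)$ up to involutions contained in the derived group, which for minimal homeomorphisms is subsumed by Matui's results, or more directly by the density statement already proved), the group $D([T]_c)$, which is a subgroup of the amenable group $[T]_c$ and hence itself amenable as a discrete group, is dense in $\overline{D([T]_1)}$. The same density principle for amenability then gives topological amenability of $\overline{D([T]_1)}$.

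There is no real obstacle here; the entire content has been built up in the earlier sections. The only point requiring a little care is verifying that the density-of-amenable-subgroup criterion applies, i.e.\ noting that invariance of a measure under a dense subgroup extends to the closure by continuity of the action map $G \times K \to K$ — a standard and brief verification.
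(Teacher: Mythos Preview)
Your argument is correct and matches the paper's approach exactly: Jewett--Krieger to realize $T$ as a minimal uniquely ergodic Cantor system, Juschenko--Monod for amenability of $[T]_c$, Theorem~\ref{thm: density of the whole topo full group} for density, and the standard density criterion for amenability of the ambient Polish group.

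One small simplification for the $\overline{D([T]_1)}$ part: you do not need to argue that $D([T]_c)$ is dense or invoke Matui's results. Theorem~\ref{thm: density gp gen by invol in topo ful group} already gives you that $[T]_{c\text{-}inv}$ is dense in $\overline{D([T]_1)}$, and $[T]_{c\text{-}inv}$ is a subgroup of the amenable discrete group $[T]_c$, hence itself amenable. That is all you need.
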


It would be interesting to obtain a purely ergodic-theoretic proof of the previous result. The same is true of our next result where we rely on Matui's work on finite generation of topological full groups (the commutator group of the topological full group of a minimal homeomorphism is finitely generated if and only if the homeomorphism is a subshift of finite type, see \cite[Thm. 5.4]{MR2205435}). We should also point out that our proof of (1)$\impl$(2) is a natural adaptation of his.

\begin{thm}Let $T\in\Aut(X,\mu)$ be ergodic. Then the following are equivalent
\begin{enumerate}[(1)]
\item the  $\LL^1$ full group $[T]_1$ is topologically finitely generated;
\item the closed subgroup $\overline{D([T])_1}$ is topologically finitely generated;
\item the transformation $T$ has finite entropy.
\end{enumerate}
\end{thm}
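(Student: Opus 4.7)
The plan is to establish the cycle of implications $(2)\Rightarrow(1)\Rightarrow(3)\Rightarrow(2)$.

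The implication $(2)\Rightarrow(1)$ is immediate: if $h_1,\dots,h_m$ topologically generate $\overline{D([T]_1)}$, then $T,h_1,\dots,h_m$ topologically generate $[T]_1$, since the closed subgroup they generate contains $\overline{D([T]_1)}$ and, by Corollary \ref{cor:index take values in Z}, the quotient $[T]_1/\overline{D([T]_1)}\cong\Z$ is generated by the image of $T$ under the index map.

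For $(3)\Rightarrow(2)$, I would invoke Krieger's theorem to realize $T$, up to conjugacy in $\Aut(X,\mu)$, as a minimal uniquely ergodic subshift $\tilde T$ of the Cantor space; finite entropy is precisely what makes this possible. Matui's theorem \cite[Thm.~5.4]{MR2205435} then shows that $D([\tilde T]_c)$ is finitely generated as an abstract group. Since $D([\tilde T]_c)\subseteq\overline{D([\tilde T]_1)}$ is moreover dense by Theorem \ref{propintro:dense full group}, the Polish group $\overline{D([T]_1)}=\overline{D([\tilde T]_1)}$ is topologically finitely generated.

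The main step is $(1)\Rightarrow(3)$. Let $g_1,\dots,g_n$ topologically generate $[T]_1$, let $\mathcal P_i$ be the cocycle partition $\{c_{g_i}^{-1}(k):k\in\Z\}$ of $g_i$, and set $\mathcal P:=\bigvee_{i=1}^n\mathcal P_i$. By Lemma \ref{lem: l1 partition has finite entropy} each $\mathcal P_i$ has finite entropy, and by Lemma \ref{lem: entropy bound on the join} so does $\mathcal P$. Let $\Sigma$ denote the smallest $T$-invariant complete sub-$\sigma$-algebra containing $\mathcal P$, and put
\begin{equation*}
\mathcal H:=\{h\in[T]_1: c_h \text{ is } \Sigma\text{-measurable}\}.
\end{equation*}
I would first verify that every $h\in\mathcal H$ preserves $\Sigma$: writing $h(x)=T^{c_h(x)}(x)$, the preimage of any $\Sigma$-set $A$ decomposes as $\bigsqcup_{k\in\Z}\bigl(\{c_h=k\}\cap T^{-k}(A)\bigr)$, which lies in $\Sigma$. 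The cocycle identities $c_{h_1h_2}(x)=c_{h_1}(h_2(x))+c_{h_2}(x)$ and $c_{h^{-1}}(x)=-c_h(h^{-1}(x))$ then show that $\mathcal H$ is a subgroup, and closedness of $\mathcal H$ follows since convergence in $[T]_1$ yields $L^1$-convergence of cocycles and hence pointwise a.e.\ convergence along a subsequence. Being a closed subgroup containing $T$ and every $g_i$, the set $\mathcal H$ must coincide with $[T]_1$. In particular, for every Borel $A\subseteq X$ the induced transformation $T_A\in[T]_1=\mathcal H$ by Proposition \ref{prop: induced transfo}, so $c_{T_A}$ is $\Sigma$-measurable; since $A=\{c_{T_A}\neq 0\}$ modulo null sets (using aperiodicity of $T$), we conclude $A\in\Sigma$. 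Thus $\Sigma$ is the full Borel $\sigma$-algebra, $\mathcal P$ is a finite-entropy generating partition for $T$, and the Kolmogorov--Sinai theorem yields $h(T)=h(T,\mathcal P)<+\infty$. I expect the main obstacle to be verifying that $\mathcal H$ is a group, which boils down to showing that $\Sigma$-measurability of $c_h$ is equivalent to $h$ preserving the $\sigma$-algebra $\Sigma$.
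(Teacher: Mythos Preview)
Your proof is correct and follows the same overall strategy as the paper: the implications $(2)\Rightarrow(1)$ and $(3)\Rightarrow(2)$ are identical (semidirect product decomposition via the index map, and Krieger plus Matui plus density of the topological full group), and for $(1)\Rightarrow(3)$ both arguments take the join $\mathcal P$ of the cocycle partitions of a topological generating set and show it is a finite-entropy generating partition by recovering an arbitrary Borel set $A$ as the support of the induced map $T_A$.

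The only difference is in how $(1)\Rightarrow(3)$ is packaged. The paper works directly with the Boolean algebra $\mathcal A$ generated by the $T$-translates of $\mathcal P$, notes that cocycle partitions of words in the generators lie in $\mathcal A$, and then approximates $T_A$ in the uniform metric by such words so that their supports converge to $A$ in measure; this shows $\mathcal A$ is dense in $\MAlg(X,\mu)$. You instead define the closed subgroup $\mathcal H$ of elements with $\Sigma$-measurable cocycle and argue $\mathcal H=[T]_1$, which is a clean repackaging of the same content. One small remark on your concern at the end: closure of $\mathcal H$ under inverses does not require the full equivalence you mention, since on $\{c_h=k\}$ one has $h=T^k$, so $h(\{c_h=k\})=T^k(\{c_h=k\})\in\Sigma$ directly by $T$-invariance of $\Sigma$; this immediately gives $\Sigma$-measurability of $c_{h^{-1}}$.
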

\begin{proof}
(1)$\impl$(3) Suppose that $[T]_1$ is topologically finitely generated, and let $S_1,...,S_k\in[T]_1$ generate a dense subgroup of $[T]_1$. For each $i\in\{1,...,k\}$ and each $n\in\Z$ let $A_{i,n}:=\{x\in X: c_{S_i}(x)=n\}$. Since $S_1,...,S_k\in[T]_1$, for every $i\in\{1,...,k\}$ we have $$\sum_{n\in\Z}\abs n\mu(A_{i,n})<+\infty$$
Lemma \ref{lem: l1 partition has finite entropy} thus yields that every partition $(A_{i,n})_{n\in\N}$ has finite entropy. The partition $\mathcal P:=\bigvee_{i=1}^k(A_{i,n})_{n\in\Z}$  thus has finite entropy. We will show that it is a generating partition. 

Let $\mathcal A$ be the algebra generated by the $T$-translates of $\mathcal P$. Note that every element of the group generated by $S_1,...,S_k$ must have a cocycle partition whose elements belong to $\mathcal A$.

Since $\mathcal P$ has finite entropy, in order to conclude that $T$ has finite entropy we only need to check that $\mathcal A$ is dense in $\MAlg(X,\mu)$. 

For every Borel set $A$ the transformation induced by $T$ on $A$ has support equal to $A$ and can be approximated by elements of the group generated by $S_1,...,S_k$ in the metric $\tilde d_T$ which refines the uniform metric $d_u$. So the supports of these transformations form a sequence $(A_n)$ whose elements belong to $\mathcal Q$ such that $\mu(A_n \bigtriangleup A)\to 0$. We can conclude that up to measure zero, $\mathcal Q$ generates the $\sigma$-algebra of $X$. As explained before, this yields that $T$ has finite entropy. \\

    (3)$\impl$(2) : If $T$ has finite entropy, Krieger's finite generator theorem yields that we can take $T$ to be a minimal subshift \cite{MR0393402}. It then follows from Matui's result \cite[Thm. 5.4]{MR2205435} that $D([T]_c)$ is finitely generated. So by Theorem \ref{thm: density of the whole topo full group} the Polish group $\overline{D([T]_1)}$ is topologically finitely generated.\\ 
    
   (2)$\impl$(1): Suppose  $\overline{D([T]_1)}$ is topologically finitely generated. 
   It follows from Corollary \ref{cor:index take values in Z} that $$[T]_1=\overline{D([T]_1)}\rtimes \la T\ra,$$
    so $[T]_1$ is also topologically finitely generated.
\end{proof}

We now give a concrete example where the topological rank is equal to $2$. Our observation is that the topological generators built by Marks for the whole full group of an irrational rotation actually work for the $\LL^1$ full group. 

\begin{prop}The topological rank of the $\LL^1$ full group of an irrational rotation is equal to $2$.
\end{prop}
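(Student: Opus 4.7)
The plan is to prove the two bounds on the topological rank separately.

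For the lower bound, I would observe that $[T]_1$ cannot be topologically cyclic. Indeed, the closure of a cyclic subgroup of a Polish group is always abelian, while $[T]_1$ is not abelian: by the corollary to Theorem \ref{thm: closed normal subgroups}, the group $\overline{D([T]_1)}$ is topologically simple and in particular non-trivial (alternatively, Proposition \ref{prop: chara many involutions} produces many involutions in $[T]_1$, generically non-commuting). Hence the topological rank of $[T]_1$ is at least $2$.

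For the upper bound, I would take as generators $T$ itself (in its rotation model $R_\alpha$ on $\mathbb T = \R/\Z$) together with an involution $U$ produced by Marks's construction for the whole full group of an irrational rotation. Marks's involution $U$ is supported on a finite union of arcs of $\mathbb T$ which are permuted pairwise by fixed powers of $T$, so the cocycle $c_U$ takes only finitely many integer values and $U\in [T]_1$. It remains to show that $\la T,U\ra$ is dense in $[T]_1$ for the $\LL^1$ metric $\tilde d_T$, which is the main obstacle: Marks only provides density in the strictly coarser uniform topology.

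To upgrade uniform density to $\LL^1$ density, I would invoke Corollary \ref{cor: generated by T along with periodic} and Theorem \ref{thm: invol gen}: since $T \in \la T, U \ra$, it suffices to approximate in $\tilde d_T$ every involution $V\in [T]_1$ by words in $T$ and $U$. Applying Lemma \ref{manyinvol}, I would first reduce to the case where $V$ has cocycle bounded in absolute value by some $N\in\N$, at the cost of a $\tilde d_T$-small error coming from Proposition \ref{prop: weak order continuity}. Marks's inductive scheme then yields approximations of $V$ by words in $T$ and $U$ that converge uniformly; the key point to verify is that these approximating words can be arranged so that their cocycles are dominated in absolute value by a fixed integrable function (essentially a constant multiple of $\abs{c_V}$ plus bounded corrections from $U$), so the Lebesgue dominated convergence theorem converts uniform convergence into $\tilde d_T$-convergence. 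This cocycle control is the delicate point and is the main technical step of the proof.
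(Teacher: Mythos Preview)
Your lower bound is fine and matches the paper. The issue is in the upper bound: you have correctly identified the obstacle (upgrading uniform to $\LL^1$ convergence), but the route you propose leaves the central difficulty unresolved. You aim to approximate a general involution $V$ with bounded cocycle by words in $T,U$ coming from Marks's scheme, and then hope that the cocycles of these approximants are dominated by a fixed integrable function. You call this ``the delicate point'' and ``the main technical step'', but you do not indicate how to achieve it. Marks's construction is designed for the uniform topology and gives no a priori control on the cocycles of the approximating words; arranging such control for an arbitrary bounded-cocycle $V$ is not automatic and may well require essentially redoing the whole argument.

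The paper avoids this difficulty altogether by not trying to approximate general involutions. Instead it invokes Theorem~\ref{thm: topo gen by involutions induced by graphing}: the group $\overline{D([T_\alpha]_1)}$ is topologically generated by the very specific involutions $I_{T_\alpha,A}$ with $A\cap T_\alpha(A)=\emptyset$. So it suffices to show that each $I_{T_\alpha,A}$ lies in the $\LL^1$-closure of $\langle T_\alpha, U\rangle$ where $U=I_{T_\alpha,[0,\beta[}$. Now comes the key observation that makes the dominated-convergence issue disappear: on the set of involutions of the form $I_{T_\alpha,A}$ the uniform and $\LL^1$ topologies coincide, because such involutions have cocycle bounded by $1$ in absolute value, hence $\tilde d_T(I_{T_\alpha,A},I_{T_\alpha,A'})\leq 2\,d_u(I_{T_\alpha,A},I_{T_\alpha,A'})$. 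Marks's argument (which produces approximants precisely of this form) therefore transfers verbatim. Combining with $[T_\alpha]_1=\overline{D([T_\alpha]_1)}\rtimes\langle T_\alpha\rangle$ from Corollary~\ref{cor:index take values in Z} finishes the proof. In short, the step you flagged as delicate is bypassed entirely by restricting attention to the right generating class of involutions rather than trying to control arbitrary ones.
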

\begin{proof}
The topological generators that Marks built in \cite{markstopogen} to show that the full group of the ergodic hyperfinite equivalence relation is topologically $2$-generated can readily be used. 

To be more precise, consider an irrational rotation $T_\alpha$ on $[0,1[$ of irrational angle $\alpha\in[0,1[$. Let $\beta\in(0,\alpha)$ be irrational and consider the involution  $I_{T_\alpha,[0,\beta[}$ as defined in the paragraph preceding Theorem \ref{thm: topo gen by involutions induced by graphing}. 

The exact same proof as Marks' shows that the group topologically generated by $T_\alpha$ and $I_{T_\alpha,[0,\beta[}$ contains all the involutions $I_{T_\alpha, A}$ where $A$ is any Borel subset of $[0,1[$ such that $A\cap T_\alpha(A)=\emptyset$ (indeed the uniform and the $\LL^1$ topology coincide on the set of involutions of the form $I_{T_\alpha, A}$ so Marks' proof readily applies). So the group topologically generated by $T_\alpha$ and $I_{T_\alpha,[0,\beta[}$ contains $\overline{D([T_\alpha]_1)}$ by Theorem \ref{thm: topo gen by involutions induced by graphing}. But  by Corollary \ref{cor:index take values in Z} we have  $$[T_\alpha]_1=\overline{D([T]_1)}\rtimes \la T\ra,$$
so the group topologically generated by $T_\alpha$ and $I_{T_\alpha, A}$ is equal to $[T_\alpha]_1$ which has thus topological rank at most $2$. Since  $[T_\alpha]_1$ is not abelian, we conclude that its topological rank is equal to $2$.
 \end{proof}

\begin{rmq}Matui's work \cite[Prop. 3.2]{MR3103094} can also be used to obtain the above result for irrational rotations of angle $\alpha\in(0,1/6)$. 
\end{rmq}

Once can also adapt an unpublished proof of Marks to show that the topological rank of the $\LL^1$ full group of the odometer is equal to two. More generally, we have a proof that every \emph{rank one} transformation has an $\LL^1$ full groups whose topological rank is equal to two. This applies in particular to compact transformation. The proof will appear in a subsequent paper.

\section{Further remarks and questions}\label{sec: further}

\subsection{Non-amenability}

\begin{thm}\label{thm:fg non amenable}Let $\Gamma$ be a finitely generated non amenable group acting freely in a measure-preserving manner on $(X,\mu)$. Then neither $[\Gamma]_1$ nor $\overline{D([\Gamma]_1)}$ are amenable.
\end{thm}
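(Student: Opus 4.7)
The plan is to embed $\Gamma$ (respectively its commutator subgroup $D(\Gamma)$) as a closed discrete subgroup of $[\Gamma]_1$ (respectively of $\overline{D([\Gamma]_1)}$) and then invoke the fact that a closed subgroup of an amenable Polish group is amenable.

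Fix a finite generating set $S$ of $\Gamma$ and define the homomorphism $\iota : \Gamma \to [\Gamma]_1$ by $\iota(\gamma)(x) = \gamma \cdot x$. Each $\iota(\gamma)$ is measure-preserving and satisfies $d_S(x,\gamma\cdot x) \leq |\gamma|_S$ uniformly in $x$, hence $\iota(\gamma) \in [\Gamma]_1$. Freeness of the action makes $\iota$ injective.

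The geometric heart of the argument is that $\iota(\Gamma)$ is uniformly $1$-separated in $[\Gamma]_1$. Indeed, for $\gamma \neq e$, freeness forces $\gamma\cdot x \neq x$ for every $x$, so the Schreier graph metric satisfies $d_S(x,\gamma\cdot x) \geq 1$ everywhere, and integration gives $\tilde d_S(\iota(\gamma),\mathrm{id}_X) \geq 1$. Right-invariance of $\tilde d_S$ then yields $\tilde d_S(\iota(\gamma_1),\iota(\gamma_2)) = \tilde d_S(\iota(\gamma_1\gamma_2^{-1}),\mathrm{id}_X) \geq 1$ for all $\gamma_1 \neq \gamma_2$, so $\iota(\Gamma)$ is uniformly discrete; since a Cauchy sequence in $\iota(\Gamma)$ must therefore be eventually constant, $\iota(\Gamma)$ is also closed in $[\Gamma]_1$. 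As $\Gamma$ is non-amenable and closed subgroups of amenable Polish groups are amenable, $[\Gamma]_1$ cannot be amenable.

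For the second statement, I use that $\iota$ sends commutators to commutators, giving $\iota(D(\Gamma)) \subseteq D([\Gamma]_1) \subseteq \overline{D([\Gamma]_1)}$. As a subset of the uniformly $1$-separated set $\iota(\Gamma)$, the image $\iota(D(\Gamma))$ is itself closed in $[\Gamma]_1$ and a fortiori in $\overline{D([\Gamma]_1)}$. Moreover $D(\Gamma)$ is non-amenable: otherwise the short exact sequence $1 \to D(\Gamma) \to \Gamma \to \Gamma/D(\Gamma) \to 1$ would present $\Gamma$ as an extension of an abelian (hence amenable) group by an amenable group, forcing $\Gamma$ itself to be amenable, a contradiction. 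The closed subgroup property again yields non-amenability of $\overline{D([\Gamma]_1)}$.

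I expect the most delicate point to be the invocation of the closed subgroup property in the non-locally-compact Polish setting; it is known (and can be proved by inducing an $H$-action on a compact space $K$ to the $G$-action on the compact space of $H$-equivariant maps $G\to K$ and pushing an invariant measure forward via evaluation at $e$, taking some care with the continuity of the induced action) but deserves an explicit reference or short self-contained justification.
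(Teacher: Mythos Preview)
Your approach has a genuine gap: the assertion that closed subgroups of amenable Polish groups are amenable is \emph{false} in general. The standard counterexample is the unitary group $U(\ell^2)$ with the strong operator topology, which is extremely amenable (hence amenable) by the Gromov--Milman theorem, yet contains the free group $\mathbb F_2$ as a closed discrete subgroup via the left regular representation: for $\gamma\neq e$ one has $\lVert\lambda(\gamma)\delta_e-\delta_e\rVert=\sqrt 2$, so the image is uniformly separated exactly as in your argument. Thus the hereditary property you invoke simply does not hold outside the locally compact setting, and the induced-action sketch you outline cannot be made to work in general (the obstruction is precisely that the space of $H$-equivariant maps $G\to K$ need not be compact, nor the induced action continuous, when $G$ is not locally compact).

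The paper's proof avoids this issue entirely by going in the opposite direction: rather than finding a non-amenable closed subgroup, it exhibits $[\Gamma]_1$ and $\overline{D([\Gamma]_1)}$ as \emph{dense} subgroups (for the uniform topology) of the full group $[\mathcal R_\Gamma]$, which is known to be non-amenable when $\Gamma$ is non-amenable and acts freely. Amenability does pass from a dense subgroup to the ambient group (any $G$-invariant measure for a continuous $G$-action is obtained from an $H$-invariant one by density and continuity), and amenability for a finer group topology implies amenability for a coarser one (every action continuous for the coarser topology remains continuous for the finer one). Chaining these two facts with non-amenability of $[\mathcal R_\Gamma]$ gives the result. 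Your discreteness computation for $\iota(\Gamma)$ is correct and pleasant, but it does not by itself yield non-amenability of the ambient group.
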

\begin{proof}
Let $\Gamma$ be a finitely generated non amenable group acting freely in a measure-preserving manner on $(X,\mu)$. It follows from Theorem \ref{thm: L1 is dense} that $[\Gamma]_1$ is dense in $[\mathcal R_\Gamma]$ for the uniform topology. Since $\Gamma$ is infinite and acting freely, the equivalence relation $\mathcal R_\Gamma$ is aperiodic so the group $[\mathcal R_\Gamma]$ is perfect by \cite[Prop. 3.6]{lemai2014}. So $\overline{D([\Gamma]_1)}$ is also dense in $[\mathcal R_\Gamma]$. Furthermore, the group $[\mathcal R_\Gamma]$ is non-amenable for the uniform topology by \cite[Prop. 5.6]{MR2311665}. So the dense subgroups $[\Gamma]_1$ and $\overline{D([\Gamma]_1)}$ are not amenable for the uniform topology, in particular they are not amenable for the thinner $\LL^1$ topology.
\end{proof}

\begin{rmq}
In an upcoming joint work with Carderi and Tanskov, it is shown that the converse of the above result holds: if a countable amenable group $\Gamma$ acts freely in a measure-preserving manner on $(X,\mu)$, then both $[\Gamma]_1$ and $\overline{D([\Gamma]_1)}$ are amenable.
\end{rmq}

\subsection{Absence of a natural closure property}

Full groups as defined by Dye are  stable under cutting and pasting a countable family of elements along a countable partition of $X$.
Here $\LL^1$ full groups are only stable under cutting and pasting a \textit{finite} family of elements, and it seems natural to wonder if they can be more abstractly characterised. Here we show that $\LL^1$ full groups are not ‘‘stable under $\LL^1$ cutting and pasting'', showing that the most natural closure property one could hope for does not hold.

\begin{thm}\label{thm:non stability under l1}There exists an ergodic measure-preserving transformation $T$ and a non-null Borel set $A\subseteq X$ such that $[T_A]_1\not\leq [T]_1$. 
\end{thm}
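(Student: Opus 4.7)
The plan is to construct an involution $\phi$ lying in $[T_A]_1$ but not in $[T]_1$. Writing $R=T_A$ and recalling that elements of $[T_A]$ act trivially outside $A$, for any $\phi\in[R]$ supported in $A$ the $T$-cocycle is related to the $T_A$-cocycle by
\[
c_\phi^T(x)=\sum_{i=0}^{c_\phi^R(x)-1}n_A(R^i x)\qquad\text{when }c_\phi^R(x)\geq 0,
\]
and analogously when $c_\phi^R(x)<0$. For an involution $\phi$ with cocycle partition $A_k:=\{c_\phi^R=k\}$, symmetry between the positive and negative sides gives the identity
\[
\int_A|c_\phi^T|\,d\mu\;=\;2\int_A n_A(y)\,\mathrm{mult}(y)\,d\mu(y),
\qquad
\mathrm{mult}(y):=\#\{(k,i):k>0,\,0\le i<k,\,y\in R^i(A_k)\},
\]
together with $\int_A\mathrm{mult}\,d\mu=\tfrac12\|c_\phi^R\|_{L^1}$. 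Since $\int_A n_A\,d\mu=1$ by Kac's formula, both $n_A$ and $\mathrm{mult}$ are individually integrable, but their product need not be: arranging $\phi\in[T_A]_1\setminus[T]_1$ is exactly the task of making $n_A$ and $\mathrm{mult}$ simultaneously heavy-tailed on a common region of $A$.

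The construction proceeds via a Kakutani suspension. Start from any aperiodic ergodic $(A,R,\nu)$, e.g.\ a Bernoulli shift. Using Rohlin's lemma iteratively, choose for each $n\ge 1$ a base $F_n\subseteq A$ and integer $k_n$ such that $F_n,R(F_n),\ldots,R^{k_n-1}(F_n)$ are pairwise disjoint, with $\nu(F_n)=\gamma_n$ to be specified. Arrange, by a standard exhaustion argument, that the bases $F_n$ and the top levels $R^{k_n-1}(F_n)$ are pairwise disjoint across $n$, and—crucially—that the intermediate level sets $R^i(F_n)$ for different $n$ overlap heavily on a carefully chosen shrinking sequence of subsets of $A$. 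Define a roof function $r:A\to\N_{\ge 1}$ that is comparable to $n$ on the pieces covered by the $n$-th tower and equals $1$ outside all towers, in such a way that $\int_A r\,d\nu<\infty$. Build $(X,T,\mu)$ as the Kakutani suspension of $R$ with roof $r$; then $T_A=R$ and $n_A=r$. Finally, let $\phi\in[R]$ be the involution with $\phi|_{F_n}=R^{k_n-1}$, extended by its inverse on $R^{k_n-1}(F_n)$ and by the identity elsewhere.

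The estimates then become $\|c_\phi^R\|_{L^1(\mu)}\sim\sum_n k_n\gamma_n$, while for $x\in F_n$ we have $c_\phi^T(x)=\sum_{i=0}^{k_n-2}r(R^i x)\ge n(k_n-1)$ since $R^i(F_n)$ sits inside the $n$-th tower where $r\gtrsim n$; hence $\|c_\phi^T\|_{L^1(\mu)}\gtrsim\sum_n n\,k_n\,\gamma_n$. A polynomial-tail choice of parameters (for instance $\gamma_n\sim n^{-5/2}$, $k_n\sim n$, and $r\sim n$ on the $n$-th tower) makes the first sum finite but the second divergent, so $\phi\in[T_A]_1\setminus[T]_1$.

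The main obstacle is the engineering step in the second paragraph. A naive family of pairwise disjoint Rohlin towers has $\mathrm{mult}\le 1$, and Kac's formula then forces $\|c_\phi^T\|_{L^1}\le 2\int_A n_A\,d\mu=2$, so no counterexample can arise from disjoint towers. One must genuinely arrange the level sets $R^i(F_n)$ to overlap on the region where $n_A$ blows up—making $\mathrm{mult}$ unbounded—while simultaneously preserving disjointness of the bases and top levels needed for $\phi$ to be a well-defined measure-preserving involution, and keeping both $\int_A r\,d\nu$ and $\sum_n k_n\gamma_n$ finite. Once this delicate configuration is in place, the counterexample falls out of a comparison between two explicit power-law sums.
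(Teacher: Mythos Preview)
Your framework---build $T$ as a Kakutani suspension over an ergodic $(A,R,\nu)$ with roof $r$, and compare $\|c_\phi^T\|_{L^1}$ to $\|c_\phi^R\|_{L^1}$ via the multiplicity function---is sound, and your identity $\int_A|c_\phi^T|=2\int_A n_A\cdot\mathrm{mult}$ is correct. However, the construction itself has a genuine gap.

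First, the numerics do not close. With $\gamma_n\sim n^{-5/2}$, $k_n\sim n$ and $r\sim n$ on the $n$-th tower, the $n$-th tower has $\nu$-measure $k_n\gamma_n\sim n^{-3/2}$, so the contribution to $\int_A r\,d\nu$ is $\sim n\cdot n^{-3/2}=n^{-1/2}$, and the sum diverges. Thus your roof is not integrable and the suspension is not a probability space. Any attempt to rescue this by letting the towers overlap (so that their total measure is smaller than $\sum k_n\gamma_n$) runs into the problem you yourself flag: $r$ must be a single well-defined function, and ``$r\sim n$ on the $n$-th tower'' is incoherent on the overlap. You acknowledge that the engineering step is ``the main obstacle'' but then simply assert that it can be done; as it stands this is the entire content of the proof and it is missing.

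Second, your diagnosis that disjoint towers force $\mathrm{mult}\le 1$ is correct only under the additional hypothesis that each $F_n,R(F_n),\ldots,R^{k_n-1}(F_n)$ is a genuine Rohlin tower (i.e.\ these $R$-translates are disjoint). But there is no reason to impose this: for $\phi$ to be a well-defined involution one only needs $F_n\cap R^{k_n-1}(F_n)=\emptyset$. The paper's construction exploits exactly this: the pieces $A_n$ supporting each $U_n$ are pairwise disjoint, but the base $\{c_{U_n}^R=2^{n-1}\}$ is ``fat''---it is a union of $2^{n-1}$ consecutive $R$-levels---so its $R$-translates overlap heavily and $\mathrm{mult}$ grows like $2^{n-1}$ on $A_n$. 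Concretely, on $A_n$ one has $n_A=2^n$, $\mu(A_n)=4^{-n}$, and $|c_{U_n}^R|=2^{n-1}$, giving a contribution of $4^{-n}\cdot 2^{n-1}=2^{-n-1}$ to $\|c_\phi^R\|_{L^1}$ (summable) but $4^{-n}\cdot 2^n\cdot 2^{n-1}=\tfrac12$ to $\|c_\phi^T\|_{L^1}$ (not summable). No overlap across $n$, no delicate exhaustion argument---just a direct explicit choice of exponents.
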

\begin{rmq}A reformulation of the theorem is that the inclusion ${[T]_1}_A\leq [T_A]_1$ can be strict. In particular, given $S\in [T]_1$, it can happen that $[S]_1\not\leq[T]_1$. 
\end{rmq}
\begin{proof}
We fix a Borel partition $(X_n)_{n\geq 1}$ of  $X$ such that for every $n\in\N$, $\mu(X_n)=2^{-n}$. We then cut each $X_n$ into $4^n$ pieces $(B_{n,m})_{m=0}^{4^n-1}$ of equal measure (so $\mu(B_{n,m})=8^{-n}$). 

Let $T_0$ be a periodic transformation such that 
\begin{itemize}
\item for all $n\in\N$ and $m\in\{0,...,4^n-2\}$, one has $T_0(B_{n,m})=B_{n,m+1}$ and
\item for all $n\in\N$ and $x\in B_{n,0}$ one has $T_0^{4^n+1}(x)=x$.
\end{itemize}

Then the set $B_0:=\bigsqcup_{n\in\N} B_{n,0}$ is a fundamental domain for $T_0$. Let $U$ be a measure-preserving transformation such that $\supp U=B_0$ and $U_{\restriction B_0}$ is ergodic, and consider $T:=UT_0$. By 
Proposition \ref{prop: building ergodic and aperiodic from periodic}, $T$ is ergodic. Moreover for all $n\in\N$ and $m\in\{0,...,4^n-1\}$ we  have $T(B_{n,m})=B_{n,m+1}$.
Now for all $n\in\N$ we let
\begin{align*}
A_n&:=\bigsqcup_{m=0}^{2^n-1} B_{n,2^nm}\end{align*}
and then $\displaystyle A:=\bigsqcup_{n\in\N} A_n$.

Consider the transformation $T_A$, and note that by construction for all $n\in\N$, all $m\in\{0,2^n-2\}$ and all $x\in B_{n,2^nm}$ we have $T_A(x)=T^{2^n}(x)$. 
Now consider the involution $U_n$ defined by 
$$U_n(x)=\left\{\begin{array}{cl}
T^{4^n/2}(x)&\text{if }x\in \bigsqcup_{m=0}^{2^n/2-1} B_{n,2^nm} \\
T^{-4^n/2}(x) & \text{if }x\in\bigsqcup_{m=2^n/2}^{2^n-1} B_{n,2^nm}\\
x & \text{else.}
\end{array}\right.$$
\begin{figure}[h]
\centering
\includegraphics{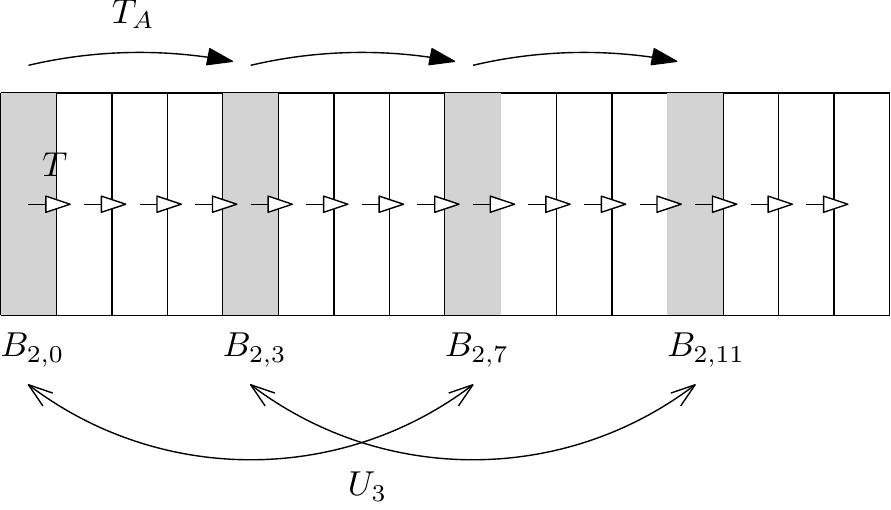}
\caption{The construction of $U_3$. The set $A_3$ is the gray part of the figure.}
\end{figure}

Note that for all $x\in A_n$ we have $d_T(x,U_n(x))=4^n/2$. 
We can then compute \begin{align*}d_T(U_n,\mathrm{id}_X)&=\sum_{m=0}^{2^n-1} \int_{B_{n,2^nm}}d_T(x,U_n(x))\\
&=\frac{2^n}{8^n}\times\frac{4^n}2\\
&=\frac 12
\end{align*}
Also for all $x\in A_n$ we have $d_{T_A}(x,U_n(x))=2^{n}/2$ so the same computation as before yields
$$d_{T_A}(U_n,\mathrm{id}_X)=\frac{2^n}{8^n}\times \frac{2^n}2=\frac 1{2^{n+1}}$$
Let $U$ be the involution obtained by gluing together all the $U_n$'s, namely for all $x\in X$
$$U(x)=\left\{\begin{array}{cl}U_n(x) & \text{if }x\in A_n \text{ for some }n\in\N,  \\x & \text{else.}\end{array}\right.$$
We clearly have
\begin{align*} d_T(U,\mathrm{id}_X)&=\sum_{n\in\N}d_T(U_n,\mathrm{id}_X)=\sum_{n\in\N}\frac12=+\infty\\
\text{and }d_{T_A}(U,\mathrm{id}_X)&=\sum_{n\in\N}d_{T_A}(U_n,\mathrm{id}_X)=\sum_{n\in\N}\frac1{2^{n+1}}=\frac 12<+\infty.
\end{align*}
We conclude that $U$ witnesses the fact that $[T_A]_1\not\leq [T]_1$.
\end{proof}

Recall that two actions of finitely generated groups $\Gamma$ and $\Lambda$ are $\LL^1$ orbit equivalent if up to a measure-preserving transformation, they share the same orbits and for every $\lambda\in \Lambda$ and $\gamma\in\Gamma$, the maps
$$x\mapsto d_\Gamma(x,\lambda(x))\text{ and }x\mapsto d_\Lambda(x,\gamma(x))$$ are integrable, where $d_\Gamma$ and $d_\Lambda$ are respective Schreier graph distances (see e.g. \cite{Austin:2016rm} for more on this notion). Here we note that although sharing the same $\LL^1$ full group implies $\LL^1$ orbit equivalence, the converse does not hold a priori. 

\begin{cor}\label{cor: L1 OE but not same L1 full group}There exists two measure-preserving actions of the free group on two generators $\mathbb F_2$ such that the identity map is an $\LL^1$-orbit equivalence between them but they do not share the same $\LL^1$ full group. 
\end{cor}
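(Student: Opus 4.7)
The plan is to apply Theorem~\ref{thm:non stability under l1} as a black box. It supplies an ergodic $T\in\Aut(X,\mu)$, a non-null $A\subseteq X$, and an element $U\in[T_A]_1\setminus[T]_1$ whose support lies in $A$. In particular, writing $U(x)=T_A^{c(x)}(x)$ on $A$, the cocycle $c\colon A\to\Z$ is in $\LL^1$, whereas the ambient $T$-cocycle of $U$ is not. My strategy is to build two $\mathbb F_2$-actions with the same orbits, one of which includes $T_A$ as a generator (so $U$ becomes short) and the other of which has its second generator acting trivially (so the Schreier distance reduces to $d_T$).

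Denoting by $a,b$ the free generators of $\mathbb F_2$, I would set
$$\alpha(a)=T,\quad \alpha(b)=T_A;\qquad \beta(a)=T,\quad \beta(b)=\mathrm{id}_X.$$
Both actions live in $[\mathcal R_T]$ and have the $T$-orbits as their orbits, so the identity is an orbit equivalence. To see it is an $\LL^1$ orbit equivalence, three of the four cross-distances between generators are obviously bounded; the only non-trivial one is $d_\beta(x,\alpha(b)x)\leq d_T(x,T_A(x))$, whose integrability is exactly Kac's formula (used via Proposition~\ref{prop: induced transfo} applied to $T$).

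To separate the $\LL^1$ full groups, I would observe that $\beta(b)=\mathrm{id}_X$ forces the Schreier metric $d_\beta$ to equal $d_T$: any word in $\{a,b\}^{\pm1}$ acts through $\beta$ as some $T^n$ with $|n|$ at most the word length, so $[\beta]_1=[T]_1$. Conversely, in $\alpha$ the element $U$ can be reached from $x$ by $|c(x)|$ applications of $\alpha(b)^{\pm1}$ (with $d_\alpha(x,U(x))=0$ outside $A$), giving $U\in[\alpha]_1$. Since $U\notin[T]_1=[\beta]_1$, this yields $[\alpha]_1\neq[\beta]_1$. The main (and rather mild) obstacle is that $\beta$ is not free, but the paper's definition of $[\Gamma]_1$ does not require free actions, so this causes no problem.
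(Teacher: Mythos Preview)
Your proof is correct and follows essentially the same approach as the paper's: both use the $T$, $A$, and $U$ provided by Theorem~\ref{thm:non stability under l1}, and both set up one $\mathbb F_2$-action whose Schreier metric reduces to $d_T$ and another whose generating set contains $T_A$ so that $U$ becomes $\LL^1$. The only cosmetic difference is the particular choice of generators --- the paper takes $\alpha_1(a)=\alpha_1(b)=T$ and $\alpha_2(a)=T_A$, $\alpha_2(b)=TT_A^{-1}$, whereas you take $\alpha(a)=T$, $\alpha(b)=T_A$ and $\beta(a)=T$, $\beta(b)=\mathrm{id}_X$ --- but the mechanism is identical.
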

\begin{proof}Write $\mathbb F_2=\la a,b\ra$, fix $T\in\Aut(X,\mu)$ and $A\subseteq X$ as in the proof of the previous theorem. Define a measure-preserving $\mathbb F_2$-action $\alpha_1$ by $\alpha_1(a)=T$ and $\alpha_2(b)=T$. Then define a second action $\alpha_2$ by $\alpha_2(a)=T_A$  and $\alpha_2(b)=TT_A\inv$. Since $T_A\in [T]_1$ these two actions are $\LL^1$ orbit equivalent via the identity map. However a map $U$ as in the proof of the previous theorem belongs to the $\LL^1$ full group of $\alpha_2$, but not to the $\LL^1$ full group of $\alpha_2$. 
\end{proof}

\begin{rmq} Obviously $\LL^\infty$ orbit equivalence implies conjugacy of the $\LL^1$ full groups. It would be interesting to understand wether the converse holds. 
\end{rmq}

\subsection{$\LL^p$ full groups}

Let $p\in [1,+\infty]$. One can then define $\LL^p$ full group $[\Phi]_p$ of a graphing $\Phi$ on a standard probability space $(X,\mu)$ by letting 
$$[\Phi]_p=\left\{T\in[\mathcal R_\Phi]:  \text{the map }x\mapsto d_\Phi(T(x),x)\text{ belongs to }\LL^p(X,\mu)\right\}.$$
The triangle inequality and Minkowski's inequality yield that $[\Phi]_p$ is indeed a group, and the natural distance $\tilde d^p_\Phi$ given by 
$$\tilde d^p_\Phi(S,T)=\norm{x\mapsto d_\Phi(S(x),T(x))}_p$$
is right-invariant, complete and refines the uniform topology. Moreover Hölder's inequality implies that $[\Phi]_q\leq [\Phi]_p$ whenever $p<q$. Note that the $\LL^\infty$ full group is discrete since $d_\Phi$ takes values in $\Z$. It is also uncountable hence we view it as a degenerate case. For $p<+\infty$ however, one can show exactly like for $p=1$ that $[\Phi]_p$ is separable, so that it is also a Polish group. 


The main difference with $\LL^1$ full groups is that for $p>1$ the $\LL^p$ full groups are not stable under taking induced transformations. Indeed, given a set $A$ such that $0<\mu(A)<1$ and a map $f:A\to \N$ of integral $1$, one can  use Lemma \ref{lem: induced from periodic} to build a measure-preserving ergodic transformation $T$ whose return time to $A$ equates $f$. So if $f$ was chosen not to be in $\LL^p$ the transformation induced by $T$ on $A$ won't belong to the $\LL^p$ full group of $T$. \\

If $T$ is an ergodic measure-preserving transformation, the $\LL^p$ full group of $T$ is a complete invariant of flip conjugacy since it also has many involutions and is a subgroup of the $\LL^1$ full group.  The index map on $[T]_1$ restricts to its $\LL^p$ full group and yields a surjective homomorphism $[T]_p\to \Z$. The kernel of this homomorphism clearly contains $\overline{D([T]_p)}$ but it is unclear wether the converse holds.

Since $\overline{D([T]_1)}=\ker I$, a positive answer to the following general question would also imply that $\overline{D([T]_p)}=\ker I$.

\begin{qu}Let $\Phi$ be an ergodic graphing. Is it true that for all $p\in]1,+\infty[$ we have the equality
$$\overline{D([\Phi]_1)}\cap [\Phi]_p=\overline{D([\Phi]_p)},$$
where the closure in the left term is in the $\LL^1$ topology while the closure in the right term is in the $\LL^p$ topology?
\end{qu}

I also do not know wether $\overline{D([\Phi]_p)}$ is always generated by involutions, which if true would yield via the same proof as Thm. \ref{thm: density gp gen by invol in topo ful group} that $\overline{D([\Phi]_q)}$ is dense in $\overline{D([\Phi]_p)}$ for $1\leq p<q\leq+\infty$.

\subsection{Questions}

We close this paper by recalling some questions scattered through the text and asking a few more. Answering the first is would provide a much better understanding of $\LL^p$ full groups.

\begin{qu}Let $1\leq p<q\leq +\infty$. Is it true that $[\Phi]_q$ is dense in $[\Phi]_p$? 
\end{qu}

Note that a positive answer for $q=+\infty$ yields a positive answer for every $1\leq p<q\leq+\infty$. When $\Phi$ is a continuous graphing, we obviously have $[\Phi]_c\leq [\Phi]_\infty$. We thus also ask: 

\begin{qu}Let $\Phi$ be a finite continuous graphing and $1\leq p<+\infty$. Is it true that $[\Phi]_c$ is dense in $[\Phi]_p$?
\end{qu}

Note that the answer is negative if we remove the condition that $\Phi$ is finite (cf. Example \ref{example: full topological not dense in L1}). We have seen that the $\LL^1$ full group of an ergodic measure-preserving transformation $T$ is topologically finitely generated if and only if $T$ has finite entropy, and gave examples where the topological rank of $[T]_1$ is equal to $2$. All these examples have entropy zero. One could hope that there is a formula linking the topological rank to entropy as is true for cost and topological rank of full groups \cite{gentopergo}, but for now we don't even know the answer to the following very basic and pessimistic question.

\begin{qu}Let $T$ be an ergodic measure-preserving transformation of finite entropy. Is the topological rank of $[T]_1$ equal to $2$?
\end{qu}

It would be interesting to try to generalise our structural results on $\LL^1$ full groups of ergodic $\Z$-actions to $\Z^n$-actions. For instance, given a measure-preserving ergodic $\Z^n$-action, one can still define an index map $I_n:[\Z^n]_1\to \R^n$ and ask for analogues of Corollary \ref{cor:index take values in Z}.

\begin{qu}\label{qu: index for higher dimension}Does the index map $I_n$ take values into $\Z^n$ when the canonical generators of $\Z^n$ act ergodically? Is the kernel of $I_n$ equal to the closure of the derived group of $[\Z^n]$?
\end{qu}

Our positive answer to this question for $n=1$ relied crucially on the decomposition of elements of $[T]_1$ as products of almost positive, almost negative and periodic elements of disjoint supports, but it is not clear what an analogue of this in higher dimension would be so that another approach is probably needed. 

We should mention that in the case of a single ergodic measure-preserving transformation $T$, one can use the ideas from section \ref{sec:generated by induced and periodic} to show that the open unit ball in $[T]_1$ only consists of periodic elements. Since those belong to $\overline{D([T]_1)}$ (cf. Lemma \ref{lem: periodic in derived group}), this yields another way to prove that $\overline{D([T]_1)}$ is open. We thus ask:

\begin{qu}
Let a finitely generated group $\Gamma$ act freely on $(X,\mu)$ with all its generators acting ergodically. When does the open unit ball in $[\Gamma]_1$ only contain periodic elements? 
\end{qu}

The fact that for an ergodic measure-preserving transformation $T$ the open unit ball of the $\LL^1$ full group only contains periodic elements can be used to show that $[T]_1$ has Rosendal's local property (OB) and is (OB) generated, so that by Rosendal's work it has a well defined quasi-isometry type as a topological group (see \cite{rosendalcoarse2016}). This will be the subject of a later work.\\ 

Finally, full groups of ergodic measure-preserving equivalence relations can be seen as special cases of $\LL^1$ full groups (see Example \ref{ex: full group of R}). They enjoy the automatic continuity property as was shown by Kittrell and Tsankov \cite[Thm. 1.5]{MR2599891} and hence admit a unique Polish group topology. 

\begin{qu}
Let $\Phi$ be an ergodic graphing. Does $[\Phi]_1$ satisfy the automatic continuity property? Is its Polish group topology unique? What about $\overline{D([\Phi]_1)}$ ?
\end{qu}


\bibliographystyle{alpha}
\bibliography{/Users/francoislemaitre/Dropbox/Maths/biblio}

\end{document}